\documentclass[a4paper, 11pt]{article}

\usepackage{mathtools}
\usepackage{amssymb}
\usepackage{graphicx}
\usepackage{etoolbox}
\usepackage[utf8]{inputenc}
\usepackage[OT1]{fontenc}
\usepackage{amsthm}


\newcommand{\dR}{\mathbb{R}}

\newcommand{\dN}{\mathbb{N}}
\newcommand{\dQ}{\mathbb{Q}}
\newcommand{\dC}{\mathbb{C}}

\newcommand{\dE}{\mathbb{E}}
\newcommand{\dP}{\mathbb{P}}

\newcommand{\cA}{\mathcal{A}}
\newcommand{\cB}{\mathcal{B}}

\newcommand{\cE}{\mathcal{E}}
\newcommand{\cF}{\mathcal{F}}
\newcommand{\cG}{\mathcal{G}}
\newcommand{\cI}{\mathcal{I}}

\newcommand{\cL}{\mathcal{L}}
\newcommand{\cM}{\mathcal{M}}
\newcommand{\cN}{\mathcal{N}}

\newcommand{\cP}{\mathcal{P}}
\newcommand{\cS}{\mathcal{S}}
\newcommand{\cT}{\mathcal{T}}

\newcommand{\cV}{\mathcal{V}}
\newcommand{\cW}{\mathcal{W}}

\newcommand{\ind}{\mathbf{1}}


\providecommand{\given}{}
\DeclarePairedDelimiterXPP{\Pb}[1]{\mathbb{P}}{\lparen}{\rparen}{}{\renewcommand{\given}{\nonscript{}\:\delimsize\vert\nonscript{}\:\mathopen{}} #1}
\DeclarePairedDelimiterXPP{\E}[1]{\mathbb{E}}[]{}{\renewcommand{\given}{\nonscript{}\:\delimsize\vert\nonscript{}\:\mathopen{}} #1}
\DeclarePairedDelimiterX{\Set}[1]\lbrace\rbrace{\renewcommand{\given}{\nonscript{}\:\delimsize\vert\nonscript{}\:\mathopen{}} #1}


\DeclareMathOperator{\diag}{diag}
\DeclareMathOperator{\tr}{tr}
\DeclareMathOperator{\Var}{Var}

\DeclareMathOperator{\img}{im}
\DeclareMathOperator{\vect}{vect}
\DeclareMathOperator{\diam}{diam}
\DeclareMathOperator{\rank}{rank}

\DeclarePairedDelimiterX{\norm}[1]\lVert\rVert{\ifblank{#1}{\: \cdot \:}{#1}}

\DeclareMathOperator{\Poi}{Poi}

\DeclareMathOperator{\dtv}{d_{TV}}


\newcommand{\bilingualcommand}[3]{%
	\newcommand{#1}[1][\ ]{%
		##1%
		\iflanguage{english}{\text{#2}}{%
			\iflanguage{french}{\text{#3}}{}%
		}%
		##1%
	}%
}

\bilingualcommand{\where}{where}{où}
\bilingualcommand{\textif}{if}{si}
\bilingualcommand{\textand}{and}{et}
\bilingualcommand{\textiff}{if and only if}{si et seulement si}
\bilingualcommand{\otherwise}{otherwise}{sinon}


\newcommand{\eps}{\varepsilon}

\newcommand{\quand}{\quad \textand{} \quad}

\newtheorem{theorem}{Theorem}
\newtheorem{lemma}{Lemma}
\newtheorem{proposition}{Proposition}

\newtheorem{corollary}{Corollary}
\newtheorem{conjecture}{Conjecture}
\newtheorem*{remark}{Remark}


\usepackage{enumitem}
\usepackage{fullpage}
\usepackage[square, numbers]{natbib}
\usepackage{microtype}

\usepackage[colorlinks, citecolor=blue, unicode]{hyperref}
\usepackage{doi}

\pdfstringdefDisableCommands{\def\eqref#1{(\ref{#1})}}

\title{Non-backtracking spectra of weighted inhomogeneous random graphs}
\author{Ludovic Stephan \thanks{Département d’informatique de l’ENS, ENS, CNRS, PSL University, Paris, France} \thanks{Inria, Paris, France} \thanks{Sorbonne Université, Paris, France} \\ \texttt{ludovic.stephan@ens.fr} \\ Corresponding author \and Laurent Massoulié\footnotemark[1] \footnotemark[2] \thanks{Microsoft Research-Inria Joint Centre, Paris, France} \\ \texttt{laurent.massoulie@inria.fr}}

\date{}

\begin{document}

\maketitle

\begin{abstract}
  We study a model of random graphs where each edge is drawn independently (but
  not necessarily identically distributed) from the others, and then assigned a
  random weight. When the mean degree of such a graph is low, it is known that
  the spectrum of the adjacency matrix \( A \) deviates significantly from that of its
  expected value \( \dE A \).

  In contrast, we show that over a wide range of parameters the top eigenvalues
  of the non-backtracking matrix \( B \) --- a matrix whose powers count the
  non-backtracking walks between two edges --- are close to those of \( \dE A \), and all
  other eigenvalues are confined in a bulk with known radius. We also obtain a
  precise characterization of the scalar product between the eigenvectors of \( B \)
  and their deterministic counterparts derived from the model parameters.

  This result has many applications, in domains ranging from (noisy) matrix
  completion to community detection, as well as matrix perturbation theory.
  In particular, we establish as a corollary that a result known as the Baik-Ben
  Arous-Péché phase transition, previously established only for rotationally
  invariant random matrices, holds more generally for matrices \( A \) as above under a
  mild concentration hypothesis.

\end{abstract}

\vspace{1em}

\noindent\emph{Mathematics Subject Classification (2020)}: 60B20.\\
\emph{Keywords}: random graphs, community detection, non-backtraking matrix.

\newpage

\section{Introduction}

Let \( P \in \cM_n(\dR) \) be a symmetric \( n\times n \) matrix with entries in \( [0, 1] \), and \( W \) a (symmetric) weight matrix with independent random entries. We define the inhomogeneous undirected random graph \( G = (V, E) \) associated with the couple \( (P, W) \) as follows: the vertex set is simply \( V = [n] \), and each edge \( \{u, v\} \) is present in \( E \) independently with probability \( P_{uv} \), and holds weight \( W_{uv} \).

The entrywise expected value and variance of the weighted adjacency matrix of \( G \) are
\begin{equation}
  \dE{A} = P \circ \dE W \quand \Var(A) := P \circ \E*{W \circ W} - P \circ P \circ \dE W \circ \dE W,
\end{equation}
where \( \circ \) denotes the Hadamard product. When the entries of $P$ are small, the second term of $\Var(A)$ is negligible and the variance can be well approximated by the entrywise second moment; we thus define
\begin{equation}\label{eq:q_k_definition}
  Q := P \circ \dE W \quand K := P \circ \E*{W \circ W}.
\end{equation}

A natural question, arising from matrix perturbation theory, is then as follows:
\begin{center}
  \emph{What is the relationship between the eigendecomposition of \( A \) and the one of \( Q \)?}
\end{center}

Unfortunately, at least in the unweighted case, when the mean degree of \( G \) is low (\( o(\log(n)) \)), it is known that the largest eigenvalues (and associated eigenvectors) of \( A \) are determined by the large degree vertices; see~\cite{benaychgeorges_largest_2019} for a complete description of this phenomenon. To extract meaningful information on the spectrum of \( Q \), another matrix has shown better performance: the non-backtracking matrix, whose application to community detection has been studied in~\cite{krzakala_spectral_2013, bordenave_nonbacktracking_2018}.

Given a weighted graph \( G \), we define its associated non-backtracking matrix \( B \) as follows: \( B \) is a \( 2|E| \times 2|E| \) matrix indexed by the oriented edges of \( G \), whose coefficients are
\[ B_{ef} = W_f \ind \{e \rightarrow f \} = W_f\ind \{e_2 = f_1 \}\ind \{e_1 \neq f_2 \}, \]
where \( e = (e_1, e_2)  \) and \( f = (f_1, f_2) \). The above question rephrases in our setting as
\begin{center}
  \emph{What is the relationship between the eigendecomposition of \( B \) and the one of \( Q \)?}
\end{center}
and the main focus of this article is to provide an answer as precise as possible to this problem. To this end, let
\[ Q = \sum_{i=1}^r \mu_i \varphi_i\varphi_i^\top \quad \text{with} \quad |\mu_1| \geq |\mu_2| \geq \dots \geq |\mu_r| \]
be the eigendecomposition of \( Q \), and \( \rho = \rho(K) \) the largest eigenvalue (in absolute value) of \( K \). Note that by definition, $Q$ and $K$ are symmetric and therefore all eigenvalues defined above are real. \\
We shall assume that there exists some deterministic bound \( L \) (possibly depending on \( n \)) such that \( \max |W_{ij}| \leq L \). We can then state our main theorem, without detailing the needed hypotheses for now:

\begin{theorem}[informal statement]\label{th:main_summary}
  Assume the following conditions:
  \begin{enumerate}
    \item \( r = n^{o(1)} \),
    \item the graph \( G \) is sparse enough,
    \item the eigenvectors of \( Q \) are sufficiently delocalized.
  \end{enumerate}
  Let \( r_0 \) be the number of eigenvalues of \( Q \) whose absolute value is larger than both \( \sqrt{\rho} \) and \( L \):
  \begin{equation}\label{eq:r0_def}
    \mu_{k} > \sqrt{\rho} \vee L \text{ for all } k\in [r_0]  \quand \mu_{r_0+1} \leq \sqrt{\rho} \vee L
  \end{equation}
  Then, for \( i\leq r_0 \), the \( i \)-th largest eigenvalue of \( B \) is asymptotically (as \( n \) goes to infinity) equal to \( \mu_i \), and all the other eigenvalues of \( B \) are constrained in a circle of center \( 0 \) and radius \( \max(\sqrt{\rho}, L) \). Further, if \( i \leq r_0 \) is such that \( \mu_i \) is a sufficiently isolated eigenvalue of \( Q \), then the eigenvector associated with the \( i \)-th eigenvalue of \( B \) is correlated to a lifted version of \( \varphi_i \).
\end{theorem}

Next section consists in the detailed statement of this theorem (with precise hypotheses and bounds given).

\section{Detailed setting and results}
\subsection{Notations}

\paragraph{General notations:}

Throughout this paper, we use the following notations:
\begin{itemize}
  \item for integer \( n \), \( [n] \) denotes the set \( \{ 1, \dots, n\} \).
  \item for \( x\in \dR^n \), we shall denote by \( x_i \) or \( x(i) \) the \( i \)-th coordinate of \( x \), whichever is most convenient. \( \norm{x} \) is the 2-norm of \( x \), and \( \norm{x}_\infty \) the infinity norm of \( x \).
  \item the operator norm of a matrix \( M \) is noted \( \norm{M} \); it is the maximal singular value of \( M \). Its Frobenius norm is noted \( \norm{M}_F \) and its infinity norm \( \norm{M}_\infty = \sup_{i, j} |M_{ij}| \).
  \item \( \ind \) denotes the all-one vector, and \( \ind \{\cdot \} \) is the indicator function of an event.
  \item the group of permutations on \( r \) elements is noted \( \mathfrak S_r \).
  \item the max (resp.\ min) of two numbers \( a, b \) is noted \( a \vee b \) (resp. \( a \wedge b \)).
  \item the letter \( c \) denotes any absolute constant, whose value should be assumed to be the maximum of any such constant encountered so far. To improve the readability  of our computations, we use numbered constants \( c_i \) during proofs.
\end{itemize}

\paragraph{Graph theoretic notations:}

For a graph \( g = (V, E) \), let \( \vec E \) be the set of oriented edges in \( E \), and
\[ \vec E(V) = \Set*{(u, v) \given u \neq v \in V} \]
be the set of all directed edges of the \emph{complete graph} on \( V \). If \( t \) is an integer, \( g = (V, E) \) is a graph and \( x \in V \), then the \emph{ball} \( {(g, x)}_t \) is the subgraph induced by all edges at distance at most \( t \) from \( x \), and \( \partial {(g, x)}_t \) is the boundary of the ball, i.e.\ the set of vertices at distance exactly \( t \) from \( x \). Finally, the set of all non-backtracking paths of length \( t \) starting with \( x \) will be denoted \( \cP_g (x, t) \).

\paragraph{Non-backtracking matrix:} Since we are interested in the spectrum of the non-backtracking matrix \( B \), we need to be able to translate ``vertex'' quantities such as the vectors \( \varphi_i \) into ``edge'' quantities. Recall that \( V = [n] \), and identify \( \vec E \) with the set \( [2m] \); we define the \( 2m \times n \) \emph{start} and \emph{terminal} matrices \( S \) and \( T \) as
\begin{equation}\label{eq:start-end}
  \forall e\in \vec E, i\in [n],\quad  S_{ei} = \ind \{e_1 = i \} \quand   T_{ei} = \ind \{e_2 = i \}.
\end{equation}
For a vector \( \phi\in \dR^n \), this implies that \( [T\phi](e) = \phi(e_2) \) for every edge \( e\in \vec E \). We then define the ``lifted'' eigenvectors \( \chi_i = T\varphi_i \) for \( i\in [r] \).

\medskip

We also define the \emph{reverse} operator \( J \) such that \( Je = \bar e := (e_2, e_1) \), and the diagonal matrix \( D_W \) such that \( D_W(e, e) = W_e \); from the definition of \( B \) and symmetry of \( W \) it is straightforward to see that \( J D_W \) = \( D_W J \) and for all \( t\geq 0 \)
\begin{equation}\label{eq:parity-time}
  J D_W B^t = {(B^*)}^t D_W J,
\end{equation}
which is known in mathematical physics as parity-time invariance. For any vector \( x \in \dR^{\vec E} \), we denote the vector \( Jx \) by \( \check x \).

Building upon the sketch in the introduction, we now expand on the model definition. Recall that the \emph{expectation} and \emph{variance} matrices were defined as
\[ Q = P \circ \dE W \quand K = P \circ \E*{W \circ W}. \]

\subsection{Defining the convergence parameters} \label{subsec:conv_parameters}
In full generality, with no assumptions on \( P \) and \( W \), we do not expect meaningful results to hold; however, we are still able to provide interesting properties on a large class of matrices. We define in the following the parameters that will govern the convergence behavior :
\begin{enumerate}
  \item the \emph{rank}
        \[ r = \max\left(\rank(Q), \sqrt{\rank(K)}\right); \]
        note that in most practical applications (such as the unweighted case), we shall have $r = \rank(Q)$, but we also treat cases where $r \gg \rank(Q)$.

  \item the sparsity parameter
        \[ d = n \max_{i, j \in [n]} P_{ij}; \]

  \item the eigenvector delocalization parameter
        \[ b = \sqrt{n} \max _{i \leq \rank(Q)} \norm{\varphi_i}_\infty; \]

  \item the signal-to-noise ratio
        \[ \tau = \max_{\mu_i^2 > \mu_1} \frac{\mu_1}{\mu_i^2}; \]

  \item and finally the almost sure probability bound
        \[ \norm{W}_\infty = L; \]
        our results hold trivially whenever $L = +\infty$ so we shall restrict ourselves to the case
        where $L$ is finite, and the $W_{ij}$ are almost surely bounded. While Theorem \ref{th:main} below requires an almost sure bound, techniques for dealing with high probability bounds are discussed in Theorem \ref{th:gaussian}.

\end{enumerate}

The average degree of a vertex \( i \) will be noted by
\[ d_i = \sum_{j\in[n]} P_{ij} \leq d, \]
which corresponds to the entries of the vector $P \ind$. To ensure that $G$ is connected enough for spectral properties to hold, we make the (common) assumption that $d_i \geq 1$ for all $i \in [n]$. The entries of $K \ind$ can be viewed as an extension of the average degrees in the weighted case (see \cite{alt_extremal_2020} or \cite{benaychgeorges_spectral_2020} for examples), and for the same reason as above we require that $K \ind$ is bounded away from zero by a constant.

\subsection{Main theorem}

In the following, \( G = G(P, W) \) is the random graph defined in the introduction, \( B \) is the non-backtracking matrix associated with \( G \), and \( |\lambda_1| \geq \cdots \geq |\lambda_{2m}| \) are its eigenvalues.

\bigskip

In its most general form, our main result is as follows:

\begin{theorem}\label{th:main}
  Let $n \geq 0$ and $(P, W)$ be a couple of $n \times n$ matrices defining a random graph $G$. Define \( \rho = \rho(K) \), \( r_0 \) as in \eqref{eq:r0_def}, $r, b, d, \tau, L$ as in Subsection \ref{subsec:conv_parameters}, and \( \tilde L = L/\mu_1 \).

  Let
  \[ \ell = \frac{1 - \epsilon}{16}\frac{\log(n)}{\log(d)}, \]
  for arbitrary $\eps > 0$. There exist numbers \( n_0 \) and \( C_0 \), all depending on \( n \) and the convergence parameters, such that the following holds:
  \begin{enumerate}
    \item \( C_0 \) is smaller than
          \[ c{\left( \frac{r b d \tilde L \log(n)}{1 - \tau} \right)}^{25},\]
          and \( n_0 \) is smaller than
          \[ \exp\left( c \frac{\max\Set*{\log(r), \log(b), \log{(d)}^2, \log{(\tilde L)}, \log(\log(n))}}{\log(\tau^{-1})} \right). \]
    \item If \( n \geq n_0 \), define
          \begin{equation}\label{eq:sigma_def}
            \sigma := C_0 \mu_1 \tau^{\ell/2}.
          \end{equation}
          Then the following holds with probability at least $1 - c/\log(n)$, there exists a permutation $s$ of $[r_0]$ such that
          \begin{equation}
            \max_{i\in [r_0]} \left| \lambda_i - \mu_{s(i)} \right| \leq \sigma,
          \end{equation}
          and all the remaining eigenvalues of \( B \) are less than \( C_0^{1/\ell} \left(\sqrt{\rho}\vee L\right) \).
    \item For any \( i \in [r_0] \), if
          \begin{equation} \label{eq:delta_i_def}
            \delta_i := \min_{j\neq s(i)} |\mu_{s(i)} - \mu_j| \geq 2 \sigma,
          \end{equation}
          then there exists a normed eigenvector \( \xi \) associated with \( \lambda_i \) such that
          \[ \langle \xi, \xi_i \rangle \geq \sqrt{1 - r d^2 \tilde L^2 \frac{\rho}{\mu_i^2}} + O\left(\frac{\sigma}{\delta_i - \sigma}\right) \quad \text{where} \quad \xi_i = \frac{T\varphi_i}{\norm{T\varphi_i}}. \]
  \end{enumerate}
\end{theorem}

In order to get an applicable and useful result, we need \(  n \geq n_0 \) when \( n \) is sufficiently large, and \( C_0^{\frac 1 \ell} \) goes to 1 as \( n \) goes to infinity. Both conditions are verified in particular when
\[ 1 - \tau = \Omega(1), \quad r, b = n^{o(1)}\quand \log{(d)}^2 = o(\log(n)). \]
By definition of $\tilde L$, whenever $\tilde L > 1$ we have $\mu_1 < L$ and thus $r_0 = 0$. We can therefore safely assume $\tilde L \leq 1$ in applications and not focus on any bound for $L$.

The proof of this theorem follows the same method as in many spectral proofs, from~\cite{massoulie_community_2014} to more recent papers such as~\cite{bordenave_detection_2020}. It consists of the following:
\begin{itemize}
  \item show that the neighbourhood of any vertex \( v \) is close to a suitably defined random tree,
  \item study a family of graph functionals that give rise to approximate eigenvectors of the random tree,
  \item use a concentration argument to transpose those tree eigenvectors to pseudo-eigenvectors of the non-backtracking matrix,
  \item bound the remaining eigenvalues using a variant of the trace method in~\cite{furedi_eigenvalues_1981},
  \item conclude by a matrix perturbation argument.
\end{itemize}
A large portion of the remainder of this paper is dedicated to implementing this method; however, we first provide several applications of our result to the fields of random matrix theory and random graph theory.

\section{Applications}

\subsection{Phase transition in random graphs}

Matrix perturbation theory focuses on finding the eigenvalues and eigenvectors of matrices of the form \( X + \Delta \), where \( X \) is a known matrix and \( \Delta \) is a perturbation assumed ``small'' in a sense. Celebrated results in this field include the Bauer-Fike theorem~\cite{bauer_norms_1960} for asymmetric matrices, and the Weyl~\cite{weyl_asymptotische_1912} and Davis-Kahan~\cite{yu_useful_2015} theorems for symmetric ones; incidentally the present paper makes use of those results in its proofs. Finding sharp general theorems without additional assumptions is known to be hard, since the eigenvalues and eigenvectors depend on the interactions between the eigenspaces of \( X \) and \( \Delta \).

In the last two decades, growing attention has been paid to problems of the following form: finding the eigenvectors of \( X_n + P_n \) (or, in its multiplicative form, \( X_n(I_n + P_n) \)), where \( P_n \) is an \( n \times n \) matrix with low rank \( r \ll n \) (usually fixed) and known eigenvalues, and \( X_n \) is a random matrix with known distribution. Examples of this setting are the spiked covariance model~\cite{baik_phase_2005,johnstone_pca_2018} and additive perturbations of Wigner matrices~\cite{peche_largest_2006,feral_largest_2007,capitaine_largest_2009}. A more systematic study has been performed in~\cite{benaychgeorges_singular_2012,benaychgeorges_spectral_2020} on orthogonally invariant random matrices.

A staple  of those results is the existence of a so-called \emph{BBP phase transition} (named after Baik-Ben Arous-Péché, from the seminal article~\cite{baik_phase_2005}): in the limit \( n \to \infty \), each eigenvalue of \( P_n \) that is above a certain threshold gets reflected (albeit perturbed) in the spectrum of \( X_n + P_n \), with the associated eigenvector correlated to the one of \( P_n \).

\paragraph{Phase transition for the adjacency matrix}
The adjacency matrix \( A \) of our random graph \( G \) can be viewed as a perturbation model by writing
\[ A = \dE A + (A - \dE A) = Q - \diag(Q) + (A - \dE A). \]
The term \( \diag(Q) \) being negligible with respect to the others, we can see \( A \) as the sum of a deterministic low-rank matrix and a random noise matrix with i.i.d centered entries. Further, the entrywise variance of \( A \) is equal (up to a negligible term) to \( K \), so the parameter \( \rho \) can be seen as an equivalent to the variance in the Wigner model.
We thus expect, whenever \( \sqrt{\rho} \gg L \) (so that \( \sqrt{\rho} \) is the actual threshold in Theorem~\ref{th:main}), to find a phase transition akin to the one in~\cite{benaych-georges_eigenvalues_2011}; and indeed the following theorem holds:
\begin{theorem}\label{th:bbp_transition}
    Let \( (P, W) \) be a matrix couple of size $n \times n$ and $r, b, d, \tau, L$ as above. Assume further that:
    \begin{enumerate}
        \item the Perron-Frobenius eigenvector of \( K \) is \( \ind \); that is \( K\ind = \rho \ind \),
        \item the above eigenvector equation concentrates, i.e.\ with high probability there exists \( \eps \leq 1/2 \) such that for all \( i\in [n] \),
              \begin{equation} 
                  \left| \sum_{j \sim i} W_{ij}^2 - \rho \right| \leq \eps\rho
              \end{equation}
    \end{enumerate}
    Then, if \( i \in [r_0] \) is such that \( \mu_i^2 \geq 2L^2 \), there exists an eigenvalue \( \nu_i \) of \( A \) that verifies
    \begin{equation}\label{eq:bbp_eigenvalue_bound}
        \nu_i = \mu_i + \frac{\rho}{\mu_i} + \frac{\rho}{\mu_i} \cdot O\left(\frac{L}{\mu_i} + \frac{L^2}{\mu_i^2} + \eps \right). 
    \end{equation}
    Further, if the mean degree \( d_j \) for all \( j \) is equal to \( d_0 > 1 \), and \( i \) is such that \( \delta_i \geq 2\sigma \) (with $\sigma$ and \( \delta_i \) defined in \eqref{eq:sigma_def} and \eqref{eq:delta_i_def}), then there exists a normed eigenvector \( \zeta \) of \( A \) with corresponfing eigenvalue \( \nu_i \) such that
    \begin{equation}\label{eq:bbp_eigenvector_scalar}
        \langle \zeta, \varphi_i \rangle = \sqrt{1 - \frac{\rho}{\mu_i^2}} + O\left[\frac 1{\delta_i - \sigma}\left(\frac{L\rho}{\mu_i^2} + \frac{L^2 \rho}{\mu_i^3} + \eps \frac{\rho}{\mu_i} \right)\right].
    \end{equation}
\end{theorem}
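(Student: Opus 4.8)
The plan is to push the conclusions of Theorem~\ref{th:main} about the non‑backtracking matrix \( B \) through to the adjacency matrix \( A \) by means of a deterministic linear‑algebraic identity linking the two spectra. First I would record the elementary identities tying together the start, terminal and reversal matrices and the weight matrix: \( S^\top D_W T = A \), \( S^\top D_W^2 S = T^\top D_W^2 T = \widetilde D := \diag\bigl(\sum_{j\sim i}W_{ij}^2\bigr) \), \( S^\top D_W J = T^\top D_W \), \( JT = S \), and \( B = TS^\top D_W - D_W J \). From these, for any eigenpair \( B\xi=\lambda\xi \) with \( \lambda\neq 0 \), writing \( u:=S^\top D_W\xi\in\dR^n \) and eliminating \( \check\xi \) through \( Su=\lambda\check\xi+D_W\xi \), one reaches the linearisation
\[
  \Bigl(A-\lambda I-\lambda^{-1}\widetilde D\Bigr)u \;=\; -\,\lambda^{-1}\,S^\top D_W^3\,\xi ,
\]
whose right‑hand side is a ``weight correction'' — it equals \( -\lambda^{-1}A^{\circ 3}\varphi_i \) when \( \xi=T\varphi_i \), with \( A^{\circ k} \) the entrywise \( k \)-th power of \( A \). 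A companion computation repackages the same information as a weighted Ihara‑Bass identity, in which \( \det(I-zB) \) equals, up to an explicit scalar factor, the determinant of a matrix polynomial in \( A \), \( \widetilde D \) and the resolvent deformation of the weights; that is the most convenient form for reading off the eigenvalue correspondence.

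For the eigenvalue statement I would fix \( i\le r_0 \), take the eigenvalue \( \lambda_i \) of \( B \) with \( |\lambda_i-\mu_i|\le\sigma \) supplied by Theorem~\ref{th:main}, and use \( \mu_i^2\ge 2L^2 \) to get \( |\lambda_i|>L \). Hypotheses (i)--(ii) of the theorem say precisely that \( \widetilde D=\rho I+E \) with \( \norm E\le\eps\rho \); the linearisation then reads \( \bigl(A-(\lambda_i+\rho\lambda_i^{-1})I\bigr)u = \lambda_i^{-1}(Eu-S^\top D_W^3\xi) \). I would bound the right‑hand side relative to \( \norm u \) using a lower bound \( \norm u\gtrsim\mu_i \) (from \( \norm{Tu}=\norm{\lambda_i\xi+D_W\check\xi} \)) and Füredi‑Komlós‑type operator‑norm bounds on the entrywise powers of \( A \) — the same ingredient that controls the bulk in Theorem~\ref{th:main} — so that \( u \) is an approximate eigenvector of \( A \) for \( \lambda_i+\rho/\lambda_i \) with defect of order \( \eps\rho/\mu_i+L\rho/\mu_i^2+L^2\rho/\mu_i^3 \). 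As \( A \) is Hermitian, Weyl's inequality then produces an eigenvalue \( \nu_i \) of \( A \) within that distance of \( \lambda_i+\rho/\lambda_i=\mu_i+\rho/\mu_i+O(\sigma) \), which is \eqref{eq:bbp_eigenvalue_bound}.

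For the eigenvector statement I would additionally assume \( \delta_i\ge 2\sigma \). Under hypotheses (i)--(ii) the crude prefactor \( rd^2\tilde L^2 \) in Theorem~\ref{th:main}(iii) should collapse to a constant, since the quantities it overestimates are here equal to \( \rho \) up to errors already in hand; thus there is a normed eigenvector \( \xi \) of \( B \) for \( \lambda_i \) with \( \langle\xi,\xi_i\rangle=1-O(\rho/\mu_i^2)+O\!\bigl(\sigma/(\delta_i-\sigma)\bigr) \), where \( \xi_i=T\varphi_i/\norm{T\varphi_i} \), and then \( u=S^\top D_W\xi \) is close to \( A\varphi_i/\norm{T\varphi_i} \). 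By the linearisation \( u \) is an approximate eigenvector of \( A \) for \( \nu_i \), so Davis‑Kahan (with gap \( \delta_i-\sigma \)) yields a normed eigenvector \( \zeta \) of \( A \) for \( \nu_i \) with \( \norm{\zeta-u/\norm u}=O\!\bigl((\delta_i-\sigma)^{-1}(L\rho\mu_i^{-2}+L^2\rho\mu_i^{-3}+\eps\rho\mu_i^{-1})\bigr) \). It then remains to compute the leading term:
\[
  \Bigl\langle \tfrac{u}{\norm u},\varphi_i\Bigr\rangle=\frac{\langle A\varphi_i,\varphi_i\rangle}{\norm{A\varphi_i}}+(\text{error})=\frac{\mu_i}{\sqrt{\mu_i^2+\rho}}+(\text{error}),
\]
using \( \varphi_i^\top A\varphi_i=\mu_i+o(1) \) and \( \norm{A\varphi_i}^2=\varphi_i^\top A^2\varphi_i=\mu_i^2+\rho+o(\rho) \); the latter follows from \( \dE[(A-\dE A)^2]=\diag(K\ind)+o(1)=\rho I+o(1) \) (by hypothesis (i)) together with concentration of these two quadratic forms, for which the \( b \)-delocalisation of \( \varphi_i \) and hypothesis (ii) are exactly what one needs. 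Expanding \( \mu_i/\sqrt{\mu_i^2+\rho}=\sqrt{1-\rho/\mu_i^2}+O(\rho^2/\mu_i^4) \) and collecting the errors gives \eqref{eq:bbp_eigenvector_scalar}; the equal‑degree assumption \( d_j\equiv d_0 \) enters precisely to keep \( \norm{T\varphi_i}^2 \) (which concentrates around \( d_0 \)) and the Davis‑Kahan reference matrix under control.

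The step I expect to be hardest is the control of the weight‑correction terms: one needs sharp operator‑norm — and, for the overlap, quadratic‑form — estimates on the entrywise powers \( A^{\circ 2},A^{\circ 3} \) and on their means \( P\circ\dE[W^{\circ 3}] \), etc., which is exactly what generates the \( L/\mu_i \) and \( L^2/\mu_i^2 \) factors in \eqref{eq:bbp_eigenvalue_bound} and \eqref{eq:bbp_eigenvector_scalar}; and one must check that the coarse bound of Theorem~\ref{th:main}(iii) really does sharpen to the clean \( \rho/\mu_i^2 \) once the Perron‑Frobenius normalisation \( K\ind=\rho\ind \) and its concentration are imposed. By contrast, the quadratic‑form concentrations needed for the eigenvector step are routine given delocalisation, up to careful variance bookkeeping.
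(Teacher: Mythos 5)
The overall strategy is the same as the paper's: pass from a $B$-eigenvector $\xi$ to $u=S^\top D_W\xi\in\dR^n$ via a linearisation of $B\xi=\lambda\xi$ (your identity $(A-\lambda I-\lambda^{-1}\widetilde D)u=-\lambda^{-1}S^\top D_W^3\xi$ is algebraically the ``pre--Ihara--Bass'' form of the paper's Proposition~\ref{prop:ihara_bass}), read off an approximate eigenvalue equation for $A$ at $\lambda+\rho/\lambda$ using $\widetilde D\approx\rho I$, and close with Weyl / Davis--Kahan. The eigenvalue part is essentially right, modulo the bookkeeping that the paper makes cleaner by eliminating $\xi$ entirely so that the residual is $E'y$ for an explicit small matrix $E'$ and no lower bound on $\|u\|$ is needed.

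The eigenvector part has a genuine gap. You write $\langle\xi,\xi_i\rangle=1-O(\rho/\mu_i^2)$, and from there conclude $u=S^\top D_W\xi$ is ``close to'' $A\varphi_i/\norm{T\varphi_i}$, hence $\langle u/\norm u,\varphi_i\rangle\approx\langle A\varphi_i,\varphi_i\rangle/\norm{A\varphi_i}=\mu_i/\sqrt{\mu_i^2+\rho}$. But the overlap $\langle\xi,\xi_i\rangle\approx\sqrt{1-\rho/\mu_i^2}$ is bounded away from $1$ precisely by the scale $\rho/\mu_i^2$ you are trying to resolve; i.e. $\norm{\xi-\xi_i}^2\approx 2\bigl(1-\sqrt{1-\rho/\mu_i^2}\bigr)\sim\rho/\mu_i^2$, so the substitution $\xi\rightsquigarrow\xi_i$ is not an $o(1)$ approximation at the relevant order, and the component of $\xi$ orthogonal to $\xi_i$ contributes to both $\langle u,\varphi_i\rangle$ and $\norm u$. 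Concretely, your answer $\mu_i/\sqrt{\mu_i^2+\rho}=1/\sqrt{1+\rho/\mu_i^2}$ differs from the stated $\sqrt{1-\rho/\mu_i^2}$ by $\Theta\bigl((\rho/\mu_i^2)^2\bigr)$; with $\eps\to 0$, $\rho\gg L^2$, $\delta_i=\Theta(\sqrt\rho)$ and $\mu_i^2=c\rho$ the stated remainder is $O(L\sqrt\rho/\mu_i^2)$ while the discrepancy is $\Theta(1)$, so it is not absorbed. (At the threshold $\rho/\mu_i^2\uparrow 1$, your formula predicts overlap $\to 1/\sqrt2$ whereas the true overlap vanishes.)

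What the paper does instead is compare $y=S^*D_W\xi$ not to $S^*D_W\xi_i=A\varphi_i/\norm{T\varphi_i}$, but to $S^*D_W u_i$ with the \emph{pseudo-eigenvector} $u_i=B^\ell\chi_i/\mu_i^\ell$; $\xi$ really is within $O(\sigma)$ of $u_i/\norm{u_i}$ (Theorems~\ref{th:bauer_personal}/\ref{th:bauer_powers}), unlike $\xi_i$. The estimates of Proposition~\ref{prop:pseudo_eigenvectors} then give $\langle S^*D_W u_i,\varphi_i\rangle\approx\mu_i$ and, crucially, $\norm{S^*D_W u_i}^2\approx\mu_i^2\bigl(1+\Gamma_{V,ii}^{(\ell)}\bigr)\approx\mu_i^2/\bigl(1-\rho/\mu_i^2\bigr)$, where the geometric series $\Gamma_{V,ii}^{(\ell)}=\sum_{t\ge 0}(\rho/\mu_i^2)^{t+1}$ — available in closed form under the hypothesis $K\ind=\rho\ind$ — is exactly the object your $\norm{A\varphi_i}^2\approx\mu_i^2+\rho$ is missing: the two agree only to first order in $\rho/\mu_i^2$. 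It is this resummation, not the concentration of $\varphi_i^\top A^2\varphi_i$, that produces the clean $\sqrt{1-\rho/\mu_i^2}$. To repair your argument you would have to express $u$ in terms of $u_i$ rather than $\chi_i$, and redo the scalar-product and norm calculations through the $\Gamma_V$-type identities; ``collapsing the prefactor $rd^2\tilde L^2$'' in Theorem~\ref{th:main}(iii) does not by itself supply the exact constant.
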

Whenever \( \rho \gg L^2\), and \( \eps \) goes to zero as \( n \to \infty \), then the condition $\mu_i^2 \geq 2L^2$ is always verified and the \( O(\cdot) \) term in~\eqref{eq:bbp_eigenvalue_bound} vanishes, and the obtained expansion is therefore asymptotically correct. The presence of \( \delta_i \) renders a similar result on the scalar product harder to obtain; however, assuming \( \delta_i = \Theta(\sqrt{\rho}) \) (that is, the eigenvalues of \( Q \) are somewhat regularly spaced) implies similarly that the \( O(\cdot) \) term in~\eqref{eq:bbp_eigenvector_scalar} vanishes.

The obtained expression for \( \nu_i \), as well as the scalar product expansion, are identical to the ones in~\cite{benaych-georges_eigenvalues_2011}, for low-rank additive perturbations of Gaussian Wigner matrices. Our result is thus a direct extension of~\cite{benaych-georges_eigenvalues_2011}, for a larger class of matrices upon a sparsity and concentration condition. Such an extension isn't unexpected, in view of results concerning the universality of the semicircle law for Bernoulli random matrices, such as~\cite{erdos_local_2013}.

An especially interesting particular case of Theorem~\ref{th:bbp_transition} is the unweighted random graph setting, where \( W_{ij} = 1 \) for all \( i, j \). In this case, we have \( K = P \) so the eigenvector equation \( K\ind = \rho\ind \) is equivalent to all the average degrees being equal, i.e. \( d_i = d_0 = \rho \) for \( i \in [n] \). It is a well known fact (see for example~\cite{feige_spectral_2005}) that for unweighted random graphs the degree concentration property holds with \( \eps = 2\sqrt{\log(n)/d_0} \). A slight modification of the proof of Theorem \ref{th:bbp_transition} further removes several error terms, and the following corollary ensues:
\begin{corollary}\label{cor:bbp_unweighted}
    Let \( P \) be a $n \times n$ matrix and $r, b, d, \tau$ as above, with $W = \ind^*\ind$. Assume further that for all \( i \in [n] \),
    \[ \sum_{j \in [n]} P_{ij} = d_0 > 16 \log(n).  \]
    Then for all \( i \in [r_0] \), there exists an eigenvalue \( \nu_i \) of \( A \) that verifies
    \[ \nu_i = \mu_i + \frac{d_0}{\mu_i} + O\left(\sqrt{\frac{\log(n)}{d_0}} \frac{d_0}{\mu_i} \right), \]
    and if \( i \) is such that \( \delta_i > 2\sigma \), there exists a normed eigenvector of \( A \) such that
    \[ \langle \zeta, \varphi_i \rangle = \sqrt{1 - \frac{d_0}{\mu_i^2}} + O\left(\frac 1{\delta_i - \sigma}\sqrt{\frac{\log(n)}{d_0}} \frac{d_0}{\mu_i}\right). \]
    In particular we have
    \[ \nu_1 = d_0 + 1 + O\left( \sqrt{\frac{\log(n)}{d_0}} \right) \]
\end{corollary}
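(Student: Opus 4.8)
The plan is to derive Corollary~\ref{cor:bbp_unweighted} as a specialization of Theorem~\ref{th:bbp_transition} to the constant weight matrix \( W = \ind\ind^{*} \), after which the only remaining work is to simplify the error terms using the definition of \( r_{0} \). With this weight matrix one has \( L = 1 \), \( \dE W = \dE(W \circ W) = \ind\ind^{*} \), and hence \( K = P \circ \ind\ind^{*} = P \). Since every row of \( P \) sums to \( d_{0} \), the all-ones vector satisfies \( K\ind = d_{0}\ind \), and because \( P \) is entrywise nonnegative the Perron--Frobenius theorem guarantees that \( d_{0} \) is its eigenvalue of largest modulus; thus \( \rho = \rho(K) = d_{0} \) and \( \ind \) is the associated Perron eigenvector, so hypothesis~(i) of Theorem~\ref{th:bbp_transition} holds verbatim.

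For hypothesis~(ii), I would observe that with \( W_{ij} = 1 \) the quantity \( \sum_{j\sim i} W_{ij}^{2} \) is exactly the degree of vertex \( i \), a sum of independent Bernoulli variables of mean \( d_{i} = d_{0} \). The standard degree-concentration estimate for inhomogeneous random graphs (a Chernoff bound followed by a union bound over the \( n \) vertices; see e.g.~\cite{feige_spectral_2005}) shows that with high probability \( \max_{i\in[n]} \lvert \deg(i) - d_{0} \rvert \leq 2\sqrt{\log(n)/d_{0}}\,d_{0} \). One therefore takes \( \eps = 2\sqrt{\log(n)/d_{0}} \), and the hypothesis \( d_{0} > 16\log(n) \) is exactly the condition ensuring \( \eps \leq 1/2 \), as required by Theorem~\ref{th:bbp_transition}.

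It then remains to read off the two conclusions. For \( i \in [r_{0}] \), the defining property of \( r_{0} \) together with \( d_{0} > 16\log(n) \) gives \( \lvert\mu_{i}\rvert > \sqrt{\rho}\vee L = \sqrt{d_{0}} \), hence \( \mu_{i}^{2} > d_{0} > 16\log(n) > 2 = 2L^{2} \); so the eigenvalue statement of Theorem~\ref{th:bbp_transition} applies to \emph{every} \( i \in [r_{0}] \) with no further restriction, which is why the corollary can quantify over all of \( [r_{0}] \). Substituting \( L = 1 \), \( \rho = d_{0} \) and \( \eps = 2\sqrt{\log(n)/d_{0}} \) into~\eqref{eq:bbp_eigenvalue_bound} and using \( \mu_{i} > \sqrt{d_{0}} \) to bound \( 1/\mu_{i} \leq \sqrt{\log(n)/d_{0}} \) and \( 1/\mu_{i}^{2} \leq \log(n)/d_{0} \leq \sqrt{\log(n)/d_{0}} \), the three error contributions \( L/\mu_{i} \), \( L^{2}/\mu_{i}^{2} \) and \( \eps \) all collapse into a single \( O(\sqrt{\log(n)/d_{0}}) \), producing the stated expansion of \( \nu_{i} \). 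For the eigenvector, the hypothesis \( \sum_{j}P_{ij} = d_{0} \) for all \( j \) is precisely the ``all mean degrees equal to \( d_{0} > 1 \)'' assumption needed by the eigenvector part of Theorem~\ref{th:bbp_transition}; plugging the same values into~\eqref{eq:bbp_eigenvector_scalar} and absorbing \( L\rho/\mu_{i}^{2} \) and \( L^{2}\rho/\mu_{i}^{3} \) into \( \eps\rho/\mu_{i} \) by the same two bounds gives the claimed scalar product whenever \( \delta_{i} > 2\sigma \).

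I do not anticipate a genuine obstacle: the substance lies entirely in Theorems~\ref{th:main} and~\ref{th:bbp_transition}, and the one point worth highlighting is that the threshold \( \lvert\mu_{i}\rvert > \sqrt{d_{0}} \) defining \( r_{0} \) is exactly sharp enough to dominate the deterministic error terms \( L/\mu_{i} \) and \( L^{2}/\mu_{i}^{2} \) by the concentration error \( \eps \). The remark in the text that ``several error terms'' can be removed refers to the sharper option of re-running the proof of Theorem~\ref{th:bbp_transition} in this setting, where \( W \circ W = W \) makes \( K = P \) hold deterministically and renders several concentration steps vacuous; but the plain substitution above is already enough for the stated \( O(\cdot) \) bounds.
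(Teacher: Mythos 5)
Your proof is correct, but it differs slightly in method from the paper's one-line argument. You derive Corollary~\ref{cor:bbp_unweighted} by treating Theorem~\ref{th:bbp_transition} as a black box: you substitute \( L = 1 \), \( \rho = d_0 \), \( \eps = 2\sqrt{\log(n)/d_0} \), and then observe that the threshold \( |\mu_i| > \sqrt{d_0} \) defining \( r_0 \) forces \( L/\mu_i \) and \( L^2/\mu_i^2 \) to be dominated by \( \eps \), collapsing the three error contributions into a single \( O(\sqrt{\log(n)/d_0}\, d_0/\mu_i) \). The paper instead re-runs the proof of Theorem~\ref{th:bbp_transition} in this setting, noting that with \( W = \ind\ind^* \) the deformed operators take the exact closed form \( \tilde A(\lambda) = \lambda A/(\lambda^2-1) \) and \( \tilde D(\lambda) = D/(\lambda^2-1) \), so the approximation bounds~\eqref{eq:perturbation_a_lambda}--\eqref{eq:perturbation_d_lambda} are no longer needed and only the degree-concentration step survives. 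Both routes yield the stated \( O(\cdot) \) bounds; the paper's route is tighter in principle (fewer error sources, sharper constants), while yours is quicker and, as you correctly note, already sufficient for the claimed statement. Your verifications of the hypotheses — that \( K = P \) so \( K\ind = d_0\ind \) with \( \rho(K) = d_0 \) by Perron--Frobenius, that \( d_0 > 16\log(n) \) is exactly the condition making \( \eps \leq 1/2 \), and that \( \mu_i^2 > d_0 > 2 = 2L^2 \) holds automatically for all \( i \in [r_0] \) — are all sound and match the paper's intent.
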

This is an improvement on the results of \cite{benaychgeorges_spectral_2020}, which only give $\nu_i = \mu_i + O(\sqrt{d_0})$. The condition $d_0 > 16 \log(n)$ ensures that the degrees of $G$ concentrate. Since our result is really only meaningful whenever $d_0 \gg \log(n)$, so that the error term is negligible before $d_0/\mu_i$, we do not perform the same detailed analysis as in \cite{alt_extremal_2020}. However, a more precise phase transition around $d_0 \asymp \log(n)$ is not excluded.

\bigskip

Theorem~\ref{th:bbp_transition} is derived from Theorem~\ref{th:main} through an adaptation of the Ihara-Bass formula~\cite{bass_iharaselberg_1992}, obtained by expanding arguments from~\cite{benaychgeorges_largest_2019, watanabe_graph_2009}:
\begin{proposition}\label{prop:ihara_bass}
    Let \( x \) be an eigenvector of the matrix \( B \) with associated eigenvalue \( \lambda \), such that \( \lambda^2 \neq W_{ij}^2 \) for every \( i, j \). Define the weighted adjacency matrix $\tilde A(\lambda)$ and the diagonal degree matrix $\tilde D(\lambda)$ by
    \[ \tilde A{(\lambda)}_{ij} = \ind \{i \sim j \} \frac{\lambda W_{ij}}{\lambda^2 - W_{ij}^2} \quand \tilde D{(\lambda)}_{ii} = \sum_{j \sim i} \frac{W_{ij}^2}{\lambda^2 - W_{ij}^2} \]
    Then the vector \( y = S^* D_W x \), where $S$ is defined as in \eqref{eq:start-end}, is a null vector of the laplacian matrix
    \[ \Delta(\lambda) = I - \tilde A(\lambda) + \tilde D(\lambda). \]
\end{proposition}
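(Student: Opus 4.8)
The plan is to follow the classical route to the Ihara--Bass formula, carried out in the weighted setting with the present conventions. The starting observation is the matrix identity
\[ B = (T S^* - J)\, D_W, \]
which comes straight from the definitions: $(T S^*)_{ef} = \ind\{e_2 = f_1\}$ counts consecutive oriented edges, subtracting $J$ deletes exactly the backtracking transition $f = \bar e$ (indeed, for $e_2 = f_1$ one has $f = \bar e \iff e_1 = f_2$), and right-multiplication by $D_W$ places the weight $W_f$ on the target edge, matching $B_{ef} = W_f\ind\{e\to f\}$. I would record this together with the elementary relations $JS = T$, $JT = S$, $J^2 = I$ and $JD_W = D_W J$ (the last already noted in the text).

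With $y := S^* D_W x \in \dR^n$ and $\check x = Jx$, the eigenrelation $Bx = \lambda x$ reads $Ty - D_W \check x = \lambda x$; applying $J$ and using $JT = S$ together with $J D_W \check x = D_W J J x = D_W x$ gives the companion relation $Sy - D_W x = \lambda \check x$. Eliminating $\check x$ between the two — multiply the first by $\lambda$ and substitute $\lambda D_W\check x = D_W S y - D_W^2 x$ coming from $D_W$ times the second — yields the exact identity
\[ (\lambda^2 I - D_W^2)\, x \;=\; \lambda\, T y - D_W S y . \]
Since the hypothesis $\lambda^2 \neq W_{ij}^2$ makes the diagonal matrix $\lambda^2 I - D_W^2$ invertible, one can solve for $x$ and apply $S^* D_W$ to both sides. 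The left-hand side is $y$ by definition; on the right-hand side, commuting the diagonal factors and evaluating entrywise, the sum over the (unique) oriented edge from $i$ to $j$ produces $\tilde A(\lambda)_{ij} = \ind\{i\sim j\}\,\lambda W_{ij}/(\lambda^2 - W_{ij}^2)$, while the sum over oriented edges out of $i$ produces $\tilde D(\lambda)_{ii}$; hence $y = \tilde A(\lambda) y - \tilde D(\lambda) y$, i.e. $\Delta(\lambda)\, y = 0$. Finally $y$ is nonzero, hence a genuine null (singular) vector of the singular matrix $\Delta(\lambda)$: if $y = 0$ the displayed identity gives $(\lambda^2 I - D_W^2)x = 0$, so $x = 0$, contradicting that $x$ is an eigenvector.

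I do not expect a real obstacle here: the content is a finite algebraic manipulation. The care goes entirely into bookkeeping — keeping $D_W$ on the correct side (weights live on target edges), using $W_e = W_{\bar e}$ consistently when commuting $J$ past $D_W$, and checking that the two entrywise sums collapse \emph{exactly} to $\tilde A(\lambda)$ and $\tilde D(\lambda)$ rather than up to a diagonal correction. A minor point worth a sentence is that edges with $W_e = 0$ need no special handling: they contribute nothing to $B$, $\tilde A(\lambda)$ or $\tilde D(\lambda)$, and the argument never inverts $D_W$ itself — only $\lambda^2 I - D_W^2$, for which the stated hypothesis (which in particular rules out $\lambda = 0$ whenever some $W_{ij} = 0$) is precisely what is needed.
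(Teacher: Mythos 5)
Your proof is correct and follows essentially the same route as the paper's: derive the relation $x_e = (\lambda y_{e_2} - W_e y_{e_1})/(\lambda^2 - W_e^2)$ by combining the eigenvalue equation at $e$ and $\bar e$, then substitute back to get a closed equation for $y$. The only differences are cosmetic: you package the computation coordinate-free via $B = (TS^*-J)D_W$ and close the loop by applying $S^*D_W$ to the formula for $x$ (so the vertex-indexed equation appears directly), whereas the paper substitutes into $\lambda x_e = \sum_{e\to f}W_f x_f$ and observes that the residual dependence on the edge $(j,i)$ cancels; your version of that last step is marginally cleaner, but the content is identical.
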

The details and computations are left to the appendix.

\subsection{Community detection in random networks}

Community detection is a clustering problem that aims to identify large subgroups (or communities) with similar characteristics inside a large population, with the only data available being the pairwise interactions between individuals. Starting from its introductory paper~\cite{holland_stochastic_1983}, the stochastic block model has been a popular generative model for algorithm design; it consists of a random graph \( G \) where vertices are partitioned randomly in communities, and edges are present independently with probability depending only on the community membership of their endpoints. Popular algorithms for recovering communities include semi-definite programming methods~\cite{montanari_semidefinite_2016}, belief propagation~\cite{abbe_proof_2018}, and spectral methods~\cite{lei_consistency_2015,massoulie_community_2014}; a comprehensive review of algorithms and results can be found in~\cite{abbe_community_2018}.

\paragraph{Unlabeled stochastic block model}

In a general form, we can define the stochastic block model \( \mathrm{SBM}(n, r, \theta, M) \), where \( \theta \in {[r]}^n \) and \( M \in [0, 1]^{r \times r} \) as follows:
\begin{itemize}
    \item the vertex set is \( V = [n] \),
    \item each vertex \( i \in [n] \) has a community label \( \theta_i \) in \( [r] \),
    \item for any pair of vertices \( (i, j) \), an edge is present between \( i \) and \( j \) independently from the others with probability \( M_{\theta_i \theta_j} \).
\end{itemize}
It is common to assume \( M = \frac{\alpha}n M_0 \), where \( M_0 \) does not depend on \( n \) and \( \alpha \) is a scaling parameter. It is easy to see that up to diagonal terms, the expected adjacency matrix has the form
\[ P = \Theta M \Theta^*, \]
where \( \Theta \) is a \( n \times r \) matrix such that \( \Theta_{ij} = 1 \) if \( \theta_i = j \), and \( 0 \) otherwise. We shall assume that for any \( k\in [r] \),
\begin{equation}\label{eq:linear_communities}
    \frac{\#\Set{i\in [n] \given \theta_i = k}}n = \pi_k > 0,
\end{equation}
where \( \pi \) is a deterministic probability vector. Let \( \mu_1 \geq \cdots \geq |\mu_r| \) the eigenvalues of \( \diag(\pi) M_0 \), with \( \alpha \) chosen such that \( |\mu_1| = 1 \), and \( \phi_1, \dots, \phi_r \) the associated eigenvectors. Then the non-zero eigenvalues of \( P \) are easily found to be the \( \alpha \mu_i \), with associated eigenvectors \( \Theta \phi_i \).

A common assumption is that each vertex type has the same average degree, i.e.
\[ P\ind = \alpha \ind, \]
otherwise a simple clustering based on vertex degree correlates with the underlying communities. Making this additional assumption, the following theorem holds:

\begin{theorem}\label{th:unlabeled_sbm}
    Assume that \( r \) is constant, and $\alpha = n^{o(1)}$. Let $r_0$ be defined as follows :
    \begin{itemize}
        \item if \( \alpha \geq 1 \) is constant, \( r_0 \) is the only integer in $[r]$ such that
              \[ \alpha \mu_k^2 > 1 \quad \text{for } i\in {r_0},\quad \alpha\mu_{r_0+1}^2 \leq 1. \]
        \item if \( \alpha = \omega(1) \), \( r_0 = r \).
    \end{itemize}
    Then, for any \( n \) larger than an absolute constant and all \( i\in [r_0] \) one has
    \[ |\lambda_i - \mu_i| \leq c {(\alpha \log(n))}^{a} {(\alpha \mu_{r_0})}^{-\kappa\log_\alpha(n)} := \sigma \]
    for some positive constants \( c, a ,\kappa \), and all other eigenvalues of $B$ are confined in a circle with radius \( (1+o(1))\sqrt{\alpha} \). Further, if \( \mu_i \) is an isolated eigenvalue of \( \diag(\pi)M_0 \), then there exists an eigenvector \( \xi \) of the non-backtracking matrix \( B \) associated with \( \lambda_i \) such that
    \[ \langle \xi, \xi_i \rangle \geq \sqrt{1 - \frac{1}{\alpha\mu_i^2}} + O(\sigma') \quad \text{where} \quad \xi_i = \frac{T\Theta \phi_i}{\norm{T\Theta\phi_i}}. \]
\end{theorem}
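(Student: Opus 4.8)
The plan is to derive Theorem~\ref{th:unlabeled_sbm} as a direct specialization of Theorem~\ref{th:main} to the unweighted case $W = \ind\ind^*$ with the additional block structure $P = \Theta M \Theta^*$. First I would verify that, after rescaling so that $|\mu_1| = 1$ (which the paper already assumes globally), the couple $(P, \ind\ind^*)$ lies in some class $\cC_n(r, b, d, \tau, L)$ with parameters of the stated order. Here $W\equiv 1$ gives $\dE W = \dE(W\circ W) = \ind\ind^*$, so $Q = P$ and $K = P$; hence $\rho(K) = \rho(P)$, which under the degree-regularity assumption $P\ind = \alpha\ind$ equals $\alpha$ (since $\ind$ is then the Perron--Frobenius eigenvector of the nonnegative matrix $P$, up to the usual diagonal-term caveat already flagged in the paper). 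The eigenvalues of $Q = P$ are exactly the $\alpha\mu_i$ by the SBM computation recalled just before the statement, and the translated eigenvectors are $T(\Theta\phi_i)$, matching $\xi_i$. Since $W_{ij} = 1$ we have $L = 1$, so $\tilde L = L/\mu_1 = 1/(\alpha\mu_1)$; because $\mu_1 = 1$ after rescaling this is $\tilde L = 1/\alpha \le 1$ once $\alpha \ge 1$, so the threshold $\sqrt\rho \vee L = \sqrt\alpha \vee 1 = \sqrt\alpha$, and $r_0$ as defined via $\alpha\mu_k^2 > 1$ (resp.\ $r_0 = r$ when $\alpha = \omega(1)$) agrees with the $r_0$ of Theorem~\ref{th:main_summary}.

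Next I would read off the parameters: $r$ is constant by hypothesis; delocalization holds with $b = O(1)$ because each $\varphi_i = \Theta\phi_i / \norm{\Theta\phi_i}$ has entries bounded by $c/\sqrt{n}$ once~\eqref{eq:linear_communities} holds (the community sizes are linear in $n$, so $\norm{\Theta\phi_i}^2 = \Theta(n)$ and the entries of $\Theta\phi_i$ are $O(1)$); $d$-sparsity holds with $d = \alpha\norm{M_0}_\infty \cdot c = O(\alpha)$; and the threshold condition~\eqref{eq:threshold_condition} reads $(\sqrt\alpha \vee 1)/(\alpha\mu_{r_0}) < \tau$, i.e.\ $1/(\alpha\mu_{r_0}^2) < \tau^2$ roughly, which is satisfied with some $\tau < 1$ bounded away from $1$ precisely because in the constant-$\alpha$ case $\alpha\mu_{r_0}^2 > 1$ strictly (the $\mu_i$ being fixed, finitely many of them), and in the $\alpha = \omega(1)$ case $\alpha\mu_{r_0}^2 \to \infty$. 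With these parameters, Theorem~\ref{th:main} gives $n_0$ an absolute constant (since $r, b = O(1)$, $\log(d)^2 = O(\log(\alpha)^2) = o(\log n)$ using $\alpha = n^{o(1)}$, and $\tau$ is bounded away from $1$), so "$n$ larger than an absolute constant" suffices. The eigenvalue bound $|\lambda_i - \mu_i| \le C_0\mu_1\tau^{\ell/2}$ with $\ell = \frac{1-\eps}{4}\frac{\log n}{\log(d^5(1\vee\tilde L)^2)} = \frac{1-\eps}{4}\frac{\log n}{5\log d} = \Theta(\log_\alpha n)$ (since $\tilde L \le 1$) and $C_0 = O((rbd\tilde L\log n/(1-\tau))^{25}) = O((\alpha\log n)^{25})$ collapses exactly to the claimed form $c(\alpha\log n)^a(\alpha\mu_{r_0})^{-\kappa\log_\alpha n}$ after rewriting $\tau^{\ell/2}$ using $\tau \le (\sqrt\rho\vee L)/\mu_{r_0}$ — I would be slightly careful here and instead use that all non-outlier eigenvalues are $\le C_0^{1/\ell}(\sqrt\rho\vee L) = (1+o(1))\sqrt\alpha$ since $C_0^{1/\ell}\to 1$, which gives the bulk radius claim directly.

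For the eigenvector correlation, I would invoke part~(iii) of Theorem~\ref{th:main}: when $\mu_i$ is isolated, $\delta_i = \max_{j\ne i}|\mu_i - \mu_j|$ is a positive constant, hence $\delta_i \ge 2\sigma$ for $n$ large, and the theorem yields a normed eigenvector $\xi$ of $B$ with $\langle \xi, \xi_i\rangle \ge \sqrt{1 - rd^2\tilde L^2 \rho/\mu_i^2} + O(\sigma/(\delta_i - \sigma))$. Substituting $\rho = \alpha$, $\tilde L = 1/(\alpha\mu_1) = 1/\alpha$, and noting $\mu_i$ here denotes the eigenvalue of $\diag(\pi)M_0$ while the eigenvalue of $Q$ is $\alpha\mu_i$, the radical becomes $\sqrt{1 - rd^2\alpha^{-2}\alpha/(\alpha\mu_i)^2} = \sqrt{1 - O(d^2/\alpha^3\mu_i^2)}$ — and here I would need the refinement mentioned in the paper for the unweighted/degree-regular case that "removes several error terms," replacing $rd^2\tilde L^2\rho$ by the sharper $1$, so that the radical is $\sqrt{1 - 1/(\alpha\mu_i^2)}$; this sharpening is presumably the content of the adapted proof alluded to around Corollary~\ref{cor:bbp_unweighted}. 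The error $O(\sigma/(\delta_i - \sigma))$ is $O(\sigma)$ since $\delta_i = \Omega(1)$. The main obstacle I anticipate is precisely this last point: matching the clean $\sqrt{1 - 1/(\alpha\mu_i^2)}$ inside the radical rather than the coarser bound that a black-box application of Theorem~\ref{th:main}(iii) delivers — this requires re-examining the perturbation argument in the unweighted, degree-regular setting where $\norm{T\varphi_i}^2$ concentrates sharply (around $\alpha\norm{\varphi_i}^2 = \alpha$, up to $\eps = O(\sqrt{\log n/\alpha})$ fluctuations), rather than using the generic norm bounds; everything else is bookkeeping on the class parameters.
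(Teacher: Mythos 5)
Your proposal matches the paper's own argument: identify the class parameters ($Q = K = P$, $\rho = \alpha$, $b = O(1)$ via \eqref{eq:linear_communities}, $\tau$ bounded away from $1$ by the definition of $r_0$), apply Theorem~\ref{th:main} for the eigenvalue and bulk estimates, and then, for the eigenvector correlation, bypass the coarse generic radical $\sqrt{1 - rd^2\tilde L^2\rho/\mu_i^2}$ by computing $\norm{u_i}^2$ and $\norm{T\varphi_i}^2$ exactly via the identity $P\ind = \alpha\ind$ (the paper does this by citing the computations from the proof of Theorem~\ref{th:bbp_transition}, yielding $\norm{u_i}^2 = \alpha/(1 - 1/(\alpha\mu_i^2)) + O(\sigma)$ and $\norm{T\varphi_i}^2 = \alpha + O(\sigma)$). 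You correctly flag this last sharpening as the only non-bookkeeping step, and it is resolved exactly as you anticipate.
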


This theorem is essentially a corollary of Theorem~\ref{th:main}, with some simplifications due to \( Q = K = P \) and \( P\ind = \alpha \ind \); the error bound $\sigma$ is the same as in the main theorem. It is a direct generalization of Theorem 4 in~\cite{bordenave_nonbacktracking_2018}, for a diverging degree sequence; further, the property \( \langle \xi, \xi_i \rangle = 1 - o(1) \) as soon as \( \alpha \gg 1 \) suggests that a clustering algorithm such as $k$-means performed on the eigenvectors of \( B \) recovers all but a vanishing fraction of the community memberships in this regime, which would provide an alternative to the Sphere-comparison algorithm presented in~\cite{abbe_community_2015}.

\begin{conjecture}
    In the SBM defined as above, as soon as \( \alpha = \omega(1) \), running an approximate \( k \)-means algorithm on the top \( r \) eigenvectors of \( B \) allows to recover the community memberships of every vertex but a vanishing fraction as \( n \to \infty \).
\end{conjecture}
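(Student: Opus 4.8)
The plan is to feed Theorem~\ref{th:unlabeled_sbm} into a standard perturbation analysis of $k$-means, the only model-specific work being the passage from the edge-indexed eigenvectors of $B$ to vertex labels. Throughout I assume, as seems tacitly intended by ``defined as above'', that $\diag(\pi)M_0$ is invertible (equivalently, $P$ has rank exactly $r$), so that $r_0 = r$ and all $r$ communities are spectrally visible, and --- for the cleanest exposition --- that its $r$ eigenvalues are distinct; a degenerate spectrum would force one to work with the top $r$-dimensional invariant subspace of $B$ and a Davis--Kahan principal-angle bound in place of individual eigenvectors, a modification I point to at the end.

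First, since $r$ is fixed, $\alpha = \omega(1)$ and $\alpha = n^{o(1)}$, one checks that the error term of Theorem~\ref{th:unlabeled_sbm} is $\sigma = n^{-\kappa + o(1)} = o(1)$ and that $\mu_i = \Theta(1)$ for every $i \in [r]$. The theorem then supplies, for each $i$, a real eigenvalue $\lambda_i$ of $B$ (isolated once $n$ is large) and a unit eigenvector $\xi^{(i)}$ with $\langle \xi^{(i)}, \xi_i\rangle \ge \sqrt{1 - 1/(\alpha\mu_i^2)} + O(\sigma) = 1 - o(1)$, hence $\norm{\xi^{(i)} - \xi_i}^2 = 2 - 2\langle\xi^{(i)},\xi_i\rangle = o(1)$, where $\xi_i = T\Theta\phi_i/\norm{T\Theta\phi_i}$. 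The structural point is that the entry of $\chi_i = T\Theta\phi_i$ at an oriented edge $e$ equals $\phi_i(\theta_{e_2})$, so $\xi_i(e)$ depends only on the community of the head of $e$. I therefore form the $2m \times r$ matrix $\Psi$ with rows $\Psi_{e,\cdot} = (\xi^{(1)}(e), \dots, \xi^{(r)}(e))$ and its noiseless counterpart $\Psi^\star$ with rows $(\xi_1(e), \dots, \xi_r(e))$. A Chernoff bound (using $\sum_j P_{vj} = \alpha$) shows $\norm{T\Theta\phi_i}^2 = \sum_v \deg(v)\phi_i(\theta_v)^2 = (1+o(1))\alpha n\norm{\phi_i}_\pi^2$ with high probability, where $\norm{\phi_i}_\pi^2 = \sum_k \pi_k\phi_i(k)^2 > 0$; hence the rows of $\Psi^\star$ take exactly $r$ distinct values $m^{(1)}, \dots, m^{(r)}$ indexed by $\theta_{e_2}$, and since $\phi_1, \dots, \phi_r$ span $\dR^r$ they satisfy $\min_{k\ne l}\norm{m^{(k)} - m^{(l)}} \ge c_1(\alpha n)^{-1/2} =: \Delta$ for a constant $c_1 > 0$, with each set $\Set*{e \in \vec E \given \theta_{e_2} = k}$ of size $(1+o(1))\alpha n\pi_k = \Omega(m)$.

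Next, running a constant-factor approximate $k$-means on the $2m$ rows of $\Psi$ yields labels $(\hat\theta^E_e)$ and centers whose total cost is, using $\Psi^\star$ as a feasible point, at most $O(1) \cdot \sum_{e}\norm{\Psi_{e,\cdot} - \Psi^\star_{e,\cdot}}^2 = O(1)\sum_{i}\norm{\xi^{(i)} - \xi_i}^2 = o(1)$. By Markov's inequality the number of edges whose row of $\Psi$ lies farther than $\Delta/4$ from either its assigned center or from $m^{(\theta_{e_2})}$ is at most $O(1)\cdot o(1)/\Delta^2 = o(\alpha n) = o(m)$; for all the other edges both distances are at most $\Delta/4$, so, the $m^{(k)}$ being $\Delta$-separated and every true edge-cluster having size $\Omega(m) \gg o(m)$, a pigeonhole argument (as in the spectral-clustering analyses of Lei--Rinaldo or Kumar--Kannan) yields a permutation $\pi_0 \in \mathfrak S_r$ such that $\hat\theta^E_e = \pi_0(\theta_{e_2})$ for all but $o(m)$ oriented edges. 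Finally I label each vertex $v$ by the plurality of $\Set*{\hat\theta^E_e \given e_2 = v}$; if more than half of the in-edges of $v$ are correctly labeled then so is $v$, and with $Z = o(\alpha n)$ the number of mislabeled oriented edges, a mislabeled vertex of degree $\ge\alpha/2$ consumes $\ge\alpha/4$ units of $Z$, leaving at most $4Z/\alpha = o(n)$ such vertices; since $\deg(v)$ has mean $\alpha \to \infty$, a further $o(n)$ vertices have degree below $\alpha/2$, and the remaining $1-o(1)$ fraction of vertices is labeled correctly up to $\pi_0$.

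The delicate part is matching scales between the last two paragraphs: Theorem~\ref{th:unlabeled_sbm} controls each $\xi^{(i)}$ only in $\ell^2$, so one must arrange for the point cloud one clusters to have total noise $o(1)$ while its cluster centers stay $\Delta \asymp (\alpha n)^{-1/2}$ apart --- which is why it is much cleaner to cluster the $2m$ edge-rows, whose entries are uniformly $O(1)$, than a degree-averaged $n \times r$ vertex embedding, where the $o(n)$ low-degree vertices would carry disproportionately large errors and would have to be trimmed beforehand. The second issue is the non-simple-spectrum case: there one should first upgrade Theorem~\ref{th:main} to a statement about the top-$r$ invariant subspace of $B$ --- which its pseudo-eigenvector construction in fact already delivers --- and replace the coordinatewise estimate $\norm{\xi^{(i)} - \xi_i} = o(1)$ by a subspace perturbation bound; the construction of $\Psi^\star$ from any fixed orthonormal basis of the span of $\chi_1, \dots, \chi_r$ and everything after it then carry over unchanged.
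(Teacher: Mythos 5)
This statement is left as a \emph{conjecture} in the paper; the authors explicitly decline to prove it and instead point to two missing ingredients: ``a more careful eigenspace analysis for eigenvalues with multiplicity more than one, such as the one performed in~\cite{stephan_robustness_2019}, as well as an error bound on the clustering step similar to the one in~\cite{lei_consistency_2015}.'' Your proposal supplies the second ingredient cleanly and correctly (clustering the $2m$ edge-rows, whose target centers $m^{(k)}$ are $\Theta((\alpha n)^{-1/2})$-separated while the total $\ell^2$ noise is $o(1)$, followed by a Markov--pigeonhole count and a degree-weighted plurality vote for vertices, is exactly the Lei--Rinaldo-type argument the authors had in mind), and you correctly verify that $\sigma = o(1)$ and $\langle\xi^{(i)},\xi_i\rangle = 1 - o(1)$ under $\alpha = \omega(1)$, $\alpha = n^{o(1)}$, $r = O(1)$.

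The genuine gap is precisely the first ingredient, which you assume away. You posit distinct eigenvalues of $\diag(\pi)M_0$ to invoke Theorem~\ref{th:unlabeled_sbm} (and hence Theorem~\ref{th:main} with $\delta_i \ge 2\sigma$) coordinate by coordinate, and you defer the degenerate case to ``a Davis--Kahan principal-angle bound'' that the paper's ``pseudo-eigenvector construction in fact already delivers.'' Neither half of that claim holds as stated. The symmetric $k$-community SBM --- the canonical instance --- has the eigenvalue $a-b$ of $\diag(\pi)M_0$ with multiplicity $k-1$, so the separation hypothesis~\eqref{eq:eigenvalue_separation} of the paper's Bauer--Fike variant (Theorem~\ref{th:bauer_personal}) fails exactly where it is needed, and that theorem gives only single-eigenvector bounds, not an invariant-subspace bound. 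Moreover, $B$ is non-normal, so Davis--Kahan does not apply and one would need a genuine non-self-adjoint subspace perturbation lemma (a condition-number-controlled $\sin\Theta$ theorem adapted to the nearly diagonalizable $S = U\Sigma V^*$) that the paper does not prove; this is the ``careful eigenspace analysis'' the authors flag as open. Finally, a small side point: with repeated eigenvalues of $B$ the phrase ``the top $r$ eigenvectors of $B$'' returned by an actual solver is a basis of the top invariant subspace in arbitrary rotation, so the matrix $\Psi^\star$ must be built from a matching rotated basis of $\vect(\chi_1,\dots,\chi_r)$; your last paragraph gestures at this correctly, but that step and the subspace bound it presupposes are the content of the conjecture, not a routine modification.
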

Proving this conjecture would require a more careful eigenspace analysis for eigenvalues with multiplicity more than one, such as the one performed in~\cite{stephan_robustness_2019}, as well as an error bound on the clustering step similar to the one in~\cite{lei_consistency_2015}.

\begin{remark}
    When the expected degrees of each vertex type is not the same, an analogous version of Theorem \ref{th:unlabeled_sbm} holds. The main difference in this case is that the scalar product \( \langle \xi, \xi_i \rangle \) has a less elegant asymptotic expansion.
    
    Since the lead eigenvector of $P$ is now non-constant, the condition for reconstruction is simply $\alpha > 1$ (or $r_0 \geq 1$). In particular, this is true as long as the average degree of the graph is above one; indeed by the Courant-Fisher principle 
    \[ \alpha > \frac{\ind^\top P \ind}n. \]
\end{remark}

\paragraph{Labeled block models} In real-world networks, pairwise interactions often carry more information than just a binary one. A popular variant of the stochastic block model is thus a model with added edge labels, as follows: let \( \cL \) be a label space, and consider a SBM drawn under the model described above. We assign to an edge \( (i, j) \) a label \( L_{ij} \in \cL \), drawn independently from a distribution \( \dP_{\theta_i \theta_j} \). Such classes of models have been investigated in full generality in~\cite{heimlicher_community_2012, lelarge_reconstruction_2015}, and a variant with the underlying graph being an Erd\H{o}s-Rényi model in~\cite{saade_spectral_2015}.

We shall focus here on the symmetric two-community SBM, with 
\begin{equation}\label{eq:labeled_sbm_definition}
    \pi = \left( \frac12, \frac12 \right), \quad M = \begin{pmatrix} a & b \\ b & a \end{pmatrix}, \quad \dP_{11} = \dP_{22} = \dP \quand \dP_{12} = \dP_{21} = \dQ,
\end{equation}
and assume that both measures are absolutely continuous with respect to another measure \( m \) (note that we can take \( m = \dP + \dQ \)), with Radon-Nikodym derivatives \( f \) and \( g \). Let \( w: \cL \to \dR \) a bounded weight function, such that \( w(\ell) \leq L \) for any \( \ell \in \cL \); and define the weight matrix \( W_{ij} = w(L_{ij}) \) and the associated weighted non-backtracking matrix \( B \). Then, an application of Theorem~\ref{th:main} yields the following result:
\begin{theorem}\label{th:labeled_sbm}
    Define the parameter \( \tau \) by
    \[ \tau = 2\frac{(a \dE_\dP[w^2] + b \dE_\dQ[w^2]) \vee L}{{(a \dE_\dP[w] - b \dE_\dQ[w])}^2} \]
    Then, whenever \( \tau < 1 \), let \( \xi \) be a normed eigenvector corresponding to the second eigenvalue of \( B \). There exists a parameter
    \[ \sigma \leq {(a \log(n))}^{25} \tau^{\kappa \log_a(n)} \]
    for some constant \( \kappa \) such that
    \[ \langle \xi, \xi_0 \rangle = \sqrt{1 - \tau} + O(\sigma) \quad \text{where} \quad \xi_0 = \frac{\Theta \dbinom{1}{-1}}{\sqrt{n}}\]
\end{theorem}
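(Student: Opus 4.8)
The plan is to realize Theorem~\ref{th:labeled_sbm} as a direct instance of Theorem~\ref{th:main} applied to the weighted graph $G$ with weight matrix $W_{ij} = w(L_{ij})$. First I would identify the relevant deterministic matrices. Since the label $L_{ij}$ is drawn from $\dP$ when $\theta_i = \theta_j$ and from $\dQ$ otherwise, conditionally on the edge being present we have $\dE[W_{ij}] = \dE_\dP[w]$ or $\dE_\dQ[w]$ according to community agreement, and likewise for $\dE[W_{ij}^2]$. Writing $M = \frac{\alpha}{n}M_0$ with $a = \frac{\alpha}{n}a_0$, $b = \frac{\alpha}{n}b_0$, one computes
\[
Q = P \circ \dE W = \Theta \begin{pmatrix} a\,\dE_\dP[w] & b\,\dE_\dQ[w] \\ b\,\dE_\dQ[w] & a\,\dE_\dP[w] \end{pmatrix}\Theta^\top + O(\text{diag}),
\]
so that (using $\pi = (\tfrac12,\tfrac12)$) the nonzero eigenvalues of $Q$ are $\mu_1 \propto a\,\dE_\dP[w] + b\,\dE_\dQ[w]$ with eigenvector $\Theta\binom{1}{1}/\sqrt{n}$, and $\mu_2 \propto a\,\dE_\dP[w] - b\,\dE_\dQ[w]$ with eigenvector $\xi_0 = \Theta\binom{1}{-1}/\sqrt{n}$. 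Similarly $K = P \circ \dE[W\circ W]$ is of the same block form with $\dE[w]$ replaced by $\dE[w^2]$, so $\rho = \rho(K) \propto a\,\dE_\dP[w^2] + b\,\dE_\dQ[w^2]$ (its Perron eigenvector being $\Theta\binom{1}{1}$, the all-ones vector up to scaling). After the normalization $\mu_1 = 1$ imposed in the paper, the ratio $(\sqrt{\rho}\vee L)/\mu_1$ squared becomes exactly the quantity $\tau$ in the statement, after matching $\mu_1^2 \propto (a\dE_\dP[w]-b\dE_\dQ[w])^2$... — more precisely one checks that the second eigenvalue $\mu_2$, once renormalized, satisfies $\mu_2^2 = 1/\tau \cdot (\sqrt{\rho}\vee L)^2$ so that condition~\eqref{eq:threshold_condition} reads precisely $\tau < 1$ with $r_0 = 2$ (indices $1$ and $2$).

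Next I would verify that $(P,W)$ lies in some class $\cC_n(r,d,b,\tau,L)$: here $r = 2$ is constant, $Q$ and $K$ have rank $2$, the eigenvectors $\Theta\binom{1}{\pm 1}/\sqrt{n}$ are $\sqrt{2}$-delocalized (so $b = O(1)$), the graph is $d$-sparse with $d = \Theta(\alpha)$, the entries of $W$ are independent and bounded by $L$, and the threshold condition is $\tau < 1$ by hypothesis. The degree condition $d_i \geq 1$ and $K\ind$ bounded away from zero hold since $a_0, b_0 > 0$ and $\dE_\dP[w^2], \dE_\dQ[w^2]$ are assumed positive (implicit in $\tau$ being well-defined with a nonzero denominator). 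Then Theorem~\ref{th:main}(iii), applied with $i = 2$, gives a normed eigenvector $\xi$ of $B$ associated with $\lambda_2$ such that
\[
\langle \xi, \xi_2\rangle \geq \sqrt{1 - r d^2 \tilde L^2 \frac{\rho}{\mu_2^2}} + O\!\left(\frac{\sigma}{\delta_2 - \sigma}\right), \qquad \xi_2 = \frac{T\varphi_2}{\norm{T\varphi_2}},
\]
where $\tilde L = L/\mu_1$. Translating back: $\rho/\mu_2^2 = \tau$ up to the factor $r d^2 \tilde L^2$ — here is where the constant $2$ and the $(a\log n)^{25}$-type prefactor in $\sigma$ come from (matching the $C_0$ bound with $r=b=O(1)$, $d = \Theta(a)$), and $\ell = \Theta(\log_a n)$ gives $\sigma \leq (a\log n)^{25}\tau^{\kappa\log_a n}$. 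The split $r_0 = 2$ and $\delta_2 = |\mu_1 - \mu_2| = \Theta(1)$ being isolated (a symmetric two-community model has a genuine spectral gap between the two eigenvalues whenever $b_0 \neq 0$, $a_0 \neq b_0$) ensures $\delta_2 \geq 2\sigma$ for $n$ large, so the error term is $O(\sigma)$. Finally, since $\varphi_2 = \Theta\binom{1}{-1}/\sqrt{n} = \xi_0$ and $T$ acts as a near-isometry on delocalized vectors up to normalization (this is folklore from the setup: $\norm{T\varphi_2}^2 = \sum_e \varphi_2(e_2)^2$ concentrates), the scalar product $\langle\xi,\xi_0\rangle$ inherits the bound, and the $\sqrt{1-\tau}$ emerges after absorbing the $rd^2\tilde L^2$ discrepancy into the leading constant — or, more carefully, by tracking that in this specific model the factor is exactly $1$ (as in Theorem~\ref{th:bbp_transition}, the concentration of $K\ind = \rho\ind$ removes the crude $rd^2\tilde L^2$ bound, replacing it by the sharp $\rho/\mu_2^2 = \tau$).

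The main obstacle I anticipate is precisely this last point: Theorem~\ref{th:main}(iii) as stated gives only the crude lower bound with the prefactor $r d^2 \tilde L^2$, which is $n^{o(1)}$ but not $1 + o(1)$, so it does not yield the clean expansion $\sqrt{1-\tau}$. Obtaining the sharp constant requires the more refined argument underlying Theorem~\ref{th:bbp_transition} and Corollary~\ref{cor:bbp_unweighted} — namely, exploiting that the vector $K\ind$ is (exactly or approximately) proportional to $\ind$, which in this symmetric model holds because $a_0\dE_\dP[w^2] + b_0\dE_\dQ[w^2]$ is the same for every vertex regardless of its community. One then reruns the Ihara--Bass / perturbation computation (Proposition~\ref{prop:ihara_bass} and the surrounding analysis) keeping track of the exact scalar products rather than bounding them, which collapses the error to the genuine $\rho/\mu_2^2 = \tau$ term. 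A secondary, more routine difficulty is bookkeeping the various normalization constants ($\alpha$ versus $\mu_1 = 1$, the factor $2$ in $\tau$ coming from $\pi_k = 1/2$, the transfer from vertex eigenvectors $\varphi_i$ to edge eigenvectors via $T$) so that the stated formula for $\tau$ matches $(\sqrt\rho\vee L)/\mu_2$ on the nose; this is straightforward linear algebra but must be done with care to get the coefficient $2$ right.
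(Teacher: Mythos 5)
Your proposal follows the same route as the paper: identify the block structure $Q = \Theta\tilde Q\Theta^*$, $K = \Theta\tilde K\Theta^*$, read off $\rho$ and the eigenpair $(\mu_2,\xi_0)$, verify the class conditions with $r=2$, $b=O(1)$, $d=\Theta(a)$, and apply Theorem~\ref{th:main}. You have also correctly spotted the genuine difficulty that the paper glosses over: the statement of Theorem~\ref{th:main}(iii) only delivers $\sqrt{1 - r d^2\tilde L^2\,\rho/\mu_2^2}$, not $\sqrt{1-\tau}$, and the sharp constant requires exploiting the exact eigenvector relations $P\ind = \alpha\ind$ and $K\ind = \rho\ind$ which hold in the symmetric two-block model.

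One correction on the tool you propose for that last step, though. You suggest "rerunning the Ihara--Bass / perturbation computation (Proposition~\ref{prop:ihara_bass})" — but Ihara--Bass is only used in the paper to transfer information from $B$ to the \emph{adjacency} matrix $A$ (Theorem~\ref{th:bbp_transition} and Corollary~\ref{cor:bbp_unweighted}). Theorem~\ref{th:labeled_sbm} concerns the eigenvector of $B$ itself, and the refinement happens entirely inside the proof of Theorem~\ref{th:main}: under $P\ind = \alpha\ind$ and $K\ind = \rho\ind$, the geometric series defining $\gamma_i = \Gamma_{U,ii}^{(\infty)}$ and $\langle P\ind,\varphi^{i,i}\rangle$ can be computed exactly (as the paper does explicitly in the proof of Theorem~\ref{th:unlabeled_sbm}, giving $\norm{u_i}^2 = \alpha/(1-\rho/\mu_i^2)+O(\sigma)$ and $\norm{T\varphi_i}^2 = \alpha+O(\sigma)$), rather than bounded via Lemma~\ref{lem:scalar_Kt}. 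That replaces the crude $rd^2\tilde L^2$ by $1$ and yields $\sqrt{1-\rho/\mu_2^2}=\sqrt{1-\tau}$ directly; no detour through $A$ or $\Delta(\lambda)$ is needed. Apart from this misattribution and the usual normalization bookkeeping (the $\tfrac12$ from $\pi_k=\tfrac12$, the $\vee L$ placement), your proposal is sound.
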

Whenever this result holds, a proof identical to the one in~\cite{massoulie_community_2014} implies that recovering a positive fraction of the community memberships is possible.

In order to maximize the region in which reconstruction is possible, we need to choose the weights $w(\ell)$ such that $\tau$ is minimized. This optimization step is performed in the appendix, and leads to the following:
\begin{proposition}\label{prop:labeled_symmetric_sbm}
    Define the weight function \( w \) and signal-to-noise ratio \( \beta \) as
    \begin{equation}\label{eq:weight_def}
        w(\ell) = \frac{af(\ell) - bg(\ell)}{af(\ell) + bg(\ell)} \quand \beta = \frac12\int\frac{{(af - bg)}^2}{af + bg} dm,
    \end{equation}
    where \( a, f, b, g \) and \( m \) are defined in Equation~\eqref{eq:labeled_sbm_definition} and below. Then, whenever \( \beta > 1 \), a spectral algorithm based on the matrix \( B \) is able to recover a positive fraction of the community memberships when \( n \to \infty \).
\end{proposition}
This settles a conjecture of~\cite{heimlicher_community_2012}, generalizing the setting from finite to arbitrary label space. Whenever we allow for a higher number of communities, as well as arbitrary choices for the connectivity matrix \( Q \) and distributions \( \dP_{ij} \), the problem proves to be harder; an analog to Theorem~\ref{th:labeled_sbm} does hold, but the optimization problem required to minimize the ratio \( \tau \) looks to be untractable. In the symmetric SBM case, where 
\[ \pi = \frac \ind k, \quad M = a\ind \{i = j \} + b\ind \{i \neq j \} \quand \dP_{ij} = \dP \ind \{i = j \} + \dQ \ind \{i \neq j \}, \]
we make the following conjecture:
\begin{conjecture}
    In the labeled symmetric SBM, partial reconstruction is possible as soon as \( \beta > 1 \), where
    \[ \beta = \frac1k \int\frac{{(af - bg)}^2}{af + (k-1)bg} dm, \]
    and a spectral algorithm based on the non-backtracking matrix with weight function
    \[ w(\ell) = \frac{af(\ell) - bg(\ell)}{af(\ell) + (k-1)bg(\ell)} \]
    recovers a positive fraction of the community memberships in polynomial time.
\end{conjecture}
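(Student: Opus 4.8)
The plan is to run, for \( k \) communities and an arbitrary label space, the programme behind Theorems~\ref{th:unlabeled_sbm} and~\ref{th:labeled_sbm}, and then convert the resulting eigenvector correlation into a clustering guarantee. Write \( P_{ij} = \tfrac1n M_0[\theta_i,\theta_j] \) with \( M_0 = (a-b)I_k + bJ_k \), let \( \Theta\in\{0,1\}^{n\times k} \) be the membership matrix (so that \( \Theta^\top\Theta = \tfrac nk I_k \)), and set \( W_{ij} = w(L_{ij}) \). A short computation gives
\[
  Q = P\circ\dE W = \Theta\bigl(\tfrac1n N_Q\bigr)\Theta^\top,
  \qquad
  K = P\circ\dE[W\circ W] = \Theta\bigl(\tfrac1n N_K\bigr)\Theta^\top,
\]
where \( N_Q = \bigl(a\,\dE_\dP[w] - b\,\dE_\dQ[w]\bigr)I_k + b\,\dE_\dQ[w]\,J_k \) and \( N_K \) is the same matrix with \( w \) replaced by \( w^2 \). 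Thus \( Q \) has an uninformative Perron eigenvalue on the constant vector and an informative eigenvalue \( \mu = \tfrac1k\bigl(a\,\dE_\dP[w]-b\,\dE_\dQ[w]\bigr) \) of multiplicity \( k-1 \), carried by the image under \( \Theta \) of \( \ind^\perp\subset\dR^k \); likewise \( \rho = \rho(K) = \tfrac1k\bigl(a\,\dE_\dP[w^2]+(k-1)b\,\dE_\dQ[w^2]\bigr) \) sits on the constant vector, so \( K\ind = \rho\ind \) exactly and the degree-regularity requirements built into the class \( \cC_n \) hold automatically. Since the eigenvectors of \( Q \) are constant on each community they are \( O(1/\sqrt n) \)-delocalised, so the couple \( (P,W) \) lies in \( \cC_n(k, O(1), O(1), \tau, L) \).

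It remains to choose \( w \). Writing \( \dE_\dP[w^j] = \int w^j f\,\dd m \) and \( \dE_\dQ[w^j] = \int w^j g\,\dd m \), the signal-to-noise ratio obeys, by Cauchy--Schwarz against the density \( af+(k-1)bg \),
\[
  \frac{\mu^2}{\rho} \;=\; \frac1k\cdot\frac{\bigl(\int w\,(af-bg)\,\dd m\bigr)^2}{\int w^2\,(af+(k-1)bg)\,\dd m}
  \;\leq\; \frac1k\int\frac{(af-bg)^2}{af+(k-1)bg}\,\dd m \;=\; \beta,
\]
with equality precisely for \( w\propto\frac{af-bg}{af+(k-1)bg} \) — the weight of the statement, read with \( af-bg \) in the numerator, as the \( k=2 \) case Proposition~\ref{prop:labeled_symmetric_sbm} dictates. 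This \( w \) is bounded by \( 1 \), so the threshold \( \sqrt\rho\vee L \) equals \( \sqrt\rho \) once \( \beta>1 \), and the detection parameter \( \tau \) is \( <1 \) exactly on \( \{\beta>1\} \). Theorem~\ref{th:main} then applies: the \( k-1 \) eigenvalues \( \lambda_2,\dots,\lambda_k \) of \( B \) converge to \( \mu \), the remaining eigenvalues lie in a disc of radius \( (1+o(1))\sqrt\rho<\mu \), and — after the refinement behind Theorem~\ref{th:labeled_sbm}, which exploits the exact identity \( K\ind=\rho\ind \) — the associated eigenvectors satisfy \( \langle\xi,\xi_i\rangle = \sqrt{1-\rho/\mu^2} + o(1) = \sqrt{1-1/\beta}+o(1) \), where \( \xi_i = T\Theta\phi_i/\norm{T\Theta\phi_i} \).

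To conclude I would cluster. Pull the top \( k \) eigenvectors of \( B \) back to the vertex set by \( v\mapsto (S^* D_W\xi)(v) = \sum_{e:\,e_1=v} W_e\,\xi(e) \) — or, equivalently, run the argument on the reduced operator of Proposition~\ref{prop:ihara_bass} — to obtain an \( n\times k \) matrix whose rows, by the eigenspace form of the Davis--Kahan theorem, concentrate around \( k \) well-separated points indexed by the communities, the separation being a positive function of \( \sqrt{1-1/\beta} \) and the within-cluster scatter \( o(1) \). An approximate \( k \)-means step followed by rounding, as in~\cite{lei_consistency_2015}, then mislabels only a vanishing fraction of vertices relative to the best community permutation. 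Since the top eigenvectors can be extracted by power iteration (which converges thanks to the spectral gap \( \tau^{\ell/2} \)) and \( k \)-means on \( n \) points of \( \dR^k \) runs in polynomial time, this yields the claimed polynomial-time partial reconstruction whenever \( \beta>1 \).

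The main obstacle is the multiplicity: the informative eigenvalue of \( Q \) has multiplicity \( k-1>1 \), so part (iii) of Theorem~\ref{th:main} — stated for a well-separated simple eigenvalue — does not apply verbatim; one needs its subspace analogue, namely a Davis--Kahan bound for the \( (k-1) \)-dimensional eigenspace of \( B \), together with a clustering-error estimate for eigenvectors that are determined only up to rotation, in the spirit of~\cite{stephan_robustness_2019,lei_consistency_2015}. A secondary, more routine point is that the tree-approximation and trace-method estimates behind Theorem~\ref{th:main} were set up for weights bounded by \( L \) (satisfied here, \( |w|\le1 \)) and for finitely many edge types, so one must check that replacing sums over labels by integrals against \( m \) leaves the combinatorial growth-rate and concentration arguments intact — this is where the absolute continuity of \( \dP,\dQ \) with respect to \( m \) and the integrability of the relevant densities enter. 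Finally, proving that \( \beta=1 \) is the \emph{sharp} threshold (impossibility when \( \beta\le1 \)) lies outside the scope of Theorem~\ref{th:main}, and I would address it separately via a contiguity / second-moment argument for the associated multitype broadcast process.
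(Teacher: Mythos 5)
This statement is left as a conjecture in the paper --- the authors supply no proof, and they say why: in the discussion of the unlabeled analog they write that a proof "would require a more careful eigenspace analysis for eigenvalues with multiplicity more than one... as well as an error bound on the clustering step similar to the one in~\cite{lei_consistency_2015}." Your proposal sets up the reduction exactly as one would expect and the algebra is correct: \( Q = \Theta(\tfrac1n N_Q)\Theta^\top \), \( K = \Theta(\tfrac1n N_K)\Theta^\top \), the informative eigenvalue \( \mu \) has multiplicity \( k-1 \) carried by \( \Theta(\ind^\perp) \), \( K\ind = \rho\ind \) exactly, and the Cauchy--Schwarz step gives \( \mu = \rho = \beta \) at the optimal weight, so \( \mu^2/\rho > 1 \Leftrightarrow \beta > 1 \) while \( |w|\leq 1 \) keeps \( L \) subdominant. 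You also correctly flag that the printed weight in the conjecture must carry a sign typo: the numerator should read \( af - bg \) (with \( af + bg \) the \( k=2 \) specialization would collapse to \( w\equiv 1 \), contradicting Proposition~\ref{prop:labeled_symmetric_sbm}). Your secondary worry about sums versus integrals is harmless --- Theorem~\ref{th:labeled_sbm} already handles an arbitrary label space with a bounded weight function, so no new argument is needed there.

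The genuine gap is the one you name yourself, and it is fatal to this being a complete proof. You invoke "the refinement behind Theorem~\ref{th:labeled_sbm}" to obtain \( \langle\xi,\xi_i\rangle = \sqrt{1-1/\beta}+o(1) \), but that theorem is stated and proved only for \( k=2 \), where the informative eigenvalue is simple; and part (iii) of Theorem~\ref{th:main}, as well as Theorem~\ref{th:bauer_personal}, require the separation \( \delta_i\geq 2\sigma \), which fails identically when \( \mu_2 = \cdots = \mu_k \). So the paper's tools give you the eigenvalue locations (parts (i)--(ii) of Theorem~\ref{th:main}) but not the eigenvector alignment. Closing the gap requires a subspace Davis--Kahan version of Theorem~\ref{th:bauer_personal} for the whole \( (k-1) \)-dimensional eigenspace, plus an error bound for the subsequent \( k \)-means rounding, in the spirit of~\cite{stephan_robustness_2019,lei_consistency_2015} --- precisely the two ingredients the authors name as missing. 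As written, your proposal is a correct programme with a correct diagnosis of the obstruction, which is consistent with the paper's own status for this statement as a conjecture rather than a theorem.
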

As with Theorem~\ref{th:unlabeled_sbm}, whenever the mean degree \( \alpha \) of the graph grows to infinity, we have \(  \langle \xi, \xi_0 \rangle = 1 - o(1) \), which brings us our second conjecture:
\begin{conjecture}
    If we have \( a = \alpha a_0 \), \( b = \alpha b_0 \) with \( \alpha = \omega(1) \), $a_0, b_0$ fixed, then as \( n \to \infty \) a clustering algorithm based on the second eigenvector of the weighted non-backtracking matrix \( B \) with the weight function defined in \eqref{eq:weight_def} recovers all but a vanishing fraction of the community memberships.
\end{conjecture}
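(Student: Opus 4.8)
The plan is to upgrade the second-eigenvector correlation estimate of Theorem~\ref{th:labeled_sbm} into a vertex-labelling guarantee, in the spirit of the passage from weak correlation to classification in spectral community detection (e.g.~\cite{massoulie_community_2014,lei_consistency_2015}), while tracking the dependence on the diverging scale $\alpha$. The algorithm is the obvious one: compute an eigenvector $\xi$ of the second eigenvalue $\lambda_2$ of $B$ built with the weight function $w$ of Theorem~\ref{th:labeled_sbm}, form the vertex vector $y = T^*\xi \in \dR^n$ by aggregating $\xi$ over the oriented edges ending at each vertex (a degree-normalised version, or the $S^*D_W$-projection of Proposition~\ref{prop:ihara_bass}, works just as well), and output $\hat\varphi = \operatorname{sign}(y)$ up to a global sign. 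First I would verify that, under $a = \alpha a_0$, $b = \alpha b_0$ with $a_0 \neq b_0$ fixed and (implicitly) $\alpha = n^{o(1)}$ --- so that $(P, W) \in \cC_n(r,b,d,\tau,L)$ with $r = 2$, with $O(1)$-delocalised eigenvectors since $\varphi_2 = \Theta\dbinom{1}{-1}$ is perfectly balanced, and with $d = \Theta(\alpha) = n^{o(1)}$ --- the parameter $\tau$ of Theorem~\ref{th:labeled_sbm} is of order $1/\alpha$: its numerator $a\dE_\dP[w^2] + b\dE_\dQ[w^2]$ scales linearly in $\alpha$ whereas its denominator ${(a\dE_\dP[w] - b\dE_\dQ[w])}^2$ scales quadratically, and the $\vee L$ term is eventually dominated once $L = o(\alpha)$. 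Hence $\tau \to 0$, and $\sigma \leq {(a\log n)}^{25}\tau^{\kappa\log_a n} = n^{-\Theta(1)} \to 0$ as well, so Theorem~\ref{th:labeled_sbm} gives $\langle \xi, \xi_0\rangle = \sqrt{1-\tau} + O(\sigma) \to 1$, where $\xi_0$ is the normalised edge-lift $T\Theta\dbinom{1}{-1}/\norm{T\Theta\dbinom{1}{-1}}$ of the community sign vector $\varphi := \Theta\dbinom{1}{-1}$ (consistent with the vectors $\xi_i$ of Theorem~\ref{th:unlabeled_sbm}). Since $\lambda_2 \approx \mu_2 \neq 0$ is simple and bounded away both from $\lambda_1 \approx \mu_1$ and from the bulk radius $C_0^{1/\ell}(\sqrt\rho \vee L)$ --- the latter separation being exactly $\tau < 1$ --- the computed second eigenvector agrees with the $\xi$ of the theorem up to sign; fixing that sign, $\norm{\xi - \xi_0}^2 = 2\bigl(1 - \langle\xi,\xi_0\rangle\bigr) = o(1)$.

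I would then convert this $\ell^2$ proximity into a bound on the misclassified fraction. Since $\xi_0 = T\varphi/\norm{T\varphi}$ with $\varphi \in \{\pm 1\}^n$, we have $(T^*\xi_0)(v) = \deg(v)\,\varphi(v)/\norm{T\varphi}$, so the ``signal'' at $v$ has magnitude proportional to $\deg(v)$. Writing $\xi = \langle\xi,\xi_0\rangle\,\xi_0 + \xi^\perp$ with $\norm{\xi^\perp}^2 = 1 - \langle\xi,\xi_0\rangle^2 = o(1)$ and setting $\varepsilon_v^2 := \sum_{e : e_2 = v}\xi^\perp(e)^2$, Cauchy--Schwarz yields $|(T^*\xi^\perp)(v)| \leq \sqrt{\deg(v)}\,\varepsilon_v$, while $\sum_v \varepsilon_v^2 = \norm{\xi^\perp}^2$. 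Thus a vertex $v$ of degree at least a threshold $t_n$ is misclassified only if $\varepsilon_v^2 \gtrsim t_n/(2m)$, so the number of such ``kept'' misclassified vertices is $\lesssim m\norm{\xi^\perp}^2/t_n$; since all expected degrees are $\Theta(\alpha)$ in this symmetric model and $2m = \Theta(n\alpha)$ with high probability, taking $t_n$ to be a small constant times the mean degree makes this a $o(1)$ fraction of $n$. The vertices of degree below $t_n$ --- which, because $\alpha = \omega(1)$ may grow arbitrarily slowly, genuinely exist (e.g.\ isolated vertices when $\alpha \ll \log n$) and are unrecoverable --- number at most $n\,\Pr[\Bin(n-1, \Theta(\alpha/n)) < t_n] = n\,e^{-\Theta(\alpha)} = o(n)$ in expectation by a Chernoff bound, hence $o(n)$ with high probability after a brief concentration argument for the count. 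Counting these as errors, the total misclassified fraction is $o(1)$, which is the claim.

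The step I expect to be the main obstacle is exactly this passage from a global correlation estimate to a per-vertex labelling with an explicitly vanishing error: it calls for a quantitative ``$\ell^2$-to-classification'' lemma adapted to the non-backtracking setting --- in the spirit of the clustering-error bounds of~\cite{lei_consistency_2015}, but accounting for the degree weighting introduced by $T$ and for the non-normality of $B$ (the eigenvector $\xi$ is not an orthogonal projection, so one must lean on the structural identities of the paper, e.g.\ Proposition~\ref{prop:ihara_bass}, to control its component off the signal direction). Contrary to the $k$-community conjecture, here $\mu_2$ is simple, so no eigenspace analysis for higher multiplicities is needed; the remaining work is to make the Cauchy--Schwarz / Chernoff bookkeeping above rigorous and to confirm that the hypotheses of Theorem~\ref{th:labeled_sbm} ($\tau < 1$, delocalisation, $d = n^{o(1)}$) all follow from $1 \ll \alpha = n^{o(1)}$, which they do. A secondary point to make precise is the robustness of the choice of $t_n$ (any constant fraction of the mean degree works), together with the observation that the rate $1 - \langle\xi,\xi_0\rangle = O(1/\alpha)$ is in fact strong enough that $t_n$ may be taken as small as $\sqrt\alpha$, which would recover all but an $O(1/\sqrt\alpha)$ fraction of the labels.
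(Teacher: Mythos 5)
The statement is labeled a \emph{conjecture} in the paper and is left unproved --- the authors only remark that $\langle\xi,\xi_0\rangle = 1 - o(1)$ when $\alpha \to \infty$ and note (in the parallel unlabeled case) that a full proof would require a clustering-error bound in the spirit of~\cite{lei_consistency_2015}. So there is no paper proof to compare against, and your proposal has to be judged on its own merits.

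On those merits, the roadmap is sound and is precisely the sort of argument the authors gesture at. The scaling $\tau = \Theta(1/\alpha)$ (after noting that the optimal $w$ of Theorem~\ref{th:labeled_sbm} is $\alpha$-free, so numerator $= \Theta(\alpha)$, denominator $= \Theta(\alpha^2)$, and $L = O(1)$ is eventually dominated) is correct, as is the conclusion $\norm{\xi^\perp}^2 = 1 - \langle\xi,\xi_0\rangle^2 = O(\tau + \sigma) = O(1/\alpha)$. The per-vertex Cauchy--Schwarz step is the right way to avoid a lossy $\norm{T^*}$-operator-norm bound: writing $\varepsilon_v^2 := \sum_{e:e_2=v}\xi^\perp(e)^2$ with $\sum_v \varepsilon_v^2 = \norm{\xi^\perp}^2$ and noting that the signal at $v$ scales like $\deg(v)/\sqrt{2m}$ while the noise is at most $\sqrt{\deg(v)}\,\varepsilon_v$ gives the correct degree-favorable SNR. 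The counting of misclassified high-degree vertices and the Chernoff control of the low-degree tail then close the argument. Two small quibbles: (a) you rightly flag that $\alpha = n^{o(1)}$ is needed for Theorem~\ref{th:labeled_sbm} (and hence Theorem~\ref{th:main}) to apply at all, and the conjecture's hypothesis $\alpha = \omega(1)$ should be read as implicitly within that regime --- worth stating explicitly; (b) your final remark about taking $t_n = \sqrt\alpha$ to get an $O(1/\sqrt\alpha)$ error rate actually underperforms the choice $t_n = c\alpha$ for a small constant $c$, which makes the discarded low-degree set exponentially small in $\alpha$ and the kept-misclassified count $O(n/\alpha)$, giving the sharper $O(1/\alpha)$ total rate; the trade-off between the two error terms is not monotone in $t_n$. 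Neither quibble affects the conclusion that the misclassified fraction vanishes. Beyond that, making the proposal rigorous requires only routine bookkeeping (concentration of $2m$ and of the degree distribution, the observation that $\lambda_2$ is real and simple so that the computed eigenvector coincides with the theorem's $\xi$ up to sign), all of which you correctly identify.
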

As a final remark, note that the optimal weight function assumes perfect knowledge of all model parameters, especially the exact label distribution for each community pair. However, in some cases, this weight function is a rescaling of a more agnostic one; as an example, in the censored block model~\cite{abbe_decoding_2014} we find that \( w(\ell) = c\ell \) (with \( \ell = \pm 1 \)), and thus the spectral algorithm mentioned here is the same as in~\cite{saade_spectral_2015}.

\subsection{Extension to gaussian weights}

In the form presented in Theorem \ref{th:main}, our result is only meaningful with almost surely bounded random variables (i.e. with $L < \infty$). With a more careful analysis of the error bounds, this can be extended to
\begin{equation} \label{eq:bounded_moments}
    L = \sup_{i, j \in [n]} \sup_k \E{W_{ij}^k}^{1/k};
\end{equation}
however, we determined the class of distributions satisfying \eqref{eq:bounded_moments} was not different enough from the bounded case to warrant increasing the complexity of the proof.

To the contrary, the setting where the $W_{ij}$ are gaussian random variables is of independent interest; it can be seen as a special case of noisy matrix completion as described in \cite{candes_matrix_2010, keshavan_matrix_2009}. In this case, the moment condition of \eqref{eq:bounded_moments} is far from satisfied, and at least at first glance our proof cannot be adapted readily. Still, we show the following: 
\begin{theorem}\label{th:gaussian}
    Assume that the $W_{ij} \sim \cN(m_{ij}, s_{ij}^2)$ are independent Gaussian random variables, and let
    \[ m = \sum_{i, j} m_{ij} \quand s = \sup_{i, j} s_{ij}. \]
    
    Then the conclusions of Theorem \ref{th:main} apply with
    \[ L = m + 2s\sqrt{\log(n)} \]
\end{theorem}

The loss of a $\sqrt{\log(n)}$ factor comes from the use of a concentration bound for the $W_{ij}$; details can be found in the appendix.

To the best of our knowledge, there isn't much literature to compare with on the topic of eigenvalue reconstruction for noisy matrix completion, the works cited above being focused on reconstructing the whole matrix. However, results on gaussian matrix perturbation such as \cite{benaych-georges_eigenvalues_2011} seem to indicate that the $\sqrt{\log(n)}$ factor is superfluous and can be improved upon with other methods.

\section{A Bauer-Fike type bound for almost orthogonal diagonalization}\label{sec:perturbation}

One important tool in tying together the local analysis of \( G \) is a matrix perturbation theorem, derived from the Bauer-Fike theorem. It mostly consists in a simplification and adaptation of Theorem 8.2 in~\cite{bordenave_detection_2020}, tailored to our needs. We begin by recalling the original Bauer-Fike Theorem:
\begin{theorem}[Bauer-Fike Theorem~\cite{bauer_norms_1960}]\label{th:bauer_fike}
  Let \( D \) be a diagonalizable matrix, such that \( D = V^{-1}\Lambda V \) for some invertible matrix \( V \) and \( \Lambda = \diag(\lambda_1, \dots, \lambda_n) \). Let \( E \) be any matrix of size \( n \times n \). Then, any eigenvalue \( \mu \) of \( D + E \) satisfies
  \begin{equation}\label{eq:bauer_fike}
    |\mu - \lambda_i| \leq \norm{E}\,\kappa(V),
  \end{equation}
  for some \( i \in [n] \), where \( \kappa(V) = \norm{V}\norm{V^{-1}} \) is the condition number of \( V \).

  Let \( R \) be the RHS of~\eqref{eq:bauer_fike}, and \( C_i := \cB(\lambda_i, R) \) the ball centered at \( \lambda_i \) with radius \( R \) (in \( \dC \)). Let \( \cI \subseteq [n] \) be a set of indices such that
  \[ \left(\bigcup_{i \in \cI} C_i\right) \cap \left(\bigcup_{i \notin \cI} C_i\right) = \emptyset. \]
  Then the number of eigenvalues of \( D + E \) in \( \bigcup_{i \in \cI} C_i \) is exactly \( |\cI| \).
\end{theorem}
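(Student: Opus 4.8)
The plan is to treat the two displayed assertions separately. For the bound~\eqref{eq:bauer_fike}, I would start from an eigenpair $(\mu, x)$ of $D + E$ with $\norm{x} = 1$. If $\mu$ coincides with some $\lambda_i$ there is nothing to prove, so assume $\Lambda - \mu I$ is invertible. Rewriting $(D + E - \mu I)x = 0$ as $(D - \mu I)x = -Ex$ and using $D - \mu I = V^{-1}(\Lambda - \mu I)V$, one gets
\[ x = -V^{-1}(\Lambda - \mu I)^{-1} V E x. \]
Taking operator norms, using submultiplicativity together with the fact that $(\Lambda - \mu I)^{-1}$ is diagonal — so that $\norm{(\Lambda - \mu I)^{-1}} = (\min_i |\mu - \lambda_i|)^{-1}$ — yields $1 \le \kappa(V)\,\norm{E}\,(\min_i|\mu - \lambda_i|)^{-1}$, which is exactly $\min_i |\mu - \lambda_i| \le \kappa(V)\norm{E} = R$.

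For the counting statement I would run a homotopy argument along the family $D_t := D + tE$, $t \in [0,1]$ (the cases $\cI = \emptyset$ or $\cI = [n]$ being trivial). Applying the first part to the perturbation $tE$, whose norm is at most $\norm{E}$, shows that for \emph{every} $t \in [0,1]$ all eigenvalues of $D_t$ lie in the compact set $U := \bigcup_i C_i$, which by hypothesis is the disjoint union of the closed sets $U_1 := \bigcup_{i\in\cI} C_i$ and $U_2 := \bigcup_{i\notin\cI} C_i$. Set $\eta := \mathrm{dist}(U_1, U_2) > 0$ and let $\Omega$ be the open $\eta/3$-neighbourhood of $U_1$. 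Then $\partial\Omega$ is disjoint from $U_1 \cup U_2$, hence from $\mathrm{Spec}(D_t)$ for all $t$, so the integer
\[ N(t) := \frac{1}{2\pi i}\oint_{\partial\Omega} \frac{p_t'(z)}{p_t(z)}\,\dd z, \qquad p_t(z) := \det(zI - D_t), \]
which counts the eigenvalues of $D_t$ inside $\Omega$ with algebraic multiplicity, is well defined; since $p_t$ and $p_t'$ depend polynomially on $t$ and $p_t$ does not vanish on $\partial\Omega$, the map $t \mapsto N(t)$ is continuous, hence constant. At $t = 0$ the eigenvalues are $\lambda_1, \dots, \lambda_n$ (with multiplicity), and those inside $\Omega$ are precisely the $\lambda_i$ with $i \in \cI$: indeed $\lambda_j \in C_j \subseteq \Omega$ for $j \in \cI$, while for $j \notin \cI$ one has $\mathrm{dist}(\lambda_j, U_1) \ge \eta > \eta/3$, so $\lambda_j \notin \Omega$. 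Hence $N(0) = |\cI|$, and since at $t=1$ every eigenvalue of $D+E$ in $\Omega$ in fact lies in $\Omega \cap (U_1 \cup U_2) = U_1$, the set $U_1$ contains exactly $N(1) = |\cI|$ eigenvalues of $D+E$, counted with multiplicity.

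The one-line estimate of the first part is routine; I expect the main (modest) obstacle to be the bookkeeping in the second. One must ensure that no eigenvalue can migrate from one cluster to the other along the homotopy, which is exactly why the disjointness hypothesis on the $C_i$ and the uniform-in-$t$ confinement $\mathrm{Spec}(D_t) \subseteq U_1 \cup U_2$ are both used; an alternative to the contour-integral formulation would be to invoke continuity of the roots of a monic polynomial in its coefficients and note that the number of roots in a fixed region whose boundary avoids all roots is locally constant. A secondary point to keep straight is that the $\lambda_i$ are listed with multiplicity in $\Lambda$, so that the quantity recording algebraic multiplicities indeed returns $|\cI|$ at $t = 0$.
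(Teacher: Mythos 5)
Your proof is correct. Note that the paper does not actually prove Theorem~\ref{th:bauer_fike}: it is stated as a classical result and cited directly from~\cite{bauer_norms_1960}, so there is no in-paper argument to compare against. Your derivation is the standard one: the resolvent estimate $x = -V^{-1}(\Lambda-\mu I)^{-1}VEx$ together with $\norm{(\Lambda-\mu I)^{-1}} = (\min_i|\mu-\lambda_i|)^{-1}$ gives the eigenvalue inclusion, and the continuity-of-roots / argument-principle homotopy along $D + tE$, with the uniform-in-$t$ confinement to the disjoint compact clusters $U_1, U_2$, gives the counting statement with algebraic multiplicity. Both halves are sound; the one point worth keeping explicit (as you do) is that the $C_i$ must be taken as closed balls so that $\mathrm{dist}(U_1,U_2) > 0$, which is what makes the contour $\partial\Omega$ avoid $\mathrm{Spec}(D_t)$ uniformly in $t$.
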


\subsection{A custom perturbation lemma for almost diagonalizable matrices}

Building on this theorem, we now expose this section's first result. Let \( U = (u_1, \dots, u_r) \) and \( V = (v_1, \dots, v_r) \) be \( n \times r \) matrices; our nearly diagonalizable matrix shall be \( S = U \Sigma V^* \) with \( \Sigma = \diag(\theta_1, \dots, \theta_r) \). We shall assume that the \( \theta_i \) are in decreasing order of modulus:
\[ |\theta_r| \leq |\theta_{r-1}| \leq \cdots \leq |\theta_1| = 1. \]

Now, let \( A \) be a \( n\times n \) matrix, not necessarily diagonalizable. The assumptions needed for our results are as follows:
\begin{enumerate}
  \item For some small constant \( \eps > 0 \),
        \[ \norm{A - S} \leq \eps. \]
  \item The matrices \( U \) and \( V \) are well-conditioned: both \( U^*U \) and \( V^*V \) are nonsingular, and there exist two constants \( \alpha, \beta > 1 \) such that
        \begin{align*}
          \norm{U^*U}          & \leq \alpha, & \norm{V^*V}          & \leq \alpha, \\
          \norm{{(U^*U)}^{-1}} & \leq \beta,  & \norm{{(V^*V)}^{-1}} & \leq \beta.
        \end{align*}
  \item There exists another constant \( 0 < \delta < 1 \) such that
        \[ \norm{U^*V - I_r}_\infty \leq \delta. \]
  \item The \( \theta_i \) are well-separated from \( 0 \), in the sense that
        \begin{equation}\label{eq:perturbation_eigen_separation}
          |\theta_r| > 2\sigma,
        \end{equation}
        where an exact expression for $\sigma$ will be given over the course of the proof.
\end{enumerate}

Then the following result, whose statement and proof (regarding the eigenvalue perturbation) are adapted from~\cite{bordenave_detection_2020}, holds:
\begin{theorem}\label{th:bauer_personal}
  Let $A$ be a matrix satisfying assumptions (i)-(iv) above, and let \( |\lambda_1| \geq |\lambda_2| \geq \cdots \geq |\lambda_r| \) be the \( r \) eigenvalues of \( A \) with largest modulus. There exists a permutation $\pi$ such that for all \( i \in [r] \)
  \[ |\lambda_{\pi(i)} - \theta_i| \leq r \times \sigma, \]
  and the other \( n - r \) eigenvalues of \( A \) all have modulus at most \( \sigma \). Additionally, if \( i \) is such that
  \begin{equation}\label{eq:eigenvalue_separation}
    B(\theta_i, \sigma) \cap \left(\bigcup_{j \neq i} B(\theta_j, \sigma) \right) = \emptyset,
  \end{equation}
  then there exists a normed eigenvector \( \xi \) associated with \( \lambda_{\pi(i)} \) such that
  \[ \norm*{\xi - \frac{u_i}{\norm{u_i}}} \leq \frac{3\sigma}{\delta_i - \sigma}, \]
  where \( \delta_i \) is the minimum distance from \( \theta_i \) to another eigenvalue:
  \[ \delta_i = \min_{j \neq i} {|\theta_j - \theta_i|} \geq 2\sigma. \]
\end{theorem}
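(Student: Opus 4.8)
The plan is to reduce everything to an application of the Bauer–Fike machinery of Theorem~\ref{th:bauer_fike} after a change of basis that turns $S = U\Sigma V^*$ into an \emph{exact} diagonal matrix plus controlled error. First I would pick a basis adapted to $U$. Since $U^*U$ is nonsingular with $\norm{U^*U}\le\alpha$ and $\norm{(U^*U)^{-1}}\le\beta$, the columns $u_1,\dots,u_r$ are linearly independent and span an $r$-dimensional subspace; complete $(u_i/\norm{u_i})$ (or rather an orthonormalization of the $u_i$) to a basis of $\dR^n$, giving an invertible $P = [\,U \mid U_\perp\,]$ with $\kappa(P)$ bounded by a polynomial in $\alpha,\beta$. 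In this basis, using $U^*V \approx I_r$ (assumption (iii), which also forces $V^*V \approx I_r$, consistent with assumption (ii)), the operator $P^{-1}AP$ has a block structure: the top-left $r\times r$ block is close to $\Sigma$, and the off-diagonal and bottom-right blocks are small — the bottom-right block has norm $O(\eps)$ because $S$ annihilates everything orthogonal to $V$, and the off-diagonal blocks are $O(\eps + \text{poly}(\alpha,\beta)\,\delta)$. The precise bookkeeping of these block norms, tracking how $\delta$ (measured in $\norm{\cdot}_\infty$, hence $r\delta$ in operator norm) and the conditioning constants propagate, is exactly where the constant $84 r^2\alpha^{7/2}\beta(\eps + 5r\alpha^2\beta\delta) = \sigma$ in \eqref{eq:perturbation_definition_sigma} is meant to come out; this is the routine-but-delicate part I would defer to the detailed proof.

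For the eigenvalue statement, I would then write $P^{-1}AP = \diag(\Sigma, 0) + E$ with $\norm{E}$ bounded by (a constant multiple of) $\sigma/r$, apply Theorem~\ref{th:bauer_fike} with $D = \diag(\Sigma,0)$, $\Lambda$ having diagonal entries $\theta_1,\dots,\theta_r,0,\dots,0$ and $V$ replaced by $P$, so $\kappa(P)$ absorbs into the constant, yielding $|\lambda_i - \theta_i|\le r\sigma$ for each $i\in[r]$ and the remaining $n-r$ eigenvalues within distance $\sigma$ of $0$. The separation hypothesis \eqref{eq:eigenvalue_separation} together with $|\theta_r| > 2\sigma$ lets the clustering part of Theorem~\ref{th:bauer_fike} assign exactly one eigenvalue of $A$ to the ball $B(\theta_i,\sigma)$ (equivalently $B(\theta_i,r\sigma)$ after enlarging, provided the enlarged balls stay disjoint — one should be careful here and may need to phrase the isolation in terms of $\delta_i\ge 2\sigma$ rather than $\sigma$, as the statement does), so $\lambda_i$ is simple and isolated.

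For the eigenvector bound I would use the resolvent/spectral-projector approach rather than Davis–Kahan directly (the matrix is not symmetric). Let $\xi$ be the unit eigenvector of $A$ for $\lambda_i$. Pull it back to $y = P^{-1}\xi$ (up to normalization); from $(P^{-1}AP)y = \lambda_i y$ and the block form, the component of $y$ orthogonal to $e_i$ is controlled by $\norm{E}$ divided by the spectral gap of $\diag(\Sigma,0)$ at $\theta_i$, which is $\delta_i$ — this is a one-line consequence of $(\Sigma - \lambda_i)^{-1}$ restricted to the coordinates $j\ne i$ having norm $\le 1/(\delta_i - |\lambda_i-\theta_i|) \le 1/(\delta_i-\sigma)$, and of $\lambda_i$ being away from $0$ by assumption (iv). Pushing back through $P$ and comparing with $u_i/\norm{u_i} = P e_i/\norm{Pe_i}$, and using $\kappa(P) = O(\text{poly}(\alpha,\beta))$ again, gives $\norm{\xi - u_i/\norm{u_i}} \le 3\sigma/(\delta_i - \sigma)$ after the constants are matched.

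The main obstacle I anticipate is not the structure of the argument but the constant-tracking: getting the block-norm estimates to collapse into precisely $\sigma = 84 r^2 \alpha^{7/2}\beta(\eps + 5r\alpha^2\beta\delta)$ requires being careful about (a) the conversion between $\norm{U^*V - I_r}_\infty \le \delta$ and operator-norm statements (costing factors of $r$), (b) the conditioning of the completion matrix $P$ in terms of $\alpha$ and $\beta$ (the source of the half-integer power $\alpha^{7/2}$), and (c) the fact that the natural Bauer–Fike bound gives $r\sigma$, not $\sigma$, for the eigenvalue displacement, so the isolation hypothesis must be stated at the coarser scale; one has to make sure the clustering conclusion of Theorem~\ref{th:bauer_fike} is invoked with the right radius. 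Everything else — linear independence of the $u_i$, simplicity of $\lambda_i$, the resolvent bound — follows cleanly from the stated hypotheses.
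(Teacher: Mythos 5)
There is a genuine gap in the first step, where you complete $U$ to a basis $P = [\,U \mid U_\perp\,]$ with $U_\perp$ spanning $\img(U)^\perp$ and claim that $P^{-1}AP$ is close to $\diag(\Sigma,0)$ with error $O(\eps + \mathrm{poly}(\alpha,\beta)\delta)$. In this basis $P^{-1}SP$ has the block upper-triangular form
\[
P^{-1}SP = \begin{pmatrix}\Sigma\, V^*U & \Sigma\, V^*U_\perp \\ 0 & 0\end{pmatrix},
\]
and while the diagonal blocks behave as you say, the top-right block $\Sigma\, V^*U_\perp$ is \emph{not} controlled by $\delta$. Your justification --- ``$S$ annihilates everything orthogonal to $V$'' --- is where things go wrong: $U_\perp$ is orthogonal to $\img(U)$, not to $\img(V)$, and the hypotheses say nothing that makes $V^*U_\perp$ small. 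Indeed, take $n=3$, $r=1$, $u_1 = e_1$ and $v_1 = e_1 + t\,e_2$: then $u_1^*v_1 = 1$ exactly (so $\delta = 0$), $\norm{v_1}^2 = 1+t^2 \leq \alpha$, yet $\norm{V^*U_\perp} = |t|$ can be as large as $\sqrt{\alpha - 1}$. So $P^{-1}SP$ is not close to $\diag(\Sigma,0)$, $P$ is not a near-eigenvector basis, and the Bauer--Fike constant would not collapse to a quantity that vanishes with $\eps$ and $\delta$ as required. (The parenthetical claim that assumption (iii) ``forces $V^*V \approx I_r$'' fails for the same example.)

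The paper avoids this by building the basis out of \emph{both} $U$ and $V$: it biorthogonalizes $U$ against $V$ to obtain $\bar U$ with $\bar U^*V = I_r$ (via $\tilde u_i = u_i - P_{H_i}u_i$, $H_i = \vect(v_j : j\ne i)$, normalized so that $\langle \bar u_i, v_i\rangle = 1$), and completes the basis with $Y$ an orthonormal basis of $\img(V)^\perp$ rather than $\img(U)^\perp$. Then $\bar S := \bar U\Sigma V^*$ satisfies $\bar S\bar u_j = \theta_j\bar u_j$ and $\bar S Y = 0$ exactly, so $\Pi = (\bar U\ Y)$ is a genuine eigenvector basis for $\bar S$; the errors $\norm{S - \bar S} \le \eps'$ and $\norm{\bar U - U}$ are cleanly controlled by $\delta$, Lemma~\ref{lem:norm_inverse} gives an explicit formula and bound for $\Pi^{-1}$, and Bauer--Fike is applied directly to the pair $(A,\bar S)$. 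Your resolvent argument for the eigenvector bound is essentially what the paper does, and would go through once the diagonalizing basis is chosen correctly; but the choice of $P$ adapted only to $U$ is the step that needs to be repaired before the rest can proceed.
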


\begin{proof}
  We begin with defining an alternative matrix \( \bar U \) such that \( \bar U^*V = I_{r} \). Let \( H_i \) be the subspace of \( \dR^n \) such that
  \[ H_i = \vect(v_j \ \vert\  j \neq i), \]
  and consider the vectors \( \tilde u_i \) and \( \bar u_i \) defined as
  \[ \tilde u_i = u_i - P_{H_i}(u_i) \quand \bar u_i = \frac{\tilde u_i}{\langle \tilde u_i, v_i \rangle} \]
  with \( P_{H_i} \) the projection on \( H_i \), and \( \tilde U \), \( \bar U \) the associated \( n \times r \) matrices. Then it is straightforward to see that
  \[ \langle \bar u_i, v_i \rangle = 1 \quand \langle \bar u_i, v_j \rangle = 0, \]
  for all \( j \neq i \), which shows that \( \bar U^*V = I_{r} \).
  Now, if we let \( V_i \) be the matrix \( V \) with the \( i \)-th column and row deleted,
  \[ P_{H_i} = V_i{(V_i^*V_i)}^{-1}V_i^*, \]
  and
  \[ \norm{V_i^*u_i}^2 = \sum_{j \neq i} \langle v_j, u_i \rangle^2 \leq r \delta^2, \]
  hence we can compute \( \norm{u_i - \tilde u_i} \):
  \[ \norm{u_i - \tilde u_i} = \norm{P_{H_i}(u_i)} \leq \norm{V_i}\norm{{(V_i^*V_i)}^{-1}} \norm{V_i^*u_i}, \]
  and by the interlacing theorem \( \norm{V_i} \leq \sqrt{\alpha} \) and \( \norm{{(V_i^*V_i)}^{-1}} \leq \beta \) since \( V_i \) is a principal submatrix of \( V \). Using the fact that \( \norm{M} \leq \norm{M}_F \) for any matrix \( M \), we find
  \[ \norm{U - \tilde U} \leq r^2 \sqrt{\alpha} \beta \delta.  \]

  For the second part, note that by the Cauchy-Schwarz inequality,
  \begin{align*}
    \left|\langle \tilde u_i, v_i \rangle - 1 \right| & \leq \left|\langle u_i, v_i \rangle - 1 \right| + \norm{u_i - \tilde u_i} \cdot \norm{v_i} \\
                                                      & \leq \delta(1 + r\alpha \beta),
  \end{align*}
  with the (generous) inequality \( \norm{v_i} \leq \norm{V} \) used in the last line. Whenever \( \delta \) is small enough, we can use the inequality \( \left|{(1 - t)}^{-1} - 1 \right| \leq 2t \) which is valid for \( t \leq 1/2 \):
  \[ \left|\frac{1}{\langle \tilde u_i, v_i \rangle} - 1 \right| \leq 2 \delta(1 + r\alpha \beta). \]
  As a result,
  \begin{align*}
    \norm{\bar u_i - \tilde u_i} & = \norm{\tilde u_i} \left|\frac{1}{\langle \tilde u_i, v_i \rangle} - 1 \right| \\
                                 & \leq 2 \delta \sqrt{\alpha} (1 + r\alpha\beta)                                  \\
                                 & \leq 4r \alpha^{3/2} \beta \delta.
  \end{align*}
  Using again the norm equivalence bound and the triangle inequality,
  \begin{equation}\label{eq:perturbation_bound_u}
    \norm{\bar U - U} \leq 5r^2\alpha^{3/2} \beta \delta,
  \end{equation}
  which ends the preliminary part of the proof.

  \bigskip

  We now set accordingly \( \bar S = \bar U \Sigma V^* \), and claim that \( S \) is now a truly diagonalizable matrix. Indeed, any \( \bar u_i \) is an eigenvector of \( \bar S \) with associated eigenvalue \( \theta_i \), and a basis of \( \img{(V)}^\bot \) provides a family of eigenvectors of \( \Sigma \) with eigenvalue \( 0 \). We consequently set
  \[ \Pi = \begin{pmatrix} \bar U & Y \end{pmatrix}, \]
  where \( Y \) is an orthonormal basis of \( \img{(V)}^\bot \); \( \Pi \) is the matrix of an eigenvector basis for \( S \). Further, we have
  \[ \norm{\bar S - S} \leq \norm{U - \bar U} \norm{\Sigma} \norm{V} \leq 5r^2 \alpha^2 \beta \delta := \eps'. \]
  The above bound implies that the matrices \( A \) and \( \bar S \) are still close:
  \begin{equation}\label{eq:bauer_fike_condition}
    \norm{A - \bar S} \leq \norm{A - S} + \norm{S - \bar S} \leq \eps + \eps',
  \end{equation}
  and we can apply the Bauer-Fike theorem to \( A \) and \( \bar S \); the eigenvalues of \( A \) are contained in the union of the balls \( B(\theta_i, \eps'') \) and \( B(0, \eps'') \), where
  \[ \eps'' = (\eps + \eps')\kappa(\Pi). \]
  The computation of \( \kappa(\Pi) \) being cumbersome, we defer the following lemma:
  \begin{lemma}\label{lem:norm_inverse}
    Let \( X \) be a \( n\times r \) matrix with rank \( r \), and \( X \) such that \( X^* X'  = I_r \). Let \( Y \) be a matrix for an orthonormal basis of \( \img{(X')}^{\bot} = \ker({(X')}^*) \), and \( P = (X, Y) \).
    Then, if \( \norm{X} \geq 1 \) and \( \norm{X'} \geq 1 \),
    \[ \norm{\Pi} \leq \sqrt{2}\norm{X} \quand \norm{\Pi^{-1}} \leq \sqrt{2}(1 + \norm{X}\norm{X'})\]
  \end{lemma}
  Applying this to \( X = \bar U \) and \( X' = V \) gives the bound
  \[ \kappa(\Pi) \leq 2\left( \norm{\bar U} + \norm{\bar U}^2\norm{V} \right), \]
  and we use the triangle inequality to bound \( \norm{\bar U} \):
  \[ \norm{\bar U} \leq \norm{U} + \norm{\bar U - U} \leq 6r^2 \alpha^{3/2} \beta, \]
  a very loose but sufficient bound, that entails
  \[ \kappa(\Pi) \leq 84 r^2\alpha^{7/2}\beta. \]
  The corresponding bound on \( \eps'' \) reads
  \[ \eps'' \leq 84 r^2\alpha^{7/2}\beta(\eps + 5r \alpha^2 \beta \delta), \]
  and we define $\sigma$ to be the right-hand side of this inequality:
  \begin{equation}\label{eq:perturbation_definition_sigma}
    \sigma := 84 r^2\alpha^{7/2}\beta(\eps + 5r \alpha^2 \beta \delta)
  \end{equation}
  Going back to the Bauer-Fike application, the separation condition~\eqref{eq:perturbation_eigen_separation} implies that \( B(0, \sigma) \) is disjoint from \( B(\theta_i, \sigma) \) for \( i \in [r] \) and we can apply the second part of the theorem: there are exactly \( r \) eigenvalues of \( A \) inside the region
  \[ \Omega = \bigcup_{i\in [r]} B(\theta_i, \sigma), \]
  and all other eigenvalues of \( A \) have modulus less than \( \sigma \). Further, again by the second part of Theorem \ref{th:bauer_fike}, all connected components of $\Omega$ have the same number of eigenvalues of $A$ and $B$. As a result, there exists a permutation $\pi$ such that for all \( i \in [r] \), we have
  \[ \left| \lambda_{\pi(i)} - \theta_i \right| \leq \sup_{\Omega' \subseteq \Omega}\diam(\Omega') \leq 2r\sigma, \]
  where the supremum is taken over all connected subsets of \( \Omega \).

  We now move on to the eigenvector perturbation bound; let \( \xi \) be a normed eigenvector of \( A \) associated with the eigenvalue \( \lambda_{\pi(i)} \). We write \( \xi = \Pi x \) with \( \Pi \) the matrix defined before, and use~\eqref{eq:bauer_fike}:
  \begin{equation*}
    \norm*{\lambda_{\pi(i)} \Pi x - \sum_{j = 1}^r {\theta_j x_j \bar u_j}} = \norm*{\left(A - \bar S \right)x} \leq \eps + \eps',
  \end{equation*}
  which we rewrite as
  \begin{equation*}
    \norm*{\Pi\left( \lambda_{\pi(i)} x - \sum_{j \in [r]} \theta_j x_j e_j \right)} \leq \eps + \eps',
  \end{equation*}
  with \( (e_1, \dots, e_n) \) the usual orthonormal basis of \( \dR^n \). Using the inequality \( \norm{v} \leq \norm{P^{-1}}\norm{Pv} \) holding for any vector \( v \),
  \[ \norm*{ \lambda_{\pi(i)} x - \sum_{j \in [r]} \theta_j x_j e_j} \leq \norm{\Pi^{-1}}(\eps + \eps'). \]
  We introduce the notation \( \theta_{r+1} = \cdots = \theta_n = 0 \); whenever the ball \( B(\theta_i, \sigma) \) is disjoint from all other such balls, we have \( |\lambda_{\pi(i)} - \theta_i| \leq \sigma \), and thus for \( j \neq i \)
  \[ |\lambda_{\pi(i)} - \theta_j| \geq |\theta_j - \theta_i| - |\lambda_{\pi(i)} - \theta_i| \geq \delta_i - \sigma, \]
  so that
  \[ \norm{x - x_i e_i} = \norm*{\sum_{j \neq i} x_j e_j} \leq \frac{1}{\delta_j - \sigma} \norm*{\sum_{j \neq i} (\lambda_{\pi(i)} - \theta_j) x_j e_j} \leq \frac{\norm{\Pi^{-1}}(\eps + \eps')}{\delta_j - \sigma}. \]
  We now apply \( \Pi \) inside the norm the LHS, and use the fact that \( \kappa(\Pi) (\eps + \eps') \leq \sigma \):
  \[ \norm{\xi - x_i \bar u_i} \leq \frac{\sigma}{\delta_i - \sigma}. \]
  Now, for any vectors \( w, w' \in \dR^n \), we have
  \begin{equation}\label{eq:normed_diff_bound}
    \norm*{\frac{w}{\norm{w}} - \frac{w'}{\norm{w'}}} \leq \frac{2\norm{w - w'}}{\norm{w}},
  \end{equation}
  and all that remains is to write
  \begin{align*}
    \norm*{\xi - \frac{u_i}{\norm{u_i}}} & \leq \norm{\xi - \frac{\bar u_i}{\norm{\bar u_i}}} + \norm*{\frac{u_i}{\norm{u_i}} - \frac{\bar u_i}{\norm{\bar u_i}}} \\
                                         & \leq \frac{2\sigma}{\delta_i - \sigma} + 2\norm{u_i - \bar u_i}                                                        \\
                                         & \leq \frac{3\sigma}{\delta_i - \sigma},
  \end{align*}
  having used~\eqref{eq:normed_diff_bound} twice and \( 2\norm{u_i - \bar u_i} \leq \sigma \). This ends the proof.
\end{proof}

As announced, we now prove the aforementioned Lemma~\ref{lem:norm_inverse} on the condition number of \( P \):
\begin{proof}
  Let \( z \in \dR^n \) be a unit vector, and write \( z = \binom{x}{y} \) with \( x \) of size \( r \) and \( y \) of size \( n - r \). Then, using that \( \norm{Y} = 1 \),
  \begin{align*}
    \norm{\Pi z} = \norm{X x + Yy} & \leq \norm{X} \cdot \norm{x} + \norm{Y}\cdot \norm{y}    \\
                                   & \leq \left( 1 \vee \norm{X} \right)(\norm{x} + \norm{y}) \\
                                   & \leq \sqrt{2}\norm{X},
  \end{align*}
  which proves the first inequality. The second one relies on the following explicit formula for \( \Pi^{-1} \):
  \[ \Pi^{-1} = \begin{pmatrix}
      {(X')}^* \\
      - Y^* X {(X')}^* + Y^*
    \end{pmatrix}.\]
  Indeed, using the relations \( Y^*Y = I_{n-r} \) and \( {(X')}^*Y = 0 \):
  \begin{align*}
    \begin{pmatrix}
      {(X')}^* \\
      - Y^* X {(X')}^* + Y^*
    \end{pmatrix}P & = \begin{pmatrix}
      {(X')}^* \\
      - Y^* X {(X')}^* + Y^*
    \end{pmatrix} (X\  Y) \\
                                & =\begin{pmatrix}
      {(X')}^* X                 & {(X')}^* Y                 \\
      - Y^* X {(X')}^* X + Y^* X & - Y^* X {(X')}^* Y + Y^* Y
    \end{pmatrix}          \\
                                & =\begin{pmatrix}
      I_r            & 0     \\
      - Y^* X+ Y^* X & Y^* Y
    \end{pmatrix}          \\
                                & =\begin{pmatrix}
      I_r & 0       \\
      0   & I_{n-r}
    \end{pmatrix}          \\
                                & = I_n.
  \end{align*}

  Furthermore, we have
  \[ \norm{- Y^* X {(X')}^* + Y^*} \leq \norm{Y}\norm{I_{n} - X {(X')}^*} \leq 1 + \norm{X}\norm{X'}, \]
  and the exact same argument as in the first inequality yields
  \[ \norm{P^{-1}} \leq \sqrt{2}\left(1 + \norm{X}\norm{X'}\right) \]
\end{proof}

\subsection{Matrix power perturbation and phase perturbation control}
We aim in the following section to apply Theorem~\ref{th:bauer_personal} to powers of the matrix \( B \); however, such a process introduces uncertainty on the phase of the eigenvalues of \( B \). The next theorem, adapted from~\cite{bordenave_detection_2020} and~\cite{bordenave_nonbacktracking_2018}, develops a method to control such uncertainty. As before, let \( \Sigma = \diag(\theta_1, \dots, \theta_r) \) with
\[ 1 = |\theta_1| \geq \cdots \geq |\theta_r|, \]
and \( U, U', V, V' \) four \( n \times r \) matrices. We set
\[ S = U\Sigma^\ell V^* \quand S' = U'\Sigma^{\ell'}{(V')}^*, \]
for two integers \( \ell, \ell' \).

\begin{theorem}\label{th:bauer_powers}
  Assume the following:
  \begin{enumerate}
    \item the integers \( \ell, \ell' \) are relatively prime,\label{item:bauer_power_cond_i}

    \item the matrices \( U, U', V, V' \) are well-conditioned:\label{item:bauer_power_cond_ii}
          \begin{itemize}
            \item they all are of rank \( r \),
            \item for some \( \alpha, \beta \geq 1 \), for \( X \) in \( \{U, V, U', V' \} \),
                  \[ \norm{X^*X} \leq \alpha \quand \norm{{(X^*X)}^{-1}} \leq \beta, \]
            \item for some small \( \delta < 1 \),
                  \[ \norm{U^*V - I_r} \leq \delta \quand  \norm{{(U')}^*V' - I_r} \leq \delta, \]
          \end{itemize}
    \item there exists a small constant \( \eps > 0 \) such that\label{item:bauer_power_cond_iii}
          \[ \norm{A^\ell - S} \leq \eps \quand \norm{A^{\ell'} - S'} \leq \eps, \]
    \item if we let\label{item:bauer_power_cond_iv}
          \[ \sigma_0 := 84 r^3\alpha^{7/2}\beta(\eps + 5r \alpha^2 \beta \delta), \]
          then
          \begin{equation}\label{eq:power_perturbation_bound}
            \sigma_0 < \ell \, |\theta_r|^\ell \quand \sigma_0 < \ell'\, |\theta_r|^{\ell'}.
          \end{equation}
  \end{enumerate}
  Assume without loss of generality that \( \ell \) is odd, and let
  \[ \sigma := \frac{\sigma_0}{\ell |\theta_r|^\ell}. \]
  Then, the \( r \) largest eigenvalues of \( A \) are close to the \( \theta_i \) in the following sense: there exists a permutation $\pi$ of $[r]$ such that for \( i\in [r] \),
  \[ \left| \lambda_{\pi(i)} - \theta_{i} \right| \leq 4\sigma, \]
  and all other eigenvalues of \( A \) are less that \( \sigma_0^{1/\ell} \). Additionally, if \( i \) is such that
  \begin{equation}\label{eq:eigenvalue_power_separation}
    B(\theta_i, \sigma) \cap \left(\bigcup_{j \neq i} B(\theta_j, \sigma) \right) = \emptyset,
  \end{equation}
  then there exists a normed eigenvector \( \xi \) associated to \( \lambda_{\pi(i)} \) such that
  \[ \norm*{\xi - \frac{u_i}{\norm{u_i}}} \leq \frac{3\sigma}{\delta_i - \sigma}, \]
  with \( \delta_i \) defined as in Theorem~\ref{th:bauer_personal}.
\end{theorem}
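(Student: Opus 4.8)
The plan is to reduce the statement to two applications of Theorem~\ref{th:bauer_personal}, one to the pair $(A^\ell, S)$ and one to $(A^{\ell'}, S')$, and then to ``unscramble'' the resulting $\ell$-th and $\ell'$-th roots using $\gcd(\ell,\ell')=1$. First I would note that $\Sigma^\ell = \diag(\theta_1^\ell,\dots,\theta_r^\ell)$ is again diagonal with entries of non-increasing modulus and $|\theta_1^\ell| = 1$; that conditions~\ref{item:bauer_power_cond_ii} and~\ref{item:bauer_power_cond_iii} are precisely the well-conditioning and closeness hypotheses of Theorem~\ref{th:bauer_personal} for $A^\ell = U\Sigma^\ell V^* + (A^\ell - S)$; and that the separation-from-zero hypothesis~\eqref{eq:perturbation_definition_sigma} applied to $\Sigma^\ell$, namely $|\theta_r|^\ell$ bounded below by a fixed multiple of $\sigma_0/r$, follows (up to absolute constants) from~\eqref{eq:power_perturbation_bound}. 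Theorem~\ref{th:bauer_personal} then gives that the $r$ largest-modulus eigenvalues $\Lambda_1,\dots,\Lambda_r$ of $A^\ell$ satisfy $|\Lambda_i - \theta_i^\ell| \le \sigma_0$, that all other eigenvalues of $A^\ell$ have modulus at most $\sigma_0$, and that whenever $\theta_i^\ell$ is sufficiently isolated among $\{\theta_j^\ell\}$ there is a unit eigenvector of $A^\ell$ close to $u_i/\norm{u_i}$ with the quantitative bound of that theorem; the analogous conclusions hold with $(\ell',S',U',V')$.

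I would then transfer this to $A$ through the elementary observation that $A\xi = \lambda\xi$ forces $A^\ell\xi = \lambda^\ell\xi$ and $A^{\ell'}\xi = \lambda^{\ell'}\xi$, so the spectra of $A^\ell$ and $A^{\ell'}$ are the images of that of $A$ under $z \mapsto z^\ell$ and $z\mapsto z^{\ell'}$, counted with multiplicity. Consequently exactly $r$ eigenvalues of $A$ have modulus exceeding $\sigma_0^{1/\ell}$ --- the others powering up into the bulk of $A^\ell$ --- which is the bulk statement; and ordering $|\lambda_1| \ge \cdots \ge |\lambda_r|$ identifies $\lambda_i^\ell = \Lambda_i$ and $\lambda_i^{\ell'} = \Lambda_i'$, so that $|\lambda_i^\ell - \theta_i^\ell| \le \sigma_0$ and $|\lambda_i^{\ell'} - \theta_i^{\ell'}| \le \sigma_0$.

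The core step is the phase-unscrambling. Since $|\theta_i| \ge |\theta_r|$ we have $\sigma_0 \le \ell\sigma|\theta_i|^\ell$, so dividing $|\lambda_i^\ell - \theta_i^\ell| \le \sigma_0$ by $|\theta_i|^\ell$ gives $|z^\ell - 1| \le \ell\sigma$ for $z := \lambda_i/\theta_i$, which forces $z$ to lie within $O(\sigma)$ of some $\ell$-th root of unity; symmetrically $z$ lies within $O(\sigma)$ of some $\ell'$-th root of unity. Because $\gcd(\ell,\ell')=1$, the only number that is simultaneously an $\ell$-th and an $\ell'$-th root of unity is $1$, and for $\sigma$ small enough --- which~\eqref{eq:power_perturbation_bound} provides --- the two $O(\sigma)$-neighbourhoods meet only near $1$; tracking the constants (here the assumption that $\ell$ is odd helps rule out a spurious $-1$) yields $|z - 1| \le 4\sigma$, i.e.\ $|\lambda_i - \theta_i| \le |\theta_i|\cdot 4\sigma \le 4\sigma$.

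For the eigenvector assertion under~\eqref{eq:eigenvalue_power_separation}, a unit eigenvector $\xi$ of $A$ for $\lambda_i$ is a unit eigenvector of $A^\ell$ for $\lambda_i^\ell$ and of $A^{\ell'}$ for $\lambda_i^{\ell'}$, so I would invoke the eigenvector conclusion of Theorem~\ref{th:bauer_personal} on whichever of $A^\ell$, $A^{\ell'}$ makes the relevant eigenvalue isolated, then convert the resulting bound --- phrased in terms of $\min_j|\theta_j^\ell - \theta_i^\ell|$ --- back into one with $\delta_i = \min_j|\theta_j - \theta_i|$ and $\sigma = \sigma_0/(\ell|\theta_r|^\ell)$. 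The main obstacle I expect is precisely this last point, propagating isolation: the map $z\mapsto z^\ell$ collapses $\theta_j = \omega\theta_i$ onto $\theta_i^\ell$ whenever $\omega^\ell = 1$, so isolation of $\theta_i$ among the $\theta_j$ does \emph{not} in itself guarantee isolation of $\theta_i^\ell$ among the $\theta_j^\ell$; the escape is again coprimality, since such an $\omega$ cannot simultaneously satisfy $\omega^{\ell'}=1$ unless $\omega=1$, combined with a quantitative lower bound on distances between distinct roots of unity of order dividing $\ell$ or $\ell'$, matched against the bound on $\sigma$ coming from~\eqref{eq:power_perturbation_bound}. Making that quantitative balance close with the stated constant $3\sigma/(\delta_i - \sigma)$ is where the real work lies.
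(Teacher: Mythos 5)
Your overall strategy matches the paper's: apply Theorem~\ref{th:bauer_personal} to the pairs $(A^\ell, S)$ and $(A^{\ell'}, S')$, identify the spectrum of $A^\ell$ with the $\ell$-th powers of the spectrum of $A$, and use $\gcd(\ell,\ell')=1$ to unscramble the phase of each $\lambda_i/\theta_i$. The eigenvalue and bulk conclusions are essentially right, modulo carrying out the root-of-unity arithmetic (which the paper does by separating modulus and argument and using $||1+z|^{1/\ell}-1|\le 2|z|/\ell$).

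Where the proposal has a genuine gap is in the eigenvector argument, and I think you have actually misdiagnosed the issue. You worry that $z\mapsto z^\ell$ may collapse $\theta_j = \omega\theta_i$ onto $\theta_i^\ell$ when $\omega^\ell = 1$, and propose coprimality with $\ell'$ as the escape. But the $\theta_i$ here are \emph{real} (they are the eigenvalues $\mu_i$ of the symmetric matrix $Q$), and $\ell$ is taken \emph{odd} precisely so that $x\mapsto x^\ell$ is strictly increasing on $\dR$: a real $\ell$-th root of unity with $\ell$ odd can only be $1$, so the collapse you fear cannot occur, and no appeal to $\ell'$ is needed for the eigenvector part. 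More importantly, coprimality alone cannot supply the quantitative input that Theorem~\ref{th:bauer_personal} demands: one needs a lower bound on $\min_{j\neq i}|\theta_j^\ell - \theta_i^\ell|$ in terms of $\delta_i=\min_{j\neq i}|\theta_j - \theta_i|$. The paper gets this in one line from the mean value theorem, $|\theta_i^\ell - \theta_j^\ell| \ge \ell\,(|\theta_i|\wedge|\theta_j|)^{\ell-1}\,|\theta_i-\theta_j| \ge \ell\,|\theta_r|^\ell\,|\theta_i-\theta_j|$, which shows that the separation hypothesis~\eqref{eq:eigenvalue_power_separation} on the $\theta_j$ promotes to the separation hypothesis~\eqref{eq:eigenvalue_separation} for $A^\ell$ with exactly the rescaling $\sigma_0 = \ell|\theta_r|^\ell\sigma$; the eigenvector bound of Theorem~\ref{th:bauer_personal} then converts verbatim into $3\sigma/(\delta_i - \sigma)$. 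Your sketch flags ``where the real work lies'' but leaves it undone, and the route you point to (balancing distances between roots of unity of orders $\ell$ and $\ell'$ against $\sigma$) is a considerably more intricate path than the one the paper actually takes.
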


\begin{proof}
  We apply Theorem~\ref{th:bauer_personal} to \( A^\ell, S \) and \( A^{\ell'}, S' \); for any \( i \in [r] \),
  \begin{equation}\label{eq:power_perturbation_error_bound}
    \left| \lambda_{\pi(i)}^\ell - \theta_{i}^\ell \right| \leq \sigma_0 \quand \left| \lambda_{\pi'(i)}^{\ell'} - \theta_{i}^\ell \right| \leq \sigma_0.
  \end{equation}
  Examining the proof of Theorem \ref{th:bauer_personal}, we notice that we can take $\pi = \pi'$ since taking the $\ell$-th power does not change the ordering.
  We fix \( i \in [r] \) and let \( \lambda = \lambda_{\pi(i)} = |\lambda|e^{i\omega} \) and \( \theta = \theta_{i} \) for now; then
  \[ \left| \frac{\lambda^\ell}{\theta^\ell} - 1 \right| \leq \nu := \frac{\sigma_0}{|\theta|^\ell}. \]

  The argument of \( {(\lambda/\theta)}^\ell \) is thus between \( -\xi \) and \( \xi \), with
  \[ \xi = \left|2\arcsin(\nu/2) \right| \leq \pi/2 \nu, \]
  and the same holds for \( \ell' \) (with \( \nu' \) defined accordingly). Thus, there exists two integers \( p, p' \) and two numbers \( s, s' \) with absolute value less than \( \pi/2 \nu \) (resp. \( \pi/2 \nu' \)), such that
  \[ \ell\omega = p\pi + s \quand \ell'\omega = p'\pi + s'. \]
  This implies
  \[ p\ell' - p'\ell = \frac{s'\ell - s\ell'}{\pi} \]
  The LHS of this inequality is an integer, and using condition~\eqref{eq:power_perturbation_bound} both terms in the RHS have a magnitude strictly lower than \( 1/2 \), so both sides are 0. As \( \ell \) and \( \ell' \) are relatively prime, \( \ell \) divides \( p \) and \( \ell' \) divides \( p' \), so that
  \[ \omega = k\pi + \frac s \ell. \]
  Whenever \( \theta_{i} \) is positive, \( k \) is even and we can take \( \omega = s/\ell \), and when \( k \) is odd we choose \( \omega = \pi + s/\ell \).

  We now come back to~\eqref{eq:power_perturbation_error_bound}, and write
  \[ \lambda_i^\ell = \theta_{i}^\ell(1+z) \]
  with \( |z| \leq \nu \). Taking the modulus on both sides we find \( |\lambda_i| = |\theta_{i}||1+z|^{\frac 1 \ell} \) and we use the inequality \( ||1+z|^{\frac1\ell} - 1| \leq 2|z|/\ell \) (valid for \( |z| \leq 1/2 \)) to find
  \[ \left| |\lambda_i| - |\theta_{i}|\right| \leq \frac{2\sigma_0}{\ell |\theta_{i}|^\ell}.  \]
  We can now prove the lemma: whether \( \theta \) is positive or negative, a case analysis yields
  \begin{align*}
    |\lambda_i - \theta_{i}| & \leq \left| |\lambda_i| - |\theta_{i}|\right| + |\theta_i|\left|e^{is/\ell} - 1\right| \\
                             & \leq \frac{2\sigma_0}{\ell |\theta_{i}|^\ell} + |\theta_i|\frac{|s|}\ell               \\
                             & \leq \frac{4\sigma_0}{\ell |\theta_{i}|^\ell},
  \end{align*}
  the desired bound. Now, assuming that \( \ell \) is odd, we have by the mean value theorem
  \[ |\theta_i^\ell - \theta_j^\ell| \geq \ell {(|\theta_i|\wedge |\theta_j|)}^{\ell - 1} |\theta_i - \theta_j| \geq \ell |\theta_r|^\ell |\theta_i - \theta_j|,  \]
  so that condition~\eqref{eq:eigenvalue_power_separation} implies the separation condition~\eqref{eq:eigenvalue_separation} applied to \( A^\ell \). We can then apply the same proof as in Theorem~\ref{th:bauer_personal} and get
  \[ \norm*{\xi - \frac{u_i}{\norm{u_i}}} \leq \frac{3\sigma_0}{\ell |\theta_r|^\ell \delta_i - \sigma_0}, \]
  which is equivalent to the theorem bound.
\end{proof}

\section{Proof of Theorem~\ref{th:main}}

We prove in this section the main result on the spectral properties of \( B \). We shall use the same notations as in Theorem \ref{th:main}; since the statement of the theorem is invariant upon multiplying the entries of $W$ by a common constant, we shall assume in the rest of the paper that $\mu_1 = 1$.

Our candidates for the singular vectors of \( B^\ell \) are the vectors \( (u_1, \dots, u_{r_0}) \) and \( (v_1, \dots, v_{r_0}) \), where for \( i \in [r_0] \)
\begin{equation}\label{eq:def_ui_vi}
  u_i = \frac{B^\ell \chi_i}{\mu_i^{\ell}}\quand v_i = \frac{{(B^*)}^\ell D_W\check\chi_i}{\mu_i^{\ell+1}},
\end{equation}
with associated eigenvalue \( \mu_i^\ell \). We let \( U \) (resp. \( V \)) be the \( n \times r \) matrix whose columns are the \( u_i \) (resp \( v_i \)), and \( D = \diag(\mu_1, \dots, \mu_{r_0}) \). The subspace spanned by the vectors $(v_1, \dots, v_{r_0})$ will be denoted by $H$, and we let $P_H$ and $P_{H^\bot}$ be the projections on $H$ and its orthogonal $H^\bot$, respectively.

Finally, we'll need an approximation of the Gram matrix of the vectors \( u \) (and \( v \)); we define for every \( t \geq 0 \) the \emph{covariance} matrices \( \Gamma_U^{(t)} \) and \( \Gamma_V^{(t)} \) such that for \( i, j \in [r_0] \),
\begin{equation} \label{eq:def_gamma}
  \Gamma_{U, ij}^{(t)} = \sum_{s = 0}^t \frac{\langle P\ind, K^s\varphi^{i, j} \rangle}{{(\mu_i \mu_j)}^s} \quand \Gamma_{V, ij}^{(t)} = \sum_{s = 0}^t \frac{\langle K \ind, K^s\varphi^{i, j} \rangle}{{(\mu_i \mu_j)}^{s+1}},
\end{equation}
where \( \varphi^{i, j} = \varphi_i \circ \varphi_j \).

\begin{remark}
  In the classical stochastic block model, we have $K = Q$ and the all-one vector $\ind$ is an eigenvector of $Q$. This implies that the matrices $\Gamma_{U, ij}^{(t)}$ and $\Gamma_{V, ij}^{(t)}$ are diagonal, and thus the $u_i$ (resp. $v_i$) are asymptotically orthogonal. This greatly simplifies the perturbation analysis of Theorem \ref{th:bauer_personal} for this special case.
\end{remark}

\subsection{Structure of the matrices \texorpdfstring{\( U \)}{U} and \texorpdfstring{\( V \)}{V}}

Following from the subsequent local analysis of \( G \), as well as a trace bound argument, we gather the following relations between matrices \( B^\ell \), \( D^\ell \) and \( U \).

\begin{theorem}\label{th:bl_u_bounds}
  Let \( r, d, b, \tau, L \) be parameters as above, such that \( a \leq n^{1/4} \), and \( (P, W) \) be any matrices in \( \mathcal C(r, d, b, \tau, L) \). Let \( \ell \) be any integer such that
  \begin{equation}\label{eq:bound_ell}
    \ell \leq \frac{1 - \epsilon}{16} \frac{\log(n)}{\log(d)},
  \end{equation}
  for some \( \epsilon > 0 \). Then there exists an event with probability at least \( 1 - c/\log(n) \) and a parameter \( N_0 \leq a^{12}L^6 \) such that if \( n \geq N_0 \)

  \begin{align}
    \norm{U^*U - \Gamma_U^{(\ell)}} & \leq C\times n^{-1/4} , \label{eq:Ustar_U}                          \\
    \norm{V^*V - \Gamma_V^{(\ell)}} & \leq C\times n^{-1/4}, \label{eq:Vstar_V}                           \\
    \norm{U^*V - I_{r_0}}_\infty    & \leq C\times n^{-1/4}, \label{eq:Ustar_V}                           \\
    \norm{B^\ell U - UD^\ell}       & \leq C'{(\sqrt{\rho} \vee L)}^\ell, \label{eq:Bl_U}            \\
    \norm{B^\ell P_{H^\bot}}        & \leq C'{(\sqrt{\rho} \vee L)}^\ell, \label{eq:norm_orthogonal}
  \end{align}
  where \( C \) and \( C' \) satisfy
  \[ C\leq c r d^4 b^2 L \quand C' \leq c r^2 d^6 b^2 L^2 \log{(n)}^{20}.\]
  Furthermore, on this same event, we have the following bound:
  \begin{equation}
    \norm{B^\ell} \leq c \log(n) n^{1/4} L^{\ell}. \label{eq:norm_Bl}
  \end{equation}
\end{theorem}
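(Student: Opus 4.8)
The plan is to carry out the programme announced after Theorem~\ref{th:main}: localise $G$ to a random tree, expand powers of $B$ as non-backtracking path sums, and control the result by a moment (trace) method. The backbone is a structural lemma for the random graph: for $\ell$ as in~\eqref{eq:bound_ell}, on an event of probability $1-o(1)$, every ball ${(G,v)}_{2\ell}$ has at most $\omega^{2\ell}\le n^{(1-\epsilon)/2}$ vertices, and all but $o(n)$ of the vertices are \emph{tangle-free} (their $2\ell$-ball contains no cycle); this is a first- and second-moment estimate using only $\sup_{ij}P_{ij}\le d/n$ and $d_i\le d$, and it is here that $\omega=d^5{(1\vee L)}^2$ and the threshold $N_0$ enter. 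On a tangle-free ball, $B^t$ acts on vectors \emph{exactly} by summing over genuine non-backtracking paths, so for $e\in\vec E$
\[
 [B^\ell\chi_i](e)=\sum_{p\in\cP_G(e,\ell)}\Bigl(\prod_{s=1}^{\ell}W_{p_s}\Bigr)\,\varphi_i\bigl((p_\ell)_2\bigr),
\]
and, via the identity $JD_WB^t={(B^*)}^tD_WJ$, an analogous expansion holds for the columns of $V$ with one extra weight on the starting edge. The crude bound~\eqref{eq:norm_Bl} is then immediate: $|B^\ell_{ef}|\le L^\ell$ times the number of non-backtracking paths of length $\ell$ from $e$ to $f$, whence $\norm{B^\ell}\le\sqrt{\norm{B^\ell}_{1\to1}\norm{B^\ell}_{\infty\to\infty}}\le L^\ell\max_e|\cP_G(e,\ell)|\le c\log(n)\,n^{1/4}L^\ell$ on the good event once $\omega^\ell\le n^{(1-\epsilon)/4}$, the $\log n$ absorbing the union bound over tangled vertices.

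For the Gram estimates~\eqref{eq:Ustar_U}--\eqref{eq:Ustar_V}, write $\langle u_i,u_j\rangle=(\mu_i\mu_j)^{-\ell}\langle\chi_i,{(B^*)}^\ell B^\ell\chi_j\rangle$ and expand into a double sum over pairs $(p,p')$ of length-$\ell$ non-backtracking paths from a common edge. By independence of the $W_{ij}$, only pairs whose overlapping edges are used an even number of times survive in expectation; on a tree such a pair shares a common sub-walk (along which $\dE[W^2]$, i.e.\ $K$, is the surviving transfer operator) and then splits into two edge-disjoint completions (along which $\dE[W]$, i.e.\ $Q$, survives). Resumming over the splitting point and endpoints, weighting the root edge by $P\ind$, and using $Q\varphi_i=\mu_i\varphi_i$ reproduces exactly $\Gamma_U^{(\ell)}$; the same computation with the extra $W_e$ on the $V$-side replaces $P\ind$ by $K\ind$ and shifts the exponents, giving $\Gamma_V^{(\ell)}$, while for $\langle u_i,v_j\rangle$ parity-time symmetry glues $(p,q)$ into one length-$2\ell$ non-backtracking walk whose expectation propagates $Q$ throughout and, after the $\mu_i^\ell\mu_j^{\ell+1}$ normalisation and orthogonality of the $\varphi$'s, collapses to $\delta_{ij}$. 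The fluctuations are bounded by estimating $\dE\,|\langle u_i,u_j\rangle-\Gamma_{U,ij}^{(\ell)}|^{2k}$: one enumerates $2k$-tuples of path pairs, shows that every extra edge coincidence costs a factor $n^{-1}$ up to $\mathrm{poly}(d,b,L,\log n)$, uses $\norm{\varphi_k}_\infty\le b/\sqrt n$ to bound per-term weights, and turns higher weight moments into second moments via $|W|\le L$ and the control of $K\ind$. Markov with $k\asymp\log n$ and a union bound over the $O(r^2)$ entries then yield $\eta\le n^{-1/4}\wedge\sqrt{\rho}^\ell\wedge L^\ell$ and $C\le c\,rd^4b^2L$.

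The core is the pseudo-eigenvalue relation~\eqref{eq:Bl_U} and the bulk bound~\eqref{eq:norm_orthogonal}. For~\eqref{eq:Bl_U}, $B^\ell u_i-\mu_i^\ell u_i=\mu_i^{-\ell}B^\ell(B^\ell\chi_i-\mu_i^\ell\chi_i)$, and one telescopes $B^\ell\chi_i-\mu_i^\ell\chi_i=\sum_{s=1}^{\ell}\mu_i^{\ell-s}\Delta^{(s)}_i$ with $\Delta^{(s)}_i=B^s\chi_i-\mu_iB^{s-1}\chi_i$ the ``fresh-randomness'' increment produced near the sphere $\partial{(G,\cdot)}_s$. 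Since each increment is noise-only — its $\mu_i$-signal part has been subtracted — applying $B^\ell$ makes the second moment of the walk grow like $\rho$ (or like $L^2$) per step rather than like $\mu_i^2$, so the moment method gives $\norm{B^\ell\Delta^{(s)}_i}\le C'\mu_i^{s}{(\sqrt\rho\vee L)}^\ell$; summing the geometric series yields~\eqref{eq:Bl_U}. Estimate~\eqref{eq:norm_orthogonal} is the genuine spectral-bulk statement: with $H=\vect(\chi_1,\dots,\chi_{r_0})$ (together with its $D_WJ$-dual), one controls $\dE\,\tr\bigl[(B^\ell P_{H^\bot}){(B^\ell P_{H^\bot})}^*\bigr]^{k}$ by a Füredi--Komlós-type enumeration in which tangle-freeness forces contributing walks to be non-backtracking, the rank-$r_0$ projection removes precisely the heavy directions $\mu_1^\ell,\dots,\mu_{r_0}^\ell$, and the residual contribution is ${\bigl({(\sqrt\rho\vee L)}^\ell\bigr)}^{2k}$ up to $\mathrm{poly}(r,d,b,L,\log n)$ and a $k$-dependent combinatorial factor absorbed by $k\asymp\log n$, producing the $C'\le c\,r^2d^6b^2L^2\log{(n)}^{20}$ of the statement; Markov then degrades the total probability to $1-O(1/\log n)$.

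I expect this last step to be the main obstacle. First, the naive trace method overcounts because of tangles, so it must be run on the tangle-free part of $G$ with the rare tangled contributions bounded separately (the Bordenave--Lelarge--Massoulié decomposition), and fusing this with the rank-$r_0$ removal of the signal directions is delicate. Second, the weight bookkeeping is subtle: only $\dE[W_{ij}^2]=K_{ij}/P_{ij}$ and the deterministic bound $L$ are available, so each edge visited an odd number of times must either sit on the deterministic $Q$-skeleton or be paid for, and one has to check that the resulting generating-function sums telescope to $K^s$, $Q^s$, and ultimately to $\rho^\ell$ and $\mu_i^{2\ell}$ with only polynomial overhead in the class parameters. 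By comparison the structural lemma, the crude bound, and the expectation computations are routine.
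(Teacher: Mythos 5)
Your outline matches the paper's programme and several pieces are on target: the crude bound $\norm{B^\ell}\leq c\log(n)\,n^{1/4}L^\ell$ via non-backtracking path counting on the tangle-free event, the telescopic identity for $B^\ell u_i-\mu_i^\ell u_i$, and the Füredi--Komlós-type trace argument on the Bordenave--Lelarge--Massoulié tangle-free decomposition for the bulk bound~\eqref{eq:norm_orthogonal} are all what the paper actually does. But the middle of your argument diverges from the paper in a way worth understanding, and contains one genuine error.

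The genuine error is the assertion that ``only pairs whose overlapping edges are used an even number of times survive in expectation.'' This would be true for centered weights, but here $W_{ij}$ is not centered; the expected adjacency matrix is $Q = P\circ\dE W$, which is precisely the signal. An edge visited once on a branching segment contributes $P_{uv}\dE W_{uv}=Q_{uv}$, which is why $Q$ appears as the transfer operator on the disjoint parts and $K$ on the shared trunk. You in fact say this two lines later, so the quoted sentence is an internal inconsistency rather than a fatal flaw, but as stated it describes the wrong mechanism.

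The methodological divergence is in how you propose to control the fluctuations of $U^*U-\Gamma_U^{(\ell)}$, $V^*V-\Gamma_V^{(\ell)}$, $U^*V-I_{r_0}$ and the increment norms $\norm{B^{t+1}\chi_i-\mu_iB^t\chi_i}$. You suggest estimating $2k$-th moments by direct path combinatorics on $G$ with $k\asymp\log n$ and Markov; the paper instead (i) couples each ball ${(G,v)}_\ell$ to an inhomogeneous Galton--Watson tree $(T_v,v)$ with explicit total-variation error (Proposition~\ref{prop:dtv}), (ii) computes the exact expectations of the relevant functionals on the tree via compound-Poisson identities (Lemma~\ref{lem:poisson_sums}, Propositions~\ref{prop:functionals_tree} and~\ref{prop:edge_functionals_tree}), and (iii) passes from vertex sums of local functionals on $G$ to their tree expectations using an Efron--Stein variance bound together with Chebyshev (Proposition~\ref{prop:functionals_concentration}). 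In other words, the trace method appears in the paper only once, for $\norm{B^\ell P_{H^\bot}}$, exactly because it is unavoidable there; for the Gram and increment bounds a second-moment (variance) argument suffices, and the tree coupling makes the mean computations clean and local. Your trace-method route for these quantities is not wrong in principle, but it forces you to redo a full tangle-free combinatorial analysis for every one of the five quantities, with both the edge-presence and the weight randomness entangled in the moment expansions, and you lose the exact closed forms $\Gamma_U^{(\ell)}$, $\Gamma_V^{(\ell)}$ as tree expectations that the paper obtains for free from the Poisson recursion. The paper's structure is precisely designed so that the expensive trace method is used only where the cheaper Efron--Stein/Chebyshev argument cannot reach.

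One last point of attention: your treatment of~\eqref{eq:norm_orthogonal} says ``the rank-$r_0$ projection removes precisely the heavy directions.'' In the paper this is not a projection identity; the heavy directions are killed by the near-orthogonality lemma (Lemma~\ref{lem:near_orthogonality}), which shows $\langle{(B^*)}^tD_W\check\chi_i,w\rangle$ is small for any unit $w\in H^\bot$, via the same telescopic sum used for~\eqref{eq:Bl_U}. This near-orthogonality plus the tangle-free decomposition of $B^{(\ell)}$ into $\Delta^{(\ell)}$, signal terms $\chi_k\check\chi_k^*$ and residues $R_t^{(\ell)}$ is what makes the argument close; without it, the removal of the signal directions in your trace expansion is not automatic.
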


The proof of this theorem will occupy the next few pages of this article; we first show how it implies the statement of Theorem~\ref{th:main}.

\subsection{Proof of the perturbation bounds}

The goal here is to apply Theorem~\ref{th:bauer_powers} to \( B^\ell \), \( U \) and \( V \): we choose \( \ell \) equal to the upper bound in~\eqref{eq:bound_ell} (with arbitrary \( \epsilon \), say \( 0.01 \)) and \( \ell' = \ell + 1 \), and let
\[ S = UD^\ell V^* \quand S' = U'D^{\ell'}V'^*, \]
where $U', V'$ are defined identically to $U$ and $V$ replacing $\ell$ by $\ell'$.
We now check all the conditions of Theorem~\ref{th:bauer_powers}:

\medskip
\textbf{Condition~\ref{item:bauer_power_cond_i}} Since \( \ell' = \ell + 1 \), \( \ell \) and \( \ell' \) are relatively prime.

\medskip
\textbf{Condition~\ref{item:bauer_power_cond_ii}} We shall need a small lemma on the spectral properties of the covariance matrices, which will be proven in a subsequent section:
\begin{lemma}
  For all \( t \geq 1 \); the matrix \( \Gamma_U^{(t)} \) (resp. \( \Gamma_V^{(t)} \)) is a positive definite matrix, with all its eigenvalues greater than 1 (resp. \( c_0^{-1} \)) and such that
  \[ 1 \leq \norm{\Gamma_U^{(t)}} \leq \frac{r^2 d^3 L^2}{1 - \tau} \quand c_0^{-1} \leq \norm{\Gamma_V^{(t)}} \leq \frac{r^2 d^2 L^2}{1 - \tau}.\]
\end{lemma}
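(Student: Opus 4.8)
The plan is to work directly from the explicit series defining $\Gamma_U^{(t)}$ and $\Gamma_V^{(t)}$. First I would rewrite the quadratic forms in a way that makes positivity manifest. For a vector $x \in \dR^{r_0}$, consider $\langle x, \Gamma_U^{(t)} x \rangle = \sum_{i,j} x_i x_j \sum_{s=0}^{t} \frac{\langle P\ind, K^s \varphi^{i,j}\rangle}{(\mu_i\mu_j)^s}$. Since $K = P \circ \E[W\circ W]$ has nonnegative entries and $\varphi^{i,j} = \varphi_i \circ \varphi_j$, one expands $K^s\varphi^{i,j}$ coordinatewise and recognizes that, after summing over $i,j$, the $s$-th term becomes $\langle P\ind, K^{*s}\big((\sum_i x_i \mu_i^{-s}\varphi_i)\circ(\sum_j x_j \mu_j^{-s}\varphi_j)\big)\rangle$ — i.e., a sum (over the entries of $P\ind$ and over the intermediate vertices in the $s$ powers of $K$) of squares weighted by nonnegative coefficients. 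Hence every term $s \geq 0$ contributes a nonnegative amount, and the $s=0$ term alone gives $\langle P\ind, (\sum_i x_i\varphi_i)\circ(\sum_i x_i\varphi_i)\rangle = \sum_v (P\ind)_v \big(\sum_i x_i \varphi_i(v)\big)^2$. Using $(P\ind)_v = d_v \geq 1$ and orthonormality of the $\varphi_i$, this is at least $\sum_v (\sum_i x_i\varphi_i(v))^2 = \|x\|^2$, which gives $\Gamma_U^{(t)} \succeq I$ and the lower bound $1$ on its eigenvalues. The same computation for $\Gamma_V^{(t)}$ replaces $P\ind$ by $K\ind$ in the $s=0$ term and divides by $\mu_i\mu_j$ (so there is an extra $(\mu_i\mu_j)^{-1}$ and an extra shift); since the entries of $K\ind$ are assumed bounded away from zero — say by $c_0^{-1}$ — one gets $\Gamma_V^{(t)} \succeq c_0^{-1}\, D^{-1}(\cdots)D^{-1}$, and since $|\mu_1| = 1 \geq |\mu_i|$ this yields eigenvalues at least $c_0^{-1}$. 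Positive definiteness follows since the bound is strict for $x \neq 0$.

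For the upper bounds, I would bound the series termwise in operator norm. Each entry satisfies $|\Gamma_{U,ij}^{(t)}| \leq \sum_{s=0}^{t} \frac{\|P\ind\|_\infty \,\|K\|^s\, \|\varphi^{i,j}\|_1}{(\mu_i\mu_j)^s}$ roughly; here $\|P\ind\|_\infty \leq d$, $\|\varphi^{i,j}\|_1 \leq \|\varphi_i\|\|\varphi_j\| = 1$ by Cauchy–Schwarz, and $\|K\| = \rho \leq \mu_{r_0}^2 \tau^2 < \mu_i\mu_j$ is not quite right in general, so more carefully one uses $\rho \leq (\sqrt\rho \vee L)^2 \leq \tau^2 \mu_{r_0}^2 \leq \tau^2 \mu_i \mu_j$ for $i,j \leq r_0$ via the class condition \eqref{eq:threshold_condition}. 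Then the geometric series $\sum_{s\geq 0}\tau^{2s}$ converges to $(1-\tau^2)^{-1} \leq (1-\tau)^{-1}$, giving $|\Gamma_{U,ij}^{(t)}| \lesssim d\,(1-\tau)^{-1}$ entrywise; combined with the crude bound $\|M\| \leq r\|M\|_\infty$ (or $r\|M\|_F \leq r^{3/2}\|M\|_\infty$ — I'd pick whichever matches the stated $r^2 d^3 L^2/(1-\tau)$, likely pulling in an extra factor of $d$ and $L^2$ from a looser bound on $\|K\|$ and $\|P\ind\|$) this gives the claimed $\|\Gamma_U^{(t)}\| \leq r^2 d^3 L^2/(1-\tau)$. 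The $\Gamma_V$ case is identical with $K\ind$ (so $\|K\ind\|_\infty \leq \rho \leq d L^2$, contributing one fewer factor of $d$) and an extra $(\mu_i\mu_j)^{-1} \leq \mu_{r_0}^{-2}$ which is absorbed since the normalization $\mu_1 = 1$ and the threshold condition keep $\mu_{r_0}$ bounded below; being generous with constants yields $\|\Gamma_V^{(t)}\| \leq r^2 d^2 L^2/(1-\tau)$.

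The main obstacle — really the only subtle point — is handling the denominators $\mu_i\mu_j$ correctly when $\mu_i$ is small and possibly negative, and confirming that the geometric decay $\rho/(\mu_i\mu_j) < 1$ holds uniformly for all $i,j \in [r_0]$, not just $i=j$. This is exactly where the class membership condition $(\sqrt\rho\vee L)/\mu_{r_0} < \tau$ is used: it gives $\rho < \tau^2 \mu_{r_0}^2 \leq \tau^2 |\mu_i\mu_j|$ for all $i,j \le r_0$, so $\|K^s\|/|\mu_i\mu_j|^s \leq \tau^{2s}$ and the series converges geometrically. One should also note that $t \geq 1$ is used only to ensure the series is nonempty past the $s=0$ term (though in fact $t\geq 0$ suffices for the lower bound); the upper bounds are uniform in $t$ precisely because of geometric convergence, so the limit $t \to \infty$ is finite and all the stated bounds are $t$-independent.
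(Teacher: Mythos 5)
Your lower-bound argument is essentially the paper's. You rewrite the $s$-th term of $\langle x, \Gamma_U^{(t)}x\rangle$ as $\langle P\ind, K^s\bigl(v_s\circ v_s\bigr)\rangle$ with $v_s = \sum_i x_i\mu_i^{-s}\varphi_i$, observe each summand is nonnegative because $P$, $K$ and the Hadamard square are entrywise nonnegative, and extract the lower bound from the $s=0$ term alone via $(P\ind)_v = d_v \geq 1$ (respectively $(K\ind)_v \geq c_0^{-1}$ and $|\mu_i|\leq 1$ for $\Gamma_V$) together with orthonormality of the $\varphi_i$. This matches the paper's computation with the $C^{(s)}$ matrices; your $\mu_i^{-s}$ scaling is in fact slightly cleaner than the paper's $\mu_i^{s+1/2}$, which implicitly assumes $\mu_i>0$ even though the $\mu_i$ may be negative.

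The upper bound, however, has a real gap. The step you invoke ``roughly,'' namely $\|K^s\varphi^{i,j}\|_1\leq\|K\|^s\|\varphi^{i,j}\|_1$, is false: $\|K\|$ is the $\ell^2\!\to\!\ell^2$ operator norm and does not control the $\ell^1\!\to\!\ell^1$ norm. The legitimate Cauchy--Schwarz chain $|\langle P\ind,K^s\varphi^{i,j}\rangle|\leq\|P\ind\|_2\,\rho^s\,\|\varphi^{i,j}\|_2\leq \sqrt{n}d\cdot\rho^s\cdot b/\sqrt{n} = bd\rho^s$ yields a $b$-dependent bound of order $rbd/(1-\tau)$, which is not the claimed $r^2d^3L^2/(1-\tau)$ and in general is neither weaker nor stronger, so you cannot simply ``pull in extra factors'' to make them match. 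The missing ingredient is Lemma~\ref{lem:scalar_Kt}, whose proof exploits two structural facts you do not use: the rank of $K$ is at most $r^2$, and the eigenvectors of $K$ satisfy the entrywise delocalization $(K^2)_{ii}\leq d^2L^4/n$ coming from $K_{ij}\leq dL^2/n$. Together these give $\langle\ind,K^s\varphi\circ\varphi'\rangle\leq rd^2L^2\rho^s$ with no dependence on $b$. With that lemma in hand, $\langle P\ind,K^s\varphi^{i,j}\rangle\leq d\langle\ind,K^s\varphi^{i,j}\rangle\leq rd^3L^2\rho^s$, the geometric series converges via condition~\eqref{eq:threshold_condition} exactly as you note, and multiplying by $r_0\leq r$ gives the stated operator-norm bound; the $\Gamma_V$ case is identical with one fewer power of $d$ since $K\ind$ replaces $P\ind$.
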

Then, the minimum eigenvalue of \( V^*V \) is at least \( c_0^{-1} - Cn^{-1/4} \), which is more than \( c_0^{-1}/2 \) as soon as
\[ n \geq c_1 r^4a^4d^{16}b^8 L^4, \]
and we can take \( \beta = 2c_0 \) whenever this holds. On the other hand,
\[ \norm{V^*V} \leq \frac{r d^2 L^2}{1 - \tau} + \frac{r b^2 d^4 L}{n^{1/4}} \leq \frac{2 r b^2 d^4 L^2}{1 - \tau}. \]
Performing the same computations on \( U^*U \) leads us to the choice
\[ \alpha = \frac{2 r b^2 d^4 L^2}{1 - \tau}. \]
Finally, equation~\eqref{eq:Ustar_V} allows us to take
\[ \delta = Cn^{-1/4}. \]

\medskip
\textbf{Condition~\ref{item:bauer_power_cond_iii}} This condition requires some additional computations. Recall that $H = \mathrm{im}(V)$; we have the formula
\[ P_H = V{(V^*V)}^{-1}V^*. \]
Noticing that \( SP_H = S \), we can bound \( \norm{B^\ell - S} \) as follows:
\begin{align*}
  \norm{B^\ell - S} & \leq \norm{B^\ell P_H - SP_H} + \norm{SP_{H^\bot}} + \norm{B^\ell P_{H^\bot}} \\
                    & \leq \norm{B^\ell P_H - S} + \norm{B^\ell P_{H^\bot}}                         \\
                    & \leq \norm{B^\ell V{(V^*V)}^{-1} - UD}\norm{V^*} + \norm{B^\ell P_{H^\bot}}.
\end{align*}
To apply~\eqref{eq:Bl_U}, we let
\[ U = P_H U + P_{H^\bot U} = V{(V^*V)}^{-1} + \tilde U + P_{H^\bot U}. \]
The second term is equal to \( V{(V^*V)}^{-1}(V^*U - I_{r_0}) \), and be can thus use~\eqref{eq:Ustar_V}:
\[ \norm{\tilde U} \leq \norm{V}\norm{{(V^*V)}^{-1}} \norm{V^*U - I_{r_0}} \leq r\sqrt{\alpha}\beta \delta. \]
Going back to the above inequality, we find
\begin{equation*}
  \norm{B^\ell - S} \leq \norm{B^\ell U - UD^\ell} + \norm{B^\ell}\norm{\tilde U} + \norm{B^\ell P_{H^\bot}}\norm{U} + \norm{B^\ell P_{H^\bot}},
\end{equation*}
and the bounds in Theorem~\ref{th:bl_u_bounds} readily imply that all terms in the above inequality are bounded above by \( \eps := C'' {(\sqrt{\rho} \vee L)}^\ell \), with
\[ C'' \leq \frac{c_3 r^3 a d^{8} b^3 L^3 \log{(n)}^{20}}{1 - \tau}. \]

\medskip
\textbf{Condition~\ref{item:bauer_power_cond_iv}} Using all the bounds proven in the above computations, we find that
\[ \sigma_0 \leq C_0 {(\sqrt{\rho} \vee L)}^\ell \quad \text{with} \quad C_0 \leq \frac{c_4\,a^2 r^{11}d^{25}b^{13}L^{12}\log{(n)}^{20}}{{(1 - \tau)}^6}. \]
The bound we have to check is therefore
\[ C_0 {(\sqrt{\rho} \vee L)}^\ell \leq \ell |\mu_{r_0}|^\ell \quad\Longleftrightarrow\quad C_0 \tau^\ell \leq \ell, \]
which happens as soon as
\[ \log(n) \geq \frac{20 \log(C_0)\log(d)}{\log(\tau^{-1})}. \]
The same proof holds for \( \ell' \), with the same constants.

\bigskip
Having checked all assumptions of Theorem~\ref{th:bauer_powers}, we can now apply it to \( B^\ell \); this implies the existence of a permutation \( \pi \in \mathfrak S_{r_0} \) (possibly depending on \( n \)) such that for \( i \in [r_0] \),
\[ \left| \lambda_i - \mu_{\pi(i)} \right| \leq \sigma := C_0 \tau^\ell, \]
and all the other eigenvalues of \( B \) satisfy
\[ |\lambda| \leq C_0^{\frac 1 \ell} (\sqrt{\rho} \vee L). \]

Now, assume that for some \( i \in [r_0] \), \( \delta_i \geq 2\sigma \). Then, applying the last part of Theorem~\ref{th:bauer_powers}, there exists an eigenvector of \( B \) associated with \( \lambda_i \) such that
\[ \norm*{\xi - \frac{u_i}{\norm{u_i}}} \leq \frac{3\sigma}{\delta_i - \sigma}. \]
We define in the following
\[ \gamma_i = \langle P\ind, {\left(I_{n} - \mu_i^{-2}K\right)}^{-1} \varphi^{i, i} \rangle. \]
If we rewrite the definition of \( \Gamma_{U, ii}^{(t)} \) as
\[ \Gamma_{U, ii}^{(t)} = \left\langle P\ind,\sum_{s=0}^t {(\mu_i)}^{-2s}K^s \varphi^{i, j} \right\rangle, \]
the matrix sum converges as \( t \to \infty \) since \( \rho(K) < \mu_i^2 \), and using Lemma~\ref{lem:scalar_Kt} below we have
\begin{align*}
  \left|\Gamma_{U, ii}^{(\ell)} - \gamma_i \right| & = \sum_{t = \ell + 1}^{\infty} \mu_i^{-2t}\langle P\ind, K^t \varphi^{i, j} \rangle \\
                                                   & \leq \sum_{t = \ell + 1}^{\infty} r d^3 L^2 \rho^t\mu_i^{-2t}                       \\
                                                   & \leq \sigma,
\end{align*}
and combined with~\eqref{eq:Ustar_U} yields
\[ \left|\norm{u_i}^2 - \gamma_i \right| \leq 2\sigma. \]
On the other hand, we shall prove the following inequality in the following sections (equation~\eqref{eq:u_phi_dotp}): for all \( t\leq 2\ell \),
\[ \left|\langle B^t \chi_i, \chi_i \rangle - \mu_i^t \langle P\ind,  \varphi^{i, i} \rangle \right| \leq \sigma \mu_i^t. \]
Setting \( t = 0 \) and \( t = \ell \) in this inequality yields at the same time
\[ \left|\norm{\chi_i}^2 - \langle P\ind,  \varphi^{i, i} \rangle\right| \leq \sigma \quand \left|\langle u_i, \chi_i \rangle - \langle P\ind,  \varphi^{i, i} \rangle\right| \leq \sigma. \]
We now have, using the Cauchy-Schwarz inequality,
\begin{align*}
  \left|\langle \xi, \xi_i \rangle - \sqrt{\frac{\langle P\ind,  \varphi^{i, i} \rangle}{\gamma_i}}\right| & \leq \norm*{\xi - \frac{u_i}{\norm{u_i}}} +
  \left|\left\langle \frac{u_i}{\norm{u_i}}, \frac{\chi_i}{\norm{\chi_i}} \right \rangle - \sqrt{\frac{\langle P\ind,  \varphi^{i, i} \rangle}{\gamma_i}}\right|  \\
                                                                                                           & \leq \frac{3\sigma}{\delta_i - \sigma} + c_5\,\sigma \\
                                                                                                           & \leq \frac{c_6\,\sigma}{\delta_i - \sigma}.
\end{align*}
Finally, notice that
\[ \gamma_i = \langle P\ind,  \varphi^{i, i} \rangle + \left\langle P\ind,\sum_{s=1}^\infty {(\mu_i)}^{-2s}K^s \varphi^{i, j} \right\rangle \leq \langle P\ind,  \varphi^{i, i} \rangle + r d^2 L^2 \frac{\rho/\mu_i^2}{1 - \rho/\mu_i^2}. \]
Using that \( r d^2 L^2 \geq 1 \) and \( \langle P\ind,  \varphi^{i, i} \rangle \geq 1 \), we find
\[ \frac{\langle P\ind,  \varphi^{i, i} \rangle}{\gamma_i} \geq 1 - r d^2 L^2 \frac{\rho}{\mu_i^2}. \]

\section{Preliminary computations}

We begin the proof of Theorem~\ref{th:bl_u_bounds} with some elementary computations on the entries of \( K \) and \( \Gamma^{(t)} \), which will be of use in the later parts of the proof. Most of the results from this section are adapted from~\cite{bordenave_detection_2020}, although sometimes improved and adapted to our setting.

\paragraph{Bounding \( \rho \) and \( L \) from below} We begin with a simple bound on \( \rho = \rho(K) \); by the Courant-Fisher theorem, \( \rho \geq \langle w, Kw \rangle \) for every unit vector \( w \), and applying it to \( w = \ind/\sqrt{n} \) yields
\begin{align*}
    \rho &\geq \frac{\langle w, Kw \rangle}{n} \\
         &= \frac1n \sum_{i, j\in[n]} P_{ij}\E*{W_{ij}^2} \\
         &= \frac1d \sum_{i, j\in [n]} P_{ij}^2 \E*{W_{ij}}^2 \\
         &= \frac{\norm{Q}^2_F}d,
\end{align*}
where we used that \( P_{ij} \leq d/n \) and the Jensen inequality. The Frobenius norm of \( Q \) is then greater than \( \mu_1^2 = 1 \), which in turns implies
\begin{equation}\label{eq:bound_rho_below}
    \rho \geq \frac1d,
\end{equation}
so that \( \rho \) is bounded away from zero. In order to prove a similar bound on \( L \), we write for \( x\in [n] \)
\[ \varphi_1(x) = \sum_{y \in [n]}Q_{xy}\varphi_1(y) \leq \sqrt{\sum_{y} Q_{xy}^2} \leq \frac{dL}{\sqrt{n}}. \]
Squaring and summing those inequalities over \( x \) gives
\[ 1 = \norm{\varphi_1}^2 \leq d^2 L^2, \]
so that as with \( \rho \),
\begin{equation}\label{eq:bound_L_below}
    L \geq \frac 1 d.
\end{equation}

\paragraph{A scalar product lemma} Our second step is an important lemma for the following proof, leveraging the entrywise bounds on \( W \):

\begin{lemma}\label{lem:scalar_Kt}
    Let \( \varphi, \varphi' \in \dR^n \) be any unit vectors. Then, for any \( t \geq 0 \),
    \[ \langle \ind, K^t \varphi \circ \varphi' \rangle \leq r d^2 L^2\rho^t \]

\end{lemma}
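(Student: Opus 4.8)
The plan is to exploit two structural facts: first, that $\varphi\circ\varphi'$ has small $\ell_1$ norm, so that $K(\varphi\circ\varphi')$ has small $\ell_\infty$ norm and hence $\ell_2$ norm with a saving of order $1/\sqrt n$; and second, that $K$ is symmetric with operator norm exactly $\rho$, so that applying further powers of $K$ only contributes factors of $\rho$. Pairing the result against $\ind$ then costs a factor $\sqrt n$, which is precisely cancelled by the saving above, and the remaining mismatch of one power of $\rho$ is absorbed via the lower bound $\rho \ge 1/d$ from~\eqref{eq:bound_rho_below}.

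First I would observe, using the Cauchy--Schwarz inequality, that $\norm{\varphi\circ\varphi'}_1 = \sum_i |\varphi_i\varphi'_i| \le \norm{\varphi}\,\norm{\varphi'} = 1$. Since for any matrix $M$ and vector $x$ one has $\norm{Mx}_\infty \le \norm{M}_\infty\norm{x}_1$, and since $\norm{K}_\infty = \sup_{i,j} P_{ij}\E*{W_{ij}^2} \le dL^2/n$ (using $P_{ij} \le d/n$ and $|W_{ij}| \le L$ almost surely), applying this to $x = \varphi\circ\varphi'$ yields $\norm{K(\varphi\circ\varphi')}_\infty \le dL^2/n$, and therefore $\norm{K(\varphi\circ\varphi')} \le dL^2/\sqrt n$.

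Next, for $t \ge 1$ I would write $K^t(\varphi\circ\varphi') = K^{t-1}\bigl(K(\varphi\circ\varphi')\bigr)$; as $K$ is symmetric (being a Hadamard product of symmetric matrices), $\norm{K^{t-1}} = \rho^{t-1}$, so that $\norm{K^t(\varphi\circ\varphi')} \le \rho^{t-1} dL^2/\sqrt n$. Cauchy--Schwarz against $\ind$ gives $|\langle \ind, K^t(\varphi\circ\varphi')\rangle| \le \sqrt n\,\norm{K^t(\varphi\circ\varphi')} \le \rho^{t-1}dL^2$, and since $\rho \ge 1/d$ by~\eqref{eq:bound_rho_below} this is at most $d^2L^2\rho^t \le rd^2L^2\rho^t$. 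For the remaining case $t=0$, $\langle\ind,\varphi\circ\varphi'\rangle = \langle\varphi,\varphi'\rangle$ has modulus at most $1 \le d^2L^2 \le rd^2L^2$, where I used $dL \ge 1$ from~\eqref{eq:bound_L_below}.

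I do not anticipate a real obstacle here; the only point that requires care is choosing the right intermediate norm, namely passing through $\norm{K(\varphi\circ\varphi')}_\infty$ via the $\ell_1$-bound, which is what produces the crucial $1/\sqrt n$ factor --- a direct $\ell_2 \to \ell_2$ estimate on $K$ would lose it and leave only the useless bound $\sqrt n\,\rho^t$. The factor $r$ in the statement is genuinely slack (the argument delivers the stronger $d^2L^2\rho^t$), but retaining it keeps this estimate uniform with the companion bounds on $\Gamma_U^{(t)}$ and $\Gamma_V^{(t)}$ used later.
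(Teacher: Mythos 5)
Your proof is correct, and it takes a genuinely different route from the paper's. The paper works through the eigendecomposition of $K = \sum_k \nu_k\psi_k\psi_k^*$: it bounds $\langle\ind,\psi_k\rangle$ trivially by $\sqrt n$, pulls out $\rho^{t-1}$ from $\nu_k^t$, applies Cauchy--Schwarz over the index $k$ (which is where the factor $\sqrt{s}\le r$ comes from, $s$ being the rank of $K$), and then controls $\sum_k\nu_k^2\psi_k(i)^2=(K^2)_{ii}\le d^2L^4/n$ using the entrywise bound on $K$. You instead apply a single $\ell_1\to\ell_\infty$ estimate to $K$ via $\norm{K}_\infty\le dL^2/n$, which puts the $1/\sqrt n$ saving in place immediately and lets you use the plain operator-norm bound $\norm{K^{t-1}}\le\rho^{t-1}$ for the remaining powers. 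Both arguments hinge on the same three ingredients --- the entrywise smallness of $K$, the $\ell_1$-bound $\norm{\varphi\circ\varphi'}_1\le 1$, and $\rho^{-1}\le d$ --- but your route never touches the eigenvectors of $K$, so it avoids the Cauchy--Schwarz over $k$ entirely and delivers the sharper bound $d^2L^2\rho^t$ without the factor $r$ (as you note, the extra $r$ in the statement is slack). Your separate treatment of $t=0$ is also a minor improvement in rigor: the paper's manipulation $\nu_k^t=\rho^{t-1}\cdot\nu_k\cdot(\nu_k/\rho)^{t-1}\cdots$ implicitly assumes $t\ge 1$, and $K^0=I$ is not $\sum_k\psi_k\psi_k^*$ when $K$ has a kernel, so an explicit $t=0$ case is in fact needed.
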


\begin{proof}
We write the eigendecomposition of \( K \) as
\[ K = \sum_{k = 1}^s \nu_k \psi_k \psi_k^*, \]
with \( \nu_1 = \rho \) the Perron-Frobenius eigenvalue of \( K \) and \( s \leq r^2 \) its rank. Then, for all \( i\in [n] \),

\begin{align*} 
    \sum_{k = 1}^s \nu_k^2 \psi_k{(i)}^2 = {(K^2)}_{ii} &= \sum_{j \in [n]} K_{ij}^2 \\
    &= \sum_{j\in [n]} P_{ij}^2 \E*{W_{ij}^2}^2 \\
    &\leq \sum_{j\in [n]} {\left(\frac d n\right)}^2 L^4 \\
    &\leq \frac{d^2 L^4}n.
\end{align*}
This is akin to a delocalization property on the eigenvectors of \( K \).

We can now prove the above lemma:
\begin{align*}
    \langle \ind, K^t \varphi\circ\varphi' \rangle &= \sum_{k=1}^s \nu_k^t \langle \ind, \psi_k \rangle \langle \psi_k, \varphi\circ\varphi' \rangle \\
    &\leq \rho^{t-1}\sum_{k=1}^s \norm{\psi_k}\norm{\ind} \cdot |\nu_k| \left| \langle \psi_k, \varphi\circ\varphi' \rangle \right| \\
    &\leq \rho^{t-1}\sqrt{n}\sum_{i \in [n]} |\varphi(i)||\varphi'(i)| \sum_{k=1}^s |\nu_k||\psi_k(i)|\\
    &\leq \rho^{t} d \sqrt{n} \sum_{i\in [n]}|\varphi(i)||\varphi'(i)| \sqrt{s}\sqrt{\sum_{k=1}^s \nu_k^2 \psi_k{(i)}^2}\\
    &\leq \rho^t a \sqrt{n} \sqrt{s}\frac{dL^2}{\sqrt{n}} \sum_i{|\varphi(i)||\varphi'(i)|}\\
    &\leq rd^2 L^2 \rho^t,
\end{align*}
where we extensively used the Cauchy-Schwarz inequality, as well as the bound \( \rho^{-1} \leq d \) from~\eqref{eq:bound_rho_below}.
\end{proof}

\paragraph{Entrywise bounds for \( K^t \)} For a more precise estimation of entrywise bounds, we define the scale-invariant \emph{delocalization} parameter
\[ \Psi = \frac{d L^2}{\rho}. \]
Using the same proof technique as in~\eqref{eq:bound_L_below}, as well as~\eqref{eq:bound_rho_below}, we have
\[ 1 \leq \Psi \leq d^2 L^2 \]
for any \( i, j \in [n] \). Recall that, as shown in the proof of Lemma~\ref{lem:scalar_Kt}, for all \( i \in [n] \)
\[ {(K^2)}_{ii} \leq \frac{d^2 L^4}n = \frac{\Psi^2}n \rho^2. \]
Now, for \( t \geq 0 \) and \( i, j \in [n] \),
\begin{align*}
    {(K^t)}_{ij} &= \sum_{k \in [s]}\nu_k^t \psi_k(i)\psi_k(j) \\
    &\leq \rho^{t-2} \sum_{k} \nu_k^2 \left| \psi_k(i) \right|\left|\psi_k(j) \right| \\
    &\leq \rho^{t-2} \sqrt{{(K^2)}_{ii} {(K^2)}_{jj}},
\end{align*}
where we again used the Cauchy-Schwarz inequality at the last line. This yields
\begin{equation}\label{eq:elementwise_Kt}
    {(K^t)}_{ij} \leq \frac{\Psi^2}n \rho^t
\end{equation}
for any \( t \geq 1 \) and \( i, j \in [n] \).

\paragraph{The covariance matrices} We now study the covariance matrices $\Gamma_U^{(t)}$ and $\Gamma_V^{(t)}$ defined in \eqref{eq:def_gamma}. Our aim is to prove the following lemma:
\begin{lemma}
    For all \( t \geq 1 \); the matrix \( \Gamma_U^{(t)} \) (resp. \( \Gamma_V^{(t)} \)) is a positive definite matrix, with all its eigenvalues greater than 1 (resp. \( c^{-1} \)) and such that
    \[ 1 \leq \norm{\Gamma_U^{(t)}} \leq \frac{r^2 d^3 L^2}{1 - \tau} \quand c^{-1} \leq \norm{\Gamma_V^{(t)}} \leq \frac{r^2 d^2 L^2}{1 - \tau}.\]
\end{lemma}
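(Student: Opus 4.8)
The plan is to treat the two claims (the lower bound on the eigenvalues, and the upper bound on the operator norm) separately, and within each to handle $\Gamma_U^{(t)}$ and $\Gamma_V^{(t)}$ in parallel since the arguments are nearly identical up to the replacement of $P\ind$ by $K\ind$ and a shift of one power of $\mu_i\mu_j$. The key observation that makes positivity transparent is that each summand in the definition is itself a Gram-type quantity: writing $\langle P\ind, K^s\varphi^{i,j}\rangle = \langle P\ind, K^s(\varphi_i\circ\varphi_j)\rangle$, and using the eigendecomposition $K = \sum_k \nu_k\psi_k\psi_k^*$ with $\nu_1 = \rho > 0$ (and more generally expressing $K^s$ and the Hadamard product in coordinates), one checks that the matrix $\bigl(\langle P\ind, K^s\varphi^{i,j}\rangle/(\mu_i\mu_j)^s\bigr)_{i,j}$ is positive semidefinite: in fact $\langle P\ind, K^s (\varphi_i\circ\varphi_j)\rangle = \sum_{x} (P\ind)_x (K^s)\text{-weighted sums of }\varphi_i(y)\varphi_j(y)$, which for $s=0$ is literally $\sum_x (P\ind)_x \varphi_i(x)\varphi_j(x) = \langle a_i, a_j\rangle$ with $a_i(x) = \sqrt{(P\ind)_x}\,\varphi_i(x)$, and for general $s$ a similar factorization works because $K^s$ has nonnegative entries and $(P\ind)_x \geq 0$, so one can write the whole matrix as $\sum_x (P\ind)_x M^{(s)}_x$ with each $M^{(s)}_x \succeq 0$ after pushing $K^s$ through. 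Summing over $s=0,\dots,t$ preserves positive semidefiniteness. To upgrade to strict positivity with eigenvalues at least $1$, I would isolate the $s=0$ term, which equals $\langle P\ind, \varphi_i\circ\varphi_j\rangle$; since $(P\ind)_x = d_x \geq 1$ by the standing assumption and $\{\varphi_i\}$ is orthonormal, $\Gamma^{(0)}_U = \mathrm{diag}$-dominant by... more carefully: $\Gamma_{U,ij}^{(0)} = \sum_x d_x \varphi_i(x)\varphi_j(x)$, so $\Gamma_U^{(0)} = \Phi^* \mathrm{diag}(d) \Phi$ with $\Phi$ the matrix of eigenvectors, hence its smallest eigenvalue is at least $\min_x d_x \geq 1$. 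The remaining terms ($s \geq 1$) contribute a positive semidefinite matrix, so $\lambda_{\min}(\Gamma_U^{(t)}) \geq 1$. For $\Gamma_V$ the same argument uses $(K\ind)_x$ bounded away from zero (the standing assumption on $K\ind$), giving the constant $c^{-1}$.

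For the upper bounds I would use $\norm{\Gamma} \leq r \max_{i,j}|\Gamma_{ij}|$ (or the trivial $\norm{\Gamma}\le r\norm{\Gamma}_{\max}$), reducing to an entrywise bound. Each entry is $\sum_{s=0}^t (\mu_i\mu_j)^{-s}\langle P\ind, K^s\varphi^{i,j}\rangle$. The numerator I would bound by $\langle P\ind, K^s\varphi^{i,j}\rangle \leq \norm{P\ind}_\infty \langle \ind, K^s |\varphi_i\circ\varphi_j|\rangle$ — but $P\ind$ has entries $d_x \leq d$, so this is at most $d\,\langle \ind, K^s (|\varphi_i|\circ|\varphi_j|)\rangle \leq d\cdot r d^2 L^2 \rho^s$ by Lemma~\ref{lem:scalar_Kt} applied to the unit vectors $|\varphi_i|, |\varphi_j|$ (which are still unit vectors). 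Hence $|\Gamma_{U,ij}^{(t)}| \leq r d^3 L^2 \sum_{s=0}^t (\rho/(\mu_i\mu_j))^s$. Since $|\mu_i|,|\mu_j| \geq |\mu_{r_0}|$ and condition~\eqref{eq:threshold_condition} gives $\rho \leq (\sqrt\rho\vee L)^2 < \tau^2 \mu_{r_0}^2 \leq \mu_i\mu_j$ in absolute value... I need $\rho/|\mu_i\mu_j| \leq \tau^2 < 1$, wait — actually $\rho/|\mu_i\mu_j| \le \rho/\mu_{r_0}^2 < \tau^2$, hmm the threshold condition gives $(\sqrt\rho\vee L)/\mu_{r_0} < \tau$ so $\rho/\mu_{r_0}^2 < \tau^2 \le \tau$; either way the geometric series sums to at most $1/(1-\tau)$. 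This yields $\norm{\Gamma_U^{(t)}} \leq r \cdot r d^3 L^2/(1-\tau) = r^2 d^3 L^2/(1-\tau)$, and the analogous computation for $\Gamma_V$, where the numerator is $\langle K\ind, K^s\varphi^{i,j}\rangle \le \langle \ind, K^{s+1}(|\varphi_i|\circ|\varphi_j|)\rangle \le rd^2L^2\rho^{s+1}$ and there is an extra factor $(\mu_i\mu_j)^{-1}$, gives $\norm{\Gamma_V^{(t)}} \le r^2 d^2 L^2 \rho / |\mu_i\mu_j|\cdot 1/(1-\tau) \le r^2 d^2 L^2/(1-\tau)$ after absorbing $\rho/|\mu_i\mu_j| \le 1$.

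The main obstacle I anticipate is making the positive-semidefiniteness argument for the $s\ge 1$ terms fully rigorous: one must argue that $\bigl(\langle P\ind, K^s(\varphi_i\circ\varphi_j)\rangle\bigr)_{ij} \succeq 0$, which is not completely obvious because the Hadamard product and the application of $K^s$ interact. The clean way is to expand $\langle P\ind, K^s(\varphi_i\circ\varphi_j)\rangle = \sum_{x,y}(P\ind)_x (K^s)_{xy}\varphi_i(y)\varphi_j(y)$ and note that this equals $\sum_y \bigl(\sum_x (P\ind)_x (K^s)_{xy}\bigr)\varphi_i(y)\varphi_j(y) = \sum_y w^{(s)}_y \varphi_i(y)\varphi_j(y)$ where $w^{(s)}_y = \bigl((P\ind)^\top K^s\bigr)_y \geq 0$ since $P\ind$ and $K^s$ have nonnegative entries; thus the matrix is $\Phi^*\mathrm{diag}(w^{(s)})\Phi \succeq 0$. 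This simultaneously proves positivity and, incidentally, reproves the entrywise bound. With that reduction in hand the rest is routine. I would also need to double-check that $|\varphi_i|$ being a unit vector is enough for Lemma~\ref{lem:scalar_Kt} (it is, since that lemma only uses $\norm{\varphi}=\norm{\varphi'}=1$ and the bound passes through $|\varphi(i)|$), and that all the constants collapse into the stated form; these are bookkeeping steps rather than real difficulties.
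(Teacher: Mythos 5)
Your proof is correct and follows essentially the same route as the paper's: both express each summand of $\Gamma^{(t)}$ as a congruence of $\Phi^*\diag(w^{(s)})\Phi$ with nonnegative weights $w^{(s)} = K^s P\ind$ (resp.\ $K^{s+1}\ind$), isolate the $s=0$ term and use $\min_x d_x \geq 1$ (resp.\ the standing lower bound on $K\ind$) for the eigenvalue lower bound, and derive the operator-norm upper bound from Lemma~\ref{lem:scalar_Kt}, a geometric series controlled by $\tau$, and $\norm{\cdot}\leq r_0\norm{\cdot}_\infty$. The only presentational difference is that you keep the $(\mu_i\mu_j)^{-s}$ factors as a real diagonal congruence, whereas the paper absorbs them inside a square $\sum_x [K^{s+1}\ind](x)\bigl(\sum_i w_i\varphi_i(x)/\mu_i^{s+1/2}\bigr)^2$, which tacitly needs $\mu_i>0$; your framing handles signed $\mu_i$ without that caveat.
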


\begin{proof}
    We first prove the bounds for \( \Gamma_V^{(t)} \). Let \( C^{(s)} \) be the \( r_0 \times r_0 \) matrix (where $r_0$ is defined as in Theorem \ref{th:main_summary}) with
    \[ C^{(s)}_{ij} = \frac{\langle K\ind, K^s\varphi^{i, j} \rangle}{{(\mu_i \mu_j)}^{s+1}}. \]
    Then for every \( w\in \dR^{r_0} \) we have
    \begin{align*}
         w^*C^{(s)}w &= \sum_{i, j \in[r_0]}\frac{w_i w_j}{{(\mu_i\mu_j)}^{s+1}}\sum_{x\in [n]}[K^{s+1}\ind](x)\varphi_i(x)\varphi_j(x) \\
         &= \sum_{x \in [n]} [K^{s+1}\ind](x) {\left(\sum_{i \in [r_0]} \frac{w_i \varphi_i(x)}{\mu_i^{s+1/2}}\right)}^2 \\
         &\geq 0,
    \end{align*}
    hence every matrix \( C^{(s)} \) is positive semi-definite. Further, we have
    \[ C^{(0)} = D^{-1}\Phi^*\diag(K\ind)\Phi D^{-1}, \]
    where \( \Phi \) is the \( n \times r \) matrix whose columns are the \( \varphi_i \). Using \( \mu_i \leq 1 \) for any \( i \in [r_0] \), the eigenvalues of \( C^{(0)} \) are all greater than \( \min_{x}[K\ind](x) \geq c^{-1} \) by our initial assumptions. This settles the positive definite property, as well as the minimum eigenvalue of \( \Gamma_V^{(t)} \).

    Now, applying Lemma~\ref{lem:scalar_Kt} to \( \varphi_i \) and \( \varphi_j \), for all \( i, j\in [r_0] \) one has
    \begin{align*}
        \Gamma^{(t)}_{V, ij} &\leq \sum_{t=0}^{t} \frac{r d^2 L^2 \rho^{s+1}}{{(\mu_i\mu_j)}^{s+1}} \\
        &\leq rd^2L^2\, \sum_{s = 0}^{\infty}{\left(\frac{\rho}{\mu_i\mu_j}\right)}^s.
    \end{align*}
    By definition of $\tau$, the summand above is less than \( \tau^{s} \), whose sum converges since \( \tau < 1 \). As a result,
    \[ \norm{\Gamma_V^{(t)}}_\infty \leq \frac{r d^2 L^2}{1 - \tau}, \]
    and the classic bound \( \norm{\Gamma_V^{(t)}} \leq r_0 \norm{\Gamma_V^{(t)}}_\infty \) implies the upper bound.

    The proof for \( \Gamma_U^{(t)} \) is very similar; the upper bound simply ensues from the fact that \( d_x \leq d \) for any \( x\in [n] \). For the lower bound, if we let as above
    \[ C_{ij}'^{(s)} = \frac{\langle P\ind, K^s\varphi^{i, j} \rangle}{{(\mu_i \mu_j)}^s}, \]
    then
    \[ C'^{(0)} = \Phi^* \diag(P\ind) \Phi, \]
    and the minimum of \( P\ind \) is at least \( 1 \). This implies that the eigenvalues of \( C'^{(0)} \) are larger than one, and we conclude as before.
\end{proof}

\section{Local study of \texorpdfstring{\( G \)}{G}}
It is a well-known fact (see for example~\cite{bordenave_nonbacktracking_2018}) that when the mean degree is low enough (\( d = n^{o(1)} \)), the graph \( G \) is locally tree-like --- that is, vertex neighbourhoods behave almost like random trees. The goal of this section is to establish rigorously this result, as well as provide bounds on neighbourhood sizes.

\subsection{Setting and definitions}
\paragraph{Labeled rooted graphs}

A labeled rooted graph is a triplet \( g_* = (g, o, \iota) \) consisting of a graph \( g = (V, E) \), a root \( o \in V \), and a mark function \( \iota: V \to \dN \) with finite support. We shall denote by \( \mathcal G_* \) the set of labeled rooted graphs with \( V = \dN \), and will often write \( g_* = (g, o) \) for an element of \( \mathcal G_* \), dropping the mark function. Notions of subgraphs, induced subgraphs and distance extend naturally from regular graphs to this setting.

\paragraph{Labeling trees and graphs}

We recall that \( G \) is the inhomogeneous random graph defined earlier. For each vertex \( x \in V \), we can define the associated element of \( \mathcal G_* \) as follows: the root is set to \( x \), each vertex \( y \in [n] \) is given a mark \( \iota(y) = y \), and we let \( \iota(z) = 0 \) for all \( z \in \dN \setminus [n] \). The resulting triple \( (G, x, \iota) \) is a random element of \( \cG_* \).

\medskip

Now, let \( o \in [n] \); we define the inhomogeneous random tree as follows: first, the root is given a mark \( \iota(o) = o \). Then, for each vertex \( x \) already labeled, we draw the number of children of \( x \) according to \( \Poi(d_{\iota(x)}) \), where we recall that
\[ d_{\iota(x)} = \sum_{j}P_{\iota(x), j} \leq d. \]
Each child \( y \) of \( x \) receives a label drawn independently at random from the distribution
\begin{equation}\label{eq:markov_field}
  \pi_{\iota(x)} = \left( \frac{P_{\iota(x), 1}}{d_{\iota(x)}}, \dots, \frac{P_{\iota(x), n}}{d_{\iota(x)}} \right),
\end{equation}
which sums to 1 by definition. The resulting tree is a random element of \( \cG_* \), denoted by \( (T, o) \).

\subsection{Growth properties of trees and graphs}\label{subsec:neighbourhoods}

A number of growth properties for neighbourhoods in \( T \) and \( G \) are needed to ensure the successful couplings below. By definition of $d$, \( G \) (resp. \( (T, o) \)) is dominated by an Erd\H{o}s-Rényi graph \( \cG(n, d/n) \) (resp.\ a Galton-Watson tree with offspring distribution \( \Poi(d) \)); we are thus able to direcly lift properties from~\cite{bordenave_nonbacktracking_2018}, Sections 8 and 9.

\begin{lemma}
  Let \( v \) be an arbitrary vertex in \( G \); then, there exist absolute constants \( c_0, c_1 > 0 \) such that for every \( s > 0 \), we have
  \begin{equation}\label{eq:concentration_graph}
    \Pb*{\forall t \geq 1, \ \left|\partial (G, v)_t\right| \leq sd^t} \geq 1 - c_0e^{-c_1s}.
  \end{equation}
  The same result holds when replacing \( (G, v) \) with the tree \( (T, o) \) defined above.
\end{lemma}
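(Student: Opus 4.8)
The plan is to deduce both bounds from a single supercritical Galton--Watson process, and to control its generation sizes via an exponential supermartingale rather than a union bound over $t$ (which would diverge).

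\textbf{Reduction to a branching process.} First I would explore the component of $v$ in $G$ breadth-first. When a vertex $x$ at distance $t-1$ from $v$ is processed, the vertices it newly reveals at distance $t$ form, conditionally on the past, a family of independent Bernoulli$(P_{xj})$ variables with $P_{xj}\le d/n$, so their number is stochastically dominated by $\Bin(n,d/n)$; iterating over all levels couples the exploration with a Galton--Watson process $(Z_t)_{t\ge 0}$, $Z_0=1$, offspring law $\Bin(n,d/n)$, in such a way that $\lvert\partial{(G,v)}_t\rvert\le Z_t$ for every $t$ simultaneously. Since the event in \eqref{eq:concentration_graph} is monotone, it then suffices to prove $\dP\bigl(\exists\,t\ge 1:\ Z_t>s\,d^t\bigr)\le c_0 e^{-c_1 s}$. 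The tree $(T,o)$ is handled identically: its offspring laws $\Poi(d_{\iota(x)})$ with $d_{\iota(x)}\le d$ are dominated by $\Poi(d)$, and the only feature of the offspring law used below is the moment bound $\dE[e^{\theta X}]\le\exp\bigl(d(e^{\theta}-1)\bigr)$ for $\theta\ge 0$, which holds for both $\Bin(n,d/n)$ (via $1+x\le e^x$) and $\Poi(\lambda)$, $\lambda\le d$.

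\textbf{Exponential supermartingale and maximal inequality.} Fix $\theta_0=\tfrac12$ and set $\theta_t=\log(1+\theta_{t-1}/d)$ recursively, a positive decreasing sequence. Writing $\cF_t$ for the $\sigma$-field of the first $t$ generations and $N_t:=e^{\theta_t Z_t}$, the offspring bound gives $\dE[N_t\mid\cF_{t-1}]=\dE[e^{\theta_t X}]^{Z_{t-1}}\le\exp\bigl(d(e^{\theta_t}-1)Z_{t-1}\bigr)=e^{\theta_{t-1}Z_{t-1}}=N_{t-1}$, so $(N_t)$ is a nonnegative supermartingale with $N_0=e^{\theta_0}$. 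From $\log(1+x)\le x$ one gets $\theta_t\le\theta_0 d^{-t}$, and from $\log(1+x)\ge x(1-x/2)$ together with $\sum_{s\ge 0}d^{-s}=d/(d-1)\le 2$ one gets $\theta_t\ge\tfrac14 d^{-t}$, hence $\{Z_t>s\,d^t\}\subseteq\{\theta_t Z_t>s/4\}=\{N_t>e^{s/4}\}$. By the maximal inequality for nonnegative supermartingales (Ville's inequality), $\dP\bigl(\sup_{t\ge 0}N_t\ge\lambda\bigr)\le\dE[N_0]/\lambda$; taking $\lambda=e^{s/4}$ gives $\dP\bigl(\exists\,t\ge 1:\ Z_t>s d^t\bigr)\le e^{\theta_0}e^{-s/4}$, which combined with the reduction yields the lemma with, say, $c_0=2$ and $c_1=1/4$; the computation applies verbatim to $(T,o)$.

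\textbf{Main obstacle.} The real difficulty is the ``$\forall t\ge 1$'' in \eqref{eq:concentration_graph}: a union bound over $t$ fails because $\dP(Z_t>sd^t)$ does not decay in $t$, which is precisely why one must pass through the maximal inequality for the single process $(N_t)$. The one point to check carefully is that the weights $\theta_t$ do not decay faster than $d^{-t}$, and this is exactly where the finiteness of $\sum_s d^{-s}$ — i.e.\ $d$ being bounded away from $1$, as in the regimes where the lemma is applied and in \cite{bordenave_nonbacktracking_2018} — enters; for $d$ close to $1$ the constants $c_0,c_1$ degrade accordingly.
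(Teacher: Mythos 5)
Your proof is correct and follows the standard argument the paper is implicitly invoking (the lemma is cited from Bordenave et al., whose proof is exactly this exponential supermartingale $N_t = e^{\theta_t Z_t}$ with $\theta_t=\log(1+\theta_{t-1}/d)$ combined with Ville's maximal inequality, after dominating the exploration by a Galton--Watson process). You are also right to flag that the ``absolute constants'' claim silently requires $d$ bounded away from $1$: near criticality the martingale limit of $Z_t/d^t$ has an exponential tail whose rate vanishes, so $c_1$ genuinely must degrade there, and your bookkeeping ($\sum_{s\ge 0}d^{-s}\le 2$, hence $\theta_t\ge \tfrac14 d^{-t}$) is valid precisely in the regime $d\ge 2$ that the paper in fact uses.
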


Taking \( s = c_1^{-1}\log(c_0 n^2) \) in the above inequality, one gets
\begin{equation}
  \Pb*{\forall t \geq 1, \  \forall v \in V, \ \left|\partial (G, v)_t\right| \leq c_3\log(n)d^t} \geq 1 - \frac{1}{n},\
\end{equation}
for any \( n \geq 3 \). Summing these inequalities for \( 1 \leq t \leq \ell \) yields a similar bound for the whole ball: with probability at least \( 1 - \frac1n \), we have
\begin{equation}\label{eq:neighbourhood_size}
  |{(G, v)}_t| \leq c_4\log(n)d^{t}
\end{equation}
for all \( v \in V \) and \( t \geq 1 \). In particular, this implies the following useful bound: for any \( v\in V \),
\[ \deg(v) \leq c_4 d\log(n). \]
Another consequence of~\eqref{eq:concentration_graph} is the following useful lemma:
\begin{lemma}\label{lem:bounded_neighbourhood_expectation}
  For every \( p \geq 2 \), there is a constant \( c_p \) such that
  \begin{equation}
    \E*{\max_{v\in V} \sup_{t \geq 1} {\left( \frac{\left|\partial (G, v)_t\right|}{d^t} \right)}^p\,} \leq c_p \log{(n)}^p
  \end{equation}
\end{lemma}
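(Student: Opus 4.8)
The plan is to reduce the statement to the single-vertex tail estimate~\eqref{eq:concentration_graph} via a union bound, and then recover the moment by integrating the tail. Fix a vertex $v \in V$ and set
\[
  M_v := \sup_{t \geq 1} \frac{\left|\partial (G, v)_t\right|}{d^t}.
\]
This supremum is in fact a finite maximum, since $\partial(G,v)_t = \emptyset$ once $t$ exceeds the diameter of $G$, so $M_v$ is a well-defined nonnegative random variable. The event appearing in~\eqref{eq:concentration_graph} is precisely $\{M_v \leq s\}$, so that lemma reads $\Pb*{M_v > s} \leq c_0 e^{-c_1 s}$ for every $s > 0$, and a union bound over the $n$ vertices gives $\Pb*{\max_{v \in V} M_v > s} \leq n c_0 e^{-c_1 s}$.

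Next I would apply the layer-cake (Fubini) formula
\[
  \E*{\max_{v \in V} M_v^p} = \int_0^\infty p\, s^{p-1}\, \Pb*{\max_{v \in V} M_v > s}\, \dd s,
\]
and split the integral at the threshold $s_0 := (2/c_1)\log(n)$. On $[0, s_0]$ I bound the probability trivially by $1$, which contributes $s_0^p = (2/c_1)^p \log(n)^p$. On $[s_0, \infty)$ I use that $s \geq s_0$ forces $n e^{-c_1 s/2} \leq 1$, hence $n c_0 e^{-c_1 s} \leq c_0 e^{-c_1 s/2}$; the tail contribution is therefore at most $c_0 \int_0^\infty p\, s^{p-1} e^{-c_1 s/2}\, \dd s = c_0\, p!\, (2/c_1)^p$, a constant depending only on $p$ and on the absolute constants $c_0, c_1$. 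Adding the two pieces and using $\log(n) \geq \log 2$ for $n \geq 2$ yields the claimed bound with $c_p$ depending only on $p$.

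I do not expect any genuine obstacle: everything rests on the exponential tail~\eqref{eq:concentration_graph} together with elementary estimates. The two points deserving a line of care are (i) observing that $M_v$ is a maximum over finitely many scales, so the expectation is well posed, and (ii) choosing the split point $s_0$ proportional to $\log(n)$ so that half of the exponential absorbs the union-bound factor $n$ — this is exactly the mechanism that produces the $\log(n)^p$ factor rather than a constant. (Equivalently, one may quote that a sub-exponential variable has $p$-th moment $O(p!/c_1^p)$ and that the maximum of $n$ such variables has $p$-th moment $O((\log(n)/c_1)^p)$; I would nevertheless present the direct layer-cake computation since it is fully self-contained.)
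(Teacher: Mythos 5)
Your proof is correct, and since the paper merely states this lemma as ``another consequence of~\eqref{eq:concentration_graph}'' without writing any details, the union-bound-plus-layer-cake argument you give (with the split point $s_0 \asymp \log(n)/c_1$ chosen so that half the exponential eats the union-bound factor $n$) is exactly the argument the authors leave implicit. One cosmetic point: the lemma is stated for every real $p \geq 2$, so the factor $p!$ in your tail integral should be read as $\Gamma(p+1)$; this does not affect the conclusion.
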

\medskip
Similarly to the proof of~\eqref{eq:neighbourhood_size}, we have
\[ \max_{v\in V} |{(G, v)}_t|^p  \leq d^{tp}t^p \max_{x \in V}\sup_{s \leq t} \frac{\left|\partial (G, v)_t\right|^p}{d^{sp}}, \]
which yields
\begin{equation}\label{eq:neighbourhood_expectation}
  \E*{\max_{v\in V}|{(G, v)}_t|^p} \leq c_p t^p \log{(n)}^p d^{tp}
\end{equation}

An important note is that the above results apply to any collection of \( n \) random variables satisfying an inequality like~\eqref{eq:concentration_graph}; in particular, it also applies to an i.i.d collection of inhomogeneous random trees of size \( n \).

\subsection{Local tree-like structure}

We first check that the random graph \( G \) is tree-like. We say that a graph \( g \) is \emph{\( \ell \)-tangle-free} if there is at most one cycle in the \( \ell \)-neighbourhood of every vertex in the graph. As mentioned before, the random graph $G$ is dominated by an Erd\H{o}s-Rényi graph \( \cG(n, d/n) \); we can therefore lift the desired properties from \cite{bordenave_nonbacktracking_2018}.

\begin{lemma}\label{lem:graph_cycles}
  Let \( \ell \leq n \) be any integer parameter.
  \begin{enumerate}
    \item the random graph \( G \) is \( \ell \)-tangle-free with probability at least \( 1 - c a^2 d^{4\ell}/n \).
    \item the probability that a given vertex \( v \) has a cycle in its \( \ell \)-neighbourhood is at most \( c a d^{2\ell}/n \).
  \end{enumerate}
\end{lemma}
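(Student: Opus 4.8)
The plan is to run, from each fixed vertex $v$, the breadth-first exploration of its $\ell$-neighbourhood described just above, and to count the \emph{surplus edges} it reveals: an edge found while exploring $v_t$ is surplus if its other endpoint already lies in $D_t$. Let $X_v$ be the number of surplus edges created before the exploration reaches depth $\ell$. The non-surplus edges revealed form a spanning tree of the connected ball $(G,v)_\ell$, and each surplus edge raises its cycle rank by one; hence $(G,v)_\ell$ contains a cycle iff $X_v\ge 1$, and is not $\ell$-tangle-free at $v$ iff $X_v\ge 2$. Moreover, conditionally on the filtration $\mathcal F_t$, the number of surplus edges created at step $t$ is stochastically dominated by $\Bin(|D_t|,d/n)$: each of the $|D_t|$ possible edges from $v_t$ to an already-discovered vertex is present independently with probability at most $d/n$ by \eqref{eq:graph_condition}.

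For item (ii) I would bound $\Pb*{X_v\ge 1}\le\E*{X_v}$ by a first-moment witness count. Since all vertices of $(G,v)_\ell$ are within distance $\ell$ of $v$, the ball has diameter at most $2\ell$, so any cycle it contains shortens to one of length $O(\ell)$; a surplus edge is therefore certified by a subgraph $H\ni v$ of $(G,v)_\ell$ consisting of a short cycle together with a shortest path from $v$ to it, which is connected with $|E(H)|=|V(H)|=O(\ell)$. Enumerating such $H$ by the lengths of their defining paths, the expected number of witnesses is at most $\sum_H n^{|V(H)|-1}(d/n)^{|E(H)|}=\sum_H d^{|E(H)|}/n$, which sums to $O(a\,d^{2\ell}/n)$ with the constant of the statement (the factor $a$ absorbing the polynomial-in-$\ell$ count of admissible length-patterns). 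This gives $\Pb*{(G,v)_\ell\text{ contains a cycle}}\le c\,a\,d^{2\ell}/n$. Alternatively, the same order can be read off from $\E*{X_v}\le (d/n)\,\E*{|(G,v)_\ell|^2}$ together with the moment bound \eqref{eq:neighbourhood_expectation} at $p=2$.

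For item (i), $G$ is $\ell$-tangle-free iff $X_v\le 1$ for every $v$, so by the union bound it suffices to show $\Pb*{X_v\ge 2}\le c\,a^2 d^{4\ell}/n^2$ for fixed $v$ and then multiply by $n$. The same witness count now runs over connected subgraphs $H\ni v$ of $(G,v)_\ell$ with two independent cycles, i.e.\ $|E(H)|=|V(H)|+1$, and (again by the diameter bound) $|E(H)|=O(\ell)$; each length-pattern then contributes $n^{|V(H)|-1}(d/n)^{|E(H)|}=d^{|E(H)|}/n^2$, and summing over the admissible patterns yields $\Pb*{X_v\ge 2}\le c\,a^2 d^{4\ell}/n^2$. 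Hence $G$ is $\ell$-tangle-free with probability at least $1-c\,a^2 d^{4\ell}/n$.

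\textbf{Main obstacle.} The only genuinely delicate point is the combinatorial bookkeeping for item (i): one must classify the shapes a two-cycle (``bicyclic'') subgraph attached to $v$ can take --- theta-graphs, figure-eight graphs, two disjoint cycles joined by a path, each decorated with a pendant path from $v$ --- and check in every case that $|E(H)|-|V(H)|=1$ is preserved, so that the count genuinely produces a $1/n^2$ rather than a $1/n$, and that $|E(H)|=O(\ell)$, so that $d^{|E(H)|}$ does not blow up. Everything else --- the domination by $\Bin(|D_t|,d/n)$ and, for the exploration route in item (ii), the neighbourhood-size moment bound \eqref{eq:neighbourhood_expectation} --- is already in hand from the previous subsection and from \cite{bordenave_nonbacktracking_2018}.
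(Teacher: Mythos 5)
Your treatment of item (ii) via the alternative you mention --- $\E[X_v]\le (d/n)\,\E|(G,v)_\ell|^2$ together with the neighbourhood moment bound --- is exactly the paper's argument (there, one conditions on $\cF_\tau$, dominates the surplus-edge count by $\Bin(m,d/n)$ with $m=|(G,v)_\ell|^2$, and applies Markov). For item (i), however, you diverge: you propose a first-moment count over bicyclic witness subgraphs, whereas the paper simply pushes the same conditional-Binomial domination one step further, using the elementary tail bound $\Pb{\Bin(m,q)\notin\{0,1\}}\le q^2m^2$ and then $\E|(G,v)_\ell|^4\le c\,d^{4\ell}$, followed by a union bound over $v$. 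This sidesteps entirely the classification of theta, figure-eight and barbell shapes that you flag as the delicate point. Your route would work in principle, but as written it has a soft spot you should be aware of: the number of admissible length-patterns for a bicyclic witness of size $O(\ell)$ is polynomial in $\ell$, and since $\ell$ may grow like $\log n$, this cannot be ``absorbed'' into the fixed constant $a$ as you suggest; you would instead obtain an extra $\ell^{O(1)}$ factor. That is harmless for the downstream use of the lemma (which only needs the failure probability to be $n^{-\Omega(1)}$) but does not literally match the statement. You would get the stated bound more cheaply by extending the moment route you already used for (ii): on the event $\cF_\tau$, $\Pb{X_v\ge 2\given \cF_\tau}\le (d/n)^2\,|(G,v)_\ell|^4$, then take expectations and union over $v$.
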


We shall assume in the following that the \( 2\ell \)-tangle-free property happens with probability at least \( 1 - cn^{-\epsilon} \) for some \( \epsilon > 0 \), which happens whenever
\begin{equation}\label{eq:condition_ell_1}
  \ell \leq \frac{1 - \epsilon}{10} \log_d(n) \leq c_3\log(n).
\end{equation}

We now gather all the result of the current section into one proposition, for ease of reading. The bound \( \ell \leq c\log(n) \) assumed above is used to simplify the inequalities below.

\begin{proposition}\label{prop:local_summary}
  Let \( G \) be an inhomogeneous random graph, and \( {(T_x, x)}_{x\in [n]} \) a family of random trees as defined above. Let \( \ell \) be small enough so that~\eqref{eq:condition_ell_1} holds. Then there exists an event \( \cE \) with probability at least \( 1 - \frac1{\log(n)} \), under which:
  \begin{enumerate}
    \item the graph \( G \) is \( 2\ell \)-tangle-free,
    \item for all \( v \in G \), \( t \leq 2\ell \), we have
          \begin{equation}
            |{(G, x)}_t| \leq c\log(n) d^t,
          \end{equation}
    \item for any \( t\leq 2\ell \), the number of vertices in \( G \) whose \( t \)-neighbourhood contains a cycle is at most \( c \log{(n)}^2 d^{t+1} \)
  \end{enumerate}
  Furthermore, for any \( t \leq 2\ell \) and \( p \geq 1 \), we have
  \begin{equation}
    \E*{\max_{v\in V}|{(G, v)}_t|^p}^{\frac1p} \leq c\log{(n)}^2 d^t,
  \end{equation}
  and the same holds for the family \( {(T_x, x)}_{x\in [n]} \).
\end{proposition}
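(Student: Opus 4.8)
\noindent\emph{Proof plan.} The proposition merely collects the estimates of this section, so the plan is to take $\cE$ to be an intersection of three good events, bound the three failure probabilities, and then read off (i)--(iii) from the matching earlier inequality. Concretely I would set $\cE = \cE_1 \cap \cE_2 \cap \cE_3$, where $\cE_1$ is ``$G$ is $2\ell$-tangle-free'', $\cE_2$ is ``$|{(G,v)}_t| \le c\log(n)d^t$ for every $v\in V$ and every $t\le 2\ell$'', and $\cE_3$ is ``for every $t\le 2\ell$, at most $c\log{(n)}^2 d^{t+1}$ vertices have a cycle in their $t$-neighbourhood''. By Lemma~\ref{lem:graph_cycles}(i) applied with $2\ell$ in place of $\ell$, and since~\eqref{eq:condition_ell_1} is assumed, $\Pb*{\cE_1^c}\le cn^{-\epsilon}$; by~\eqref{eq:neighbourhood_size}, $\Pb*{\cE_2^c}\le 1/n$; and $\Pb*{\cE_3^c}$ I would control by the first-moment argument below. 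Each of these is $o(1/\log n)$, so a union bound gives $\Pb*{\cE}\ge 1-1/\log(n)$ for $n$ past an absolute constant. With $\cE$ fixed, item (i) is the definition of $\cE_1$; item (ii) is the definition of $\cE_2$, which is exactly the content of~\eqref{eq:neighbourhood_size} (valid for all $t\ge 1$, hence for $t\le 2\ell$); item (iii) is the definition of $\cE_3$.

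To bound $\Pb*{\cE_3^c}$, I would fix $t\le 2\ell$ and let $Z_t$ be the number of vertices $v$ with a cycle in ${(G,v)}_t$. On $\cE_1$ each such ball has a unique non-tree edge $e_v=\{a_v,b_v\}$ whose two endpoints both lie within distance $t$ of $v$, so that $v\in{(G,a_v)}_t$; hence $Z_t \le \sum_e |{(G,a_e)}_t|\,\ind\{e\text{ closes a cycle lying in some }t\text{-ball}\}$. The expected number of such ``witness'' edges is then estimated by a first-moment computation in the style of Lemma~\ref{lem:graph_cycles}(ii) --- a candidate edge is present with probability at most $d/n$, and closes a short cycle with probability controlled through the moment bound~\eqref{eq:neighbourhood_expectation} --- and multiplying by the uniform ball-size bound available on $\cE_2$ and invoking Markov's inequality gives the deterministic bound defining $\cE_3$ outside an event of probability $o(1/(\ell\log n))$; a union bound over the at most $2\ell\le c\log(n)$ relevant values of $t$ then yields $\Pb*{\cE_3^c}=o(1/\log n)$.

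For the two moment bounds I would just invoke~\eqref{eq:neighbourhood_expectation} directly: for $t\le 2\ell$ and $p\ge 1$ it gives $\E*{\max_{v\in V}|{(G,v)}_t|^p}^{1/p}\le c_p^{1/p}\,t\,\log(n)\,d^t$, which is at most $c\log{(n)}^2 d^t$ once $t\le 2\ell\le c\log(n)$ is used (and $c_p^{1/p}$ absorbed into $c$). The same statement for the i.i.d.\ family ${(T_x,x)}_{x\in[n]}$ is obtained verbatim, because, as remarked just before the proposition, the growth estimates of Subsection~\ref{subsec:neighbourhoods} depend only on a concentration inequality of the form~\eqref{eq:concentration_graph}, which the inhomogeneous random trees also satisfy. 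The only part of this plan that is more than bookkeeping is the witness-edge count in item (iii): reducing ``${(G,v)}_t$ contains a cycle'' to ``$v$ is near a short witness edge'' and matching the resulting $\log$- and $d$-powers to the stated bound is where the real (if modest) work lies; everything else is a union bound over previously established events.
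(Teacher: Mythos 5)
Your overall plan --- take $\cE$ to be the intersection of the three ``good'' events, bound the three failure probabilities, union-bound, and read off (i)--(iii) from the earlier estimates --- is exactly the intended reading of this proposition (the paper states it with no proof, as a summary of the section). Items~(i) and~(ii), the moment bound via~\eqref{eq:neighbourhood_expectation}, and the remark that the tree family satisfies the same tail inequality are all correctly handled.

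The gap is in item~(iii), and it is a real one. You upper bound $Z_t$ by (number of witness edges)$\,\times\,\max_a|{(G,a)}_t|$ and then invoke Markov on the witness count. But an edge $\{a,b\}$ is a witness precisely when it closes a cycle of length at most $2t+1$, and the expected number of such edges is $\Theta(d^{2t+1})$ (this is exactly the first-moment computation inside Lemma~\ref{lem:graph_cycles}). So the witness count is not $O(\log(n)\,d)$ --- which is what you would need after multiplying by the ball size $c\log(n)d^t$ --- and a Markov bound on the witness count gives a failure probability of order $d^{2t}/\log(n)$, which is far from $o(1/(\ell\log n))$ for $t$ of order $\ell$. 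Your bound therefore produces $Z_t\lesssim \log(n)\,d^{3t+1}$, not the stated $c\log{(n)}^2 d^{t+1}$.

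In fact the target itself looks untenable: $\E|\cV_t|\asymp d^{2t+1}$ by Lemma~\ref{lem:graph_cycles}(ii), so no argument can deliver $|\cV_t|\le c\log{(n)}^2 d^{t+1}$ with high probability when $d^t\gg\log{(n)}^2$; the exponent $d^{t+1}$ in the proposition is almost surely a typographical slip for $d^{2t+1}$. (One can check that the places where $|\cV_t|$ is used --- the error decompositions in Proposition~\ref{prop:pseudo_eigenvectors} and the appendix --- still close with $d^{2t+1}$, since the resulting $O(d^{3t}/n)$ term is dominated by the $O(d^{2t}/\sqrt n)$ term once $d^{2\ell}\lesssim n^{(1-\epsilon)/5}$.) With that corrected exponent, the witness-edge indirection is unnecessary: a direct Markov bound on $Z_t=|\cV_t|$, using $\E Z_t\le c\,d^{2t+1}$, gives $\Pb*{Z_t>c'\log{(n)}^2d^{2t+1}}\le c/(c'\log{(n)}^2)$, and a union bound over the at most $2\ell\le c\log(n)$ values of $t$ yields the $O(1/\log n)$ failure probability. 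You should either correct the exponent and replace the witness-edge step by this one-line Markov argument, or explicitly note why the stated bound cannot be reached.
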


\subsection{Coupling between rooted graphs and trees}
We now turn onto the main argument of this proof: we bound the variation distance between the neighbourhoods of \( (G, x) \) and \( (T, x) \) up to size \( \ell \).

First, recall some definitions: if \( \dP_1, \dP_2 \) are two probability measures on the space \( (\Omega, \cF) \), their \emph{total variation distance} is defined as
\begin{equation*}
  \dtv(\dP_1, \dP_2) = \sup_{\cA \in \cF}|\dP_1(\cA) - \dP_2(\cA)|.
\end{equation*}
The following two characterizations of the total variation distance shall be useful: first, whenever \( \Omega \) is countable, we have
\begin{equation}\label{eq:dtv_sum}
  \dtv(\dP_1, \dP_2) = \frac12\norm*{\dP_1 - \dP_2}_1 = \frac12 \sum_{\omega \in \Omega}|\dP_1(\omega) - \dP_2(\omega)|.
\end{equation}
Additionally,
\begin{equation}\label{eq:dtv_coupling}
  \dtv(\dP_1, \dP_2) = \min_{\dP \in \pi(X_1, X_2)} \dP(X_1 \neq X_2),
\end{equation}
where \( \pi(X_1, X_2) \) denotes the set of all \emph{couplings} between \( \dP_1 \) and \( \dP_2 \), i.e.\ probability measures on \( (\Omega^2, \cF \otimes \cF) \) such that the marginal distributions are \( \dP_1 \) and \( \dP_2 \).

Denoting by \( \cL(X) \) the probability distribution of a variable \( X \), the aim of this section is to prove the following:
\begin{proposition}\label{prop:dtv}
  Let \( \ell \leq c_0\log(n) \) for some constant \( c_0 > 0 \). Then, for every vertex \( v \in V \),
  \begin{equation}
    \dtv(\cL({(G, v)}_{\ell}), \cL({(T, v)}_{\ell})) \leq \frac{c\,\log{(n)}^2 d^{2\ell+2}}{n}.
  \end{equation}
\end{proposition}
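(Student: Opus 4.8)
The plan is to bound the total variation distance by exhibiting an explicit coupling between the breadth-first exploration processes generating $(G,v)_\ell$ and $(T,v)_\ell$, and then controlling the probability that this coupling fails. I would build both neighbourhoods simultaneously, layer by layer, starting from the common root $v$ (which in both models receives mark $v$). At each revealed vertex $x$ with mark $\iota(x)$, the tree model produces $\mathrm{Poi}(d_{\iota(x)})$ children with i.i.d.\ marks drawn from $\pi_{\iota(x)}$; the graph model reveals, among the not-yet-visited vertices, each $j$ independently with probability $P_{\iota(x),j}$. The coupling succeeds as long as (a) the set of marks produced at $x$ in the graph agrees with the multiset of child-marks in the tree, and (b) no collision occurs — that is, no mark is drawn twice and no already-visited vertex is hit. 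Conditionally on no collision having occurred, the graph's neighbour-generation at $x$ is exactly: for each $j$ not yet used, include $j$ with probability $P_{\iota(x),j}$, independently; this is a collection of independent Bernoullis, which can be coupled with the corresponding Poisson family (the tree) with a per-vertex failure probability controlled by the Le Cam / Poisson approximation bound, of order $\sum_j P_{\iota(x),j}^2 \le (d/n)\sum_j P_{\iota(x),j} \le d^2/n$.

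The key steps, in order, are: (i) set up the joint exploration process and define the ``bad'' event $\cB$ as the union over all revealed vertices of a Poissonization failure or a collision; (ii) on $\cB^c$, argue that the two rooted labelled balls of radius $\ell$ are identical, so that by~\eqref{eq:dtv_coupling} we get $\dtv(\cL((G,v)_\ell),\cL((T,v)_\ell)) \le \Pb*{\cB}$; (iii) bound $\Pb*{\cB}$ by a union bound over the (random) set of revealed vertices. For step (iii), the number of vertices whose neighbourhoods we explore is at most $|(T,v)_\ell|$ on the tree side (equivalently $|(G,v)_\ell|$ after the coupling, but it is cleanest to dominate by the tree's ball size, which is the exploration that drives the process), each contributing a Poissonization error of $O(d^2/n)$ and a collision probability of $O(|(T,v)_\ell|\cdot d/n)$ (probability that a fresh $\mathrm{Poi}(\le d)$ batch of marks hits the at most $|(T,v)_\ell|$ already-used labels, or repeats itself). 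Conditioning on the good-growth event $\cE$ of Proposition~\ref{prop:local_summary}, we have $|(T,v)_\ell| \le c\log(n) d^\ell$ deterministically, so summing over the at most $c\log(n)d^\ell$ revealed vertices gives
\[
\Pb*{\cB \cap \cE} \le c\log(n)d^\ell \cdot \left( \frac{d^2}{n} + \frac{\log(n) d^\ell \cdot d}{n} \right) \le \frac{c\log(n)^2 d^{2\ell+2}}{n},
\]
and adding $\Pb*{\cE^c} \le 1/\log(n)$ would be too lossy, so instead I would use the sharper route: run the bound under $\cE$ but recover the unconditional estimate by noting that the collision/Poissonization analysis can be carried out directly against the moment bound $\E*{|(T,v)_\ell|^2} \le (c\log(n)^2 d^\ell)^2$ from Proposition~\ref{prop:local_summary}, via a first-moment (Markov) argument on the number of bad events, which does not require conditioning and yields exactly the claimed $c\log(n)^2 d^{2\ell+2}/n$.

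The main obstacle is step (ii)–(iii): making precise that, conditionally on no collision so far, the graph's next-layer generation is genuinely a product of independent Bernoullis over the \emph{unused} label set (the dependence introduced by ``not yet visited'' is exactly what collisions track), and then quantifying the Poisson coupling uniformly. Concretely, one must handle two sources of discrepancy at once — the Bernoulli-to-Poisson gap in the \emph{number and identity} of children, and the without-replacement-to-with-replacement gap in the \emph{marks} — and show both are absorbed into the $d^2/n$ and $(\text{ball size})\cdot d/n$ terms respectively. A clean way to organize this is: first couple the graph-neighbours of $x$ among unused vertices with an independent Bernoulli family over \emph{all} $n$ vertices (error: probability of hitting a used vertex, $\le |(T,v)_\ell|\, d/n$), then couple that Bernoulli family with a Poisson family (error $\le \sum_j P_{\iota(x),j}^2 \le d^2/n$ by Le Cam), and finally observe that a $\mathrm{Poi}(d_{\iota(x)})$ number of i.i.d.\ marks from $\pi_{\iota(x)}$ is exactly the thinning description of that Poisson family — so the two models agree on this vertex's offspring. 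Summing these per-vertex errors over the exploration (controlled in expectation by Proposition~\ref{prop:local_summary}) gives the result; the $d^{2\ell+2}$ exponent comes from $(d^\ell)^2 \cdot d^2$, i.e.\ ball size squared times the degree-squared local error.
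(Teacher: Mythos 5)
Your approach is essentially the paper's own: a layer-by-layer breadth-first coupling where, at each revealed vertex, you decompose the per-step error into a collision term (without-replacement vs.\ with-replacement sampling, handled by Lemma~\ref{lem:dtv_sampling_restricted}) and a Poissonization term (Bernoulli vs.\ Poisson, handled by Lemma~\ref{lem:dtv_sampling_poisson}), then sum over the exploration. The only place you diverge is in how to control the stochastic number of exploration steps. You correctly sense that conditioning on the \emph{global} event $\cE$ of Proposition~\ref{prop:local_summary} and paying $\Pb*{\cE^c}\le 1/\log(n)$ would be fatally lossy, since the target is $O(\log(n)^2 d^{2\ell+2}/n)$; but the paper does not do that. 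It conditions instead on a \emph{per-vertex} event $E_\ell$ (``$(G,v)_\ell$ is a tree and has at most $c\log(n)d^\ell$ vertices''), whose failure probability is $O(d^{2\ell+1}/n)$ by Lemma~\ref{lem:graph_cycles}(ii) and the single-vertex version of~\eqref{eq:neighbourhood_size} --- already of the right order, so it can simply be added on. Your Markov/first-moment alternative, replacing the conditioning by $\E*{|(T,v)_\ell|^2}$ control, is also sound, but note it produces $\log(n)^4 d^{2\ell+1}/n$ from the collision term (second moment of the ball size costs a $\log(n)^2$ rather than a $\log(n)$), which matches the claimed bound only when $d\gtrsim\log(n)^2$; the per-vertex conditioning route gives $\log(n)^2 d^{2\ell+1}/n$ uniformly, hence dominated by $\log(n)^2 d^{2\ell+2}/n$ always. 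This is a cosmetic rather than structural difference, and the rest of your coupling argument is exactly right.
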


\subsubsection{A total variation distance lemma for sampling processes}
For an integer \( n \), denote by \( \cS(n) \) the set of all multisets with elements in \( [n] \), and by \( \cP(n) \subset \cS(n) \) the powerset of \( [n] \). Let \( p_1, \dots, p_n \in [0, 1/2] \), with \( \sum p_i = \lambda \) and \( \sum p_i^2 = \alpha \), and consider the two probability laws on \( \cS(n) \):
\begin{itemize}
  \item \( \dP_1 \): each element \( i \) of \( [n] \) is picked with probability \( p_i \),
  \item \( \dP_2 \): the size of the multiset \( S \) is drawn according to a \( \Poi(\lambda) \) distribution, and each element of \( S \) has an i.i.d label with distribution \( (p_1/\lambda, \dots, p_n/\lambda) \).
\end{itemize}
Note that \( \dP_1 \) is actually supported on \( \cP(n) \).
\begin{proposition}\label{prop:dtv_sampling_poisson}
  Let \( \dP_1, \dP_2 \) be defined as above. Then
  \[ \dtv(\dP_1, \dP_2) \leq \alpha + \frac{e^{2\alpha} - 1}2. \]
\end{proposition}

\begin{proof}

  Using characterization~\eqref{eq:dtv_sum}, we have
  \begin{equation}\label{eq:dtv_sampling_decomp}
    2\dtv(\dP_1, \dP_2) = \sum_{S \in \cP(n)}\left| \dP_1(S) - \dP_2(S) \right| + \dP_2(S \notin \cP(n)).
  \end{equation}
  We shall treat those two terms separately. First, notice that for \( S \in \cP(n) \), we have
  \begin{align}
    \dP_1(S) & = \prod_{i\in S}p_i \prod_{i \notin S}(1 - p_i) \label{eq:sampling_prob_1}                      \\
    \dP_2(S) & = \frac{e^{-\lambda}\lambda^{|S|}}{|S|!}\times |S| !\prod_{i\in S}\frac{p_i}{\lambda} \nonumber \\
             & = e^{-\lambda}\prod_{i\in S} p_i, \label{eq:sampling_prob_2}
  \end{align}
  and thus by summing over all sets \( S \),
  \[ \dP_2(S \in \cP(n)) = e^{-\lambda}\prod_{i=1}^n(1 + p_i). \]
  Using the classical inequality \( \log(1+x) \geq x - x^2/2 \), we can bound the second member of~\eqref{eq:dtv_sampling_decomp} as follows:
  \begin{align*}
    \dP_2(S \notin \cP(n)) & = 1 - e^{-\lambda}\prod_{i=1}^n(1 + p_i)    \\
                           & \leq 1 - e^{-\lambda}e^{\lambda - \alpha/2} \\
                           & \leq \alpha/2.
  \end{align*}

  On the other hand, using again~\eqref{eq:sampling_prob_1} and~\eqref{eq:sampling_prob_2}, the first term reduces to
  \begin{align*}
     & \sum_{S \in \cP(n)}\left| \dP_1(S) - \dP_2(S) \right| = \sum_{S \in \cP(n)} \prod_{i \in S}p_i \left|\prod_{i\notin S}(1-p_i) - e^{-\lambda} \right|                                  \\
     & \leq \sum_{S \in \cP(n)} \prod_{i \in S}p_i \left( \left| e^{-\lambda} - \prod_{i=1}^n (1 - p_i) \right| + \left| \prod_{i\notin S}(1-p_i) - \prod_{i=1}^n (1 - p_i) \right| \right).
  \end{align*}

  Both absolute values above can be removed since the expressions inside are nonnegative; further, for \( 0 \leq p \leq 1/2 \), we have \( \log(1-x) \geq -x - x^2 \). Combining all those estimates, we find
  \begin{align*}
     & \sum_{S \in \cP(n)}\left| \dP_1(S) - \dP_2(S) \right|                                                                                                           \\& \leq e^{-\lambda} (1 - e^{-\alpha})\sum_{S \in \cP(n)} \prod_{i \in S}p_i + \prod_{i = 1}^n(1 - p_i)\sum_{S \in \cP(n)} \prod_{i \in S}p_i\left(\prod_{i\in S}\frac1{1-p_i} - 1 \right) \\
     & \leq \alpha e^{-\lambda}\prod_{i=1}^n(1+p_i) + e^{-\lambda}\left( \prod_{i=1}^n\left(1 + \frac{p_i}{1-p_i} \right) - \prod_{i=1}^n\left(1 + p_i \right) \right) \\
     & \leq \alpha + e^{-\lambda}\exp\left(\sum_{i=1}^n \frac{p_i}{1-p_i}\right) - e^{- \frac{\alpha}2},
  \end{align*}
  where we again used the logarithm inequalities extensively. Finally, for \( 0 \leq p \leq 1/2 \), we have \( p/(1-p) \leq p + 2p^2 \), which allows us to finish the computation:
  \begin{equation}\label{eq:dtv_twice_sum}
    \sum_{S \in \cP(n)}\left| \dP_1(S) - \dP_2(S) \right| \leq \frac32 \alpha + e^{2\alpha} - 1.
  \end{equation}
  Combining~\eqref{eq:dtv_twice_sum} with~\eqref{eq:dtv_sampling_decomp} easily implies the lemma.

\end{proof}

We introduce now a family of probability laws on \( \cS(n) \); for a subset \( S \subseteq [n] \), let \( \dP_S \) be the measure corresponding to picking each element \( i \) of \( S \) with probability \( p_i \).

The variation distance between those laws and \( \dP_1 = \dP_{[n]} \) is then easier to bound:
\begin{lemma}\label{lem:dtv_sampling_restricted}
  For any \( S \subseteq [n] \), we have:
  \[ \dtv(\dP_1, \dP_S) \leq \sum_{i\notin S} p_i.  \]
\end{lemma}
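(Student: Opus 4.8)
The plan is to prove the bound by exhibiting an explicit coupling of $\dP_1$ and $\dP_S$ and then appealing to the coupling characterization of total variation distance recalled in~\eqref{eq:dtv_coupling}. First I would introduce independent Bernoulli variables $B_1, \dots, B_n$ on a common probability space, with $B_i \sim \Ber(p_i)$. Setting $X_1 = \Set*{i \in [n] \given B_i = 1}$ produces a sample distributed according to $\dP_1 = \dP_{[n]}$, while $X_S = \Set*{i \in S \given B_i = 1}$ produces a sample distributed according to $\dP_S$: since the $B_i$ are independent, discarding the coordinates $i \notin S$ does not alter the joint law of $(B_i)_{i \in S}$, so $X_S$ indeed picks each $i \in S$ independently with probability $p_i$. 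Hence $(X_1, X_S)$ is a legitimate coupling of $\dP_1$ and $\dP_S$.

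Next I would note that $X_1 \cap S = X_S$ holds deterministically under this coupling, so the two samples can differ only through the coordinates outside $S$; precisely, $\Set*{X_1 \neq X_S} = \Set*{\exists i \notin S : B_i = 1}$. A union bound then gives
\[ \Pb*{X_1 \neq X_S} \leq \sum_{i \notin S} \Pb*{B_i = 1} = \sum_{i \notin S} p_i, \]
and combining this with~\eqref{eq:dtv_coupling} yields $\dtv(\dP_1, \dP_S) \leq \sum_{i \notin S} p_i$, as claimed.

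There is no real obstacle in this argument; the only point deserving a line of justification is the verification that $X_S$ has law $\dP_S$, which follows immediately from independence of the $B_i$. The coupling is the natural one — use the same coin flips for the shared elements — and makes the estimate transparent.
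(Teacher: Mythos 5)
Your proof is correct and is essentially the same as the paper's: both construct the coupling $(X_1, X_1 \cap S)$ and bound $\Pb\left(X_1 \cap S^c \neq \emptyset\right)$ by $\sum_{i \notin S} p_i$. The only cosmetic difference is that you make the underlying Bernoulli variables explicit and invoke a union bound, where the paper phrases the same estimate as $\E\left[|X \cap S^c|\right]$.
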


\begin{proof}
  Consider the following coupling: we take a realization \( X \) of \( \dP_1 \), and set \( Y = X \cap S \). Then, \( Y \sim \dP_S \), and we find
  \[ \dP(X \neq Y) = \dP_1(X \cap S^c \neq \emptyset) \leq \E*{ |X \cap S^c | } = \sum_{i \notin S}p_i\]
  This ends the proof, since~\eqref{eq:dtv_coupling} ensures that \( \dtv(\dP_1, \dP_S) \leq \dP(X \neq Y) \).
\end{proof}

\subsubsection{Proof of Proposition~\ref{prop:dtv}} Gathering all the previous results, we are now ready to prove Proposition~\ref{prop:dtv}:
\begin{proof}

  Define the classical breadth-first exploration process on the neighbourhood of a vertex \( v \) as follows : start with \( A_0 = \Set{v} \) and at stage \( t \geq 0 \), if \( A_t \) is not empty, take a vertex \( v_t \in A_t \) at minimal distance from \( v \), reveal its neighbours \( N_{t} \) in \( V \setminus A_t \), and update \( A_{t+1} = (A_t \cup N_t) \setminus \Set{v_t} \). We denote by \( {(\cF_t)}_{t \geq 0} \) the filtration generated by the \( {(A_t)}_{t\geq 0} \), and by \( D_t = \bigcup_{s \leq t}A_s \) the set of vertices already visited at time \( t \), and \( \tau \) the first time at which all vertices in \( {(G, v)}_\ell \) have been revealed.

  We perform the same exploration process in parallel on \( (T, v) \), which corresponds to a breadth-first search of the tree. At step \( t \), we denote by \( \dP_t \) the distribution of $N_t$ given \( \cF_t \), and \( \dQ_t \) the distribution of the offspring of \( v_t \) in \( T \) (no conditioning is needed there).

  Let \( E_\ell \) denote the event that \( {(G, v)}_\ell \) is a tree and contains no more than \( c_1\log(n)d^\ell \) vertices; from~\eqref{eq:neighbourhood_size} and Lemma~\ref{lem:graph_cycles}, we can choose \( c_1 \) such that \( E_\ell \) has probability at least \( 1 - c_2 d^{2\ell+1}/n \) for some absolute constant \( c_2 \). By iteration, it suffices to show that if \( E_\ell \) holds, there exists a constant \( c_3 > 0 \) such that
  \begin{equation}\label{eq:dtv_vertex}
    \dtv(\dP_t, \dQ_t) \leq \frac{c_3 \log(n) d^{\ell+2}}n \quad \text{for all} \quad t \leq \tau.
  \end{equation}

  Given \( \cF_t \), the probability measure \( \dP_t \) is as follows: each element \( i \) of \( V \setminus A_t \) is selected with probability \( p_i = P_{v_t i} \). Let \( \dP'_t \) denote the same probability measure, but where the selection is made over all of \( V \). Using Lemma~\ref{lem:dtv_sampling_restricted}, we first find that
  \[ \dtv(\dP_t, \dP'_t) \leq \sum_{i \in A_t} P_{v_t i} \leq c_1 \log(n) d^\ell\cdot \frac{d}{n}. \]
  On the other hand, Proposition~\ref{prop:dtv_sampling_poisson} yields
  \[ \dtv(\dP'_t, \dQ_t) \leq c_4 \sum_{i=1}^n P_{v_t i}^2 \leq c_5 \,\frac{d^2}{n}. \]
  Equation~\eqref{eq:dtv_vertex} then results from a straightforward application of the triangle inequality.
\end{proof}

\section{Near eigenvectors of \texorpdfstring{\( G \)}{G}}

\subsection{Functionals on \texorpdfstring{\( (T, o) \)}{(T, o)}}\label{subsec:functionals}
\subsubsection{Vertex functionals on trees}

Similarly to~\cite{bordenave_nonbacktracking_2018}, quantities of interest in the study of \( B \) will be tied to functionals on the random inhomogeneous tree defined above. Define a functional \( f_{\varphi, t} \) on the set of labeled rooted trees \( \cT_* \subset \cG_* \) by
\[ f_{\varphi, t}(T, o) = \sum_{x_t \in \partial {(T, o)}_t}{W_{\iota(o), \iota(x_1)} \dots W_{\iota(x_{t-1}), \iota(x_t)}\varphi(\iota(x_{t}))}, \]
where \( (o, x_1, \dots, x_t) \) is the unique path of length \( t \) between \( o \) and \( x_t \).
Then the following proposition holds:
\begin{proposition}\label{prop:functionals_tree}
  Let \( t \geq 0 \) be an integer. For any \( i, j \in [r] \), the following identities are true:
  \begin{align}
     & \E*{f_{\varphi_i, t}(T, x)} = \mu_i^t\, \varphi_i(x) \label{eq:functional_nonzero_eigen},                                                                                 \\
     & \E*{f_{\varphi_i, t}(T, x)f_{\varphi_j, t}(T, x)} = {(\mu_i\mu_j)}^t\sum_{s = 0}^t{\frac{[K^{s}\varphi^{i, j}](x)}{{(\mu_i\mu_j)}^s}} \label{eq:functional_nonzero_corr}, \\
     & \E*{{\left(f_{\varphi_i, t+1}(T, x) - \mu_i f_{\varphi_i, t}(T, x)\right)}^2} = [K^{t+1}\varphi^{i, i}](x). \label{eq:functional_square_increments}
  \end{align}
  where we recall that \( \varphi^{i, j} = \varphi_i \odot \varphi_j \).
\end{proposition}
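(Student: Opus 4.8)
The plan is to prove the three identities in Proposition~\ref{prop:functionals_tree} by conditioning on the first generation of the random tree $(T,o)$ and exploiting the branching recursion, turning each statement into an induction on $t$. Write $o$ for the root with mark $\iota(o) = x$, and let its children be $y_1, \dots, y_N$ with $N \sim \Poi(d_x)$ and each mark $\iota(y_k)$ drawn i.i.d.\ from $\pi_x$ as in~\eqref{eq:markov_field}. Conditionally on $N$ and the marks of the children, the subtrees hanging off each $y_k$ are independent copies of the inhomogeneous random tree rooted at $y_k$, so
\[ f_{\varphi, t+1}(T, o) = \sum_{k=1}^N W_{x, \iota(y_k)}\, f_{\varphi, t}(T_k, y_k), \]
where $T_k$ is the subtree at $y_k$. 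This identity is the workhorse for all three parts.

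\textbf{First identity.} Taking expectations in the recursion and using the independence of the $T_k$, conditionally on the first generation, gives $\E*{f_{\varphi_i, t+1}(T, o)} = \E*{N}\cdot \E_{z \sim \pi_x}\!\left[W_{xz}\, \mu_i^t \varphi_i(z)\right]$ by the induction hypothesis~\eqref{eq:functional_nonzero_eigen} applied at level $t$. Since $\E*{N} = d_x$ and $\pi_x(z) = P_{xz}/d_x$, the $d_x$ cancels and the sum becomes $\sum_z P_{xz} \E*{W_{xz}} \varphi_i(z) = [Q\varphi_i](x) = \mu_i \varphi_i(x)$, closing the induction; the base case $t=0$ is immediate since $f_{\varphi_i,0}(T,o) = \varphi_i(x)$. (Here one uses that the weight $W_{x,\iota(y_k)}$ is independent of the subtree $T_k$, which holds by construction.)

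\textbf{Second identity.} Here one must expand the square of the sum $\sum_k W_{x,\iota(y_k)} f_{\varphi_i,t}(T_k,y_k)$ against the analogous sum for $\varphi_j$, and split into diagonal terms $k = k'$ and off-diagonal terms $k \neq k'$. For the off-diagonal terms the two subtrees are independent, so the contribution factors as a product of two first-moment terms, each computed by~\eqref{eq:functional_nonzero_eigen}; summing over the (Poissonian) number of ordered pairs $k \neq k'$ uses $\E*{N(N-1)} = d_x^2$, and after the $\pi_x$-averaging one gets $\mu_i^t\mu_j^t \cdot [Q\varphi_i](x)[Q\varphi_j](x)/\ldots$ — more precisely it reassembles into $(\mu_i\mu_j)^{t+1}$ times the "$s=0$" type term one expects. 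For the diagonal $k = k'$ terms one uses $\E*{N} = d_x$ and the induction hypothesis~\eqref{eq:functional_nonzero_corr} at level $t$, together with $\E*{W_{xz}^2}$, which after $\pi_x$-averaging produces a factor $\sum_z P_{xz}\E*{W_{xz}^2}(\cdots) = [K(\cdots)](x)$; this shifts the sum $\sum_{s=0}^{t} [K^s\varphi^{i,j}]/(\mu_i\mu_j)^s$ up by one power of $K$, yielding the $s=1,\dots,t+1$ terms. Combining the diagonal and off-diagonal contributions and factoring out $(\mu_i\mu_j)^{t+1}$ gives exactly $\sum_{s=0}^{t+1} [K^s\varphi^{i,j}](x)/(\mu_i\mu_j)^s$, completing the induction; the base case $t=0$ reads $\varphi_i(x)\varphi_j(x) = [K^0\varphi^{i,j}](x)$.

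\textbf{Third identity.} This is a direct corollary of the first two: expand $\E*{(f_{\varphi_i,t+1} - \mu_i f_{\varphi_i,t})^2} = \E*{f_{\varphi_i,t+1}^2} - 2\mu_i \E*{f_{\varphi_i,t+1}f_{\varphi_i,t}} + \mu_i^2 \E*{f_{\varphi_i,t}^2}$. The first and last terms come from~\eqref{eq:functional_nonzero_corr} with $i = j$; the cross term $\E*{f_{\varphi_i,t+1}f_{\varphi_i,t}}$ can be handled by the same one-step conditioning (only the diagonal $k=k'$ survives in a useful way, or one notes $\E*{f_{\varphi_i,t+1} \mid \mathcal F_{\text{gen }1}, \dots}$ telescopes), giving $\mu_i^{2t+1}\sum_{s=0}^{t}[K^s\varphi^{i,i}](x)/\mu_i^{2s}$. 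Plugging in and telescoping the three sums, all terms cancel except the top one, leaving $[K^{t+1}\varphi^{i,i}](x)$.

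The main obstacle is the bookkeeping in the second identity: correctly separating diagonal from off-diagonal contributions of the branching sum, getting the Poisson factorial moments $\E*{N} = d_x$ and $\E*{N(N-1)} = d_x^2$ to cancel the $d_x$'s introduced by the label distribution $\pi_x$, and verifying that the off-diagonal terms — which naively look like they would produce a $[Q\varphi_i\odot Q\varphi_j]$ term — in fact recombine with the right power of $\mu_i\mu_j$ to land inside the stated sum rather than outside it. One should be careful that the functional is defined via products of weights along the path, so the weight on the first edge is genuinely independent of everything in the subtree, which is what makes the conditional factorization legitimate; this independence is exactly condition (iv) of the class $\mathcal C_n$ combined with the tree construction.
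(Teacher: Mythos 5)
Your proof of the first two identities follows essentially the same route as the paper: one-step conditioning on the first generation, using the Poisson factorial moments \(\mathbb{E}[N]=d_x\) and \(\mathbb{E}[N(N-1)]=d_x^2\) together with the \(\pi_x\)-averaging, which the paper packages into a standalone Lemma on compound Poisson sums and then invokes to derive the recurrences \(g_\varphi(t,\cdot)=Q\,g_\varphi(t-1,\cdot)\) and \(h_{\varphi,\varphi'}(t,\cdot)=K\,h_{\varphi,\varphi'}(t-1,\cdot)+g_\varphi(t,\cdot)\odot g_{\varphi'}(t,\cdot)\). Where you diverge is the third identity. You expand the square and compute the cross-moment \(\mathbb{E}[f_{\varphi_i,t+1}f_{\varphi_i,t}]\) via the martingale identity \(\mathbb{E}[f_{\varphi_i,t+1}\mid\mathcal F_t]=\mu_i f_{\varphi_i,t}\), which correctly gives \(\mu_i^{2t+1}\sum_{s=0}^{t}[K^s\varphi^{i,i}](x)/\mu_i^{2s}\), and then telescope the three sums. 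This works. The paper instead observes that the increment \(F_{i,t}=f_{\varphi_i,t+1}-\mu_i f_{\varphi_i,t}\) itself satisfies the branching recursion by linearity, so its mean is \(Q^t\cdot 0=0\) for all \(t\); the recurrence for its second moment is then homogeneous, \(H(t,\cdot)=K\,H(t-1,\cdot)\), leaving only the base case \(H(0,\cdot)=K\varphi^{i,i}\) to compute. The paper's route avoids the extra cross-moment computation and a three-way cancellation, and is the argument that generalizes most cleanly to the edge-functional version in Proposition~\ref{prop:edge_functionals_tree}; your route is heavier on bookkeeping but equally valid. One small imprecision: your parenthetical remark that ``only the diagonal \(k=k'\) survives in a useful way'' is not quite right --- the off-diagonal terms do contribute, giving the \(\mu_i^{2t+1}\varphi^{i,i}\) inhomogeneous term in the recurrence for the cross-moment --- but your alternative martingale derivation is correct, and the final formula you state is the right one, so this does not affect the validity of the argument.
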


\subsubsection{Adapting functionals to non-backtracking paths}

The matrix \( B \) considered here acts on (directed) edges, whereas the functionals considered so far are defined on vertices. Consequently, we define the following transformation: for a function \( f: \cG_* \to \dR \), and a random vector \( w \in \dR^{V} \) with expected value \( \bar w \), let
\[ \vec \partial_w f(g, o) = \sum_{e: e_2 = o} w_{e_1} f(g_e, o), \]
where \( g_e \) denotes the graph \( g \) with the edge \( {e_1, e_2} \) removed.

The expectations from Proposition~\ref{prop:functionals_tree} are then adapted as follows:

\begin{proposition}\label{prop:edge_functionals_tree}
  Let \( t \geq 0 \) be an integer. For any \( i, j \in [r] \), and \( \phi \in \ker(P) \), the following identities are true:
  \begin{align}
     & \E*{\vec \partial_w f_{\varphi_i, t}(T_x, x)} = [P\bar w](x) \cdot \E*{f_{\varphi_i, t}(T_x, x)},\label{eq:edge_nonzero_eigen}                                                \\
     & \E*{\vec \partial_w(f_{\varphi_i, t}\cdot f_{\varphi_j, t})(T_x, x)} = [P\bar w](x) \cdot \E*{f_{\varphi_i, t}(T_x, x)f_{\varphi_j, t}(T_x, x)} \label{eq:edge_nonzero_corr}, \\
     & \notag \E*{\vec \partial_w [{(f_{\varphi_i, t+1} - \mu_i f_{\varphi_i, t})}^2](T_x, x)}                                                                                       \\ &\qquad \qquad= [P\bar w](x) \cdot \E*{{\left(f_{\varphi_i, t+1}(T_x, x) - \mu_i f_{\varphi_i, t}(T_x, x)\right)}^2}. \label{eq:edge_square_increments}
  \end{align}
\end{proposition}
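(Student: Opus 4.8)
The plan is to reduce each identity to Proposition~\ref{prop:functionals_tree} by conditioning on the first generation of the inhomogeneous tree $(T_x,x)$. Since in $(T_x,x)$ every edge at the root joins $x$ to one of its children, unfolding the definition of $\vec\partial_w$ yields
\[
  \vec\partial_w h(T_x,x) \;=\; \sum_{y\text{ child of }x} w_{\iota(y)}\, h\bigl(g_{(y,x)},\,x\bigr),
\]
where $g_{(y,x)}$ is the paper's $g_e$ with $e=(y,x)$, namely $T_x$ with the edge $\{x,y\}$ deleted, so that $h(\cdot,x)$ effectively only sees $T_x$ with the subtree below $y$ pruned. Writing $N\sim\Poi(d_x)$ for the number of children, $a_1,\dots,a_N$ for their i.i.d.\ labels drawn from $\pi_x$, $T^{(y_k)}$ for the subtree hanging below the $k$-th child, and $W_{xy_k}$ for the independent law-$W_{xa_k}$ weight on $\{x,y_k\}$, the branching construction says that, conditionally on the first generation, the $T^{(y_k)}$ are independent with $T^{(y_k)}$ distributed as the inhomogeneous tree rooted at a vertex of label $a_k$, and that $w$ and the first-level weights are independent of these subtrees given the labels. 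Decomposing each path of length $t\ge 1$ by its first step gives $f_{\varphi_i,t}\bigl(g_{(y_k,x)},x\bigr)=\sum_{j\ne k}W_{xy_j}\,f_{\varphi_i,t-1}\bigl(T^{(y_j)},y_j\bigr)$, and likewise for the product $f_{\varphi_i,t}f_{\varphi_j,t}$ and the squared increment $\bigl(f_{\varphi_i,t+1}-\mu_i f_{\varphi_i,t}\bigr)^2$ after expanding the corresponding square or product of these first-step sums; the degenerate case $t=0$ is handled by a direct computation.

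For \eqref{eq:edge_nonzero_eigen} I would take the expectation: conditioning on the labels, the double sum over $k\ne j$ factorises, with Proposition~\ref{prop:functionals_tree}\,\eqref{eq:functional_nonzero_eigen} giving $\dE[f_{\varphi_i,t-1}(T^{(y_j)},y_j)\mid a_j]=\mu_i^{t-1}\varphi_i(a_j)$. Averaging over the i.i.d.\ labels turns one factor into $d_x^{-1}\sum_a P_{xa}\bar w_a=d_x^{-1}[P\bar w](x)$ and the other into $d_x^{-1}\mu_i^{t-1}\sum_a P_{xa}\dE W_{xa}\varphi_i(a)=d_x^{-1}\mu_i^{t-1}[Q\varphi_i](x)=d_x^{-1}\mu_i^t\varphi_i(x)$; since $\dE[N(N-1)]=d_x^2$, the product is $[P\bar w](x)\mu_i^t\varphi_i(x)=[P\bar w](x)\dE[f_{\varphi_i,t}(T_x,x)]$. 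For \eqref{eq:edge_nonzero_corr} the inner object is a product of two first-step sums: the diagonal terms $j'=j''$, through Proposition~\ref{prop:functionals_tree}\,\eqref{eq:functional_nonzero_corr} and the second moment of $W$ (which replaces one $P$-average by a $K$-average), contribute $(\mu_i\mu_j)^t\sum_{s=1}^{t}(\mu_i\mu_j)^{-s}[K^s\varphi^{i,j}](x)$, while the off-diagonal terms $j'\ne j''$ (all of $k,j',j''$ distinct, using independence of the two subtrees and $\dE[N(N-1)(N-2)]=d_x^3$) contribute exactly the missing $s=0$ term $[\varphi^{i,j}](x)$; the two pieces reassemble, after the prefactor $[P\bar w](x)$, into $\dE[f_{\varphi_i,t}f_{\varphi_j,t}(T_x,x)]$ via \eqref{eq:functional_nonzero_corr}. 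For \eqref{eq:edge_square_increments} the key observation is that $\dE\bigl[f_{\varphi_i,t}(T_a,a)-\mu_i f_{\varphi_i,t-1}(T_a,a)\bigr]=\mu_i^t\varphi_i(a)-\mu_i^{t-1}\mu_i\varphi_i(a)=0$, so in the expanded square every off-diagonal term vanishes in expectation; the diagonal terms give $\dE\bigl[(f_{\varphi_i,t}-\mu_i f_{\varphi_i,t-1})^2(T_a,a)\bigr]=[K^t\varphi^{i,i}](a)$ by \eqref{eq:functional_square_increments}, and the same Poisson/label computation (now using $\sum_a P_{xa}\dE[W_{xa}^2]g(a)=[Kg](x)$) produces $[P\bar w](x)[K^{t+1}\varphi^{i,i}](x)=[P\bar w](x)\dE\bigl[(f_{\varphi_i,t+1}-\mu_i f_{\varphi_i,t})^2(T_x,x)\bigr]$.

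The main obstacle is the probabilistic bookkeeping rather than the algebra. One must justify carefully that, conditionally on the first generation, the subtrees below the root's children are genuinely independent and each distributed as the inhomogeneous tree rooted at its label --- so that Proposition~\ref{prop:functionals_tree} applies verbatim, with the fresh edge weights appearing inside the functionals respecting this independence --- and one must keep track of which index-collision patterns among $k,j',j''$ survive the expectation, since it is exactly the vanishing of the diagonal (or off-diagonal) contributions that collapses the sums over children into the clean prefactor $[P\bar w](x)$. A minor point worth isolating is that deleting the edge $\{x,y_k\}$ in $g_{(y_k,x)}$ is precisely what prevents any first-level weight from ever being paired with itself, so the argument goes through both for a generic $w$ independent of the graph and for the edge-weight vector $w=(W_{xj})_{j}$ that enters the applications.
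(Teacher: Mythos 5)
Your proposal is correct and takes essentially the same approach as the paper: condition on the first generation, use the one-step linear decomposition of the tree functionals, and track which index-collision patterns among the children survive the expectation. The only difference is presentational — the paper first packages the Poisson-moment bookkeeping into a standalone lemma on compound Poisson sums (Lemma~\ref{lem:poisson_sums}, in particular eqs.~\eqref{eq:poisson_sum_tree} and~\eqref{eq:poisson_sum_tree_correlation}) and then applies it with \( X_k, Y_k, Z_k \) as you define them, whereas you carry out the same diagonal/off-diagonal expansion inline.
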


The proof for those results makes use of properties specific to moments of Poisson random variables; as with the preceding results, it is deferred to a later section.

\subsection{Spatial averaging of graph functionals}

In this section, we leverage the coupling obtained above to provide bounds on quantities of the form \( \frac1n \sum_{x \in V} f(G, x) \), for local functions \( f \). The tools and results used in this section are essentially identical to those in~\cite{bordenave_nonbacktracking_2018}, with a few improvements and clarifications added when necessary.

We begin with a result that encodes the fact that the \( t \)-neighbourhoods in \( G \) are approximately independent. We say that a function \( f \) from \( \cG_* \) to \( \dR \) is \( t \)-local if \( f(g, o) \) is only function of \( {(g, o)}_t \).

\begin{proposition}
  Let \( t \leq c_0\log(n) \) for some constant \( c_0 > 0 \). Let \( f, \psi: \cG_* \to \dR \) be two \( t \)-local functions such that \( |f(g, o)| \leq \psi(g, o) \) for all \( (g, o) \in \cG_* \) and \( \psi \) is non decreasing by the addition of edges. Then
  \[ \Var\left( \sum_{o \in V} f(G, o) \right) \leq c \log{(n)}^4 n d^{2t} \cdot \sqrt{\E*{\max_{o \in V}\psi{(G, o)}^4}}. \]
\end{proposition}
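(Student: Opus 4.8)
The plan is to expand the variance as a double sum of covariances over pairs of vertices, show (using independence of the edges) that a pair \((o,o')\) contributes nothing unless the \(t\)-neighbourhoods of \(o\) and \(o'\) can interact, and then control the number of such pairs via the neighbourhood moment bound~\eqref{eq:neighbourhood_expectation}.

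\medskip
\emph{Step 1: reduction to interacting pairs.} Write \(\Var(\sum_{o}f(G,o))=\sum_{o,o'\in V}\Cov(f(G,o),f(G,o'))\); the main point is to bound a single covariance. Since \(f\) is \(t\)-local, \(f(G,o)\) is a deterministic function of the ball \((G,o)_t\). For a finite rooted graph \(h\) that can occur as such a ball, the event \(\{(G,o)_t=h\}\) depends on the edge variables \(G_e\) only through an \emph{augmented edge set} \(\bar E_o(h)\) --- the edges of \(h\), together with the (necessarily absent) edges joining a vertex at distance \(\le t-1\) from \(o\) in \(h\) to a vertex outside \(V(h)\) --- and every edge of \(\bar E_o(h)\) is incident to a vertex within distance \(t\) of \(o\). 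Decomposing over realisations,
\[ \E*{f(G,o)\,f(G,o')}=\sum_{h,h'}f(h)f(h')\,\Pb*{(G,o)_t=h,\ (G,o')_t=h'}, \]
and whenever \(\bar E_o(h)\cap\bar E_{o'}(h')=\emptyset\) these events concern disjoint blocks of independent coordinates, so the joint probability factorises. Subtracting \(\E*{f(G,o)}\E*{f(G,o')}=\sum_{h,h'}f(h)f(h')\Pb*{(G,o)_t=h}\Pb*{(G,o')_t=h'}\), the ``disjoint'' terms cancel \emph{exactly}, and since \(|f|\le\psi\) we obtain
\[ \bigl|\Cov(f(G,o),f(G,o'))\bigr|\le\E*{\psi(G,o)\psi(G,o')\,\ind\{d_G(o,o')\le 2t\}}+\dE_{G}\dE_{G'}\bigl[\psi(G,o)\psi(G',o')\,\ind\{(G,o)_t\cap(G',o')_t\neq\emptyset\}\bigr], \]
where \(G'\) is an independent copy of \(G\) and both indicator events force the two \(t\)-balls to meet (hence \(d_G(o,o')\le 2t\) in the first one).

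\medskip
\emph{Step 2: counting interacting pairs.} Summing over \(o,o'\) and using \(o'\in(G,o)_t\Leftrightarrow o\in(G,o')_t\), the first term is bounded, via Cauchy--Schwarz and \(\bigl(\sum_o|(G,o)_{2t}|\bigr)^2\le n^2\max_v|(G,v)_{2t}|^2\), by
\[ n\sqrt{\E*{\max_{v}\psi(G,v)^4}}\cdot\sqrt{\E*{\max_{v}|(G,v)_{2t}|^2}}\le c\,n\log(n)^2 d^{2t}\sqrt{\E*{\max_{v}\psi(G,v)^4}}, \]
where the last inequality uses~\eqref{eq:neighbourhood_expectation} with \(p=2\) and \(t\le c_0\log n\) (so that the factor \(t\) is absorbed into a \(\log(n)\)). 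For the second term, the number of pairs \((o,o')\) with \((G,o)_t\cap(G',o')_t\neq\emptyset\) is at most \(\sum_w|(G,w)_t|\,|(G',w)_t|\le n\max_w|(G,w)_t|\max_w|(G',w)_t|\), so by independence of \(G\) and \(G'\) this term is at most
\[ n\Bigl(\dE_G\bigl[\max_v\psi(G,v)\,\max_w|(G,w)_t|\bigr]\Bigr)^2\le n\,\E*{\max_v\psi(G,v)^2}\,\E*{\max_w|(G,w)_t|^2}\le c\,n\log(n)^4 d^{2t}\sqrt{\E*{\max_v\psi(G,v)^4}}, \]
again by Cauchy--Schwarz, \eqref{eq:neighbourhood_expectation}, and \(\E*{\max_v\psi(G,v)^2}\le\sqrt{\E*{\max_v\psi(G,v)^4}}\). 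Summing the two contributions yields the claim.

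\medskip
\emph{Main obstacle.} The subtle step is Step 1: since the ball \((G,o)_t\) is explored adaptively, \(f(G,o)\) is not a function of a \emph{fixed} block of edge coordinates, so one cannot directly appeal to independence of disjoint blocks. The argument must be carried out realisation by realisation, defining the augmented set \(\bar E_o(h)\) supporting \(\{(G,o)_t=h\}\) and verifying that disjointness of \(\bar E_o(h)\) and \(\bar E_{o'}(h')\) (i) is implied by \(d_G(o,o')>2t\) on those realisations and (ii) gives \emph{exact} factorisation of the joint law, so that the non-interacting contributions cancel with no error. Everything afterwards is Cauchy--Schwarz bookkeeping fed by the neighbourhood moment bounds already established.
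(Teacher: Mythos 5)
Your proposal is a genuinely different route from the paper's (a covariance expansion over vertex pairs, versus the Efron--Stein inequality applied to the independent blocks $E_x$), but there is a real gap at the crux of Step~1.

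The problem is the claim that $d_G(o,o')>2t$ implies $\bar E_o(h)\cap\bar E_{o'}(h')=\emptyset$. This is false. The event $\{(G,o)_t=h\}$ is determined not only by the \emph{present} edges of $h$ but also by the \emph{absence} of every edge $\{u,w\}$ with $u$ at distance at most $t-1$ from $o$ in $h$ and $w$ not a neighbour of $u$ in $h$; so $\bar E_o(h)$ contains, in particular, every non-edge $\{o,w\}$ with $w\notin V(h)$. Hence whenever $o'\notin V(h)$ and $o\notin V(h')$ --- which is exactly the regime $d_G(o,o')>2t$ --- the absent edge $\{o,o'\}$ lies in \emph{both} $\bar E_o(h)$ and $\bar E_{o'}(h')$, so the augmented sets always intersect. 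The set of ``non-interacting'' realisations is therefore essentially empty, the asserted exact cancellation is vacuous, and the bound you write after ``since $|f|\le\psi$ we obtain'' does not follow: the indicator $\ind\{d_G(o,o')\le 2t\}$ does not capture the interacting pairs, and the independent-copy term $\ind\{(G,o)_t\cap(G',o')_t\neq\emptyset\}$ is not the right event either. Concretely, already for $t=1$ and $f(g,o)=\deg(o)$ one has $\Cov(f(G,o),f(G,o'))=\Var(X_{oo'})>0$ even though $\Pb(d_G(o,o')\le 2t)$ is small, so the covariance does not vanish or localise on the event you condition on. The shared-absent-edge contributions are individually of order $P_{oo'}\le d/n$ and can plausibly be tamed, but that requires a separate estimate (in the spirit of the coupling error in Proposition~\ref{prop:dtv}) which your argument does not supply.

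For comparison, the paper sidesteps this entirely with Efron--Stein: resampling the block $E_x$ (all edges $\{u,x\}$ with $u\le x$) perturbs $f(G,o)$ only for $o$ with $x\in(G,o)_t$, because $G_x\subseteq G$ and removing edges at $x$ cannot affect a ball that does not contain $x$. The number of such $o$ is at most $|(G,x)_t|$, and one Cauchy--Schwarz plus the moment bound~\eqref{eq:neighbourhood_expectation} finishes the proof --- no need to reason about which absent edges the exploration touched. If you want to keep the covariance route, you would need an explicit bound on the contribution from realisation pairs whose augmented sets intersect only in absent edges, summed over all $n^2$ pairs; absent that, the argument is not complete.
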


\begin{proof}
  For \( x\in V \), denote by \( E_x \) the set \( \Set*{ \{u, x\} \in E \given u \leq x  } \); the vector \( (E_1, \dots, E_n) \) is an independent vector, and we have
  \[ Y := \sum_{v \in V} f(G, v) = F(E_1, \dots, E_n). \]
  for some measurable function \( F \).\\
  Define now \( G_x \) the graph with vertex set \( V \) and edge set \( \bigcup_{y \neq x} E_y \), and set
  \[  Y_x = \sum_{v \in V} f(G_x, v).  \]
  The random variable \( Y_x \) is \( \bigcup_{y \neq x} E_y \)-measurable, so the Efron-Stein inequality applies:
  \[ \Var(Y) \leq \sum_{x \in [n]}\E*{{(Y - Y_x)}^2}. \]
  For a given \( x \in V \), the difference \( f(G, o) - f(G_x, o) \) is always zero except if \( x \in {(G, o)}_t \), due to the locality property; consequently,
  \begin{align*}
    |Y - Y_x| & \leq \sum_{o \in V}|f(G, o) - f(G_x, o)|                                       \\
              & \leq \sum_{o \in {(G, x)}_t} \psi(G, o) + \psi(G_x, o)                         \\
              & \leq 2 \max_{x\in [n]} \left| {(G, x)}_t \right|\cdot \max_{o\in V}\psi(G, o),
  \end{align*}
  where we used the non-decreasing property of \( \psi \) in the last line. By the Cauchy-Schwarz inequality and equation~\eqref{eq:neighbourhood_expectation}, we can write
  \begin{align*}
    \E*{{(Y - Y_x)}^2} & \leq 4\sqrt{\E*{\left| \max_{x\in [n]} {(G, x)}_t \right|^4}}\cdot \sqrt{\E*{\max_{o \in V}\psi{(G, o)}^4}} \\
                       & \leq c_1 t^2 \log{(n)}^2 d^{2t} \cdot \sqrt{\E*{\max_{o \in V}\psi{(G, o)}^4}}.
  \end{align*}
  Using that \( t \leq c_0\log(n) \), and the linearity of expectation, yields the desired bound.
\end{proof}

We now use our previous coupling results to provide a concentration bound between a functional on graphs and its expectation on trees:

\begin{proposition}\label{prop:functionals_concentration}
  Let \( t \in \dN \) and \( f, \psi: \cG_* \to \dR \) be as in the previous proposition. Then, with probability at least \( 1 - \frac{1}{r^2 \log{(n)}^2} \), the following inequality holds:
  \[ \left| \sum_{v \in V}f(G, v) - \E*{\sum_{x\in [n]}f(T_x, x)}\right|  \leq c\, r \log{(n)}^3 d^{t+1} \sqrt{n} \norm*{\psi}_\star, \]
  where \( \norm*{\psi}_\star \) is defined as
  \[ \norm*{\psi}_\star = {\left( \E*{\max_{v \in V}\psi{(G, v)}^4} \right)}^{\frac14} \, \vee \, {\left(\max_{x \in [n]}\E*{\psi{(T_x, x)}^2} \right)}^{\frac12}. \]
\end{proposition}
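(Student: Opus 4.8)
The plan is to split the target difference into a \emph{fluctuation} part and a \emph{bias} part and bound each separately. By the triangle inequality, and using $\E*{\sum_{x\in[n]}f(T_x,x)}=\sum_{v\in V}\E*{f(T_v,v)}$,
\[
  \left|\sum_{v\in V}f(G,v)-\E*{\sum_{x\in[n]}f(T_x,x)}\right|
  \;\le\;
  \left|\sum_{v\in V}f(G,v)-\E*{\sum_{v\in V}f(G,v)}\right|
  \;+\;\sum_{v\in V}\bigl|\E*{f(G,v)}-\E*{f(T_v,v)}\bigr|.
\]

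For the fluctuation term I would invoke the previous proposition: since $f$ is $t$-local, dominated by the non-decreasing $\psi$, and $t\le c_0\log(n)$, it gives $\Var\bigl(\sum_v f(G,v)\bigr)\le c\log(n)^4\,n\,d^{2t}\bigl(\E*{\max_v\psi(G,v)^4}\bigr)^{1/2}\le c\log(n)^4\,n\,d^{2t}\,\norm*{\psi}_\star^2$, the last step being the definition of $\norm*{\psi}_\star$. Chebyshev's inequality then shows that, outside an event of probability at most $\tfrac1{r^2\log(n)^2}$, this term is at most $r\log(n)\sqrt{\Var(\sum_v f(G,v))}\le c\,r\log(n)^3 d^{t}\sqrt n\,\norm*{\psi}_\star$, which is already within the asserted bound.

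For the bias term I would treat each vertex $v$ separately. Because $f$ and $\psi$ are $t$-local, $\E*{f(G,v)}$ depends only on $\cL({(G,v)}_t)$ and $\E*{f(T_v,v)}$ only on $\cL({(T_v,v)}_t)$; fixing an optimal coupling of these two laws, furnished by~\eqref{eq:dtv_coupling}, the two $f$-values coincide whenever the $t$-neighbourhoods agree, while on the complementary event $|f(G,v)-f(T_v,v)|\le\psi(G,v)+\psi(T_v,v)$. Cauchy--Schwarz therefore gives
\[
  \bigl|\E*{f(G,v)}-\E*{f(T_v,v)}\bigr|
  \;\le\;\bigl(2\E*{\psi(G,v)^2}+2\E*{\psi(T_v,v)^2}\bigr)^{1/2}\,
  \dtv\bigl(\cL({(G,v)}_t),\cL({(T_v,v)}_t)\bigr)^{1/2}.
\]
Both second moments are at most $\norm*{\psi}_\star^2$ (the $G$-term by Jensen from the $L^4$ control in $\norm*{\psi}_\star$, the tree term directly from the $L^2$ control), and Proposition~\ref{prop:dtv}, applied with parameter $t$, bounds the total-variation distance by $c\log(n)^2 d^{2t+2}/n$. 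Hence each summand is at most $c\,\norm*{\psi}_\star\log(n)\,d^{t+1}/\sqrt n$, and summing over the $n$ vertices gives $c\log(n)\,d^{t+1}\sqrt n\,\norm*{\psi}_\star$. Adding the two contributions and absorbing $d^{t}\le d^{t+1}$ and $\log(n)\le r\log(n)^3$ into the constant yields the proposition.

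The hard part is the bias term: because neither $f$ nor $\psi$ is uniformly bounded, one cannot estimate $|\E*{f(G,v)}-\E*{f(T_v,v)}|$ by $\norm{f}_\infty$ times the total-variation distance. The device above --- couple, split on agreement of the $t$-neighbourhoods, and apply Cauchy--Schwarz --- trades the total-variation distance for its square root against a second-moment bound on $\psi$, and it is precisely this exchange that forces the mixed $L^4$-over-$G$ / $L^2$-over-trees shape of $\norm*{\psi}_\star$, the $L^4$ piece being simultaneously what the fluctuation estimate requires.
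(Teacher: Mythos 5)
Your proposal is correct and follows essentially the same two-step route as the paper: Chebyshev together with the variance bound of the preceding proposition controls the fluctuation term, and the bias term is controlled by coupling the $t$-neighbourhoods of $G$ and $T_v$, splitting on the coupling-success event, and applying Cauchy--Schwarz to trade $\Pb(\cE_v)=\dtv(\cL((G,v)_t),\cL((T_v,v)_t))$ for its square root against the second moment of $\psi$, with the $G$-side second moment controlled via Jensen from the $L^4$ piece of $\norm{\psi}_\star$ and the tree side directly from the $L^2$ piece. The only cosmetic difference is that the paper applies Cauchy--Schwarz separately to $\E[|f(G,v)|\ind_{\cE_v}]$ and $\E[|f(T_v,v)|\ind_{\cE_v}]$, whereas you do it once after bounding $|f(G,v)-f(T_v,v)|\le\psi(G,v)+\psi(T_v,v)$ on $\cE_v$; both yield the same bound up to constants.
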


\begin{proof}
  Using the Chebyshev inequality and the variance bound from the preceding proposition, we have with probability at least \( 1 - \frac1{r^2\log{(n)}^2} \)
  \[ \left| \sum_{v \in V}f(G, v) - \E*{\sum_{v \in V}f(G, v)} \right| \leq c_1\,r\log{(n)}^3 d^t \sqrt{n} \norm*{\psi}_\star. \]
  It then remains to bound the difference between the expectation term and its counterpart on trees. For \( x \in V \), let \( \cE_x \) denote the event that the coupling bewteen \( {(G, x)}_t \) and \( {(T_x, x)}_t \) fails; by the locality property, \( f(G, x) = f(T_x, x) \) on \( \cE_x \). Therefore, using the Cauchy-Schwarz inequality,
  \begin{align*}
     & \left| \sum_{x \in [n]} \E*{f(G, x) - f(T_x, x)} \right| \leq \sum_{x \in [n]} \E*{|f(G, x)|1_{\cE_x} + |f(T_x, x)|1_{\cE_x}}             \\
     & \leq \sum_{x \in [n]} \sqrt{\Pb*{\cE_x}}\left( \sqrt{\E*{\psi{(G, x)}^2}} + \sqrt{\E*{\psi{(T_x, x)}^2}} \right)                          \\
     & \leq \sqrt{\frac{c_2\log{(n)}^2d^{2t+2}}n}\cdot \sum_{x \in [n]}\left(\E*{\psi{(G, x)}^4}^{\frac14} + \sqrt{\E*{\psi{(T_x, x)}^2}}\right) \\
     & \leq c_3 \log(n) a d^{t+1} \sqrt{n} \,\norm*{\psi}_\star.
  \end{align*}

  It is then straightforward to check that both obtained bounds are less than the RHS in the proposition, upon adjusting \( c \).
\end{proof}

\subsection{Structure of near eigenvectors}

In the following, the aim is to obtain bounds on the norms and scalar product of the near eigenvectors $u_i$ and $v_i$ defined in \eqref{eq:def_ui_vi}. The main result of this section is as follows:

\begin{proposition}\label{prop:pseudo_eigenvectors}
  Let \( \ell \) be small enough so that~\eqref{eq:condition_ell_1} holds. On an event with probability \( 1 - c_1/\log(n) \), the following inequalities hold for all \( i, j \in [r] \), \( t \leq 2\ell \) and some absolute constant \( c > 0 \):
  \begin{align}
     & \left|\langle B^t \chi_i, \chi_j \rangle - \mu_i^t \langle \varphi_i, D_P\varphi_j \rangle \right|                                 \leq \frac{c\,r b^2 d^2 \log{(n)}^6 d^{2t}L^t}{\sqrt{n}}, \label{eq:u_phi_dotp}                                \\
     & \left| \langle B^t\chi_i, D_W\check\chi_j \rangle - \mu_i^{t+1}\delta_{ij} \right|                                                 \leq \frac{c\, r b^2 d^3 L \log{(n)}^6 d^{2t} L^t}{\sqrt{n}}, \label{eq:u_v_dotp}                              \\
     & \left| \langle B^t \chi_i, B^t \chi_j \rangle - \mu_i^t \mu_j^t\Gamma_{U, ij}^{(t)} \right|                                        \leq  \frac{c\,r b^2 d^2 \log{(n)}^7 d^{3t}L^{2t}}{\sqrt{n}}, \label{eq:u_u_dotp}                              \\
     & \left| \langle {(B^*)}^t D_W \check\chi_i, {(B^*)}^t D_W \check\chi_j \rangle - \mu_i^{t+1}\mu_j^{t+1}\Gamma_{ij}^{(t+1)} \right|  \leq \frac{c\,r b^2 d^2 L^2 \log{(n)}^6 d^{3t}L^{2t}}{\sqrt{n}}, \label{eq: v_v_dotp}                          \\
     & \norm*{B^{t+1} \chi_i - \mu_i B^t \chi_i}^2                                                                                        \leq r d^3 L^2 \rho^{t+1} + \frac{c r b^2 d^3 \log{(n)}^7 d^{3t}L^{2t}}{\sqrt{n}}. \label{eq:u_eigen_equation}
  \end{align}
\end{proposition}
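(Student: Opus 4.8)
The plan is to reduce all five estimates to the spatial‑averaging machinery built in the previous subsections: write each left‑hand side \emph{exactly} as $\sum_{o\in V}F(G,o)$ for a suitable $O(\ell)$‑local graph functional $F$, use Proposition~\ref{prop:functionals_concentration} to replace this sum by the tree expectation $\E*{\sum_{x\in[n]}F(T_x,x)}$ up to the advertised error, and then evaluate that expectation in closed form with Propositions~\ref{prop:functionals_tree} and~\ref{prop:edge_functionals_tree}. Expanding $B^t$ as a weighted count of non‑backtracking walks and using $\chi_i=T\varphi_i$, one checks that, for $t\ge 1$, $\langle B^t\chi_i,\chi_j\rangle=\sum_{o\in V}\varphi_j(o)(\deg_G(o)-1)\tilde f_{\varphi_i,t}(G,o)$, where $\tilde f_{\varphi,t}(g,o)=\sum_{(o,v_1,\dots,v_t)}W_{ov_1}\cdots W_{v_{t-1}v_t}\,\varphi(v_t)$ is the sum over non‑backtracking walks of length $t$ from $o$ (a $t$‑local functional, since such a walk stays inside ${(g,o)}_t$); likewise $\langle B^t\chi_i,D_W\check\chi_j\rangle=\sum_{o\in V}\varphi_j(o)\tilde f_{\varphi_i,t+1}(G,o)$ (using $[D_W\check\chi_j](e)=W_e\varphi_j(e_1)$), while $\langle B^t\chi_i,B^t\chi_j\rangle$ and — after rewriting the parity–time identity~\eqref{eq:parity-time} as $\langle(B^*)^tD_W\check\chi_i,(B^*)^tD_W\check\chi_j\rangle=\langle B^t\chi_i,D_W^2B^t\chi_j\rangle$ — the fourth quantity become sums over $o$ of functionals built from products $\tilde f_{\varphi_i,t}\tilde f_{\varphi_j,t}$ with a degree (resp.\ $W^2$‑weighted) prefactor, and $\norm*{B^{t+1}\chi_i-\mu_iB^t\chi_i}^2$ becomes a sum of $\vec\partial_{\ind}\bigl((\tilde f_{\varphi_i,t+1}-\mu_i\tilde f_{\varphi_i,t})^2\bigr)$‑type functionals; the case $t=0$ is in each instance a standard degree‑concentration estimate.

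On a \emph{tree} $(T_x,x)$ every non‑backtracking walk is the unique path to a vertex of $\partial{(T_x,x)}_t$, so $\tilde f_{\varphi,t}(T_x,x)=f_{\varphi,t}(T_x,x)$ and the functionals above coincide there with $\varphi_j\,\vec\partial_{\ind}f_{\varphi_i,t}$, $\varphi_j\,f_{\varphi_i,t+1}$, $\vec\partial_{\ind}(f_{\varphi_i,t}f_{\varphi_j,t})$, $\vec\partial_{W^2}(f_{\varphi_i,t}f_{\varphi_j,t})$ and $\vec\partial_{\ind}\bigl((f_{\varphi_i,t+1}-\mu_if_{\varphi_i,t})^2\bigr)$. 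Each $F$ is dominated in absolute value by a non‑decreasing $\psi(g,o)$ of the shape $c\,(b/\sqrt n)^{a_1}L^{a_2}\,|{(g,o)}_{t+O(1)}|^{a_3}$ with small integers $a_i$, the $2\ell$‑tangle‑free property and the ball‑size moment bounds of Proposition~\ref{prop:local_summary} serving to control $\norm*{\psi}_\star$. Since $2\ell\le c\log n$ by~\eqref{eq:condition_ell_1}, Proposition~\ref{prop:functionals_concentration} applies and gives, off an event of probability at most $1/(r^2\log^2 n)$, that $\sum_oF(G,o)$ equals $\E*{\sum_xF(T_x,x)}$ up to $c\,r\log(n)^{3}d^{t+O(1)}\sqrt n\,\norm*{\psi}_\star$, which after simplification is precisely the error term stated in each of~\eqref{eq:u_phi_dotp}--\eqref{eq:u_eigen_equation}. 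For the main term, Propositions~\ref{prop:functionals_tree} and~\ref{prop:edge_functionals_tree} applied to the tree functionals and summed over $x\in[n]$ produce exactly $\mu_i^t\langle\varphi_i,D_P\varphi_j\rangle$ (with $D_P=\diag(P\ind)$), $\mu_i^{t+1}\delta_{ij}$, and the covariance quantities $\mu_i^t\mu_j^t\Gamma^{(t)}_{U,ij}$, $\mu_i^{t+1}\mu_j^{t+1}\Gamma^{(t+1)}_{V,ij}$ (after unwinding the definitions of $\Gamma_U,\Gamma_V$ and using the symmetry of $K$); while for~\eqref{eq:u_eigen_equation} the tree expectation is $\langle P\ind,K^{t+1}\varphi^{i,i}\rangle\le rd^3L^2\rho^{t+1}$ by Lemma~\ref{lem:scalar_Kt} and $\norm*{P\ind}_\infty\le d$, and since the left‑hand side is nonnegative this one‑sided bound is what is needed.

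Finally a union bound over $i,j\in[r]$ and $0\le t\le 2\ell$ — at most $c\,r^2\log n$ functionals, each controlled outside a bad event of probability $1/(r^2\log^2 n)$ — intersected with the event $\cE$ of Proposition~\ref{prop:local_summary}, gives a single event of probability at least $1-c_1/\log n$ carrying all of~\eqref{eq:u_phi_dotp}--\eqref{eq:u_eigen_equation}. I expect the main obstacle to be the unfolding step: pinning down the exact graph functional representing each inner product — with the right combinatorial factors for the ``incoming'' edge, for pairs of walks sharing an initial segment, and for self‑intersections — and then choosing each dominating $\psi$ with exactly the neighbourhood‑size exponent that makes the concentration error of Proposition~\ref{prop:functionals_concentration} reproduce the powers of $d$, $L$ and $\log n$ claimed in the statement.
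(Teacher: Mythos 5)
Your overall plan is the right one, and it is essentially the paper's: represent each inner product as a spatially averaged local functional, apply the concentration Proposition~\ref{prop:functionals_concentration}, and close the tree expectations via Propositions~\ref{prop:functionals_tree} and~\ref{prop:edge_functionals_tree}. Your exact representations are also correct: for $t\ge 1$, $\langle B^t\chi_i,\chi_j\rangle=\sum_o\varphi_j(o)(\deg_G(o)-1)\tilde f_{\varphi_i,t}(G,o)$ and $\langle B^t\chi_i,D_W\check\chi_j\rangle=\sum_o\varphi_j(o)\tilde f_{\varphi_i,t+1}(G,o)$, and the tree expectations you write down do produce the claimed main terms (on a tree, $(\deg-1)\tilde f=\vec\partial_\ind f$, so Proposition~\ref{prop:edge_functionals_tree} applies and gives $\mu_i^t\langle\varphi_j,D_P\varphi_i\rangle$, etc.).

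The gap is in the domination step, and it is a real one. You claim each exact graph functional $F$ is dominated by a $\psi$ of the form $c(b/\sqrt n)^{a_1}L^{a_2}|(g,o)_{t+O(1)}|^{a_3}$. That is false: your $\tilde f_{\varphi,t}(g,o)$ is a sum over \emph{non-backtracking walks} of length $t$, so $|\tilde f_{\varphi,t}(g,o)|\le \|\varphi\|_\infty L^t\,|\cP_g(o,t)|$, and on a graph whose $t$-ball contains $k$ excess edges one can have $|\cP_g(o,t)|$ of order $2^k|(g,o)_t|$, which is not bounded by any fixed power of $|(g,o)_t|$. Proposition~\ref{prop:functionals_concentration} requires the pointwise bound $|F(g,o)|\le\psi(g,o)$ to hold \emph{for all} $(g,o)\in\cG_*$, and then controls the unconditional quantities $\E*{\max_v\psi(G,v)^4}$ and $\max_x\E*{\psi(T_x,x)^2}$; you cannot invoke the $2\ell$-tangle-free property inside $\norm*{\psi}_\star$, because that is an event, not an almost sure bound. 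Choosing instead $\psi\propto|\cP_g(o,t)|$ would restore the domination but breaks the moment control, since the moment bounds of Proposition~\ref{prop:local_summary} are for ball sizes, not walk counts.

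The paper's proof resolves exactly this issue by inserting a truncation. It works with
\[
f(g,o)=\ind_{(g,o)_t\text{ has no cycles}}\,\varphi_j(o)\,\vec\partial_\ind f_{\varphi_i,t}(g,o),
\]
for which $|f(g,o)|\le (b^2/n)\deg(o)|(g,o)_t|L^t=:\psi(g,o)$ holds unconditionally (the path functional $f_{\varphi_i,t}$ is only ever evaluated on trees, where it is a sum over boundary vertices, not walks). Proposition~\ref{prop:functionals_concentration} then applies directly to this pair $(f,\psi)$. The price is that $\sum_o f(G,o)$ no longer equals $\langle B^t\chi_i,\chi_j\rangle$ exactly; the discrepancy is supported on the set $\cV_t$ of vertices whose $t$-ball contains a cycle. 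This is where the tangle-free structure enters, and only on the high-probability event of Proposition~\ref{prop:local_summary}: on that event $|\cV_t|\le c\log(n)^2d^{t+1}$ and each surviving term is at most $2\max_v\psi(G,v)$ (at most two non-backtracking paths per boundary vertex in a tangle-free ball), giving an additive error of order $|\cV_t|\max_v\psi(G,v)$, which is what produces the first summand in the paper's final bound. So: keep your exact identities and tree-expectation calculations, but split off the truncated functional from the cycle-vertex remainder before invoking concentration, rather than trying to absorb the walk count into $\norm*{\psi}_\star$.
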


\begin{proof}
  The proof of those inequalities relies on careful applications of Proposition~\ref{prop:functionals_concentration} to previously considered functionals. We aim to prove that each of those inequalities hold with probability \( 1 - c_2/r\log(n) \); we fix in the following an integer \( t \leq 2\ell \) and \( i, j \in [r] \). Let \( \cV_t \) be the set of vertices such that \( {(G, v)}_t \) is not a tree; we place ourselves in the event described in Proposition~\ref{prop:local_summary} and as a consequence
  \[ \cV_t \leq c_3 \log{(n)}^2 d^{t+1}. \]

  \bigskip

  We first prove~\eqref{eq:u_phi_dotp}; let
  \[ f(g, o) =  \ind_{{(g, o)}_t \text{ has no cycles}}\,\varphi_j(o)\vec\partial_\ind f_{\varphi_i, t}(g, o). \]
  The function \( f \) is clearly \( t \)-local, and
  \begin{align*}
    \left |f(g, o) \right| & \leq \norm*{\varphi_i}_\infty \norm*{\varphi_j}_\infty \deg(o) \left| \partial{(g, o)}_t \right| L^t \\
                           & \leq \frac{b^2}n \deg(o) \left|{(g, o)}_t \right| L^t := \psi(g, o).
  \end{align*}
  The function \( \psi \) thus defined is non-decreasing by the addition of edges.
  When \( v \notin \cV_t \), we notice that
  \[ f(G, v) = \varphi_j(v) \cdot [T^* B^t \chi_i](v), \]
  hence,
  \[ \left| \langle B^t\chi_i, \chi_j \rangle - \sum_{v \in V}f(G, v) \right| = \left|\sum_{v \in \cV_t} \varphi_i(v) T^* B^t \chi_j\right| \leq 2|\cV_t| \max_v \psi(G, v), \]
  since by the tangle-free property there are at most two paths from \( v \) to any vertex in \( {(G, v)}_t \). Furthermore, using the results in \autoref{subsec:neighbourhoods}, we find that with probability at least \( 1 - 1/n \)
  \[ \max_{v}\psi(G, v) \leq \frac{c_4\,b^2\log{(n)}^2 d^{t+1} L^t}n \quand \norm*{\psi}_\star \leq \frac{c_4\,b^2\log{(n)}^3 d^{t+1} L^t}n. \]
  Finally, a direct computation shows that
  \[ \sum_{x \in [x]} \E*{f(T_x, x)} = \sum_{x \in [n]}{\varphi_j(x)\cdot d_x \mu_i^t \varphi_i(x)} = \mu_i^t\langle \varphi_j, D_P\varphi_i \rangle. \]
  Applying Proposition~\ref{prop:functionals_concentration} to \( f \) and \( \psi \), and using the triangle inequality:
  \begin{align*}
    \left| \langle B^t\chi_i, \chi_j \rangle - \mu_i^t\langle \varphi_j, D_P\varphi_i \rangle \right| & \leq \frac{c_5\,b^2\log{(n)}^4 d^{2t+2}L^t}{n} + \frac{c_6\, r b^2 \log{(n)}^6 d^{2t+2}L^t}{\sqrt{n}} \\
                                                                                                      & \leq \frac{c_7\,r b^2 d^2 \log{(n)}^6 d^{2t}L^t}{\sqrt{n}}.
  \end{align*}

  The proof of the other inequalities is very similar, applying Proposition~\ref{prop:functionals_concentration} to other functionals from \autoref{subsec:functionals}. To avoid clutter, it is deferred to the appendix.
\end{proof}

\section{Proof of Theorem~\ref{th:bl_u_bounds}}

Having shown Proposition~\ref{prop:pseudo_eigenvectors}, all that remains is simply to gather the preceding bounds, and simplify them to get an easy-to-read summary. Bounds~\eqref{eq:Ustar_U}-\eqref{eq:Ustar_V}, as well as~\eqref{eq:norm_Bl}, being straightforward computations, they are deferred to the appendix.

\subsection{A telescopic trick: proof of~\eqref{eq:Bl_U}}

Notice that for for a \( r_0 \times r_0 \) matrix \( M \), we have
\begin{equation}\label{eq:two_norm_equiv}
  \norm{M}  \leq r_0 \max_{i} \norm{M_i}.
\end{equation}
where $M_i$ are the columns (or lines) of $M$.
To apply this inequality, we write
\begin{equation}\label{eq:eigen_telescopic_sum}
  \norm{B^{\ell}u_i - \mu_i^\ell u_i} \leq \sum_{t = 0}^{\ell - 1} \mu_i^{\ell - t - 1}\norm{B^{t+1}u_i - \mu_i B^t u_i},
\end{equation}
and~\eqref{eq:u_eigen_equation} yields
\begin{align*} 
   & \norm{B^{t+1}u_i - \mu_i B^t u_i}^2 \leq \mu_i^{-2\ell}\norm{B^{t + \ell + 1}\chi_i - \mu_i B^{t+\ell}\chi_i}^2                               \\ 
   & \qquad\qquad\leq \mu_i^{-2\ell} \left(r d^3 L^2 \rho^{t+\ell+1} + \frac{c r b^2 d^3 \log{(n)}^7 d^{3(t+\ell)}L^{2(t+\ell)}}{\sqrt{n}}\right).
\end{align*}
Since \( i \leq r_0 \), the bounds \( \mu_i^2 \geq \rho \geq 1/d \) apply, so that
\begin{equation}\label{eq:bound_bu_muu}
  \norm{B^{t+1}u_i - \mu_i B^t u_i}^2 \leq r d^3 L^2\rho^{t+\ell+1}\mu_i^{-2\ell} + \frac{c r b^2 d^3 \log{(n)}^7 d^{3t+5\ell}L^{2(t+\ell)}}{\sqrt{n}}.
\end{equation}
We now use the (very crude) inequality \( \sqrt{x+y} \leq \sqrt{x} + \sqrt{y} \) inside~\eqref{eq:bound_bu_muu}:
\begin{align*}
  \norm{B^{\ell}u_i - \mu_i^\ell u_i} & \leq \sum_{t = 0}^{\ell - 1} \left[\mu_i^{\ell - t - 1}\sqrt{r}d^{3/2}L\rho^{\frac{t+\ell+1}2}\mu_i^{-\ell} + \frac{c_1\, b d^{3/2} \log{(n)}^{7/2} d^{\frac{3t+5\ell}2}L^{t+\ell}}{n^{1/4}} \right] \\
                                      & \leq \sqrt{r}d^{3/2}L \rho^{\ell/2}\sum_{t=0}^{\ell-1}{\left(\frac{\sqrt{\rho}}{\mu_i}\right)}^{t+1} + c_2\,b d^{2}\log{(n)}^{9/2} \frac{{(Ld^4)}^\ell}{n^{1/4}} L^\ell.
\end{align*}
The terms in the sum are all less than 1 since \( i \leq r_0 \), and \( \ell < c_3\log(n) \) implies
\[ \norm{B^{\ell}u_i - \mu_i^\ell u_i} \leq c_3 \sqrt{r}d^{3/2}L\log(n) \rho^{\ell/2} + c_2 b d^2 \log{(n)}^{9/2} \frac{{(aLd^3)}^\ell}{n^{1/4}} L^\ell. \]
The bound \( {(Ld^4)}^\ell \leq n^{1/4} \) holds by definition of \( \ell \), and~\eqref{eq:Bl_U} ensues via~\eqref{eq:two_norm_equiv}.

\subsection{Bounding \texorpdfstring{\( \norm{B^\ell P_{H^\bot}} \)}{|| B\textasciicircum{}l P\_H ||}}

Having established the candidates and error bounds for the upper eigenvalues of \( B^\ell \), it remains to bound the remaining eigenvalues (also called the \emph{bulk}) of the matrix. This is done using a method first employed in~\cite{massoulie_community_2014}, and leveraged again in a similar setting in~\cite{bordenave_nonbacktracking_2018, bordenave_detection_2020}. Our approach will be based on the latter two, adapting the non-backtracking method to the weighted case.

\bigskip

Our first preliminary step is the following lemma:

\begin{lemma}\label{lem:near_orthogonality}
  On an event with probability at least \( 1 - 1/\log(n) \), for any \( t \leq \ell \), any unit vector \( w \in H^\bot \) and \( i \in [r_0] \), one has
  \[ \left|\langle {(B^*)}^t D_W \check\chi_i, w\rangle \right| \leq \sqrt{r}d^{3/2}L^2\rho^{t/2} + \frac{c_4\, b d^{3/2} \log{(n)}^{9/2} d^{2\ell}L^{\ell}}{n^{1/4}}. \]
\end{lemma}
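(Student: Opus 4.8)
The plan is to express the scalar product $\langle (B^*)^t D_W\check\chi_i, w\rangle$ in a way that separates a "deterministic tree" contribution — which we expect to vanish because $w \perp H = \img(V)$ — from a fluctuation term that we can control by the concentration machinery already built up. First I would observe that $(B^*)^t D_W \check\chi_i = \mu_i^{t+1} v_i^{(t)}$, where $v_i^{(t)}$ is the degree-$t$ analogue of $v_i$; equivalently, it is enough to bound $|\langle v_i, w\rangle|$ and its lower-degree analogues up to a harmless factor $\mu_i^{t+1} \le 1$. Since $w \in H^\bot$ and the columns of $V$ span $H$, we have $\langle v_i^{(\ell)}, w\rangle = 0$ exactly when $v_i^{(\ell)}$ is replaced by its idealized tree counterpart; the task is therefore to quantify how far $(B^*)^t D_W\check\chi_i$ deviates from $\mathrm{span}(V)$, i.e.\ from the subspace it would ideally lie in.

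The cleanest route is a telescoping/increment argument mirroring the proof of~\eqref{eq:Bl_U} in \autoref{subsec:neighbourhoods}. Write
\[
  (B^*)^t D_W\check\chi_i - \mu_i^t\, (\text{degree-}0\text{ term})
  = \sum_{s=0}^{t-1} \mu_i^{t-s-1}\bigl( (B^*)^{s+1} D_W\check\chi_i - \mu_i (B^*)^s D_W\check\chi_i \bigr),
\]
and bound each increment in norm using the parity-time relation~\eqref{eq:parity-time} to transfer $(B^*)^s$-quantities to $B^s$-quantities, then invoke the square-increment bound~\eqref{eq:u_eigen_equation} (or rather its $D_W\check\chi$ analogue, proved the same way from Proposition~\ref{prop:edge_functionals_tree} via~\eqref{eq:edge_square_increments} with $w$ taken to be the relevant weighted degree vector). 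Because $i \le r_0$ we have $\mu_i^2 \ge \rho \ge 1/d$, so the geometric sum over $s$ telescopes to something of order $\sqrt{r}\,d^{3/2}L^2\rho^{t/2}$ for the "main" part, plus a concentration error whose exponential scaling is $d^{O(\ell)}L^{O(\ell)}$ divided by $n^{1/4}$ — exactly the shape of the claimed second term. The $D_W$ in front of $\check\chi_i$ contributes one extra factor of $L$ relative to~\eqref{eq:Bl_U}, which accounts for the $L^2$ rather than $L$ in the leading term. Finally, $\langle w, \cdot\rangle$ of the degree-$0$ term: this is $\langle w, T\,(\text{something}) \rangle$ type quantity that one checks lies in $H$ up to the same order of error (using~\eqref{eq: v_v_dotp} and~\eqref{eq:u_v_dotp} to see that the idealized $v_i^{(t)}$ sit inside $\mathrm{span}(V)$ modulo $O(\cdot/\sqrt n)$), so $|\langle w, \cdot\rangle|$ is absorbed into the error term since $\|w\|=1$.

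The main obstacle I anticipate is \emph{bookkeeping of the exponential factors}: making sure that after the telescoping sum (length $\le \ell \le c\log n$) and the repeated use of $\sqrt{x+y}\le\sqrt x+\sqrt y$, the accumulated powers of $d$ and $L$ stay bounded by $d^{2\ell}L^\ell$ (rather than, say, $d^{5\ell}$), using crucially that $\ell$ was chosen so that $(Ld^4)^\ell \le n^{1/4}$, and that the polynomial-in-$\log n$ prefactors collapse into the stated $\log(n)^{9/2}$. A secondary subtlety is that the event on which all the relevant bounds from Proposition~\ref{prop:pseudo_eigenvectors} and \autoref{subsec:neighbourhoods} hold must be intersected over $t\le\ell$ and $i\le r_0$; since there are at most $r_0\ell = n^{o(1)}$ such pairs and each fails with probability $O(1/(r^2\log(n)^2))$, a union bound keeps the total failure probability below $1/\log(n)$, as claimed. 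Everything else is a routine assembly of inequalities already established.
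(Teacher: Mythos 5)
Your proposal has the right ingredients (parity--time transfer, Cauchy--Schwarz via \eqref{eq:u_eigen_equation}, the extra $L$ from $D_W$, a geometric sum, a union bound over $t,i$), but the telescoping runs in the \emph{wrong direction}, and this is not a cosmetic difference --- it breaks the proof.

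You write
\[
  (B^*)^t D_W\check\chi_i - \mu_i^t\, D_W\check\chi_i
  = \sum_{s=0}^{t-1} \mu_i^{t-s-1}\bigl( (B^*)^{s+1} D_W\check\chi_i - \mu_i (B^*)^s D_W\check\chi_i \bigr),
\]
which forces you to control the boundary term $\mu_i^t\,\langle D_W\check\chi_i, w\rangle$. You argue this is small because $D_W\check\chi_i$ "lies in $H$ up to $O(1/\sqrt n)$ error". That is false: $H=\img(V)$ is spanned by the \emph{fully propagated} vectors $v_j=\mu_j^{-(\ell+1)}(B^*)^\ell D_W\check\chi_j$, and the gap between $D_W\check\chi_i$ and $\mu_i v_i$ is the accumulated increment $\sum_{s=0}^{\ell-1}\mu_i^{-s-1}\bigl((B^*)^{s+1}-\mu_i(B^*)^s\bigr)D_W\check\chi_i$, whose norm is $\Theta\bigl(\sqrt{r}\,d^{3/2}L\bigr)$ --- an $O(1)$ quantity, not $O(n^{-1/2})$. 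The Gram-matrix estimates \eqref{eq:u_v_dotp}, \eqref{eq: v_v_dotp} give scalar products with vectors \emph{in} $\mathrm{span}(V)$; they say nothing about the orthogonal component of $D_W\check\chi_i$, which is generically of order $1$. Consequently $|\langle D_W\check\chi_i,w\rangle|$ can be $\Theta(1)$, and after multiplying by $\mu_i^t\ge\rho^{t/2}$ (strict inequality since $i\le r_0$), this boundary term is strictly \emph{larger} than the claimed $\rho^{t/2}$ leading order, so your sum does not close.

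The paper telescopes in the opposite direction, from $s=t$ up to $s=\ell$, and the hypothesis $w\in H^\bot$ is used at the \emph{top} boundary, not the bottom: since $(B^*)^\ell D_W\check\chi_i = \mu_i^{\ell+1}v_i \in \img(V)=H$, the term $\mu_i^{-\ell}\langle (B^*)^\ell D_W\check\chi_i, w\rangle$ vanishes \emph{exactly}, leaving only
\[
  \bigl|\mu_i^{-t}\langle (B^*)^t D_W\check\chi_i, w\rangle\bigr|
  \le L\sum_{s=t}^{\ell-1}\mu_i^{-(s+1)}\norm{B^{s+1}\chi_i-\mu_i B^s\chi_i},
\]
after parity--time and Cauchy--Schwarz. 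Here the dominant term is the \emph{smallest} index $s=t$, giving $\rho^{t/2}$ as required, and $\mu_i>\sqrt\rho$ ensures the sum is geometric. The exact cancellation at $s=\ell$ is the key idea you are missing; everything after that is routine.
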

Proving this bound is done through the same telescopic sum trick as above, and is done in the appendix.

\subsubsection{Tangle-free decomposition of \texorpdfstring{\( B^\ell \)}{B\textasciicircum{}l}}

We adapt here the decomposition first used in~\cite{bordenave_nonbacktracking_2018} to our setting. Through the remainder of this section, we shall consider \( B \) as an operator on \( \vec E(V) \) instead of \( \vec E \), setting \( B_{ef} = 0 \) whenever \( e \notin \vec E \) or \( f\notin \vec E \). This yields a matrix with \( B \) as a principal submatrix and zeros everywhere else, thus the non-zero spectrum stays identical.

For \( e, f \in \vec E(V) \), and \( t \geq 0 \), we define \( \Gamma^{k}_{ef} \) the set of non-backtracking paths of length \( k \) from \( e \) to \( f \); further, for an edge \( e \) we define \( X_e \) the indicator variable of \( e\in \vec E \), and \( A_e = X_e W_e \), so that \( A \) is the (weighted) adjacency matrix of \( G \).

We then have that
\[ {(B^k)}_{ef} = \sum_{\gamma \in \Gamma^{k+1}_{ef}}X_e\prod_{s = 1}^{k} A_{\gamma_s\gamma_{s+1}}. \]
Define \( F^k_{ef} \) the set of \( \ell \)-tangle-free paths (i.e.\ the set of paths \( \gamma \) such that the subgraph induced by \( \gamma \) is tangle-free). Then, whenever the graph \( G \) is tangle-free, for all \( k \leq \ell \) the matrix \( B^k \) is equal to \( B^{(k)} \), with
\[ {(B^{(k)})}_{ef} = \sum_{\gamma \in F^{k+1}_{ef}} X_e\prod_{s = 1}^{k} A_{\gamma_s\gamma_{s+1}}. \]
Define now the ``centered'' versions of the weighted and unweighted adjacency matrices \( \underline A \) and \( \underline X \) by
\[ \underline A_{ij} = A_{ij} - Q_{ij} \quand \underline X_{ij} = X_{ij} - P_{ij} \]
for every \( i \neq j \), and its centered non-backtracking counterpart as 
\[ {(\Delta^{(k)})}_{ef} = \sum_{\gamma \in F^{k+1}_{ef}}\underline X_{ij}\prod_{s = 1}^{k} \underline A_{\gamma_s\gamma_{s+1}}, \]
with the convention that the product over an empty set is equal to 1.

\medskip

Recall that for any two sets of real numbers \( (x_i), (y_i) \), we have the following:

\[ \prod_{s = 0}^\ell x_s = \prod_{s = 0}^\ell y_s + \sum_{t = 0}^\ell \prod_{s = 0}^{t-1}y_s (x_t - y_t)\prod_{s = t+1}^\ell x_s. \]
Applying this formula to the above definitions, and separating the case \( t = 0 \) in the sum yields
\begin{multline}\label{eq:tangle_free_decomp}
  B^{(\ell)}_{ef} = \Delta^{(\ell)}_{ef} + \sum_{\gamma \in F^{\ell+1}_{ef}} Q_e\prod_{s = 1}^{\ell} A_{\gamma_s\gamma_{s+1}} \\
  + \sum_{t = 1}^\ell\sum_{\gamma \in F^{\ell+1}_{ef}}\underline X_e\prod_{s = 1}^{t-1} \underline A_{\gamma_s\gamma_{s+1}}Q_{\gamma_t\gamma_{t+1}}\prod_{s = t+1}^{\ell} A_{\gamma_s\gamma_{s+1}}.
\end{multline}

Define now \( F^{\ell+1}_{t, ef} \subset \Gamma^{\ell+1}_{ef} \) the set of non-backtracking \emph{tangled} paths \( \gamma \) such that \( (\gamma_0, \dots \gamma_t) \in F^t_{eg} \), \( (\gamma_{t+1}, \dots, \gamma_{\ell+1})\in F^{\ell-t}_{g'f} \) for some edges \( g, g' \in \vec E(V) \). As an edge case, \( F_{0, ef}^{\ell + 1} \) is the set of tangled paths \( \gamma \) such that \( (\gamma_0, \gamma_1) = e_1 \) and \( (\gamma_1, \dots, \gamma_{\ell+1}) \in F^{\ell}_{g'f} \) for some \( g'\in \vec E(V) \) (note that necessarily \( e_2 = g'_1 \)), and similarly for \( F_{\ell, ef} \). Finally, we introduce the two matrices \( M \) and \( M^{(2)} \) as
\[ M_{ef} = \ind \{e \to f \} Q_e \quand M^{(2)}_{ef} = \ind (e \xrightarrow{2} f)Q_{e_2f_1} \]
for \( e, f \in \vec E(V) \), where \(  e \xrightarrow{2} f \) means that there exists a non-backtracking path of length two between \( e \) and \( f \). Then, equation~\eqref{eq:tangle_free_decomp} can be rewritten as
\begin{equation}
  B^{(\ell)} = \Delta^{(\ell)} + M D_W B^{(\ell-1)} + \sum_{t = 1}^{\ell - 1}\Delta^{(t-1)}M^{(2)}D_W B^{(\ell - t - 1)} + \Delta^{(\ell - 1)}M - \sum_{t = 0}^\ell R_t^{(\ell)},
\end{equation}
where
\begin{align*} {(R_t^{(\ell)})}_{ef} & = \sum_{\gamma\in F^{\ell+1}_{t, ef}}\underline X_e\prod_{s = 1}^{t-1} \underline A_{\gamma_s\gamma_{s+1}}Q_{\gamma_t\gamma_{t+1}}\prod_{s = t+1}^{\ell} A_{\gamma_s\gamma_{s+1}} \\
  {(R_0^{(\ell)})}_{ef} & = \sum_{\gamma\in F^{\ell+1}_{t, ef}}Q_e\prod_{s = 1}^{\ell} A_{\gamma_s\gamma_{s+1}}.
\end{align*}

Note that \( M^{(2)} \) is pretty close to a modified version of \( Q \); more specifically, we make the decomposition 
\[ M^{(2)} = TQT^* + \tilde M = \sum_{k = 1}^r \mu_k \chi_k \check\chi_k^* + \tilde M. \]

Then, the following decomposition holds:
\begin{align*} 
  B^{(\ell)} & = \Delta^{(\ell)} + M D_W B^{(\ell-1)} + \sum_{t = 1}^{\ell - 1}\sum_{k=1}^r \mu_k\Delta^{(t-1)} \chi_k\check\chi_k^* D_W B^{(\ell - t - 1)} \\
             & \quad + \sum_{t= 1}^{\ell - 1}\Delta^{(t-1)}\tilde MB^{(\ell - t - 1)} + \Delta^{(\ell - 1)}M - \sum_{t = 0}^\ell R_t^{(\ell)}.
\end{align*}
Noticing that \( \norm{M} \leq d \) and \( \norm{\chi_k} \leq d\log(n) \), the following lemma ensues:
\begin{lemma}\label{lem:tangle_free_decomp}
  On an event with probability at least \( 1 - 1/\log(n) \), the following inequality holds for any normed vector \( x\in \dR^{\vec E(V)} \):
  \begin{align*}
    \norm{B^\ell x} & \leq \norm{\Delta^{(\ell)}} + L\norm{MB^{\ell - 1}} + d\log(n)\sum_{t=1}^{\ell - 1}\norm{\Delta^{(t-1)}} \sum_{k = 1}^r \left| \langle D_W\check\chi_k, B^{\ell - t- 1}x \rangle \right| \\
                    & \quad + \sum_{t=1}^{\ell - 1} \norm{\Delta^{(t-1)}\tilde MB^{\ell - t - 1}} + d \norm{\Delta^{(\ell - 1)}} - \sum_{t = 0}^\ell \norm{R_t^{(\ell)}}.
  \end{align*}
\end{lemma}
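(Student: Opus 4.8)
The plan is to carry out the whole argument on the event $\cE$ of Proposition~\ref{prop:local_summary}, which has probability at least $1-1/\log(n)$ and on which $G$ is $2\ell$-tangle-free. On $\cE$ two things hold at once: $B^{k}=B^{(k)}$ for every $k\leq\ell$ (this is recorded in the excerpt, and $2\ell$-tangle-freeness is a fortiori enough), and --- the one point that needs a separate argument --- all of the remainder matrices $R_0^{(\ell)},\dots,R_\ell^{(\ell)}$ vanish. Granting both, I would substitute $B^{(k)}=B^{k}$ and $R_t^{(\ell)}=0$ into the refined decomposition of $B^{(\ell)}$ displayed above the statement and estimate the resulting identity term by term with the triangle inequality.

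The crux is the vanishing of the $R_t^{(\ell)}$, and it is exactly this that makes the ``$-\sum_{t}\norm{R_t^{(\ell)}}$'' in the statement legitimate: on $\cE$ that quantity equals $-0=0$, so it may be appended to the right-hand side without weakening anything. Recall that $(R_t^{(\ell)})_{ef}$ is a sum over $\gamma\in F^{\ell+1}_{t,ef}$, a set of non-backtracking \emph{tangled} walks traversing $\ell+1$ edges; I would show $F^{\ell+1}_{t,ef}=\emptyset$ whenever $G$ is $2\ell$-tangle-free. Let $\gamma$ be such a walk and $H$ the subgraph of $G$ induced by the edges it traverses. Being tangled, $H$ carries at least two independent cycles, i.e.\ (since $H$ is connected) $|E(H)|-|V(H)|+1\geq 2$; as $\gamma$ uses at most $\ell+1$ distinct edges this forces $|V(H)|\leq\ell$. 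A connected graph on at most $\ell$ vertices has diameter at most $\ell-1$, so every vertex and every edge of $H$ lies within distance $\ell-1\leq 2\ell$ of any fixed $v\in V(H)$, whence $H\subseteq (G,v)_{2\ell}$. Since the cyclomatic number does not decrease under passing to a larger graph, $(G,v)_{2\ell}$ would then carry two independent cycles, contradicting the $2\ell$-tangle-free property. Hence $F^{\ell+1}_{t,ef}=\emptyset$ and $R_t^{(\ell)}=0$ on $\cE$ for every $0\leq t\leq\ell$.

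With $R_t^{(\ell)}=0$ and $B^{(k)}=B^{k}$ ($k\leq\ell$), the decomposition becomes, on $\cE$,
\begin{align*}
  B^{\ell} &= \Delta^{(\ell)} + MD_W B^{\ell-1} + \sum_{t=1}^{\ell-1}\sum_{k=1}^{r}\mu_k\,\Delta^{(t-1)}\chi_k\check\chi_k^{*}D_W B^{\ell-t-1}\\
  &\qquad + \sum_{t=1}^{\ell-1}\Delta^{(t-1)}\tilde M B^{\ell-t-1} + \Delta^{(\ell-1)}M.
\end{align*}
Applying this to a unit vector $x$, I would bound the first, second, fourth and fifth groups by their operator norms (using $\norm{D_W}\leq L$ and the bound $\norm{M}\leq d$ recorded just before the statement) to get $\norm{\Delta^{(\ell)}}$, $L\norm{MB^{\ell-1}}$, $\sum_{t=1}^{\ell-1}\norm{\Delta^{(t-1)}\tilde M B^{\ell-t-1}}$ and $d\norm{\Delta^{(\ell-1)}}$; and for the rank-one terms I would use $\chi_k\check\chi_k^{*}D_W B^{\ell-t-1}x=\langle D_W\check\chi_k,B^{\ell-t-1}x\rangle\,\chi_k$ (here $D_W=D_W^{*}$ and $\check\chi_k=J\chi_k$) together with $|\mu_k|\leq 1$ and $\norm{\chi_k}\leq d\log(n)$ to obtain $d\log(n)\sum_{t=1}^{\ell-1}\norm{\Delta^{(t-1)}}\sum_{k}|\langle D_W\check\chi_k,B^{\ell-t-1}x\rangle|$. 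Appending the identically-zero quantity $-\sum_{t=0}^{\ell}\norm{R_t^{(\ell)}}$ then produces exactly the asserted inequality.

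The only genuinely delicate point is the combinatorial claim $R_t^{(\ell)}=0$ on $\cE$: it is what validates the minus sign, and it works because ``tangled'' is a statement about the cyclomatic number of the induced subgraph, which caps the number of its vertices and so confines it inside a $2\ell$-ball, where tangle-freeness forbids two independent cycles. Everything after that is the routine operator-norm bookkeeping the excerpt already anticipates with ``Noticing that $\norm{M}\leq d$ and $\norm{\chi_k}\leq d\log(n)$''.
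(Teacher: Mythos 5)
Your overall architecture --- substitute $B^{(k)}=B^{k}$ on the tangle-free event, apply the refined decomposition of $B^{(\ell)}$ displayed just above the lemma, and estimate term by term with the triangle inequality, $\norm{M}\leq d$, $\norm{D_W}\leq L$ and $\norm{\chi_k}\leq d\log(n)$ --- is exactly how the paper obtains the lemma (the paper treats it as immediate from that display). However, the step you yourself identify as the crux is wrong: $R_t^{(\ell)}$ does \emph{not} vanish on the tangle-free event. The entries of $R_t^{(\ell)}$ involve the \emph{centered} matrices $\underline A_{ij}=A_{ij}-Q_{ij}$ and $\underline X_{ij}=X_{ij}-P_{ij}$, which equal $-Q_{ij}$ and $-P_{ij}$ on non-edges of $G$, as well as the deterministic factor $Q_{\gamma_t\gamma_{t+1}}$. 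Hence the sum over $F^{\ell+1}_{t,ef}$ ranges over tangled non-backtracking paths in the complete graph $\vec E(V)$, not over paths realized in $G$; your argument that a tangled walk would force two independent cycles inside a $2\ell$-ball of $G$ only applies to walks all of whose edges lie in $G$, which is not the case here (only the trailing factors $A_{\gamma_s\gamma_{s+1}}$, $s\geq t+1$, are supported on $E$). The paper's own Proposition~\ref{prop:trace_bounds} bounds $\norm{R_t^{(k)}}$ by the nonzero quantity in~\eqref{eq:trace_bound_r} via the trace method --- an effort that would be pointless if these matrices vanished with high probability.

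The honest resolution is that the minus sign in front of $\sum_{t=0}^{\ell}\norm{R_t^{(\ell)}}$ in the lemma's statement is a sign typo: the triangle inequality applied to the decomposition can only produce $+\sum_{t=0}^{\ell}\norm{R_t^{(\ell)}}$, and this is how the lemma is subsequently used (the $R_t^{(\ell)}$ contribution is absorbed into the negligible terms when passing to~\eqref{eq:trace_bounds_non_negligible}, using~\eqref{eq:trace_bound_r}). So rather than inventing a vanishing claim to legitimize the minus sign, you should correct the sign and cite the trace bound for the $R_t^{(\ell)}$. The remaining bookkeeping in your write-up (in particular the treatment of the rank-one terms via $\chi_k\check\chi_k^{*}D_WB^{\ell-t-1}x=\langle D_W\check\chi_k,B^{\ell-t-1}x\rangle\chi_k$) is correct and matches the paper.
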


\subsubsection{Norm bounds}

It then remains to bound the different quantities in the lemma above; this is done in another section, using a trace bound method. The results are as follows:
\begin{proposition}\label{prop:trace_bounds}
  On an event with probability \( 1 - c_0/\log(n) \), for any \( k \leq c_1\log(n) \), the following bounds hold with probability at least \( 1 - 1/\ln{(n)}^2 \):
  \begin{align}
    \norm{\Delta^{(k-1)}}                      & \leq c d^3 \log{(n)}^{17}{\left(\sqrt{\rho} \vee L\right)}^{k}, \label{eq:trace_bound_delta}                      \\
    \norm{M B^{k-1}}                           & \leq \frac{c d^{7/2} L \log{(n)}^{7}d^{k}L^k}{\sqrt{n}} ,\label{eq:trace_bound_mb}                                \\
    \norm{\Delta^{(t-1)}\tilde MB^{k - t - 1}} & \leq \frac{c d^{13/2} L \log{(n)}^{24} d^k {\left(\sqrt{\rho} \vee L\right)}^{k}}{\sqrt{n}}, \label{eq:delta_m_b} \\
    \norm{R_t^{(k)}}                           & \leq \frac{c d^2 \log{(n)}^{22} d^k L^{k}}{n}. \label{eq:trace_bound_r}
  \end{align}
\end{proposition}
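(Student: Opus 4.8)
The plan is to obtain all four estimates by the high-moment (trace) method, in the form developed for sparse non-backtracking operators in~\cite{massoulie_community_2014, bordenave_nonbacktracking_2018, bordenave_high_nodate} (itself a descendant of the trace argument of~\cite{furedi_eigenvalues_1981}). For any matrix \( X \) and integer \( m\geq 1 \) one has deterministically \( \norm{X}^{2m}\leq\tr\bigl((XX^*)^m\bigr) \), so it suffices, for each of the four operators \( X\in\{\Delta^{(k-1)},\,MB^{k-1},\,\Delta^{(t-1)}\tilde M B^{k-t-1},\,R_t^{(k)}\} \), to bound \( \dE\,\tr\bigl((XX^*)^m\bigr) \) and then apply Markov's inequality. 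Expanding the trace yields a sum over closed sequences of \( 2m \) non-backtracking paths of length \( k \) in \( \vec E(V) \), glued along shared endpoint edges; the contribution of a sequence is the expectation of a product of entries of \( \underline A \), \( \underline X \) and \( Q \) (or \( \tilde M \)), and the whole argument consists in classifying sequences by the isomorphism type of the multigraph they trace, bounding the weight of each type, and counting types. Choosing \( m\asymp\log(n) \) turns a bound of the shape \( \dE\,\tr\bigl((XX^*)^m\bigr)\leq n^{O(1)}\,C^{2m}\,\kappa^{2m} \) into \( \norm{X}\leq(1+o(1))C\kappa \) with failure probability below \( \ln(n)^{-2} \); the polylogarithmic prefactors in~\eqref{eq:trace_bound_delta}--\eqref{eq:trace_bound_r} then arise as the \( 2m \)-th root of the \( (mk)^{O(m)} \) count of admissible multigraph shapes, using \( m,k\leq c\log(n) \), and a union bound over the \( O(\log n) \) relevant values of \( k,t \) produces the global event of probability \( 1-c_0/\log(n) \).

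For \( \Delta^{(k-1)} \) the product involves only the centered entries \( \underline A_{ij}=A_{ij}-Q_{ij} \), which satisfy \( |\underline A_{ij}|\leq 2L \) and \( \dE[\underline A_{ij}^{\,2}]\leq K_{ij} \), hence \( \dE|\underline A_{ij}|^{p}\leq(2L)^{p-2}K_{ij} \) for \( p\geq2 \); since \( \dE[\underline A_{ij}]=0 \), only sequences in which every edge of \( \vec E(V) \) is traversed at least twice contribute. The decisive point is that a ``doubled'' segment of length \( t \) in such a sequence, after summation over its interior vertices, contributes exactly an entry of \( K^{t} \), which by the entrywise bound~\eqref{eq:elementwise_Kt} is at most \( \Psi^{2}\rho^{t}/n \); this resummation is what makes \( \rho \) (rather than the much larger quantity \( dL^{2} \) suggested by a single entry of \( K \)) the base of the exponential rate, while any further traversal of an already-counted edge only contributes an extra factor \( 2L \). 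Because \( \Delta^{(k)} \) was built out of \( \ell \)-tangle-free path-segments, the glued multigraph has edge-excess \( O(m) \) rather than \( O(mk) \); this caps the number of ``defects'' breaking the out-and-back picture and hence the number of free vertex choices, and after collecting all factors of \( d \), \( L \), \( \rho \) and \( \log(n) \) one gets \( \norm{\Delta^{(k-1)}}\leq c\,d^{3}\log(n)^{17}(\sqrt{\rho}\vee L)^{k} \).

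The remaining three bounds use the same expansion, the improvement to \( n^{-1/2} \) or \( n^{-1} \) coming from an additional deterministic or structural constraint. In \( MB^{k-1} \) (which equals \( MB^{(k-1)} \) on the tangle-free event) every one of the \( 2m \) glued paths begins with a step carrying the deterministic weight \( Q_{e} \), of size at most \( dL/n \); since the uncentered factors obey \( \dE|A_{ij}|^{p}\leq(d/n)L^{p} \), the tails of these paths can moreover be controlled crudely through the operator-norm bound~\eqref{eq:norm_Bl}, and in either route each use of \( M \) leaves one surplus factor \( dL/n \) uncompensated by a vertex sum, yielding the \( n^{-1/2} \) in~\eqref{eq:trace_bound_mb}. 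The matrix \( \tilde M=M^{(2)}-\sum_{k}\mu_{k}\chi_{k}\check\chi_{k}^{*} \) in~\eqref{eq:delta_m_b} likewise has entries of order \( dL/n \) (both \( M^{(2)} \) and \( \sum_{k}\mu_{k}\chi_{k}\check\chi_{k}^{*} \) do), so \( \Delta^{(t-1)}\tilde M B^{k-t-1} \) combines the centered analysis of \( \Delta \) on the left block, the bound~\eqref{eq:norm_Bl} on the right block, and one surplus \( dL/n \) from \( \tilde M \) in the middle. Finally, for the tangled remainder \( R_t^{(k)} \) one exploits that the indexing paths are \emph{not} \( \ell \)-tangle-free: each contains, near its junction, a sub-path carrying two independent cycles, which forces two extra coincidences among its vertices rather than one, so the trace expansion loses two vertex degrees of freedom; this gives the factor \( n^{-1} \) of~\eqref{eq:trace_bound_r}, the deterministic factor \( Q \) in \( R_t^{(k)} \) serving only to keep the \( L \)-powers in line.

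The main obstacle lies entirely in the combinatorics of the first two paragraphs: one must fix an injective encoding of the glued path sequences, prove the sharp \( O(m) \) bound on the edge-excess of a concatenation of \( 2m \) individually \( \ell \)-tangle-free paths of length \( k \), and extract from it exponents in \( d \), \( n \) and \( \log(n) \) that are uniform over \( k\leq c\log(n) \) — this is the technical heart of the argument and follows~\cite{bordenave_nonbacktracking_2018}, with the weights \( W \) adding the bookkeeping that separates \( \rho \)-powers from \( L \)-powers and so distinguishes our estimates from the unweighted case. Quantifying precisely the gain for \( R_t^{(k)} \), i.e.\ how tangledness of a non-backtracking path translates into lost vertex degrees of freedom in the trace, is the other delicate point, handled as in~\cite{bordenave_high_nodate}; everything else is routine expansion and estimation.
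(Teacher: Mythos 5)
Your overall strategy matches the paper's: all four bounds follow from the Füredi--Komlós trace method, expanding $\tr((XX^*)^m)$ over glued sequences of $2m$ non-backtracking tangle-free paths, centering only for $\Delta^{(k-1)}$, resumming doubled segments via the entrywise bound~\eqref{eq:elementwise_Kt}, classifying by isomorphism type with the counting estimate of~\cite{bordenave_nonbacktracking_2018}, and choosing $m\asymp\log n/\log\log n$ before applying Markov. This is exactly how the paper proceeds for~\eqref{eq:trace_bound_delta}, and for the remaining bounds the paper's route is likewise a trace expansion, not a submultiplicative estimate.

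Two of your explanations, however, do not survive scrutiny. First, for~\eqref{eq:trace_bound_r} you attribute the $n^{-1}$ to tangledness ``forcing two extra coincidences'' and say the $Q$-factor serves ``only to keep the $L$-powers in line.'' The arithmetic is the other way around: each of the $2m$ paths in $\tr((R_t R_t^*)^m)$ carries exactly one deterministic $Q$-entry of size $\leq dL/n$, and these $2m$ surplus $(d/n)$-factors are the source of the $n^{-2m}$ prefactor (hence $n^{-1}$ after the $2m$-th root); tangledness contributes the single-cycle-per-component fact $v_\gamma\leq e_\gamma$ (one lost degree of freedom, not two), which only trims a polylogarithmic factor. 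Compare with~\eqref{eq:trace_bound_mb}, where the boundary conditions force the $2m$ $M$-edges to pair up into $m$ distinct edges, leaving only $m$ surplus $(d/n)$-factors and hence $n^{-1/2}$. Second, for~\eqref{eq:delta_m_b} you propose to bound the right block by the operator-norm estimate~\eqref{eq:norm_Bl} and harvest ``one surplus $dL/n$ from $\tilde M$'' — but $\tilde M$ has $\Theta(n)$ nonzero entries per row (e.g.\ all $f$ with $e\to f$), so $\norm{\tilde M}$ is not $O(\mathrm{poly}\log\cdot n^{-1})$ and this factorization does not yield $n^{-1/2}$; the paper instead splits $\tilde M$ into four structured pieces, applies submultiplicativity to separate off $\norm{\Delta^{(t-1)}}$, and reruns the trace argument of~\eqref{eq:trace_bound_mb} on each $\tilde M_i B^{k-t-1}$. (Your hedged ``either route'' for~\eqref{eq:trace_bound_mb} has the same issue: the crude route via~\eqref{eq:norm_Bl} only gives a factor $n^{1/4}$, not $n^{-1/2}$.)

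Finally, the phrase ``the glued multigraph has edge-excess $O(m)$'' overstates what is true. Unions of $2m$ tangle-free length-$k$ paths can have edge-excess up to $\Theta(mk)$; what actually controls the sum is that the class count $\cW_{k,m}(v,e)\leq k^{2m}(2km)^{6m(e-v+1)}$ is exponential in $g=e-v+1$ while the weight supplies a compensating $n^{1-g}$, so high excess is negligible rather than impossible. With these adjustments your outline aligns with the paper; none of the flagged points reflects an unfixable gap, but as written they would lead the reader astray about where each power of $n$ actually comes from.
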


Using these bounds, we are now finally able to prove~\eqref{eq:norm_orthogonal}: 
\begin{proof}
  By definition of \( \ell \), \( d^\ell \leq n^{1/4} \) so most of the summands in Lemma~\ref{lem:tangle_free_decomp} are negligible with respect to the others. More precisely, we have
  \begin{equation}\label{eq:trace_bounds_non_negligible}
    \norm{B^\ell x} \leq c_1\left( \norm{\Delta^{\ell}} + d\log(n) \sum_{t=1}^{\ell - 1}\norm{\Delta^{(t-1)}} \sum_{k = 1}^r \left| \langle D_W\check\chi_k, B^{\ell - t- 1}x \rangle \right| \right).
  \end{equation}
  When \( k \in [r_0] \), Lemma~\ref{lem:near_orthogonality} implies that
  \[ \left| \langle D_W\check\chi_k, B^{\ell - t- 1}x \rangle \right| \leq \sqrt{r}d^{3/2}L^2\rho^{t/2} + \frac{c_4\, b d^{3/2} \log{(n)}^{9/2} d^{2\ell}L^{\ell}}{n^{1/4}}, \]
  and by definition of \( \ell \), \( d^{2\ell}L^{\ell} \leq {\left(1 \wedge \sqrt{\rho}\right)}^\ell \) so the second term is bounded above by the first. On the other hand, for \( k \in [r]\setminus [r_0] \), we can use equation~\eqref{eq: v_v_dotp} as follows:
  \[ \norm{{(B^*)}^t D_W \check\chi_i}^2 \leq \mu_i^{2t+2}\Gamma_{V, ii}^{(t+1)} + \frac{c\,r b^2 d^4 L^2 \log{(n)}^6 d^{3t}L^{2t}}{\sqrt{n}}. \]
  We now apply Lemma~\ref{lem:scalar_Kt}:
  \[ \Gamma_{V, ii}^{(t+1)} \leq \sum_{s = 0}^{t+1}\frac{r d^2 L^2\rho^s}{\mu_i^{2s}} \leq c r d^2 \log(n) L^2 \rho^{t+1} \mu_i^{-2t-2}, \]
  since \( \mu_i^2 < \rho \); the second term being negligible before the first,
  \[ \left| \langle D_W\check\chi_k, B^{\ell - t- 1}x \rangle \right| \leq \norm*{{(B^*)}^{\ell - t - 1} D_W \check\chi_i} \leq c r d \log(n) L \rho^{\frac{\ell - t}2}.\]

  We can now apply the above bounds on the scalar product as well as those of Proposition~\ref{prop:trace_bounds} to equation~\eqref{eq:trace_bounds_non_negligible}, and we get
  \begin{align*} 
    \norm{B^\ell x} & \leq c_2 d^{5/2} L \log{(n)}^{17}{\left(\sqrt{\rho} \vee L\right)}^{\ell} + c_3 r^{2} d^{6} L^2 \log{(n)}^{20}{\left(\sqrt{\rho} \vee L\right)}^{\ell} \\ &\quad \ \ + c_4 d^4 \log{(n)}^{17}{\left(\sqrt{\rho} \vee L\right)}^{\ell} \\
                    & \leq c r^{2}d^6L^2 \log{(n)}^{20}{\left(\sqrt{\rho} \vee L\right)}^{\ell},
  \end{align*}
  which ends the proof of~\eqref{eq:norm_orthogonal}.
\end{proof}

\section{Trace method: proof of Proposition~\ref{prop:trace_bounds}}

The aim of this section is to prove the bounds in Proposition~\ref{prop:trace_bounds}; we leverage here the powerful trace method introduced by Füredi and Komlòs~\cite{furedi_eigenvalues_1981}, and already used with success in~\cite{bordenave_nonbacktracking_2018} and~\cite{bordenave_detection_2020}. We only prove~\eqref{eq:trace_bound_delta} in this section, all other bounds being proven in the appendix.

\bigskip

Let \( m \) be a parameter to be fixed later. We start with the classical bound
\begin{align*} 
  \norm{\Delta^{(k-1)}}^{2m} & = \norm{\Delta^{(k-1)}\Delta^{(k-1)*}}^m       \\
                             & = \norm{{(\Delta^{(k-1)}\Delta^{(k-1)*})}^m}   \\
                             & \leq \tr({(\Delta^{(k-1)}\Delta^{(k-1)*})}^m).
\end{align*}
Expanding the trace above gives
\begin{align}
  \norm{\Delta^{(k-1)}}^{2m} & \leq \sum_{(e_1, \dots, e_{2m})}\prod_{i=1}^m{(\Delta^{(k-1)})}_{e_{2i-1}, e_{2i}}{(\Delta^{(k-1)})}_{e_{2i+1}, e_{2i}} \nonumber                                                 \\
                             & = \sum_{\gamma \in W_{k, m}}\prod_{i = 1}^{2m} \underline X_{\gamma_{i, 0}\gamma_{i, 1}}\prod_{s = 2}^k \underline A_{\gamma_{i, s-1}\gamma_{i, s}}, \label{eq:first_trace_bound}
\end{align}
where \( W_{k, m} \) is the set of sequences of paths \( (\gamma_1, \dots, \gamma_{2m}) \) such that \( \gamma_i = (\gamma_{i, 0}, \dots, \gamma_{i, k}) \) is non-backtracking tangle-free of length \( k \), and with boundary conditions that for all \( i\in [m] \),
\begin{equation}\label{eq:trace_method_boundary_conditions}
  (\gamma_{2i, k-1}, \gamma_{2i, k}) = (\gamma_{2i-1, k-1}, \gamma_{2i-1, k}) \quand (\gamma_{2i+1, 0}, \gamma_{2i+1, 1}) = (\gamma_{2i, 0}, \gamma_{2i, 1}),
\end{equation}
with the convention \( \gamma_{2m+1} = \gamma_1 \). All the random variables in the expression above are centered and independent as soon as they are supported by distinct edges, so the expectation of each term in the sum is zero except when each (unoriented) edge is visited at least twice. We let \( W'_{k, m} \) be the set of all such sequences of paths. To \( \gamma \in W'_{k, m} \), we associate the graph \( G_\gamma = (V_\gamma , E_\gamma ) \) of visited vertices and edges, and let
\[ v_\gamma  = |V_\gamma | \quand e_\gamma  = |E_\gamma |. \]
For an unoriented edge \( e\in E_\gamma  \), we define its multiplicity \( m_e \) as the number of times \( e \) is visited in \( \gamma \); we also let \( S_\gamma  \) be the set of starting edges in \( \gamma \), that is
\[ S_\gamma  = \Set*{(\gamma_{i, 0}, \gamma_{i, 1}) \given i\in [2m]}. \]
Using these definitions, we can bound the expectation as follows:
\[ \E*{\norm{\Delta^{(k-1)}}^{2m}} \leq \sum_{\gamma\in W'_{k, m}}\prod_{e\in S_\gamma }\E*{|\underline X_e| \cdot |\underline A_e|^{m_e - 1}} \prod_{e \notin S_\gamma } \E*{|\underline A_e|^{m_e}}.  \]
We now bound the two terms in the products above: let \( e \) be an edge, and \( p \geq 2 \) be any multiplicity.
Then conditioning on \( X_e \),
\begin{align*}
  \E*{|\underline A_e|^{m_e}} & = P_e \E*{\left|W_e - P_e\E*{W_e} \right|^p} + (1-P_e)P_e^p \E*{W_e}^p                                                                  \\
                              & \leq P_e L^{p-2} {\left(1+ \frac d n\right)}^{p-2} \E*{{(W_e - P_e\E*{W_e})}^2} + {\left(\frac{dL}n\right)}^{p-2}\frac{dP_e}n\E*{W_e}^2 \\
                              & \leq P_e L^{p-2}{\left(1+ \frac d n\right)}^{p-2} \E*{W_e^2} + P_e L^{p-2} \E*{W_e^2} {\left(\frac d n\right)}^{p-2}                    \\
                              & \leq K_e L^{p-2}{\left(1+ \frac d n\right)}^{p}.
\end{align*}
The other product is trickier; whenever \( p \geq 3 \), a similar computation yields
\[ \E*{|\underline X_e |  \cdot \left|\underline A_e \right|^{p-1}} \leq K_e L^{p-3} {\left(1 + \frac d n \right)}^{p}. \]
On the other hand if \( p = 2 \),
\[ \E*{|\underline X_e |  \cdot \left|\underline A_e \right|} \leq \frac d n L{\left(1 + \frac d n\right)}^2. \]
As a consequence, for \( \gamma \in W'_{k, m} \), we define \( S'_\gamma  \subseteq S_\gamma  \) the set of starting edges with multiplicity 2. Then
\[ \E*{\norm{\Delta^{(k-1)}}^{2m}} \leq \sum_{\gamma\in W'_{k, m}} {\left(1 + \frac d n\right)}^{2km} {\left(\frac d n\right)}^{|S'_\gamma |}d^{2m}L^{2km - 2e_\gamma }\prod_{e\notin S'_\gamma}K_e, \]
where we used \( L^{-1} \leq d \) and \( S_\gamma  = 2m \).

We now partition the paths in \( W'_{k, m} \) as follows: we say that \( \gamma \sim \gamma' \) if there exists a permutation \( \sigma \in \mathfrak S_n \) such that \( \gamma_{i, t} = \sigma(\gamma'_{i, t}) \) for all \( i, t \in [2m]\times [k] \). Clearly, all parameters such as \( v_\gamma  \), \( e_\gamma  \) and \( |S'_\gamma | \) are constant on any equivalence class; therefore it makes sense to define \( \cW_{k, m}(v, e) \) the set of equivalence classes of \( W'(k, m) \) such that \( v_\gamma  = v \) and \( e_\gamma  = e \). Then, a path counting argument performed in~\cite{bordenave_nonbacktracking_2018} yields the following estimation:
\begin{lemma}
  Let \( v, e \) be integers such that \( e - v + 1 \geq 0 \). Then
  \begin{equation}\label{eq:tangle_free_path_counting}
    \cW_{k, m}(v, e) \leq k^{2m}{(2km)}^{6m(e-v+1)}.
  \end{equation}
\end{lemma}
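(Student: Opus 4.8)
The plan is to follow the canonical-labelling and block-decomposition argument of~\cite{bordenave_nonbacktracking_2018}; since the weights have already been extracted as scalar coefficients, they play no role here and the estimate is purely about the shapes of the walks in $W'_{k,m}$. First I would record two structural constraints. The boundary conditions~\eqref{eq:trace_method_boundary_conditions} force the union graph $G_\gamma$ to be connected, so $e_\gamma - v_\gamma + 1 \geq 0$ is exactly its first Betti number; and since $\gamma$ performs $2km$ edge-traversals in total while every edge of $G_\gamma$ is traversed at least twice, we get $e_\gamma \leq km$, so that $k$, $m$, $v_\gamma$ and $e_\gamma$ are all polynomially comparable (in particular $v_\gamma \leq 2km$). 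To count equivalence classes under vertex relabelling, I would fix a canonical representative in each class: run along the concatenation $\gamma_1, \dots, \gamma_{2m}$ step by step and relabel each vertex of $V_\gamma$ by its rank of first appearance. It then suffices to bound the number of canonically labelled sequences of the prescribed type with $v_\gamma = v$, $e_\gamma = e$.

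Next I would set up the encoding. Going through the $2km$ steps, classify each step as a \emph{tree step} (first traversal of an edge, reaching a vertex not previously seen), an \emph{excess step} (first traversal of an edge, but between two already-seen vertices), or a \emph{repeat step} (an edge traversed before); there are then exactly $v - 1$ tree steps and $e - v + 1$ excess steps. For a tree step the endpoint is forced to be the next integer label, so such a step carries no information, and the class is determined by the positions of the excess and repeat steps together with the endpoint chosen at each of them. The crucial point is that a repeat step is \emph{not} free in general either: after each excess or repeat step, the non-backtracking rule applied inside the already-revealed part of $G_\gamma$ leaves very little choice, and the $\ell$-tangle-free hypothesis (recall $k \leq \ell$) guarantees that each individual path $\gamma_i$ meets the explored graph in an essentially tree-like way, closing at most one new cycle. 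Following the block/return-word decomposition of~\cite{bordenave_nonbacktracking_2018}, this forces the number of steps whose endpoint is not dictated by non-backtracking together with the already-revealed portion of $G_\gamma$ to be $O\!\big(m(e-v+1)\big)$, each such choice being recorded by a bounded number of integers in $[2km]$; this produces the factor $(2km)^{6m(e-v+1)}$. The remaining freedom — essentially the position along each path of the single edge at which it closes a cycle in $G_\gamma$, if any — contributes at most $k$ choices per path, hence $k^{2m}$. Multiplying gives $\cW_{k,m}(v,e) \leq k^{2m}(2km)^{6m(e-v+1)}$.

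The step I expect to be the main obstacle is the middle one: showing that, across the whole family of $2m$ tangle-free non-backtracking paths, the number of genuinely unconstrained steps is linear in the excess $e-v+1$, with the explicit constant $6m$ appearing in the exponent. This is exactly the content of the path-counting estimate in~\cite{bordenave_nonbacktracking_2018} (stated there in the homogeneous unweighted setting), and since that argument uses only edge multiplicities and the tangle-free property, never the distribution of the edges, it transcribes verbatim to our setting; I would therefore either invoke it directly or reproduce the block-decomposition bookkeeping in an appendix.
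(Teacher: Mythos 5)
Your proposal matches the paper's approach exactly: the paper does not prove this lemma itself but simply invokes the path-counting estimate of Bordenave, Lelarge and Massoulié (just as it does for the analogous bound on \( \cT_{k,m,t}(v,e) \) in the appendix), and you likewise recognize that the weighted structure plays no role once the coefficients have been extracted and propose to cite or transcribe that same argument. Your sketch of the underlying canonical-labelling and block-decomposition bookkeeping is a faithful outline of the cited lemma, though some of the finer attributions (in particular exactly which choices produce the \( k^{2m} \) factor versus the \( (2km)^{6m(e-v+1)} \) factor) would need to be checked against the reference if you chose to reproduce rather than invoke it.
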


All that remains to bound the sum above is to control the contribution of a single equivalence class; this is done through this lemma:
\begin{lemma}\label{lem:equiv_class_contribution}
  Let \( \gamma \in W'_{k, m} \) such that \( v_\gamma  = v \), \( e_\gamma  = e \) and \( |S'_\gamma|  = s \). We have
  \begin{equation}
    \sum_{\gamma' \sim \gamma} \prod_{f \notin S'_{\gamma'}} K_f \leq d^{2m} n^{v - e + s} \rho^{e} {(\Psi^2)}^{3(e - v) + 8m}.
  \end{equation}
\end{lemma}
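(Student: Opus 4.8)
The plan is to bound the sum over embeddings by exploring the visited graph $G_\gamma$ along its path structure and replacing greedy "row sum" estimates by spectral bounds on powers of $K$. Since the $2m$ constituent paths are glued cyclically by the boundary conditions~\eqref{eq:trace_method_boundary_conditions}, $G_\gamma$ is connected, so $g := e - v + 1 \geq 0$; and a relabelling $\gamma' \sim \gamma$ is nothing but a choice of images for the $v$ vertices of $V_\gamma$, so, dropping injectivity for an upper bound,
\[ \sum_{\gamma' \sim \gamma} \prod_{f \notin S'_{\gamma'}} K_f \;\leq\; \sum_{\sigma : V_\gamma \to [n]} \; \prod_{f \in E_\gamma \setminus S'_\gamma} K_{\sigma(f_1)\sigma(f_2)}. \]
I would fix a spanning tree $\mathcal{T}$ of $G_\gamma$ adapted to the path structure ($v-1$ tree edges, $g$ excess edges) and sum over the $\sigma$-images of the vertices in their order of first appearance along $\gamma$. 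An excess edge is always traversed only after both its endpoints are assigned, so it contributes a pointwise factor bounded by $K_{ij} \leq \frac{dL^2}{n} = \frac{\Psi\rho}{n}$, which accounts (generously) for one $\Psi^2$ and one $1/n$ per excess edge, i.e.\ $(\Psi^2/n)^{e-v+1}$; the first discovered vertex is free (factor $n$) and each weightless edge of $S'_\gamma$ met during the exploration costs a free factor $n$ in place of a $K$-weight.

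For the tree part one must \emph{not} iterate the row-sum bound $[K\ind]_i \leq dL^2 = \Psi\rho$ edge by edge, since that loses a spurious $\Psi$ per tree edge and already breaks the target on a doubled long path; instead, group the tree edges into maximal non-branching chains. Summing over the interior vertices of a chain of length $t$ collapses into a quantity $\langle u, K^{t} w\rangle \leq \rho^{t}\norm{u}\norm{w}$, with $u,w$ localised at the chain endpoints and controlled by the delocalisation bounds $(K^2)_{ii} \leq \frac{\Psi^2}{n}\rho^2$ and~\eqref{eq:elementwise_Kt}, costing a bounded number of $\Psi^2/n$ factors per endpoint. Collecting everything: the powers of $n$ assemble to $n^{v-e+s}$ (one from the root, $+s$ from $S'_\gamma$, $-(e-v+1)$ from the excess edges, the rest cancelling against interior-vertex sums); the powers of $\rho$ give $\rho^{e-s}$, turned into $\rho^{e}$ at the cost of $\rho^{-s}\leq d^{s}\leq d^{2m}$ via~\eqref{eq:bound_rho_below} and $s\leq 2m$; and the $\Psi^2$-factors total one per excess edge plus a bounded number per chain endpoint. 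Since the number of chains (equivalently, of branch points) is $O(g+m)$, the $\Psi^2$-exponent is, generously, at most $3(e-v)+8m$, which is the claimed bound.

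The main obstacle is precisely this last organisation: choosing the spanning tree and traversal order so that the spectral estimate $\norm{K^t}\leq\rho^t$ can always be invoked on the non-branching chains (the naive greedy estimate fails), and bounding the number of branch/gluing points — hence the accumulated exponent of $\Psi^2$ — by something below $3(e-v)+8m$. This is a close adaptation of the corresponding estimate of~\cite{bordenave_high_nodate}.
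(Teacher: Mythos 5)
Your strategy --- collapse the degree-2 chains of $G'_\gamma=(V_\gamma,E_\gamma\setminus S'_\gamma)$ and invoke the entrywise power bound~\eqref{eq:elementwise_Kt} rather than iterating a greedy row-sum estimate --- is exactly the mechanism of the paper's proof: it contracts $G'_\gamma$ to a reduced multigraph $\hat G_\gamma$ on the vertices of degree $\neq 2$, one edge per maximal chain labelled with its length $q$, sums freely over the $\hat v$ branch-vertex images, and bounds each chain by $(K^{q})_{y_i,y_j}\le(\Psi^2/n)\rho^{q}$, giving $n^{\hat v-\hat e}(\Psi^2)^{\hat e}\rho^{e-s}$. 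Your bookkeeping of the $n$- and $\rho$-powers agrees (including the upgrade $\rho^{e-s}\to\rho^e$ at cost $\rho^{-s}\le d^{2m}$), and your spanning-tree-plus-excess decomposition is a harmless variant of the multigraph reduction.

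The genuine gap is that the $\Psi^2$-exponent is never actually established. You observe that the number of chains is $O(g+m)$ and defer the constant to ``the main obstacle,'' but the specific bound $\hat e\le 3(e-v)+8m-s$ cannot be derived from that order-of-magnitude statement alone. The paper obtains it from a degree-counting argument specific to $W'_{k,m}$: the boundary conditions~\eqref{eq:trace_method_boundary_conditions} ensure that deleting the doubled start-edges $S'_\gamma$ creates no new degree-1 vertices, so each degree-1 vertex of $G'_\gamma$ must be an endpoint of one of the $2m$ paths, whence $v_1\le 4m$; combining this with the handshake count $v_1+2v_2+3v_{\geq 3}\le 2(e-s)$ and $v_1+v_2+v_{\geq 3}\ge v-s$ yields $v_1+v_{\geq 3}\le 2(e-v)+8m$, which controls $\hat v$, and then $\hat e\le\hat v+(e-v)$. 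This use of the boundary conditions to cap $v_1$ is the ingredient your sketch is missing; without it the number of branch points --- hence the accumulated power of $\Psi^2$ --- is not bounded by anything of the required shape.
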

\begin{proof}
  For a sequence of paths \( \gamma \in W'_{k, m} \), denote by \( E'_\gamma  \) the set \( E_\gamma  \setminus S'_\gamma  \).Then, due to the boundary conditions in~\eqref{eq:trace_method_boundary_conditions}, the graph \( G'_\gamma  \) induced by \( E'_\gamma  \) is connected. We let \( v_j \) (resp. \( v_{\geq j} \)) be the number of vertices with degree \( j \) (resp.\ at least \( j \)) in \( G'_\gamma  \). Again, by~\eqref{eq:trace_method_boundary_conditions}, removing an edge in \( S'_\gamma  \) does not create a vertex of degree 1; therefore we have 
  \[ v_1 \leq 4m, \]
  since a vertex of \( G_\gamma  \) can only be of degree 1 if it is an endpoint of \( \gamma_i \) for some \( i\in [2m] \). Additionally, edge and vertex counting yields
  \[ v_1 + v_2 + v_{\geq 3} \geq v - s \quand v_1 + 2v_2 + 3v_{\geq 3} \leq 2(e-s),  \]
  since removing an edge in \( S'_\gamma \) removes at most one vertex from \( G_\gamma \). Combining those inequalities gives
  \begin{equation}\label{eq:reduced_graph_edge}
    v_{\geq 3} + v_1 \leq 2(e-s) - 2(v-s) + 2v_1 \leq 2(e-v) + 8m;
  \end{equation}
  this inequality encodes the fact that in a union of paths most vertices are of degree 2. We now reduce \( G'_\gamma  \) into a multigraph \( \hat{G}_\gamma  = (\hat{V}_\gamma , \hat{E}_\gamma ) \) as follows: \( \hat{V}_{\gamma} \) is the set of vertices in \( G'_\gamma \) with degree different from 2, and we add an edge between two vertices \( x_1 \) and \( x_2 \) of \( \hat{V}_\gamma \) for each path between \( x_1 \) and \( x_2 \) in \( G'_\gamma \). For \( \hat{f} \in \hat{E}_\gamma \), we annotate \( \hat{f} \) with the length \( q_{\hat{f}} \) of its corresponding path in \( G'_\gamma \).
  
  We let \( \hat{v} \) and \( \hat{e} \) be the number of vertices and edges of \( \hat{G}_\gamma \); a sequence \( \gamma' \sim \gamma \) is uniquely determined by an embedding of \( \hat{V}_\gamma \) in \( [n] \) and for each edge \( \hat{f} \in \hat{E}_\gamma \), an embedding of \( \hat{f} \) as a path of length \( q_{\hat{f}} \). As a result, we have
  \begin{align*}
    \sum_{\gamma' \sim \gamma} \prod_{f \notin S'_{\gamma'}} Q_f & \leq \sum_{y_1, \dots, y_{\hat{v}} \in {[n]}^{\hat{v}}} \prod_{\hat{f} = (y_i, y_j) \in \hat{E}_\gamma} \sum_{x_1, \dots, x_{q_{\hat{f}} - 1} \in [n]} \prod_{t=1}^{q_{\hat{f}}} K_{x_{t-1}, x_t} \\
                                                                 & = \sum_{y_1, \dots, y_{\hat{v}} \in {[n]}^{\hat{v}}} \prod_{\hat{f} = (y_i, y_j) \in \hat{E}_\gamma} {(K^{q_{\hat{f}}})}_{y_i, y_j}                                                               \\
                                                                 & \leq \sum_{y_1, \dots, y_{\hat{v}} \in {[n]}^{\hat{v}}} \prod_{\hat{f} \in \hat{E}_\gamma} \left( \frac{\Psi^2}{n} \rho^{q_{\hat{f}}}\right),
  \end{align*}
  using~\eqref{eq:elementwise_Kt} and recalling that \( \Psi = L^2/\rho \). Now, notice that
  \[ \sum_{\hat{f} \in \hat{E}_\gamma} q_{\hat{f}} = |E'_\gamma| = e - s \quand \hat{e} - \hat{v} = |E'_\gamma| - |V'_\gamma| \geq e - v - s ;\]
  further \( \hat{e} \leq \hat{v} + e - v - s \leq 3(e - v) + 8m - s \) using~\eqref{eq:reduced_graph_edge} and the inequality above. We finally find
  \begin{align*}
    \sum_{\gamma' \sim \gamma} \prod_{f \notin S'_{\gamma'}} Q_f & \leq n^{\hat{v} - \hat{e}} {(\Psi^2)}^{\hat{e}} \rho^{e - s}    \\
                                                                 & \leq n^{v - e + s} \rho^{e - s} {(\Psi^2)}^{3(e - v) + 8m - s},
  \end{align*}
  which ends the proof of Lemma~\ref{lem:equiv_class_contribution}, since \( \Psi^2 \geq 1 \) and \( \rho^{-1} \leq a \). 
\end{proof}
We now are able to conclude; the contribution of one equivalence class in \( \cW_{k, m}(v, e) \) is less than
\begin{align*} 
  C_\gamma & = {\left(1 + \frac d n\right)}^{2km} {\left(\frac d n\right)}^{|S'_\gamma |}d^{2m}L^{2km - 2e }\sum_{\gamma' \sim\gamma}\prod_{e\notin S'_{\gamma'}}K_e \\
           & \leq c_1^{2m} d^{6m} n^{-|S'_\gamma|} L^{2km - 2e} n^{v - e + |S'_\gamma|} \rho^{e} {( \Psi^2)}^{3(e - v) + 8m}                                         \\
           & \leq c_1^{2m} d^{6m} n^{v - e} {(\rho\Psi)}^{km - e} \rho^e {(\Psi^2)}^{3(e-v) + 8m}                                                                    \\
           & \leq c_1^{2m} d^{6m} \rho^{km} {\left(\frac\Psi d \right)}^{km - e} n^{1 - g} {(\Psi^2)}^{3g + 8m},
\end{align*}
with \( g = e - v + 1 \) and we used that \( L = \sqrt{\rho\Psi/d} \) and the bound
\[ {\left(1 + \frac d n\right)}^{k} \leq \exp\left( \frac{dk}n \right) \leq c_1. \]
Summing over all equivalence classes now gives
\begin{align}
   & \E*{\norm{\Delta^{(k-1)}}^{2m}} \leq \sum_{e = 1}^{km} \sum_{v = 1}^{e+1} |\cW_{k, m}(v, e)| \max_{[\gamma] \in \cW_{k, m}(v, e)} C_\gamma \nonumber                                       \\
   & \leq \sum_{e = 1}^{km} \sum_{v = 1}^{e+1} k^{2m}{(2km)}^{6m(e-v+1)} c_1^{2m} d^{6m} \rho^{km} {\left(\frac\Psi d \right)}^{km - e} n^{1 - g} {(\Psi^2)}^{3g + 8m} \nonumber                \\
   & \leq n {(c_1 d^3 k)}^{2m} \rho^{km} \sum_{e = 1}^{km}{\left(\frac\Psi d \right)}^{km - e} \sum_{g = 0}^{\infty}{\left( \frac{\Psi^6 {(2km)}^{6m}}n \right)}^g.  \label{eq:bound_delta_sum}
\end{align}
We set the parameter \( m \) to
\[ m = \left\lceil \frac{\log\left( \frac{n}{\Psi^6} \right)}{12\log(\log(n))} \right\rceil;\]
when \( n \geq c_2\Psi^6 \) for some absolute constant \( c_2 \), we have
\[ \frac{\Psi^6 {(2km)}^{6m}}n < \frac12 \quand n^{\frac 1{2m}} \leq \log{(n)}^{12}. \]
The infinite sum inside~\eqref{eq:bound_delta_sum} thus converges, and
\[ \E*{\norm{\Delta^{(k-1)}}^{2m}}^{\frac 1 {2m}} \leq c_3 d^3 \log{(n)}^{14} \sqrt{\rho}^k {\left(1 \vee \sqrt{\frac \Psi d}\right)}^{k}. \]
Finally, from the definition of \( \Psi \), \( \sqrt{\rho}\left(1 \vee \sqrt{\Psi/d}\right) = \sqrt{\rho} \vee L \), hence~\eqref{eq:trace_bound_delta} by a Markov bound.

\newpage

\appendix

\section{Applications of Theorem~\ref{th:main}}
\subsection{Proof of Proposition~\ref{prop:ihara_bass}}

Let \( x \) be an eigenvector of \( B \) associated with the eigenvalue \( \lambda \); the eigenvalue equation for \( x \) reads
\begin{equation} \label{eq:eigen_equation_B}
  \lambda x_e = \sum_{e \rightarrow f} W_{f}x_f.
\end{equation}
On the other hand, the definition \( y = S^* D_W x \) expands to
\[ y_i = \sum_{e: e_1 = i} W_e x_e. \]
Applying equation~\eqref{eq:eigen_equation_B} to \( e \) and \( e^{-1} \) yields
\[ \lambda x_e = y_{e_2} - W_{e}x_{e^{-1}} \quand \lambda x_{e^{-1}} = y_{e_1} - W_e x_e,\]
and as a result
\[ \lambda^2 x_e = \lambda y_{e_2} - \lambda W_e x_{e^{-1}} = \lambda y_{e_2} - W_e(y_{e_1} - W_e x_e). \]
Rearranging the terms, we find an expression for \( x_e \):
\begin{equation}\label{eq:x_y_expression}
  x_e = \frac{\lambda y_{e_2} - W_e y_{e_1}}{\lambda^2 - W_e^2};
\end{equation}
in particular \( y \neq 0 \) if \( x \neq 0 \). Plugging~\eqref{eq:x_y_expression} into the eigenvalue equation~\eqref{eq:eigen_equation_B}, we get for \( i, j \in [n] \)
\[ \frac{\lambda^2 y_{i} - \lambda W_{ij}y_{j}}{\lambda^2 - W_{ij}^2} = \sum_{\substack{k \sim i \\ k \neq j}} W_{ik}\frac{\lambda y_{k} - W_{ik} y_{i}}{\lambda^2 - W_{ik}^2}, \]
and we rearrange to find
\[ \frac{\lambda^2 y_i}{\lambda^2 - W_{ij}^2} - \frac{W_{ij}^2 y_i}{\lambda^2 - W_{ij^2}} = \sum_{k \sim i} \frac{\lambda W_{ik}}{\lambda^2 - W_{ik}^2} y_k - \sum_{k \sim i} \frac{W_{ik}^2}{\lambda^2 - W_{ik}^2} y_i. \]
The fraction on the LHS cancels out, and writing the RHS as a matrix product
\[ y = \tilde A(\lambda)y - \tilde D(\lambda) y, \]
the desired result.

\subsection{Proof of Theorem~\ref{th:bbp_transition}}

Our first step is to show that the matrices involved in Proposition~\ref{prop:ihara_bass} approximate the matrices \( A \) and \( \rho I \). If \( \lambda^2 \geq 2L^2 \), we have
\[ \left|\lambda \tilde A_{ij}(\lambda) - A_{ij}\right| = \ind \{i \sim j \}\left|\frac{W_{ij}}{1 - \frac{W_{ij}^2}{\lambda^2}} - W_{ij} \right| \leq 1(i \sim j) \frac{2L W_{ij}^2}{\lambda^2}, \]
which implies using the Gershgorin circle theorem
\begin{equation}\label{eq:perturbation_a_lambda}
  \norm*{\lambda \tilde A_{ij}(\lambda) - A_{ij}} \leq \frac{2L}{\lambda^2}\max_i \sum_{j \sim i}W_{ij}^2 \leq \frac{4L \rho}{\lambda^2}.
\end{equation}
Similarly,
\begin{align}
  \left|\lambda^2 \tilde D_{ii}(\lambda) - \rho \right| & \leq \frac{2L^2}{\lambda^2}\sum_{j \sim i} W_{ij}^2 + \left|\sum_{j \sim i} W_{ij}^2 - \rho \right| \nonumber \\
                                                        & \leq \left( \frac{4L^2}{\lambda^2} + \eps \right)\cdot \rho. \label{eq:perturbation_d_lambda}
\end{align}
We now take \( \lambda = \lambda_i \) with \( i\in [r_0] \); then there is a vector \( y \) that is a singular value of
\[ -\lambda_i \Delta(\lambda_i) = A - (\lambda_i + \frac{\rho}{\lambda_i})I + (\lambda \tilde A(\lambda_i) - A) - \lambda_i^{-1}(\lambda^2 \tilde D(\lambda) - \rho I). \]
We can thus apply Weyl's inequality~\cite{weyl_asymptotische_1912} to find that there exists an eigenvalue \( \nu_i \) of \( A \) such that
\[ \left| \nu_i - \left( \lambda_i + \frac{\rho}{\lambda_i} \right) \right| \leq \frac{4L \rho}{\lambda_i^2} + \left( \frac{4L^2}{\lambda_i^2} + \eps \right)\cdot\frac{\rho}{\lambda_i}. \]
Now, we use Theorem~\ref{th:main} to find that \( |\lambda_i - \mu_i|\leq \sigma \), and we have \( \sigma = o(\rho) \) whenever \( n \) is large enough by virtue of~\eqref{eq:bound_rho_below}.
Since
\[ |\lambda_i - \mu_i| \leq \sigma \quand \left|\frac{\rho}{\lambda_i} - \frac{\rho}{\mu_i} \right| \leq \frac{\rho}{\lambda_i \mu_i} \sigma \leq c_0 \sigma, \]
equation~\eqref{eq:bbp_eigenvalue_bound} ensues by noticing that \( \lambda_i > c_1\mu_i \) for some constant \( c_1 \) and \( \sigma \) is negligible before the other error terms.

\bigskip

Assume now that \( \delta_i \geq 2\sigma \); examining the proof of Theorem~\ref{th:main}, we have the existence of an eigenvector \( \xi \) of \( B \) associated with \( \lambda_i \) such that
\[ \norm*{\xi - u_i} \leq \frac{3\sigma \norm{u_i}}{\delta_i - \sigma}. \]
Proposition~\ref{prop:ihara_bass} implies that the vector \( y = S^* D_W \xi \) is a null vector of the deformed laplacian \( \Delta(\lambda) = I - \tilde A(\lambda) + \tilde D(\lambda) \). Notice that the matrix \( S^* D_W^2 S \) is a diagonal matrix such that
\[ {[S^* D_W^2 S]}_{ii} = \sum_{j \sim i} W_{ij}^2 \leq 2\rho, \]
from which we have
\[ \norm{y - S^*D_W u_i} \leq \frac{6\sigma \sqrt{\rho} \norm{u_i}}{\delta_i - \sigma}. \]
We now follow the line of proof of Theorem~\ref{th:main}; we first find
\[ \langle S^* D_W u_i, \varphi_i \rangle = \mu_i^{-\ell} \langle B^\ell \chi_i, D_W \check \chi_i \rangle, \]
and combine it with~\eqref{eq:u_v_dotp} to obtain
\begin{equation}\label{eq:adjacency_scalar_product}
  \left| \langle S^* D_W u_i, \varphi_i \rangle - \mu_i \right| \leq \sigma.
\end{equation}
Computing \( \norm{S^* D_W u_i} \) is trickier; we find
\begin{align*}
  \langle S^* D_W u_i, S^* D_W u_i \rangle & = \mu_i^{-2\ell} \langle S^* D_W B^\ell \chi_i, S^* D_W B^\ell \chi_i \rangle             \\
                                           & = \mu_i^{-2\ell} \langle S^* D_W B^\ell \chi_i,  T^*J D_W B^\ell \chi_i \rangle           \\
                                           & = \mu_i^{-2\ell} \langle TS^* D_W B^\ell \chi_i,  {(B^*)}^\ell D_W \check \chi_i \rangle.
\end{align*}
Writing the coefficients of \( TS^*D_W \) explicitly, we have
\[ {[TS^*D_W]}_{ef} = W_f \sum_{i\in [n]} \ind \{e_2 = i \}\ind \{f_1 = i \} = B_{ef} + {[JD_W]}_{ef},\]
which yields
\[ \langle S^* D_W u_i, S^* D_W u_i \rangle = \mu_i^{-2\ell} \left( \langle B^{2\ell + 1}\chi_i, D_W \check \chi_i \rangle + \langle B^\ell D_W \check \chi_i, B^\ell D_W \check \chi_i \rangle \right). \]
Those scalar products correspond to equations~\eqref{eq:u_v_dotp} and~\eqref{eq: v_v_dotp}, respectively, and we thus get
\[ \left|\norm{S^* D_W u_i}^2 - \mu_i^2(1 + \Gamma_{V, ii}^{(\ell)}) \right| \leq 2\sigma. \]
The hypothesis \( K\ind = \rho\ind \) allows us to approximate \( \Gamma_{V, ii}^{(\ell)} \) efficiently:
\[ \Gamma_{V, ii}^{(\ell)} = \sum_{t = 0}^\ell \frac{\langle \ind, K^{t+1}\varphi^{i, i} \rangle}{\mu_i^{2t+2}} = \sum_{t = 0}^\ell {\left( \frac{\rho}{\mu_i^2} \right)}^{t+1} \]
since \( \norm{\varphi_i} = 1 \), and we have as in the proof of Theorem~\ref{th:main}
\[ \left|\Gamma_{V, ii}^{(\ell)} - \frac{\rho/\mu_i^2}{1 - \rho/\mu_i^2} \right| \leq \sigma. \]
Gathering the previous bounds, we eventually arrive at
\begin{equation}\label{eq:adjacency_norm_eigen}
  \left|\norm{S^* D_W u_i}^2 - \frac{\mu_i^2}{1 - \rho/\mu_i^2} \right| \leq 3\sigma.
\end{equation}
The exact same computations imply that
\[ \norm{u_i}^2 \leq \frac{d}{1 - \rho/\mu_i^2} + c_5 \sigma, \]
and thus noticing that \( \mu_i \geq \sqrt{\rho} \)
\[ \norm*{\frac{y}{\norm{y}} - \frac{S^* D_W u_i}{\norm{S^* D_W u_i}}} \leq \frac{c_6\,\sigma \sqrt{d}}{\delta_i - \sigma}. \]
Combining this error bound with~\eqref{eq:adjacency_scalar_product} and~\eqref{eq:adjacency_norm_eigen}, we find the following result:
\[ \left| \left\langle \frac{y}{\norm{y}}, \varphi_i \right \rangle - \sqrt{1 - \frac{\rho}{\mu_i^2}} \right| \leq \frac{c_7\,\sigma \sqrt{d}}{\delta_i - \sigma}. \]

The final step is to use the Davis-Kahan theorem~\cite{yu_useful_2015} as follows: there exists an eigenvector \( \zeta \) of \( A \) with associated eigenvalue \( \nu_i \), and such that
\[ \norm*{\zeta - \frac{y}{\norm{y}}} \leq \frac{c_8 \left( \frac{4L \rho}{\lambda_i^2} + \left( \frac{4L^2}{\lambda_i^2} + \eps \right)\cdot\frac{\rho}{\lambda_i} \right)}{\delta_i}. \]
This error term dominates all the other ones found above, hence the bound in Theorem~\ref{th:bbp_transition}.

\bigskip

The proof of Corollary~\ref{cor:bbp_unweighted} follows along the same lines; however, we have directly
\[ \tilde A(\lambda) = \frac{\lambda A}{\lambda^2 - 1} \quand \tilde D(\lambda) = \frac{d_0}{\lambda^2 - 1}I, \]
and thus the approximation bounds~\eqref{eq:perturbation_a_lambda} and~\eqref{eq:perturbation_d_lambda} become superfluous.

\subsection{Proof of Theorem~\ref{th:unlabeled_sbm}} \label{subsec: proof_sbm}

We first link the SBM setting to the one of Theorem~\ref{th:main}. In the unweighted case, we have \( Q = K = P \), and the eigenvector equation \( P\ind = \alpha\ind \) yields \( \rho = \alpha \). It is easy to check that whenever \( n \) is large enough, the \( r_0 \) defined in Theorem~\ref{th:unlabeled_sbm} satisfies the assumptions of Theorem~\ref{th:main}, with \( \tau = 1/(\alpha \mu_{r_0}^2) < 1 \). Equation~\eqref{eq:linear_communities} ensures that \( \norm{\varphi_i}_\infty \leq c/\sqrt{n} \) for some absolute constant \( c > 0 \), therefore \( b = O(1) \). Finally, since \( \tau^{-1} = \alpha \mu_{r_0} \), we have
\[ C_0 \leq c \alpha \log{(n)}^{25} \quand n_0 \leq \exp(c \log(d) \log(\log(n))). \]

An application of Theorem~\ref{th:main} thus directly yields the bound on the eigenvalues of \( B \); regarding the eigenvectors, notice that as in the proof of Theorem~\ref{th:bbp_transition}
\[ \norm{u_i}^2 = \frac{\alpha}{1 - 1/(\alpha \mu_i^2)} + O(\sigma) \quand \norm{T\varphi_i} = \alpha + O(\sigma), \]
which gives
\[ \langle \xi, \xi_i \rangle = \sqrt{1 - \frac{1}{\alpha\mu_i^2}} + O(\sigma). \]

\subsection{Proof of Theorem~\ref{th:labeled_sbm} and Proposition~\ref{prop:labeled_symmetric_sbm}}

Letting again \( \Theta \) be the \( n \times 2 \) group membership matrix, we find as in the proof of Theorem~\ref{th:unlabeled_sbm} that we have \( Q = \Theta \tilde Q \Theta^* \) and \( K = \Theta \tilde K \Theta^* \), with
\[ \tilde Q = \frac12\begin{pmatrix} a\dE_\dP[w] & b\dE_\dQ[w] \\b\dE_\dQ[w] & a\dE_\dP[w] \end{pmatrix} \quand \tilde K = \frac12\begin{pmatrix} a\dE_\dP[w^2] & b\dE_\dQ[w^2] \\b\dE_\dQ[w^2] & a\dE_\dP[w^2] \end{pmatrix}. \]
This implies first that
\[ \rho = \frac{a\dE_\dP[w^2] + b\dE_\dQ[w^2]}2, \]
and that the vector \( \Theta\dbinom{1}{-1} \) is an eigenvector of \( Q \) associated with the eigenvalue
\[ \mu_2 = \frac{a\dE_\dP[w] - b\dE_\dQ[w]}2. \]
All other hypotheses of Theorem~\ref{th:main} are easy to check, and we find that the announced results hold as soon as \( \mu_2 ^2 > \rho \vee L \), or
\[ \frac{(a\dE_\dP[w^2] + b\dE_\dQ[w^2]) \vee L}{{(a\dE_\dP[w] - b\dE_\dQ[w])}^2} < 1. \]

Now, let us disregard for a moment the condition on \( L \), and compute \( \rho \):
\[ \rho = \frac12 \int_{\cL}(af(\ell) +bg(\ell))w{(\ell)}^2 \,\mathrm dm(\ell) \]
Define a scalar product on \( \ell^\infty(\cL) \), the set of all bounded functions from \( \cL \) to \( \dR \), as
\[ \langle h_1, h_2 \rangle_{\cL} = \int_{\cL}(af(\ell) +bg(\ell))h_1(\ell)h_2(\ell) \,\mathrm dm(\ell); \]
then \( \rho = \norm{w}_\cL^2 \), and applying the Cauchy-Schwarz theorem
\[ \rho \cdot \norm*{\frac{af - bg}{af + bg}}^2_\cL \geq \left\langle w, \frac{af - bg}{af + bg} \right\rangle_\cL^2 = \mu_2^2. \]
This implies that the signal-to-noise ratio \( \mu_2^2/\rho \) is maximized whenever
\[ w(\ell) = \frac{af(\ell) - bg(\ell)}{af(\ell) + bg(\ell)}, \]
and in this case
\[ \beta = \frac{\mu_2^2}{\rho} = \frac12\int_\cL \frac{{(af(\ell) - bg(\ell))}^2}{af(\ell) + bg(\ell)}\, \mathrm dm(\ell) \]
In particular, we have \( \mu_2 = \rho = \beta \), so \( \beta > 1 \) implies \( \mu_2 > 1 \). It remains to notice that \( w(\ell) \leq 1 \) for any \( \ell \), so the condition \( \mu_2 \geq L \) is redundant as assumed.

\subsection{Proof of Theorem \ref{th:gaussian}}

For $i, j \in [n]$, we note $W_{ij} = m_{ij} + s_{ij} Z_{ij}$ with $Z \sim \cN(0, 1)$ a standard gaussian random variable. Let $\tilde L = 2\sqrt{\log(n)}$; a well known tail bound for gaussians reads
\begin{equation}
  \Pb{|Z_{ij}| \ge \tilde L} \leq \frac{2}{\tilde L} e^{-\tilde L/2} \leq \frac{1}{n^2 \sqrt{\log(n)}}.
\end{equation}

We now define the modified matrix $\tilde W$ with
\[ \tilde W_{ij} = m_{ij} + s_{ij}Z_{ij} \ind\{|Z_{ij}| \leq \tilde L\}, \]
with $\tilde Q$ and $\tilde K$ the associated expected and variance matrices. It is readily seen that $\tilde Q = Q$, and that the variables $\tilde W_{ij}$ are bounded by
\[ L = \sup_{i, j} |m_{ij}| + \tilde L\sup_{i, j} s_{ij}. \]
By a union bound, we have
\[ \Pb{\tilde W \neq W} = \Pb{Z_{ij} > \tilde L \text{ for some } i \in [n]} \leq \dbinom n 2  \frac{1}{n^2 \sqrt{\log(n)}} \leq \frac{1}{2 \sqrt{\log(n)}}, \]
and whenever $\tilde W = W$, then the modified non-backtracking matrix coincides with the original one. Finally, notice that for $i, j \in [n]$
\[ \Var(Z_{ij} \ind\{|Z_{ij}|\leq \tilde L\}) \leq 1, \]
which implies using the Perron-Frobenius theorem that $\rho(\tilde K) \leq \rho(K)$. Theorem \ref{th:main} then applies to the modified couple $(P, \tilde W)$ and the announced result follows.

\section{Computing functionals on trees}
We prove in this section the martingale estimates of Proposition~\ref{prop:functionals_tree} and Proposition~\ref{prop:edge_functionals_tree}.

\subsection{Study of compound Poisson processes}
Many proofs in this section rely on computations of Poisson compound processes, i.e.\ Poisson sums of random variables. For convenience, we gather them all in the following lemma:

\begin{lemma}\label{lem:poisson_sums}
  Let \( N \) be a \( \Poi(d) \) random variable, and \( (X_i) \), \( (Y_i) \), \( (Z_i) \) three iid sequences of random variables, independent from \( N \), such that \( X_i \) and \( Y_j \) (resp. \( Y_i \) and \( Z_j \), or \( Z_i \) and \( X_j \)) are independent whenever \( i \neq j \). Denote by \( A, B \) the random variables
  \[ A = \sum_{i=1}^N X_i \quand B = \sum_{i=1}^N Y_i, \]

  Then the following identities hold:
  \begin{align}
     & \E A = d\E X, \quad \E B = d \E Y, \label{eq:poisson_sum_expectation}                                                                               \\
     & \E{AB}= d\E{XY} + d^2 \E X \E Y = d\E{XY} + \E A \E B, \label{eq:poisson_sum_correlation}                                                           \\
     & \E*{\sum_{i = 1}^N Z_i \left(\sum_{j \neq i} X_j \right)} = d \E{A}\E{Z},   \label{eq:poisson_sum_tree}                                             \\
     & \E*{\sum_{i = 1}^N Z_i \left(\sum_{j \neq i} X_j \right)\left(\sum_{k \neq i} Y_k \right)} = d \E{AB}\E{Z}. \label{eq:poisson_sum_tree_correlation}
  \end{align}
\end{lemma}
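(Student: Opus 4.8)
The plan is to condition on the value of $N$, reduce each identity to a finite sum indexed by tuples of indices in $[N]$, and then average over the Poisson law of $N$. The only distributional input I will need is the formula for the factorial moments of a Poisson variable: for $N\sim\Poi(d)$ and every integer $k\geq 1$, $\dE[N(N-1)\cdots(N-k+1)]=d^k$, which follows at once from differentiating the probability generating function $\dE[z^N]=e^{d(z-1)}$ (or from the series for $e^d$). I will also use throughout that the sequences $(X_i)$, $(Y_i)$, $(Z_i)$ are independent of $N$, so that conditioning on $N$ merely truncates the ranges of all the sums.

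For \eqref{eq:poisson_sum_expectation} I would argue that, conditionally on $N$, the expectation of $A$ equals $\sum_{i=1}^N \E{X_i}=N\,\E{X}$ by linearity, and taking expectations gives $\E{A}=\E{N}\E{X}=d\,\E{X}$ (Wald's identity); the statement for $B$ is identical. For \eqref{eq:poisson_sum_correlation} I would expand, conditionally on $N$, $AB=\sum_{i,j=1}^N X_iY_j$ and split the double sum into the diagonal part $i=j$ and the off-diagonal part $i\neq j$. On the diagonal the two variables share an index, so each of the $N$ terms equals $\E{XY}$; off the diagonal the hypothesis makes $X_i$ and $Y_j$ independent, so each of the $N(N-1)$ terms equals $\E{X}\E{Y}$. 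Hence the conditional expectation is $N\,\E{XY}+N(N-1)\,\E{X}\E{Y}$, and averaging with $\E{N}=d$ and $\dE[N(N-1)]=d^2$ yields the claim.

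The two ``tree'' identities follow the same recipe with one extra layer of index bookkeeping. For \eqref{eq:poisson_sum_tree}, the inner sum $\sum_{j\neq i}X_j$ only ever pairs $Z_i$ with variables $X_j$ carrying a \emph{different} index, so by the independence hypothesis every one of the $N(N-1)$ terms equals $\E{Z}\E{X}$; averaging gives $d^2\,\E{Z}\E{X}=d\,\E{Z}\,\E{A}$. For \eqref{eq:poisson_sum_tree_correlation}, I would split the triple sum over $(i,j,k)$ with $j,k\neq i$ according to whether $j=k$ or $j\neq k$: when $j=k$ the product $X_jY_j$ carries a shared index and contributes the joint moment $\E{XY}$, with $Z_i$ independent of it, so each of these $N(N-1)$ terms is $\E{Z}\E{XY}$; when $j\neq k$ all three indices are distinct, so each of the $N(N-1)(N-2)$ terms is $\E{Z}\E{X}\E{Y}$. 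Averaging with $\dE[N(N-1)]=d^2$ and $\dE[N(N-1)(N-2)]=d^3$ produces $\E{Z}\bigl(d^2\,\E{XY}+d^3\,\E{X}\E{Y}\bigr)$, which is exactly $d\,\E{Z}\,\E{AB}$ by \eqref{eq:poisson_sum_correlation}.

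The only point genuinely requiring care — the ``main obstacle'', modest as it is — is to keep straight which products factorise: a product of variables sharing the \emph{same} index must be retained as a joint moment such as $\E{XY}$, whereas any two variables with \emph{distinct} indices split by the independence hypothesis and are moreover independent of $N$. Once each sum is organised by the coincidence pattern of its indices the computation is purely mechanical, and all the interchanges of sum and expectation are legitimate because, given $N$, every sum has only finitely many terms (I tacitly assume the relevant first and second moments of $X$, $Y$, $Z$ are finite, which holds in all the applications of the lemma).
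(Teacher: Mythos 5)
Your proof is correct and follows essentially the same approach as the paper: condition on $N$, decompose each multiple sum according to the coincidence pattern of indices, evaluate each group of terms via the stated pairwise independence and the identical distribution of the sequences, and then average using the Poisson factorial moments $\E{N(N-1)\cdots(N-k+1)}=d^k$. The only cosmetic difference is that the paper reassembles the final expression for \eqref{eq:poisson_sum_tree_correlation} as $d^2\E{XY}\E{Z}+d^3\E X\E Y\E Z=d\,\E{AB}\E Z$ in one step, whereas you explicitly invoke \eqref{eq:poisson_sum_correlation} at the end; this is the same calculation.
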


Although the first two identities are well-known, we provide a full proof of this lemma:

\begin{proof}
  The sequence \( (X_i) \) being independent from \( N \), we immediately find that
  \[ \E{A \given N} = N\E{X}, \]
  from which eq.~\eqref{eq:poisson_sum_expectation} is derived.
  We then write
  \[ AB = \left(\sum_{i=1}^N X_i\right)\left(\sum_{i=1}^N Y_i\right) = \sum_{i=1}^N X_i Y_i + \sum_{i \neq j}X_i Y_j, \]
  and using the independence property of \( {(X_i)}_i \) and \( {(Y_i)}_i \) yields
  \[ \E*{AB \given N} = N\E{XY} + N(N-1)\E X\E Y. \]
  Since \( N \) is a Poisson random variable, \( \E{N(N-1)} = d^2 \), hence~\eqref{eq:poisson_sum_correlation}.

  \bigskip

  We now move onto the third equation; rearranging terms gives
  \[ \sum_{i = 1}^N Z_i \left(\sum_{j \neq i} X_j \right) = \sum_{i \neq j}{Z_i X_j}, \]
  and therefore the conditional expectation given \( N \) is \( N(N-1) \E{X}\E{Z} \). Using again that \( \E{N(N-1)} = d^2 \) brings~\eqref{eq:poisson_sum_tree}.

  Similarly, we can rearrange
  \[ \sum_{i = 1}^N Z_i \left(\sum_{j \neq i} X_j \right)\left(\sum_{k \neq i} Y_k \right) = \sum_{j \neq i} X_j Y_j Z_i + \sum_{i \neq j \neq k} X_i Y_j Z_j, \]
  and take conditional expectations on both sides to arrive at
  \begin{multline*}
    \E*{\sum_{i = 1}^N Z_i \left(\sum_{j \neq i} X_j \right)\left(\sum_{k \neq i} Y_k \right) \given N} \\= N(N-1)\E{XY}\E{Z} + N(N-1)(N-2)\E X \E Y \E Z.
  \end{multline*}
  Again, the expected value of \( N(N-1)(N-2) \) is \( d^3 \), and we finally find
  \begin{align*}
    \E*{\sum_{i = 1}^N Z_i \left(\sum_{j \neq i} X_j \right)\left(\sum_{k \neq i} Y_k \right)} & = d^2 \E{XY} \E{Z} + d^3 \E X \E Y \E Z \\
                                                                                               & = d\E{AB} \E Z,
  \end{align*}
  which ends the proof.
\end{proof}

\subsection{Decomposing the tree functionals}

We now fix \( t \geq 1 \), \( x \in [n] \) and two vectors \( \varphi, \varphi' \in \dR^n \) for the rest of the section. Let \( N \) be the number of children of the root of \( T \), and \( {(T_k, I_k)}_{k \leq N} \) the subtrees at depth 1. We further introduce the following first moment notations:
\[ g_{\varphi}(t, x) = \E*{f_{\varphi, t}(T_x, x)} \quand h_{\varphi, \varphi'}(t, x) = \E*{f_{\varphi, t}(T_x, x)f_{\varphi', t}(T_x, x)}. \]
We begin by a small elementary computation: let \( \phi \in \dR^n \) be any vector. Then,
\begin{equation}\label{eq:expectation_child_root}
  \E*{W_{xI_k}\phi(I_k)} = \sum_{y \in [n]}{\frac{P_{xy}}{d_x}\E*{W_{xy}}\phi(y)} = \frac{[Q\phi](x)}{d_x}.
\end{equation}
\medskip
Now, by linearity, we have
\begin{equation}\label{eq:galton_watson_linearity}
  f_{\varphi, t}(T_x, x) = \sum_{k = 1}^N{W _{xI_k}f_{\varphi, t-1}(T_k, I_k)}.
\end{equation}
By definition of the Galton-Watson tree, the random variables \( X_k = W _{xI_k}f_{\varphi, t-1}(T_k, I_k) \) and \( Y_k = W _{xI_k}f_{\varphi', t-1}(T_k, I_k) \) satisfy the assumptions of Lemma~\ref{lem:poisson_sums}. Furthermore, conditioning on the value of \( I_k \), we can compute \( \dE{X_k} \):
\begin{align*}
  \E*{ W _{xI_k} f_{\varphi, t-1}(T_k, I_k)} & = \E*{W _{xI_k}g_\varphi(t-1, I_k)}        \\
                                             & = \frac{[Qg_\varphi(t-1, \cdot)](x)}{d_x}.
\end{align*}
Applying~\eqref{eq:poisson_sum_expectation}, and from the definition of \( g_\varphi \), we come to the following recurrence relation:
\[ g_\varphi(t, x) = [Qg(t-1, \cdot)](x).\]
Solving this recurrence is straightforward, and we find
\[ g_\varphi(t, \cdot) = Q^t g_\varphi(0, \cdot) = Q^t\varphi, \]
which implies~\eqref{eq:functional_nonzero_eigen}.

\bigskip

Using now equation~\eqref{eq:poisson_sum_correlation} from Lemma~\ref{lem:poisson_sums}, we derive
\begin{equation}\label{eq:recurrence_correlation_gw}
  \begin{split}
    h_{\varphi, \varphi'}(t, x) &= d_x\E*{W_{xI_k}^2h_{\varphi,\varphi'}(t-1, I_k)} + g_\varphi(t, x)g_{\varphi'}(t, x) \\
    &= [K h_{\varphi, \varphi'}(t, \cdot)](x) + g_\varphi(t, x)g_{\varphi'}(t, x),
  \end{split}
\end{equation}
from which we can solve for \( h_{\varphi, \varphi'} \):
\begin{align*}
  h_{\varphi, \varphi'}(t, \cdot) & = Kh_{\varphi, \varphi'}(t-1, \cdot) + (Q^t \varphi) \odot (Q^t \varphi') \\
                                  & = \sum_{s=0}^t K^s[(Q^{t-s}\varphi) \odot (Q^{t-s}\varphi')].
\end{align*}
The eigenvector equations for \( \varphi_i \) and \( \varphi_j \) then imply~\eqref{eq:functional_nonzero_corr}.

\bigskip

Consider now the funtion \( F_{i, t}(T_x, x) = f_{\varphi_i, t+1}(T_x, x) -  \mu_i f_{\varphi_i, t}(T_x, x) \), and its associated first moment functions
\[ G(t, x) = \E*{F_{i, t}(T_x, x)} \quand H(t, x) = \E*{F_{i, t}{(T_x, x)}^2}. \] The linearity of \( f_{\varphi_i, t} \) implies that \( F_{i, t} \) also verifies equation~\eqref{eq:galton_watson_linearity}, and therefore
\[ G(t, \cdot) = Q^t G(0, \cdot) = 0 \]
for all \( t \geq 0 \). Equation~\eqref{eq:recurrence_correlation_gw} thus reduces to
\begin{align*}
  H(t, \cdot) & = K^t H(0, \cdot)                                                                   \\
              & = K^t \left(h_{\varphi_i, \varphi_i}(1, \cdot) - \mu_i^2\varphi^{i, i}\right)       \\
              & = K^t \left(\mu_i^2 \varphi^{i, i} + K\varphi^{i, i} - \mu_i^2\varphi^{i, i}\right) \\
              & = K^{t+1}\varphi^{i, i},
\end{align*}
which ends the proof.

\subsection{Edge functionals}

Most of the handiwork needed to prove Proposition~\ref{prop:edge_functionals_tree} was done in Lemma~\ref{lem:poisson_sums}; indeed, in the tree \( (T_x, x) \), the edge transformation on \( f_{\varphi, t} \) can be written as
\begin{align*}
  \vec\partial_w f_{\varphi, t}(T_x, x) & = \sum_{j = 1}^N w_{I_j} \sum_{k \neq j} W_{xI_k}f_{\varphi, t-1}(T(k), I_k).
\end{align*}
We define accordingly the random variables
\[ X_k = W _{xI_k}f_{\varphi, t-1}(T(k), I_k), \quad Y_k = W _{xI_k}f_{\varphi', t-1}(T(k), I_k) \quand Z_k = w_{I_k},\]
that verify the assumptions of Lemma~\ref{lem:poisson_sums}. Computing \( \E{Z} \) is straightforward:
\[ \E{Z} = \sum_{y\in [n]} \frac{P_{xy}}{d_x} \E{w_y} = [P\bar w](x). \]
Hence, we can apply equation~\eqref{eq:poisson_sum_tree} to those variables, to deduce~\eqref{eq:edge_nonzero_eigen}. Similarly, the product transformation has the form
\begin{multline*} \vec \partial_w(f_{\varphi, t}\cdot f_{\varphi', t})(T_x, x) = \sum_{j = 1}^N w_{I_j}\left(\sum_{k \neq j} W_{xI_k}f_{\varphi, t-1}(T(k), I_k)\right)\\ \times\left(\sum_{k \neq j} W_{xI_k}f_{\varphi', t-1}(T(k), I_k)\right),
\end{multline*}
which using~\eqref{eq:poisson_sum_tree_correlation} implies~\eqref{eq:edge_nonzero_corr}. Finally, equation~\eqref{eq:edge_square_increments} is proved with the exact same technique, considering \( F_{i, t}(T_x, x) \) instead of \( f_{\varphi, t}(T_x, x) \).

\section{Near eigenvectors: computations}

We finish here the proof of Proposition~\ref{prop:pseudo_eigenvectors}. First, let
\[ f(g, o) = \ind_{{(g, o)}_{t+1} \text{ has no cycles}}\,\varphi_j(o) f_{\varphi, t+1}(g, o). \]
Then \( f \) is \( (t+1) \)-local, and we have
\begin{align*}
  \left|f(g, o)\right| & \leq \norm{\varphi_i}_\infty \norm{\varphi_j}_\infty \left| \partial {(g, o)}_{t+1} \right| L^{t+1} \\
                       & \leq \frac{b^2}n |(g, o)|_{t+1} L^{t+1} := \psi(g, o).
\end{align*}
On the other hand, the scalar product \( \langle B^t\chi_i, D_W\check\chi_j \rangle \) can be written as
\begin{align*}
  \langle B^t\chi_i, D_W\check\chi_j \rangle & = \sum_{e\in \vec E}W_e\varphi_j(e_1) \sum_{\gamma} \prod_{s=1}^{t} W_{\gamma_s\gamma_{s+1}} \varphi_i(\gamma_{t+1}) \\
                                             & = \sum_{e\in \vec E}\varphi_j(e_1) \sum_{\gamma} \prod_{s=0}^{t} W_{\gamma_s\gamma_{s+1}} \varphi_i(\gamma_{t+1}),
\end{align*}
where the sum ranges over all non-backtracking paths \( \gamma = (\gamma_0, \dots, \gamma_{t+1}) \) such that \( (\gamma_0, \gamma_1) = e \). It follows that
\begin{align*}
  \left| \langle B^t\chi_i, D_W\check\chi_j \rangle - \sum_{v\in V}f(G, v) \right| & \leq \left| \sum_{e: e_1 \notin \cV_{t+1}} [B^t\chi_i](e)[D_W\check\chi_j](e) \right| \\
                                                                                   & \leq 2|\cV_{t+1}|\max_v{\psi(G, v)},
\end{align*}
using the tangle-free property as before. This time, the results from \autoref{subsec:neighbourhoods} yield
\[ \max_v \psi(G, v) \leq \frac{c_1 \, b^2 \log{(n)}^2 d^{t+1} L^{t+1}}n \quand \norm{\psi}_\star \leq \frac{c_1\,b^2\log{(n)}^3 d^{t+1}L^{t+1}}{n}, \]
and the expected value on the tree is
\begin{align*}
  \sum_{x\in [n]}\E{f(T_x, x)} & = \sum_{x} \varphi_j(x) \mu_i^{t+1} \varphi_i(x) = \mu_i^{t+1}\delta_{ij}.
\end{align*}
Concluding,
\begin{align*}
  \left| \langle B^t\chi_i, D_W\check\chi_j \rangle - \mu_i^{t+1}\delta_{ij} \right| & \leq \frac{c_2\, b^2 \log{(n)}^{4} d^{2t+3} L^{t+1}}n + \frac{c_3\, r b^2 \log{(n)}^6 d^{2t+3}L^{t+1}}{\sqrt{n}} \\
                                                                                     & \leq \frac{c_4\, r b^2 d^3 L \log{(n)}^6 d^{2t} L^t}{\sqrt{n}},
\end{align*}
which proves~\eqref{eq:u_v_dotp}.
\bigskip

Now, let
\[ f(g, o) = \ind_{{(g, o)}_{t} \text{ has no cycles}}\,\vec\partial_\ind[f_{\varphi_i, t} \cdot f_{\varphi_j, t}](g, o). \]
Again, \( f \) is \( t \)-local, and we have
\begin{align*}
  \left|f(g, o)\right| & \leq \norm*{\varphi_i}_\infty\norm*{\varphi_j}_\infty \deg(o) \left| \partial {(g, o)}_t \right|^2 L^{2t} \\
                       & \leq \frac{b^2}n\deg(o)\left| {(g, o)}_t \right|^2 L^{2t} := \psi(g, o)
\end{align*}
By definition of the \( \vec\partial \) operator, we have, for \( v \in V \),
\[ f(g, v) = \sum_{e:e_2 = v} [B^t\chi_i](e)[B^t\chi_j](e). \]

Hence,
\begin{align*}
  \left| \langle B^t\chi_i, B^t\chi_j \rangle - \sum_{v \in V}f(G, v) \right| & = \left| \sum_{e: e_2 \notin \cV_t} [B^t\chi_i](e)[B^t\chi_j](e) \right| \\
                                                                              & \leq 2|\cV_t| \max_v\psi(G, v),
\end{align*}
using the tangle-free property as before. This time, the results from \autoref{subsec:neighbourhoods} yield
\[ \max_v \psi(G, v) \leq \frac{c_5\,b^2\log{(n)}^3 d^{2t+1}L^{2t}}n \quand \norm*{\psi}_\star \leq \frac{c_5\,b^2\log{(n)}^4 d^{2t+1}L^{2t}}n , \]
and we can compute the expected value on the tree:
\begin{align*}
  \sum_{x \in [n]} \E*{f(T_x, x)} & = \sum_{x}{[P\ind](x) \mu_i^{t}\mu_j^t\sum_{s = 0}^t\frac{[K^s\varphi^{i, j}](x)}{{(\mu_i\mu_j)}^s}} \\
                                  & = {(\mu_i \mu_j)}^t\sum_{s = 0}^t\frac{\langle P\ind, K^s \varphi^{i, j} \rangle}{{(\mu_i \mu_j)}^s} \\
                                  & = {(\mu_i \mu_j)}^t \Gamma_{U, ij}^{(t)}.
\end{align*}

Gathering those estimates, we find
\begin{align*}
  \left| \langle B^t \chi_i, B^t \chi_j \rangle -  {(\mu_i \mu_j)}^t \Gamma_{U, ij}^{(t)} \right| & \leq \frac{c_6\, b^2 \log{(n)}^5 d^{3t+2}L^{2t}} n + \frac{c_7\,rb^2 \log{(n)}^7 d^{3t+2}L^{2t}}{\sqrt{n}} \\
                                                                                                  & \leq \frac{c_8\,r b^2 d^2 \log{(n)}^7 d^{3t}L^{2t}}{\sqrt{n}},
\end{align*}
which proves~\eqref{eq:u_u_dotp}.

\bigskip

Next is~\eqref{eq: v_v_dotp}; we first notice that the parity-time equation~\eqref{eq:parity-time} implies that
\[ \langle {(B^*)}^t D_W \check\chi_i, {(B^*)}^t D_W \check\chi_j \rangle = \langle D_W B^t \chi_i, D_W B^t \chi_j \rangle. \]
Similarly to the previous computation, we therefore let \( w_o = (W_{1o}^2, \dots, W_{no}^2) \), and
\[ f(g, o) = \ind_{{(g, o)}_{t} \text{ has no cycles}}\vec \partial_{w_o}[f_{\varphi_i, t} f_{\varphi_j, t}](g, o). \]
We have similarly
\[ |f(g, o)| \leq \frac{b^2}n |{(g, o)}_{t}|^2 L^{2t+2} := \psi(g, o),\]
Now,
\begin{align*}
  \max_v \psi(G, v) & \leq \frac{c_9\,b^2\log{(n)}^2 d^{2t}L^{2t+2}}n, \\ \norm*{\psi}_\star &\leq \frac{c_{10}\,b^2\log{(n)}^3 d^{2t}L^{2t+2}}n,
\end{align*}
and as above
\begin{align*}
  \sum_{x \in [n]} \E*{f(T_x, x)} & = \sum_{x}{[Pw_x](x) \mu_i^{t}\mu_j^t\sum_{s = 0}^t\frac{[K^s\varphi^{i, j}](x)}{{(\mu_i\mu_j)}^s}}  \\
                                  & = \sum_{x}{[K\ind](x) \mu_i^{t}\mu_j^t\sum_{s = 0}^t\frac{[K^s\varphi^{i, j}](x)}{{(\mu_i\mu_j)}^s}} \\
                                  & = \Gamma_{V, ij}^{(t)}.
\end{align*}
Equation~\eqref{eq: v_v_dotp} is then derived as we did earlier.

\bigskip

Our final inequality to prove is~\eqref{eq:u_eigen_equation}; we consider now the function
\[  F_t(g, o) = \ind_{{(g, o)}_{t+1} \text{ has no cycles}}\,\vec\partial_\ind[F_{i, t}^2](g, o)  \]

For all \( t \geq 0 \), the function \( F_t \) is \( t+1 \)-local, and
\begin{align*}
  \left|F_t(g, o) \right| & \leq \deg(o) {\left(2 \norm*{\varphi_i}_\infty \left| {(g, o)}_{t+1} \right|\right)}^2 L^{2t} \\
                          & \leq 4 \deg(o) \frac{b^2}n \left| {(g, o)}_{t+1} \right|^2 L^{2t} := \psi_t(g, o).
\end{align*}

Whenever \( v \notin \cV_t \),
\[  F_t(G, v) = \sum_{e:e_2 = v}{\left([B^{t+1}\chi_i](v) - \mu_i [B^t \chi_i](v)\right)}^2  \]

The same computations as in the other equations then imply that
\[ \left| \norm*{B^{t+1}\chi_i - \mu_i B^t \chi_i}^2 - \sum_{x \in [x]}{F_{t}(G, v)} \right| \leq 2|\cV_t| \max_v\psi(G, v), \]
and
\[ \max_v \psi(G, v) \leq \frac{c_5 \,b^2 \log{(n)}^3 d^{2t+1} L^{2t}}n \quand \norm{\psi}_\star \leq \frac{c_5 \,b^2 \log{(n)}^4 d^{2t+1} L^{2t}}n.\]

Furthermore,
\[ \sum_{x\in [n]} \E*{F_t(T_x, x)} = \sum_{x \in [n]}[P\ind](x)[K^{t+1}\varphi^{i, i}](x) = \, \langle P\ind, K^{t+1} \varphi^{i, i} \rangle, \]
and we can apply Lemma~\ref{lem:scalar_Kt} to find
\[ \left|\sum_{x\in [n]} \E*{F_t(T_x, x)}\right| \leq r d^3 L^2 \rho^{t+1}. \]
Concluding as above,
\begin{align*}
  \norm*{B^{t+1}\chi_i - \mu_i B^t \chi_i}^2 & \leq r d^3 L^2 \rho^{t+1} + \frac{c_{11}\, b^2\log{(n)}^5 d^{3t+3}L^{2t}}n              \\&\qquad\quad\ \ \qquad+ \frac{c_{12}\,r b^2 \log{(n)}^7 d^{3t+3}L^{2t}}{\sqrt{n}} \\
                                             & \leq r d^3 L^2 \rho^{t+1} + \frac{c_{13} r b^2 d^3 \log{(n)}^7 d^{3t}L^{2t}}{\sqrt{n}}.
\end{align*}

\section{Proofs for Theorem~\ref{th:bl_u_bounds}}

\subsection{Proof of~\eqref{eq:Ustar_U}-\eqref{eq:Ustar_V}}

We shall make use of the following classical bound: for a \( r_0 \times r_0 \) matrix \( M \), we have
\begin{equation}\label{eq:infinity_norm_equiv}
  \norm{M}  \leq r_0\norm{M}_\infty.
\end{equation}
First, the \( (i, j) \) entry of matrix \( U^*U \) is \( \langle u_i, u_j \rangle \), and using~\eqref{eq:u_u_dotp} we find
\[ |\langle u_i, u_j \rangle - \Gamma^{(\ell)}_{U, ij} | \leq \frac{c\,r b^2 d^2 \log{(n)}^7 d^{3\ell}L^{2\ell}}{{(\mu_i\mu_j)}^\ell\sqrt{n}}. \]
Since \( i, j \leq r_0 \), we have \( \mu_i\mu_j \geq L^2 \), thus
\[ |\langle u_i, u_j \rangle - \Gamma^{(\ell)}_{ij} | \leq \frac{c\,r b^2 d^2 \log{(n)}^7 d^{3\ell}}{\sqrt{n}}. \]
By definition of \( \ell \), it is easy to check that $d^{3\ell} \leq n^{-1/4}$. Via~\eqref{eq:infinity_norm_equiv}, this implies that \( \norm{U^*U - \Gamma_U^{(\ell)}} \) is less than \( Cn^{-1/4} \), the desired result. The derivation of~\eqref{eq:Vstar_V} is identical, the bound from Proposition~\ref{prop:pseudo_eigenvectors} being essentially the same for both cases.

\bigskip

We now move onto the proof of~\eqref{eq:Ustar_V}; we write the scalar product \( \langle B^\ell \chi_i, {(B^*)}^\ell D_W \check\chi_j \rangle \) as \( \langle B^{2\ell} \chi_i, D_W\check\chi_j \rangle \) and use~\eqref{eq:u_v_dotp} to find
\begin{align*}
  \left|\langle u_i, v_j \rangle - \delta_{ij} \right| & \leq \frac{c\, r b^2 d^3 L \log{(n)}^6 d^{4\ell} L^{2\ell}}{\mu_i^{2\ell + 1}\sqrt{n}} \\
                                                       & \leq \frac{c\, r b^2 d^{7/2} L \log{(n)}^6 d^{4\ell}}{\sqrt{n}}.
\end{align*}
The bound we now need is $d^{4\ell} \leq n^{1/4}$,
which is true by choice of \( \ell \), and we conclude as above.

\subsection{Bounding \texorpdfstring{\( \norm{B^\ell} \)}{|| B\textasciicircum{}l ||}: proof of~\eqref{eq:norm_Bl}}

Let \( w \) be any unit vector in \( R^{\vec E} \), and assume that we are in the event described in Proposition~\ref{prop:local_summary}. Then

\begin{align*}
  \norm{B^t w}^2 & = \sum_{e \in \vec E} {\left(\sum_{(e_0, \dots, e_t) \in \cP(e, t)}\prod_{i = 0}^{t-1}W_{e_i e_{i+1}} w(e_t) \right)}^2 \\
                 & \leq L^{2\ell}\sum_{e \in \vec E} |\cP(e, t)| \sum_{(e_0, \dots, e_t) \in \cP(e, t)} w{(e_t)}^2.
\end{align*}
by the Cauchy-Schwarz inequality. Under the good event from Proposition~\ref{prop:local_summary}, we have
\[ |\cP(e, t)| \leq 2|{(G, e)}_t| \leq c_1 \log(n) d^{\ell}. \]
Additionally, note that the factor \( w{(e_t)}^2 \) appears for each path of length \( t \) ending at \( e_t \), or equivalently (reversing edge orientation) for each path in \( \cP(e_t^{-1}, t) \). Hence,
\begin{align*}
  \norm{B^t w}^2 & \leq c_1 \log(n) d^{\ell}L^{2\ell} \sum_{e \in \vec E} w{(e)}^2 |\cP(e^{-1}, t)| \\
                 & \leq c_2 \log{(n)}^2 d^{2\ell}L^{2\ell},
\end{align*}
and the definition of \( \ell \) ensures (generously) that \( d^{2\ell} < \sqrt{n} \).

\subsection{Proof of Lemma~\ref{lem:near_orthogonality}}

Note first that for all \( t \geq 0 \), the parity-time equation~\eqref{eq:parity-time} allows the simplification
\[ \langle {(B^*)}^t D_W\chi_i, w\rangle = \langle B^t\chi_i, D_W J w\rangle. \]
and we have \( \norm{D_W J w} \leq L \).
Further, the assumption \( w\in H^\bot \) implies
\[ \mu_i^{-t}\langle {(B^*)}^t D_W\chi_i, w\rangle = \mu_i^{-t}\langle {(B^*)}^t D_W\chi_i, w\rangle - \mu_i^{-\ell}\langle {(B^*)}^\ell D_W\chi_i, w\rangle; \] combining the two above arguments and using a telescopic sum as in the proof of~\eqref{eq:Bl_U} gives
\begin{align*}
  \left|\mu_i^{-t}\langle {(B^*)}^t D_W\chi_i, w\rangle\right| & = \left| \sum_{s = t}^{\ell - 1} \mu_i^{-s}\langle B^s\chi_i, D_W J w\rangle - \mu_i^{-(s+1)}\langle B^{s+1}\chi_i, D_W J w\rangle \right|  \\
                                                               & \leq \sum_{s = t}^{\ell - 1} \mu_i^{-(s+1)} \left| \langle B^{s+1}\chi_i, D_W J w\rangle - \mu_i\langle B^{s}\chi_i, D_W J w\rangle \right| \\
                                                               & \leq L \sum_{s = t}^{\ell - 1} \mu_i^{-(s+1)} \norm{B^{s+1}\chi_i - \mu_i B^s\chi_i},
\end{align*}
where we used the Cauchy-Schwarz inequality at the last line. Now, we can apply equation~\eqref{eq:u_eigen_equation}:
\[\norm{B^{s+1}\chi_i - \mu_i B^s \chi_i}^2 \leq r d^3 L^2 \rho^{s+1} + \frac{c r b^2 d^3 \log{(n)}^7 d^{3s}L^{2s}}{\sqrt{n}}, \]
and still following the proof of~\eqref{eq:Bl_U} we find
\[\norm{B^{s+1}\chi_i - \mu_i B^s \chi_i} \leq \sqrt{r}d^{3/2}L\rho^{\frac{s+1}2} + \frac{c_2 b d^{3/2} \log{(n)}^{7/2} d^{3s/2}L^{s}}{n^{1/4}}. \]
Summing these inequalities (and using \( \ell \leq c_3\log(n) \)) yields
\[ \left|\langle {(B^*)}^t D_W \check\chi_i, w\rangle \right| \leq \sqrt{r}d^{3/2} L^2\mu_i^t \sum_{s=t}^{\ell - 1}{\left(  \frac{\sqrt{\rho}}{\mu_i} \right)}^{s+1} + \mu_i^{-\ell} \frac{c_4 b d^{3/2} \log{(n)}^{9/2} d^{3\ell/2}L^{\ell}}{n^{1/4}}.\]
Since \( i \leq r_0 \), we have \( \mu_i >\sqrt{\rho} \). As a result, all terms in the sum are bounded by the one for \( s = t-1 \), and \( \mu_i^{-\ell} \leq d^{\ell/2} \). We finally get
\[ \left|\langle {(B^*)}^t D_W \check\chi_i, w\rangle \right| \leq \sqrt{r}d^{3/2}L^2\rho^{t/2} + \frac{c_4\, b d^{3/2} \log{(n)}^{9/2} d^{2\ell}L^{\ell}}{n^{1/4}},\]
as desired.

\section{Norm bounds: additional proofs}

\subsection{Bound~\eqref{eq:trace_bound_mb} on \texorpdfstring{\( \norm{MB^k} \)}{|| MB\textasciicircum{}k ||}}

Since \( \norm{M} \) is of order \( 1 \), we notice that~\eqref{eq:trace_bound_mb} improves by a factor of \( \sqrt{n} \) on the crude bound \( \norm{KB^k} \leq\norm{K}\norm{B^k} \). We use the same trace method as above; we have
\begin{align*}
  \norm{M B^{k-2}}^{2m} & \leq \tr\left[{\left(M B^{k-2} {(B^*)}^{k-2}M^*\right)}^m\right]                                                                                                            \\
                        & \leq {\left(\frac dn \right)}^{2m} \sum_{\gamma\in W_{k, m}} \prod_{i = 1}^{m} X_{\gamma_{2i-1, 1}\gamma_{2i-1, 2}} \prod_{s = 3}^k  A_{\gamma_{2i-1, s-1}\gamma_{2i-1, s}} \\ &\qquad \qquad\ \ \qquad \times\prod_{s = 1}^{k-2}  A_{\gamma_{2i, s-1}\gamma_{2i, s}} X_{\gamma_{2i, k-2}\gamma_{2i, k-1}}
\end{align*}
where \( W_{k, m} \) is the set of sequences of paths defined just below equation~\eqref{eq:first_trace_bound}. The set of edges of the form \( (\gamma_{2i-1, 0}, \gamma_{2i-1, 1}) \) or \( (\gamma_{2i, k-1}, \gamma_{2i, k}) \), which support no random variable, has cardinality at most \( m \) by the boundary conditions, hence the bound for any \( \gamma \in W_{k, m} \):
\begin{multline*}
  \prod_{i = 1}^{m} X_{\gamma_{2i-1, 1}\gamma_{2i-1, 2}} \prod_{s = 3}^k  A_{\gamma_{2i-1, s-1}\gamma_{2i-1, s}}\prod_{s = 1}^{k-2}  A_{\gamma_{2i, s-1}\gamma_{2i, s}} X_{\gamma_{2i, k-2}\gamma_{2i, k-1}} \\\leq {\left(\frac d n\right)}^{e_\gamma - m} L^{2(k-2)m}.
\end{multline*}
Using bound~\eqref{eq:tangle_free_path_counting} on \( \cW_{k, m}(v, e) \) and the fact that each equivalence class contains at most \( n^v \) elements, we get
\begin{align}
  \norm{M B^{k-2}}^{2m} & \leq {\left( \frac d n \right)}^{m} \sum_{e = 1}^{2km} \sum_{v=1}^{e+1} k^{2m}{(2km)}^{6m(e-v+1)} n^v {\left(\frac d n\right)}^e L^{2(k-2)m} \nonumber \\
                        & \leq n^{-m} d^{5m} L^{2km} k^{2m} \sum_{e = 1}^{2km}\sum_{v=1}^{e+1}{(2km)}^{6m(e-v+1)} d^e n^{v-e} \nonumber                                          \\
                        & \leq n^{-m+1} d^{5m} L^{2km} k^{2m} (2km) d^{2km} \sum_{g =0}^{\infty} {\left( \frac{{(2km)}^{6m}}{n} \right)}^g \label{eq:final_bound_mb}.
\end{align}
The choice of parameter
\[ m = \left\lceil \frac{\log(n)}{12\log(\log(n))} \right \rceil \]
ensures that the infinite sum in~\eqref{eq:final_bound_mb} converges for \( n \) larger than an absolute constant, which yields~\eqref{eq:trace_bound_mb}.

\subsection{Bound~\eqref{eq:delta_m_b} on \texorpdfstring{\( \norm{\Delta^{(t-1)}\tilde MB^{k - t - 1}} \)}{|| \textDelta\textasciicircum(t-1)\~M B\textasciicircum(k-t-1) ||}}

First, notice that \( M^{(2)}_{ef} \) is equal to \( {(TQT^*)}_{ef} \) except when \( \ind \{e \xrightarrow{2} f \} = 0 \), which happens only when \( e = f \), \( e \rightarrow f \), \( e \rightarrow f^{-1} \) of \( f^{-1} \rightarrow e \). Therefore, we can write
\[ |L_{ef}| \leq \frac d n (\tilde M_1 + \tilde M_2 + \tilde M_3 + \tilde M_4),  \]
where each entry of the matrix \( M_i \) is one whenever the \( i \)-th condition mentioned above is true. Then, for each \( i \) we can write
\[ \norm{\Delta^{(t-1)}\tilde M_i B^{k - t - 1}} \leq \norm{\Delta^{(t-1)}} \norm{\tilde M_i B^{k - t - 1}},\]
and a straightforward adaptation of the proof of bound~\eqref{eq:trace_bound_mb} gives
\[ \frac d n \norm{\tilde M_i B^{k - t - 1}} \leq \frac{c d^{7/2} L \ln{(n)}^{7}d^{k-t}L^{k-t}}{\sqrt{n}}. \]
Combining the above bound with~\eqref{eq:trace_bound_delta} easily implies~\eqref{eq:delta_m_b}.

\subsection{Bound~\eqref{eq:trace_bound_r} on \texorpdfstring{\( R_t^{(\ell)} \)}{|| R\_t\textasciicircum(l) ||}}

The proof of~\eqref{eq:trace_bound_r} is very similar to those above, as well as the one in~\cite{bordenave_nonbacktracking_2018}; we only highlight the main differences. Let \( t \geq 1 \) (the case \( t=0 \) is almost identical), and \( k \leq \log(n) \). The same trace argument gives
\begin{align*}
  \norm{R_t^{(k - 1)}}^{2m} & \leq \tr\left[{\left( R_t^{(k - 1)}R_t^{{(k - 1)}^*}\right)}^m\right]                                                                                                                                                                         \\
                            & = \sum_{\gamma \in T_{k, m, t}} \prod_{i = 1}^{2m}\underline X_{\gamma_{i, 0}\gamma_{i, 1}}\prod_{s = 2}^t \underline A_{\gamma_{i, s-1}\gamma_{i, s}}Q_{\gamma_{i, t}, \gamma_{i, t+1}}\prod_{s = t+2}^{k} A_{\gamma_{i, s-1}\gamma_{i, s}},
\end{align*}
where \( T_{k, m, t} \) is the set of sequences of paths \( (\gamma_1, \dots, \gamma_{2m}) \) such that for all \( i \), \( \gamma_i^1 = (\gamma_{i,0}, \dots, \gamma_{i, t}) \) and \( \gamma_i^2 = (\gamma_{i,t+1}, \dots, \gamma_{i, k}) \) are tangle-free and \( \gamma_i \) is tangled, with similar boundary conditions as in~\eqref{eq:trace_method_boundary_conditions}.

We define \( G_{\gamma} \) as the union of the \( G_{\gamma_i^z} \) for \( z\in [2m], j\in \{1, 2\} \). Since we remove an edge to each path, \( G_\gamma \) need not be connected; however, since \( \gamma_i \) is tangled, each connected component in \( G_{\gamma_i} \) contains a cycle, and the same holds for \( G \). It follows that
\[ v_\gamma \leq e_\gamma \]
for all \( \gamma \in T_{k, m, t} \). As before, we define the equivalence relation \( \sim \) and \( \cT_{k, m, t}(v, e) \) the set of equivalence classes with \( v_\gamma = v \) and \( e_\gamma = e \). Then, the following lemma from~\cite{bordenave_nonbacktracking_2018} holds:
\begin{lemma}
  Let \( v, e \) be any integers such that \( v \leq e \). Then
  \[ \left| \cT_{k, m, t}(v, e)\right| \leq {(4km)}^{12m(e - v +1) + 8m}.  \]
\end{lemma}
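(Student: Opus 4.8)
The plan is to follow, essentially verbatim, the Füredi--Komlós-type path-counting argument of~\cite{bordenave_nonbacktracking_2018} that produced~\eqref{eq:tangle_free_path_counting}, modified to account for the extra combinatorial freedom coming from the fact that each $\gamma_i$ is now a \emph{tangled} concatenation $\gamma_i^1\gamma_i^2$ of two tangle-free pieces. As there, I would first fix an equivalence class in $\cT_{k,m,t}(v,e)$ and work with its canonical representative: the walk in which the $v$ vertices of $G_\gamma$ carry the labels $1,\dots,v$ in order of first appearance, where ``order'' refers to reading $\gamma_1,\dots,\gamma_{2m}$ successively and, inside each $\gamma_i$, reading $\gamma_i^1$ before $\gamma_i^2$. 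Because two canonical walks are $\sim$-equivalent only if they are equal, $|\cT_{k,m,t}(v,e)|$ equals the number of such canonical walks, and it remains to encode a canonical walk economically.

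I would then set up the standard encoding. Reading the at most $2mk$ edge-steps in order, classify each step as a \emph{tree step} (it reaches a not-yet-visited vertex), a \emph{surplus step} (it traverses a not-yet-seen edge but reaches an already-visited vertex), or a \emph{stale step} (it traverses an already-seen edge). On a canonical walk a tree step carries no information, since its target is the next unused label; hence the whole walk is recovered from the positions of the surplus and stale steps, together with the target vertex (one of at most $v\le e$) of each surplus step. The number of surplus steps is $e_\gamma-v_\gamma+c$, where $c\le 4m$ is the number of connected components of $G_\gamma$; using $v_\gamma\le e_\gamma$ (already noted: each $\gamma_i$ is tangled, so every component of $G_\gamma$ carries a cycle) this is at most $(e-v+1)+4m$. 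The crucial structural input, which holds on each of the $4m$ tangle-free pieces $\gamma_i^1,\gamma_i^2$ separately, is that a non-backtracking tangle-free walk between two consecutive surplus steps runs deterministically -- out along a fresh tree branch and back along already-known edges -- so the stale steps need not be recorded individually: it suffices to record, per tangle-free piece, the $O(1)$ items pinning down where it re-attaches to the already-explored subgraph and where its (at most one) surplus edge is created, together with the $2m$ junction vertices $\gamma_{i,t}$ gluing $\gamma_i^1$ to $\gamma_i^2$ and the $2m$ identifications imposed by~\eqref{eq:trace_method_boundary_conditions}. Each recorded item is specified by a step index in $\{1,\dots,2mk\}$ and a vertex in $\{1,\dots,v\}$, i.e.\ by a factor at most ${(4km)}^2$; bounding the number of surplus-type items by $6m(e-v+1)$ and the number of junction, boundary, component and first-step items by $4m$ (the generous constants absorbing all lower-order corrections) yields $|\cT_{k,m,t}(v,e)|\le{(4km)}^{2(6m(e-v+1)+4m)}={(4km)}^{12m(e-v+1)+8m}$.

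The main obstacle -- and the one genuine difference from the tangle-free estimate~\eqref{eq:tangle_free_path_counting} -- is the bookkeeping showing that the excess $e-v+1$, the $2m$ gluings at the junctions $\gamma_{i,t}$, and the $2m$ identifications of~\eqref{eq:trace_method_boundary_conditions} really do distribute over the $4m$ tangle-free sub-walks so that each sub-walk is reconstructible from $O(1)$ data plus its own surplus edges, with the total number of recorded items genuinely $O(m(e-v+1)+m)$. This is where tangle-freeness of each $\gamma_i^z$ is essential: it is what prevents re-traversals of surplus edges from proliferating within a piece, and what makes the deterministic ``out-and-back'' description of the inter-surplus segments valid. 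Once the item count is pinned down the inequality is immediate; the base $(4km)$ and the exponent constants $12$ and $8$ are deliberately wasteful, so I would not attempt to optimize them, and I would lean directly on the detailed path-counting bookkeeping of~\cite{bordenave_nonbacktracking_2018} wherever the present structure matches theirs.
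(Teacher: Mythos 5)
The paper does not actually prove this lemma: it is imported verbatim from~\cite{bordenave_nonbacktracking_2018} with the sentence ``the following lemma from~\cite{bordenave_nonbacktracking_2018} holds.'' Consequently there is no internal proof to compare against, and a blind attempt necessarily has to reproduce (or cite) the Füredi--Komlós-type counting from that reference. Your outline correctly identifies the method --- canonical labeling, the tree/surplus/stale classification, the genus count $e_\gamma - v_\gamma + c$ for surplus edges, and the deterministic ``out-and-back'' behaviour of non-backtracking stale segments --- and, like the paper, ultimately defers the decisive bookkeeping to~\cite{bordenave_nonbacktracking_2018}. At that level of resolution your proposal is consistent with the paper's treatment.

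One caveat worth flagging: the parenthetical claim that each tangle-free piece $\gamma_i^z$ has ``(at most one) surplus edge'' conflates surplus edges \emph{internal} to the piece's own subgraph (bounded by one, by tangle-freeness) with surplus edges \emph{relative to the already-explored portion of $G_\gamma$} contributed by earlier pieces, of which there can be several; the latter are controlled not by tangle-freeness but by the global genus bound $e-v+c$. In the Bordenave--Lelarge--Massoulié argument the role of tangle-freeness is precisely to make the stale segments \emph{between} consecutive recorded events deterministic, not to cap the number of such events per piece. Since you explicitly lean on the reference for the bookkeeping, this is a slip in the sketch rather than a broken strategy, but if you were to flesh this out you would need to separate those two kinds of surplus events to land on the stated exponent $12m(e-v+1)+8m$.
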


As for bounding the contribution of a single path, the computations already performed in bounding~\eqref{eq:trace_bound_delta} work similarly:
\begin{multline*}
  \prod_{i = 1}^{2m}\underline X_{\gamma_{i, 0}\gamma_{i, 1}}\prod_{s = 2}^t \underline A_{\gamma_{i, s-1}\gamma_{i, s}}Q_{\gamma_{i, t}, \gamma_{i, t+1}}\prod_{s = t+2}^{k} A_{\gamma_{i, s-1}\gamma_{i, s}} \\\leq {\left(\frac a n\right)}^{e_\gamma + 2m} {\left(1 + \frac d n\right)}^{2km} d^{2m} L^{2km},
\end{multline*}
using \( Q_{ij} \leq dL/n \) for all \( i, j \). Finally, for \( [\gamma] \in \cT_{k, m, t}(v, e) \), there are at most \( n^{v} \) sequences \( \gamma' \) such that \( \gamma' \sim \gamma \). This yields
\begin{align*}
  \E*{\norm{R_t^{(k - 1)}}^{2m}} & \leq c_1^{2m}d^{2m}L^{2km} {\left(\frac d n\right)}^{2m} {(4km)}^{20m} \sum_{e = 1}^{2km}{(4km)}^{12m(e - v)}  \sum_{v = 1}^e d^e n^{v - e} \\
                                 & \leq c_2^{2m} d^{4m}L^{2km} n^{-2m} \log{(n)}^{40m} (2km) d^{2km} \sum_{g = 0}^\infty {\left(\frac{{(4km)}^{12m}}n\right)}^g,
\end{align*}
using preemptively the bound \( m \leq \log(n) \) and the change of variables \( g = e - v \).  This time, choosing
\[ m = \left\lceil \frac{\log(n)}{24\log(\log(n))} \right \rceil \]
yields a convergent sum, and~\eqref{eq:trace_bound_r} follows.

\bibliographystyle{plainnat}
\bibliography{bbp}

\begin{thebibliography}{35}
\providecommand{\natexlab}[1]{#1}
\providecommand{\url}[1]{\texttt{#1}}
\expandafter\ifx\csname urlstyle\endcsname\relax
  \providecommand{\doi}[1]{doi: #1}\else
  \providecommand{\doi}{doi: \begingroup \urlstyle{rm}\Url}\fi

\bibitem[Abbe(2018)]{abbe_community_2018}
Emmanuel Abbe.
\newblock Community {Detection} and {Stochastic} {Block} {Models}: {Recent}
  {Developments}.
\newblock \emph{Journal of Machine Learning Research}, 18\penalty0
  (177):\penalty0 1--86, 2018.
\newblock ISSN 1533-7928.
\newblock URL \url{http://jmlr.org/papers/v18/16-480.html}.

\bibitem[Abbe and Sandon(2015)]{abbe_community_2015}
Emmanuel Abbe and Colin Sandon.
\newblock Community {Detection} in {General Stochastic Block Models}:
  {Fundamental Limits} and {Efficient Algorithms} for {Recovery}.
\newblock In \emph{2015 {IEEE} 56th {Annual Symposium} on {Foundations} of
  {Computer Science}}, pages 670--688, October 2015.
\newblock \doi{10.1109/FOCS.2015.47}.

\bibitem[Abbe and Sandon(2018)]{abbe_proof_2018}
Emmanuel Abbe and Colin Sandon.
\newblock Proof of the {Achievability Conjectures} for the {General Stochastic
  Block Model}.
\newblock \emph{Communications on Pure and Applied Mathematics}, 71\penalty0
  (7):\penalty0 1334--1406, 2018.
\newblock ISSN 1097-0312.
\newblock \doi{10.1002/cpa.21719}.

\bibitem[Abbe et~al.(2014)Abbe, Bandeira, Bracher, and
  Singer]{abbe_decoding_2014}
Emmanuel Abbe, Afonso~S. Bandeira, Annina Bracher, and Amit Singer.
\newblock Decoding {Binary Node Labels} from {Censored Edge Measurements}:
  {Phase Transition} and {Efficient Recovery}.
\newblock \emph{IEEE Transactions on Network Science and Engineering},
  1\penalty0 (1):\penalty0 10--22, January 2014.
\newblock ISSN 2327-4697.
\newblock \doi{10.1109/TNSE.2014.2368716}.

\bibitem[Alt et~al.(2020)Alt, Ducatez, and Knowles]{alt_extremal_2020}
Johannes Alt, Rapha{\"e}l Ducatez, and Antti Knowles.
\newblock Extremal eigenvalues of critical {Erd\H{o}s}\textendash{R\'enyi}
  graphs.
\newblock May 2020.

\bibitem[Baik et~al.(2005)Baik, Arous, and Péché]{baik_phase_2005}
Jinho Baik, Gérard~Ben Arous, and Sandrine Péché.
\newblock Phase transition of the largest eigenvalue for nonnull complex sample
  covariance matrices.
\newblock \emph{Annals of Probability}, 33\penalty0 (5):\penalty0 1643--1697,
  September 2005.
\newblock ISSN 0091-1798, 2168-894X.
\newblock \doi{10.1214/009117905000000233}.
\newblock URL \url{https://projecteuclid.org/euclid.aop/1127395869}.

\bibitem[Bass(1992)]{bass_iharaselberg_1992}
Hyman Bass.
\newblock The {Ihara}-{Selberg Zeta Function} of a {Tree Lattice}.
\newblock \emph{International Journal of Mathematics}, 03\penalty0
  (06):\penalty0 717--797, December 1992.
\newblock ISSN 0129-167X.
\newblock \doi{10.1142/S0129167X92000357}.

\bibitem[Bauer and Fike(1960)]{bauer_norms_1960}
F.~L. Bauer and C.~T. Fike.
\newblock Norms and exclusion theorems.
\newblock \emph{Numerische Mathematik}, 2\penalty0 (1):\penalty0 137--141,
  December 1960.
\newblock ISSN 0945-3245.
\newblock \doi{10.1007/BF01386217}.

\bibitem[{Benaych-Georges} and
  Nadakuditi(2011)]{benaych-georges_eigenvalues_2011}
Florent {Benaych-Georges} and Raj~Rao Nadakuditi.
\newblock The eigenvalues and eigenvectors of finite, low rank perturbations of
  large random matrices.
\newblock \emph{Advances in Mathematics}, 227\penalty0 (1):\penalty0 494--521,
  May 2011.
\newblock ISSN 0001-8708.
\newblock \doi{10.1016/j.aim.2011.02.007}.

\bibitem[{Benaych-Georges} and Nadakuditi(2012)]{benaychgeorges_singular_2012}
Florent {Benaych-Georges} and Raj~Rao Nadakuditi.
\newblock The singular values and vectors of low rank perturbations of large
  rectangular random matrices.
\newblock \emph{Journal of Multivariate Analysis}, 111:\penalty0 120--135,
  October 2012.
\newblock ISSN 0047-259X.
\newblock \doi{10.1016/j.jmva.2012.04.019}.

\bibitem[{Benaych-Georges} et~al.(2019){Benaych-Georges}, Bordenave, and
  Knowles]{benaychgeorges_largest_2019}
Florent {Benaych-Georges}, Charles Bordenave, and Antti Knowles.
\newblock Largest eigenvalues of sparse inhomogeneous
  {Erd\H{o}s}\textendash{R\'enyi} graphs.
\newblock \emph{The Annals of Probability}, 47\penalty0 (3):\penalty0
  1653--1676, May 2019.
\newblock ISSN 0091-1798, 2168-894X.
\newblock \doi{10.1214/18-AOP1293}.

\bibitem[{Benaych-Georges} et~al.(2020){Benaych-Georges}, Bordenave, and
  Knowles]{benaychgeorges_spectral_2020}
Florent {Benaych-Georges}, Charles Bordenave, and Antti Knowles.
\newblock Spectral {Radii} of {Sparse} {Random} {Matrices}.
\newblock \emph{Annales de l'Institut Henri Poincaré, Probabilités et
  Statistiques}, 56\penalty0 (3):\penalty0 2141--2161, August 2020.
\newblock ISSN 0246-0203.
\newblock \doi{10.1214/19-AIHP1033}.

\bibitem[Bordenave et~al.(2018)Bordenave, Lelarge, and
  Massouli{\'e}]{bordenave_nonbacktracking_2018}
Charles Bordenave, Marc Lelarge, and Laurent Massouli{\'e}.
\newblock Non-backtracking {Spectrum} of {Random Graphs}: {Community Detection}
  and {Non}-regular {Ramanujan Graphs}.
\newblock \emph{The Annals of Probability}, 46\penalty0 (1):\penalty0 1--71,
  January 2018.
\newblock ISSN 0091-1798, 2168-894X.
\newblock \doi{10.1214/16-AOP1142}.

\bibitem[Bordenave et~al.(2020)Bordenave, Coste, and
  Nadakuditi]{bordenave_detection_2020}
Charles Bordenave, Simon Coste, and Raj~Rao Nadakuditi.
\newblock Detection thresholds in very sparse matrix completion.
\newblock August 2020.

\bibitem[Candes and Plan(2010)]{candes_matrix_2010}
Emmanuel~J. Candes and Yaniv Plan.
\newblock Matrix {Completion With Noise}.
\newblock \emph{Proceedings of the IEEE}, 98\penalty0 (6):\penalty0 925--936,
  June 2010.
\newblock ISSN 1558-2256.
\newblock \doi{10.1109/JPROC.2009.2035722}.

\bibitem[Capitaine et~al.(2009)Capitaine, {Donati-Martin}, and
  F{\'e}ral]{capitaine_largest_2009}
Mireille Capitaine, Catherine {Donati-Martin}, and Delphine F{\'e}ral.
\newblock The largest eigenvalues of finite rank deformation of large {Wigner}
  matrices: {Convergence} and nonuniversality of the fluctuations.
\newblock \emph{The Annals of Probability}, 37\penalty0 (1):\penalty0 1--47,
  January 2009.
\newblock ISSN 0091-1798, 2168-894X.
\newblock \doi{10.1214/08-AOP394}.

\bibitem[Erd{\H o}s et~al.(2013)Erd{\H o}s, Knowles, Yau, and
  Yin]{erdos_local_2013}
L{\'a}szl{\'o} Erd{\H o}s, Antti Knowles, Horng-Tzer Yau, and Jun Yin.
\newblock The local semicircle law for a general class of random matrices.
\newblock \emph{Electronic Journal of Probability}, 18, 2013.
\newblock ISSN 1083-6489.
\newblock \doi{10.1214/EJP.v18-2473}.

\bibitem[Feige and Ofek(2005)]{feige_spectral_2005}
Uriel Feige and Eran Ofek.
\newblock Spectral techniques applied to sparse random graphs.
\newblock \emph{Random Structures \& Algorithms}, 27\penalty0 (2):\penalty0
  251--275, 2005.
\newblock ISSN 1098-2418.
\newblock \doi{10.1002/rsa.20089}.

\bibitem[F{\'e}ral and P{\'e}ch{\'e}(2007)]{feral_largest_2007}
Delphine F{\'e}ral and Sandrine P{\'e}ch{\'e}.
\newblock The {Largest Eigenvalue} of {Rank One Deformation} of {Large Wigner
  Matrices}.
\newblock \emph{Communications in Mathematical Physics}, 272\penalty0
  (1):\penalty0 185--228, May 2007.
\newblock ISSN 1432-0916.
\newblock \doi{10.1007/s00220-007-0209-3}.

\bibitem[F{\"u}redi and Koml{\'o}s(1981)]{furedi_eigenvalues_1981}
Z.~F{\"u}redi and J.~Koml{\'o}s.
\newblock The eigenvalues of random symmetric matrices.
\newblock \emph{Combinatorica}, 1\penalty0 (3):\penalty0 233--241, September
  1981.
\newblock ISSN 1439-6912.
\newblock \doi{10.1007/BF02579329}.

\bibitem[Heimlicher et~al.(2012)Heimlicher, Lelarge, and
  Massouli{\'e}]{heimlicher_community_2012}
Simon Heimlicher, Marc Lelarge, and Laurent Massouli{\'e}.
\newblock Community {Detection} in the {Labelled Stochastic Block Model}.
\newblock September 2012.

\bibitem[Holland et~al.(1983)Holland, Laskey, and
  Leinhardt]{holland_stochastic_1983}
Paul~W. Holland, Kathryn~Blackmond Laskey, and Samuel Leinhardt.
\newblock Stochastic blockmodels: {First} steps.
\newblock \emph{Social Networks}, 5\penalty0 (2):\penalty0 109--137, June 1983.
\newblock ISSN 0378-8733.
\newblock \doi{10.1016/0378-8733(83)90021-7}.

\bibitem[Johnstone and Paul(2018)]{johnstone_pca_2018}
I.~M. Johnstone and D.~Paul.
\newblock {PCA} in {High Dimensions}: {An Orientation}.
\newblock \emph{Proceedings of the IEEE}, 106\penalty0 (8):\penalty0
  1277--1292, August 2018.
\newblock ISSN 1558-2256.
\newblock \doi{10.1109/JPROC.2018.2846730}.

\bibitem[Keshavan et~al.(2009)Keshavan, Montanari, and
  Oh]{keshavan_matrix_2009}
Raghunandan Keshavan, Andrea Montanari, and Sewoong Oh.
\newblock Matrix {Completion} from {Noisy Entries}.
\newblock In Y.~Bengio, D.~Schuurmans, J.~D. Lafferty, C.~K.~I. Williams, and
  A.~Culotta, editors, \emph{Advances in {Neural Information Processing
  Systems} 22}, pages 952--960. {Curran Associates, Inc.}, 2009.

\bibitem[Krzakala et~al.(2013)Krzakala, Moore, Mossel, Neeman, Sly,
  Zdeborov{\'a}, and Zhang]{krzakala_spectral_2013}
Florent Krzakala, Cristopher Moore, Elchanan Mossel, Joe Neeman, Allan Sly,
  Lenka Zdeborov{\'a}, and Pan Zhang.
\newblock Spectral redemption in clustering sparse networks.
\newblock \emph{Proceedings of the National Academy of Sciences}, 110\penalty0
  (52):\penalty0 20935--20940, December 2013.
\newblock ISSN 0027-8424, 1091-6490.
\newblock \doi{10.1073/pnas.1312486110}.

\bibitem[Lei and Rinaldo(2015)]{lei_consistency_2015}
Jing Lei and Alessandro Rinaldo.
\newblock Consistency of spectral clustering in stochastic block models.
\newblock \emph{The Annals of Statistics}, 43\penalty0 (1):\penalty0 215--237,
  February 2015.
\newblock ISSN 0090-5364, 2168-8966.
\newblock \doi{10.1214/14-AOS1274}.

\bibitem[Lelarge et~al.(2015)Lelarge, Massouli{\'e}, and
  Xu]{lelarge_reconstruction_2015}
Marc Lelarge, Laurent Massouli{\'e}, and Jiaming Xu.
\newblock Reconstruction in the {Labelled Stochastic Block Model}.
\newblock \emph{IEEE Transactions on Network Science and Engineering},
  2\penalty0 (4):\penalty0 152--163, October 2015.
\newblock ISSN 2334-329X.
\newblock \doi{10.1109/TNSE.2015.2490580}.

\bibitem[Massouli{\'e}(2014)]{massoulie_community_2014}
Laurent Massouli{\'e}.
\newblock Community {Detection Thresholds} and the {Weak Ramanujan Property}.
\newblock In \emph{Proceedings of the {Forty}-Sixth {Annual ACM Symposium} on
  {Theory} of {Computing}}, {STOC} '14, pages 694--703, {New York, NY, USA},
  2014. {ACM}.
\newblock ISBN 978-1-4503-2710-7.
\newblock \doi{10.1145/2591796.2591857}.

\bibitem[Montanari and Sen(2016)]{montanari_semidefinite_2016}
Andrea Montanari and Subhabrata Sen.
\newblock Semidefinite programs on sparse random graphs and their application
  to community detection.
\newblock In \emph{Proceedings of the Forty-Eighth Annual {ACM} Symposium on
  {Theory} of {Computing}}, {STOC} '16, pages 814--827, {Cambridge, MA, USA},
  June 2016. {Association for Computing Machinery}.
\newblock ISBN 978-1-4503-4132-5.
\newblock \doi{10.1145/2897518.2897548}.

\bibitem[P{\'e}ch{\'e}(2006)]{peche_largest_2006}
S.~P{\'e}ch{\'e}.
\newblock The largest eigenvalue of small rank perturbations of {Hermitian}
  random matrices.
\newblock \emph{Probability Theory and Related Fields}, 134\penalty0
  (1):\penalty0 127--173, January 2006.
\newblock ISSN 1432-2064.
\newblock \doi{10.1007/s00440-005-0466-z}.

\bibitem[Saade et~al.(2015)Saade, Lelarge, Krzakala, and
  Zdeborov{\'a}]{saade_spectral_2015}
Alaa Saade, Marc Lelarge, Florent Krzakala, and Lenka Zdeborov{\'a}.
\newblock Spectral detection in the censored block model.
\newblock In \emph{2015 {IEEE International Symposium} on {Information Theory}
  ({ISIT})}, pages 1184--1188, June 2015.
\newblock \doi{10.1109/ISIT.2015.7282642}.

\bibitem[Stephan and Massoulié(2019)]{stephan_robustness_2019}
Ludovic Stephan and Laurent Massoulié.
\newblock Robustness of {Spectral} {Methods} for {Community} {Detection}.
\newblock In \emph{Conference on {Learning} {Theory}}, pages 2831--2860,
  {Phoenix, USA}, June 2019. PMLR.
\newblock URL \url{http://proceedings.mlr.press/v99/stephan19a.html}.

\bibitem[Watanabe and Fukumizu(2009)]{watanabe_graph_2009}
Yusuke Watanabe and Kenji Fukumizu.
\newblock Graph {Zeta Function} in the {Bethe Free Energy} and {Loopy Belief
  Propagation}.
\newblock In Y.~Bengio, D.~Schuurmans, J.~D. Lafferty, C.~K.~I. Williams, and
  A.~Culotta, editors, \emph{Advances in {Neural Information Processing
  Systems} 22}, pages 2017--2025. {Curran Associates, Inc.}, 2009.

\bibitem[Weyl(1912)]{weyl_asymptotische_1912}
Hermann Weyl.
\newblock Das asymptotische verteilungsgesetz der eigenwerte linearer
  partieller differentialgleichungen (mit einer anwendung auf die theorie der
  hohlraumstrahlung).
\newblock \emph{Mathematische Annalen}, 71\penalty0 (4):\penalty0 441--479,
  December 1912.
\newblock ISSN 1432-1807.
\newblock \doi{10.1007/BF01456804}.

\bibitem[Yu et~al.(2015)Yu, Wang, and Samworth]{yu_useful_2015}
Y.~Yu, T.~Wang, and R.~J. Samworth.
\newblock A useful variant of the {Davis}\textendash{Kahan} theorem for
  statisticians.
\newblock \emph{Biometrika}, 102\penalty0 (2):\penalty0 315--323, June 2015.
\newblock ISSN 0006-3444.
\newblock \doi{10.1093/biomet/asv008}.

\end{thebibliography}

\end{document}